\newcommand{\leqnomode}{\tagsleft@true\let\veqno\@@leqno}
\newcommand{\reqnomode}{\tagsleft@false\let\veqno\@@eqno}
\newtheorem{theorem}{Theorem}[section]
\newtheorem{lemma}[theorem]{Lemma}
\newtheorem{proposition}[theorem]{Proposition}
\newtheorem{corollary}[theorem]{Corollary}
\def \C{\mathbb{C}}
\def \F{\mathcal{F}}
\def \Z{\mathbb{Z}}
\def \N{\mathbb{N}}
\def \R{\mathbb{R}}
\def \O{\mathcal{O}}
\def \L{\mathcal{L}}
\def \cM{\mathcal{M}}
\def \Lrc{\mathcal{L}^{\mathrm{c}}_{\mathrm{r}}}
\def \Lv{\mathcal{L}_{\mathrm{v}}}
\def \Lr{\mathcal{L}_{\mathrm{r}}}
\def \Lvg{\mathcal{L}^{\mathrm{g}}_{\mathrm{v}}}
\def \Acg{A\mathrm{cg}}
\def \co{\operatorname{co}}
\def \res{\operatorname{res}}
\def \k {\boldsymbol{k}}
\def \<{\langle}
\def \>{\rangle}
\def \a{\operatorname{a}}
\def \fm{\mathfrak{m}}
\def \Frac{\operatorname{Frac}}
\def \ch{\operatorname{char}}
\def \supp{\operatorname{supp}}
\def \alg{\operatorname{alg}}
\def \qftp{\operatorname{qftp}}
\def \tp{\operatorname{tp}}
\def \K{\mathcal{K}}
\def \E{\mathcal{E}}
\def \A{\mathcal{A}}
\DeclareFontFamily{OMS}{smallo}{}
\DeclareFontShape{OMS}{smallo}{m}{n}{<->s*[.65]cmsy10}{}
\DeclareSymbolFont{smallo@m}{OMS}{smallo}{m}{n}
\DeclareMathSymbol{\smallo}{\mathord}{smallo@m}{79}
\def\@serieslogo{\@empty}
\title[Analytic Ax-Kochen-Ersov theory]{Analytic Ax-Kochen-Ersov theory with lifts of the residue field and value group}
\author[Bhardwaj]{Neer Bhardwaj}
\address{Weizmann Institute of Science, Rehovot, Israel}
\email{nbhardwaj@msri.org}
\author[van den Dries]{Lou van den Dries$^1$}
\thanks{$^1$The authors thank the Fields Institute and the Simons Laufer Mathematical Institute for hospitality during the preparation of the paper.}
\address{Department of Mathematics\\
University of Illinois at Urbana-Cham\-paign\\
Urbana, IL 61801\\
U.S.A.}
\email{vddries@illinois.edu}
\begin{document}

\begin{abstract} We develop an extension theory for {\em analytic} valuation rings in order  to establish Ax-Kochen-Ersov type results for these structures. New is that we can add in salient cases lifts of the residue field and the value group and show that the induced structure on the lifted residue field is just its field structure, and on the lifted value group is just its ordered abelian group structure. This restores an analogy with the non-analytic AKE-setting that was missing in earlier treatments of analytic AKE-theory. 
\end{abstract}


\maketitle

{
    \setcounter{tocdepth}{1}
    \tableofcontents

}

\section{Introduction}

\noindent
In the 1960s Ax and Kochen \cite{AxK1,AxK2,AxK3}, and Ersov \cite{Er1, Er2, Er3, Er4} independently, developed a model theory for henselian valuation rings with significant applications to $p$-adic number theory. Since then there have been many  generalizations and refinements, and AKE-theory remains a very active area of research. For example, in the 1980s Denef and van den Dries \cite{DevdD, vdD} treated the ring of $p$-adic integers with {\em analytic\/} structure given by (restricted) power series. This led to the solution of a problem posed by Serre \cite{S}, and to a theory of $p$-adic subanalytic sets. Using ``separated" power series this was  upgraded to a theory of rigid subanalytic sets over henselian valuation rings equipped with a richer analytic structure,  by L.~Lipshitz, Z.~Robinson, R. Cluckers, see \cite{L,LR, CLR, CL, LC2}. 

An interesting part of the original AKE-theory has so far not been extended to this analytic setting: in the equal characteristic 0 case one can add a predicate for a {\em coefficient field} (a lift of the residue field to the ambient field), and then the structure induced on this coefficient field can be shown to be just its pure field structure; likewise for a {\em monomial group}, that is, a lift of the value group. 

In the analytic setting, there is only a partial result in this direction by Binyamini, Cluckers and Novikov \cite[Proposition 2]{BCN}, and the usual approaches to analytic AKE-theory---based on direct reductions to ordinary AKE-theory by Weierstrass division ``with parameters”---cannot be adapted to cover fully the induced structure aspect, as far as we know. Their partial result inspired us to try another approach. 

We do indeed obtain the expected induced structure results in an analytic setting by developing a theory of analytic valuation rings in closer analogy with ordinary valuation theory. Weierstrass division is still key, as in \cite{DevdD, vdD},  but now in a different way.  In an earlier version of the present paper, now in \cite{Neer},  this was done by elaborating, generalizing, and cleaning up substantial parts of \cite{DHM}. The cleaning up was necessary because we noticed problems with  \cite[Lemma 3.1]{DHM}, and to remedy it we had to pass to finite extensions, with additional complications. We found subsequently that a somewhat different and more general approach to similar issues
was already available in \cite{CLR, CL}. So in our Section 9 we quote and slightly adapt instead  relevant results from those sources, saving some 15 pages compared to this earlier version. 
Much of our analytic valuation theory is characteristic-free, but for the analytic AKE-results in Section~\ref{A-AKE} we require that the valued field be of equicharacteristic $0$. 

\subsection*{More on induced structure} Here we state in detail a typical case of our result on induced structure. First we say what it is in the classical (non-analytic) setting.  Let $C$ be a (coefficient) field. 
This yields the valuation ring $C[[t]]$ of formal power series in one variable $t$ over $C$. We now expand the ring $C[[t]]$
to the structure $\big(C[[t]],  C\big)$: a ring with a distinguished subset.  Then a classical ``induced structure" result is that if
$\operatorname{char} C =0$, any set $X\subseteq C^n$ which is definable in  $\big(C[[t]], C\big)$  is even definable in the field $C$. (This can be proved along familiar lines, so we consider it as folklore knowledge, though we do not know an explicit reference.
It seems this is still open for $\operatorname{char} C >0$.) Here and below, $n$ ranges over $\N=\{0,1,2,\dots\}$ and ``definable" means ``definable with parameters from the ambient structure''.

We now equip $C[[t]]$ with analytic structure as follows: for each $n$ we have the (Tate) ring $A\<Y_1,\dots, Y_n\>$ of restricted power series in the distinct indeterminates $Y_1,\dots, Y_n$ over $A=C[[t]]$: it consists of the formal power series
$$f\ =\ f(Y_1,\dots, Y_n)\ =\ \sum_{\nu} a_\nu Y_1^{\nu_1}\cdots Y^{\nu_n}, \qquad \nu=(\nu_1,\dots, \nu_n) \text{ ranging over }\ \N^n,$$ 
with all $a_{\nu}\in A$ such that
$a_{\nu}\to 0$, $t$-adically, as $|\nu|=\nu_1+\cdots + \nu_n\to \infty$. Each such $f$ gives rise to an $n$-ary operation on $C[[t]]$, namely
$$y\ =\ (y_1,\dots, y_n)\mapsto f(y_1,\dots, y_n)\ :\  C[[t]]^n\to C[[t]].$$ 
We expand the ring $C[[t]]$ to $C[[t]]_{\text{an}}$ by taking each such $f$ as a new $n$-ary function symbol that names the above $n$-ary operation on $C[[t]]$. Further expansion yields the structure $\big(C[[t]]_{\text{an}}, C\big)$, and now our new induced structure result says that any set $X\subseteq C^n$ which is definable in  $\big(C[[t]]_{\text{an}}, C\big)$  is even definable in the field $C$.  (For example, any subset of $\C$ definable in $\big(\C[[t]]_{\text{an}}, \C\big)$
is finite or its complement in $\C$ is finite.) In fact, our induced structure result, Corollary~\ref{indstr}, is stronger and more general in several ways, for example in also allowing $t^{\N}$ as a distinguished subset of $C[[t]]$. For various reasons it is more convenient to take the fraction field $C(\!(t)\!)$ of $C[[t]]$ as the ambient ring, equipped with its natural valuation to recover $C[[t]]$. For $C=\C$
we obtain \cite[Proposition 2]{BCN} as a special case, as explained in Section~\ref{A-AKE}. 

\subsection*{Contents of Sections 2--11} We begin with a brief section on henselianity. Next a section on ultranormed
rings and restricted power series over them, including the Weierstrass theorems. At the beginning of Section~\ref{ras} we define for any complete ultranormed ring $A$ subject to mild conditions the notion of {\em $A$-analytic ring}: each $n$-variable restricted power series over $A$ yields an $n$-ary operation on any $A$-analytic ring.  
Starting in Section~\ref{noethA} we specialize to the case that $A$ is noetherian with an ideal $\smallo(A)\ne A$, such that $\bigcap_{n}\smallo(A)^n=\{0\}$ and $A$ is $\smallo(A)$-adically complete.  

In Section~\ref{vrna} we define $A$-analytic {\em valuation\/} rings and establish basic facts about them. In Section~\ref{immAE} 
we treat immediate extensions and prove an analytic version of Kaplansky's embedding theorem. In Section~\ref{trunc} (not needed for  ``induced structure" results, but included for its independent interest) we apply this to show that various extension procedures preserve truncation closedness.  
Section~\ref{division} uses  \cite{CL,CLR} to describe the function given by a univariate term in the language of analytic valued fields as analytic on the annuli of a suitable finite covering of the valuation ring. This allows us to complete the full array of extension results.  Then we can prove in Section~\ref{A-AKE} an analytic AKE-type equivalence theorem, with an induced structure result for ``coefficient field + monomial group'' as a 
consequence. Together with our work on immediate extensions in Section~\ref{immAE}, this yields {\em NIP transfer} in our analytic context. Section~\ref{req} proves a ``separation of variables'' result.

\subsection*{Notational and terminological conventions} 
Throughout  $d,m,n$ range over
$ \N=\{0,1,2,\ldots\}$;  {\em ring\/} means {\em commutative ring with $1$}. From Section~\ref{vrna} onwards we consider valued fields. Let $K$ be a {\em valued\/}  field; it is specified by a valuation ring $R$
of the field $K$.  Let  $v: K^\times \to \Gamma$ be a valuation
on $K$ with $R=\{a\in K: v a\geqslant 0\}$. Here $\Gamma=v(K^\times)$ is the (ordered) value group, and we extend $v$ to a
function
 $v: K \to \Gamma_{\infty}=\Gamma\cup\{\infty\}$ by setting $v(0):=\infty$ and we extend the total ordering of $\Gamma$ to a total ordering on $\Gamma_{\infty}$ by $\Gamma < \infty$. 
It will be convenient to let $\preccurlyeq$, $\asymp$, $\prec$, $\succcurlyeq$, $\succ$, and $\sim$
denote the binary relations on $K$ given for $x,y\in K$ by
\begin{align*}  x\preccurlyeq y\ &: \Leftrightarrow\ vx\geqslant vy\ \Leftrightarrow\ x=yz \text{ for some $z\in R$},\\
 x\asymp y\ &:\Leftrightarrow\ x\preccurlyeq y \text{ and } y\preccurlyeq x, \qquad x\prec y\ :\Leftrightarrow\ x\preccurlyeq y \text{ and }x\not\asymp y,\\
x\succcurlyeq y\ &:\Leftrightarrow\ y\preccurlyeq x, \quad x\succ y\ :\Leftrightarrow\ y\prec x,\quad x\sim y\ \Leftrightarrow\ x-y\prec x.
\end{align*}  
We let $\smallo(R)$ be the maximal ideal of $R$, and let $\res K := R/\smallo(R)$ be the residue field. For $a\in R$ we let $\res a$ be the residue class of $a$ in $\res K$. If we need to indicate dependence on $K$ we write $R_K$, $v_K$, $\Gamma_K$ instead of $R, v, \Gamma$. The reason we use the letter $R$ here instead of the more common $\O$ is that
in Section~\ref{division} we follow \cite{FvdP} in denoting the algebra of analytic functions on a suitable set $F$ by $\O(F)$;
see the start of Section~\ref{division} for context and definitions of these notions.  

Model theoretic arguments become important in Sections~\ref{division}  and ~\ref{A-AKE}, although in earlier sections we already construe various mathematical structures as $L$-structures for various first-order languages $L$. We deal only with one-sorted structures, and 
``$\mathcal{M}\subseteq \mathcal{N}$''  indicates that $\mathcal{M}$ is a substructure of $\mathcal{N}$, for $L$-structures $\mathcal{M}$ and $\mathcal{N}$. (One exception: we refer to a $3$-sorted structure from~\cite{BCN} in 
Section~\ref{req}.)

\medskip
\noindent
 We cite many results of classical AKE-theory from the exposition \cite{Lou}. We do so for convenience and do not suggest that the cited facts originate with \cite{Lou}.\footnote{We take the opportunity to mention that the Chinese Remainder Theorem on \cite[p. 86]{Lou} is stated there in too great a generality. But it holds for ideals in a ring, which is how it gets applied.}

\section{Henselianity} 

\noindent
There are a few places where we need ``henselianity'' outside the usual pattern of a henselian local ring. Accordingly, this section proves basic facts about  henselian pairs (which generalize henselian local rings).  These facts are well-known, but our treatment is more elementary than we have seen in the literature. 

\medskip\noindent
 Given a ring $R$ we let $R^\times$ denote the multiplicative group of units of $R$. The {\em Jacobson radical\/} of a ring $R$ is the
intersection of the maximal ideals of $R$. For the Jacobson radical $J$ of $R$, if $a\in R$ and
$a+J\in (R/J)^\times$, then $a\in R^\times$.
\noindent In this section $X$ and $Y$ are distinct indeterminates and $I$ is an ideal of the ring $R$.

\begin{lemma} Let
$I$ be contained in the Jacobson radical of $R$ and let $P(X)\in R[X]$ and $a\in R$ be such that $P'(a)\in R^\times$. Then $P(b)=0$ for at most one $b\in a+I$.
\end{lemma}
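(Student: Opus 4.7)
The plan is to use the Taylor expansion of $P$ around a root and exploit that the derivative stays a unit modulo the Jacobson radical. Suppose $b_1, b_2 \in a+I$ both satisfy $P(b_i)=0$; set $c := b_2-b_1$, so $c \in I$ (since $b_1-a, b_2-a \in I$). The goal is to show $c=0$.

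Write the Taylor expansion
\[
P(b_1+c)\ =\ P(b_1)\ +\ P'(b_1)\,c\ +\ c^2 Q(b_1,c)
\]
for some polynomial $Q \in R[X,Y]$ (this is the standard formal identity $P(X+Y) - P(X) - P'(X)Y \in Y^2 R[X,Y]$, which holds in any commutative ring and does not rely on any denominators). Since $P(b_1)=P(b_2)=0$, this yields
\[
0\ =\ c\bigl(P'(b_1) + c\,Q(b_1,c)\bigr).
\]

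Next I would argue that $P'(b_1) + c\,Q(b_1,c)$ is a unit in $R$. For any polynomial $F \in R[X]$ and any $x,y\in R$ with $x-y\in I$, we have $F(x)-F(y)\in I$, so $P'(b_1) \equiv P'(a) \pmod{I}$; and $c\,Q(b_1,c) \in I$ since $c \in I$. Thus
\[
P'(b_1) + c\,Q(b_1,c)\ \equiv\ P'(a)\ \pmod{I}.
\]
Because $I$ lies in the Jacobson radical $J$ of $R$, and $P'(a)$ is a unit, the image of $P'(b_1) + c\,Q(b_1,c)$ in $R/J$ is a unit; by the observation recalled just before the lemma, $P'(b_1) + c\,Q(b_1,c) \in R^\times$.

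Multiplying the identity $0 = c\bigl(P'(b_1) + cQ(b_1,c)\bigr)$ by the inverse then gives $c=0$, i.e.\ $b_1=b_2$. The main (mild) obstacle is just recording the formal Taylor identity with the quadratic remainder cleanly so that $Q$ is a genuine polynomial over $R$ (no fractions), but this is a routine manipulation in $R[X,Y]$; the conceptual content is purely the unit-plus-Jacobson-radical trick.
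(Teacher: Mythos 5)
Your proof is correct and follows essentially the same route as the paper: expand $P$ at one root with a quadratic remainder, factor out the difference of the two roots, and use the unit-plus-Jacobson-radical observation to see the cofactor is a unit. The paper merely factors the cofactor as $P'(b)\bigl(1+rP'(b)^{-1}\epsilon\bigr)$ instead of reducing the sum modulo $J$ directly; this is a cosmetic difference.
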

\begin{proof} Let $b\in a+I$ and $P(b)=0$. Then for $\epsilon\in I$ we have $r\in R$ such that
 $$P(b+\epsilon)\ =\ P(b) + P'(b)\epsilon + r\epsilon^2\ =\ P'(b)\epsilon + r\epsilon^2\ =\ 
 P'(b)\epsilon\big(1+rP'(b)^{-1}\epsilon\big)\ =\ 0,$$ 
 and $P'(b), 1+rP'(b)^{-1}\epsilon\in  R^\times$, so $\epsilon=0$.
 \end{proof}

\medskip\noindent
 {\em The pair $(R,I)$ is henselian\/} means: \begin{itemize}
\item $I$ is contained in the Jacobson radical of $R$, equivalently, $1+I\subseteq R^\times$;
\item for all polynomials $P(X)\in R[X]$ and $a\in R$ with $P(a)\in I$ and $P'(a)\in R^\times$ there exists $b\in R$ such that $P(b)=0$ and $a-b\in I$.
\end{itemize}

\noindent
Thus given a maximal ideal $\fm$ of the ring $R$, the pair $(R,\fm)$ is henselian iff
$R$ is a henselian local ring in the usual sense. 

\begin{lemma}\label{he1} Assume $1+I\subseteq R^\times$. Then the following conditions are equivalent: \begin{enumerate}
\item[(i)] $(R,I)$ is henselian;
\item[(ii)] each polynomial $1+X + ea_2X^2 + \cdots + ea_nX^n$ with $n\geqslant 2$, $e\in I$, and $a_2,\dots, a_n\in R$ has a zero in $R$ $($obviously, such a zero lies in $-1+I)$;
\item[(iii)] each polynomial $Y^n + Y^{n-1} + ea_2Y^{n-2} + \cdots + ea_n$ with $n\geqslant 2$, $e\in I$, and
$a_2,\dots, a_n\in R$ has a zero in $R^\times$;
\item[(iv)] given any polynomial $P(X)\in R[X]$ and $a\in R$, $e\in I$ such that $P(a)=eP'(a)^2$ 
there exists $b\in R$ such that $P(b)=0$ and $b-a\in eP'(a)R$. 
\end{enumerate}
\end{lemma}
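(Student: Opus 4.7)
I plan to prove the equivalence via the cycle (iv)$\Rightarrow$(i)$\Rightarrow$(ii)$\Leftrightarrow$(iii) and close it with (ii)$\Rightarrow$(iv); three of these implications are bookkeeping, and the substantive content sits in the last.

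For (iv)$\Rightarrow$(i), given $P(a)\in I$ and $P'(a)\in R^{\times}$, I would set $e:=P(a)P'(a)^{-2}\in I$, so that $P(a)=eP'(a)^2$, and invoke (iv) to produce $b$ with $P(b)=0$ and $b-a\in eP'(a)R\subseteq I$. For (i)$\Rightarrow$(ii), I would feed the polynomial $P(X)=1+X+ea_2X^2+\cdots+ea_nX^n$ into (i) at $a=-1$: a direct computation gives $P(-1)\in I$ and $P'(-1)\in 1+I\subseteq R^{\times}$, so (i) supplies a zero in $-1+I$. For (ii)$\Leftrightarrow$(iii), the substitution $Y=1/X$ (i.e.\ passing to $Y^nP(1/Y)$) carries the family of polynomials in (ii) bijectively onto the family in (iii); any root $b\in R$ of the (ii)-polynomial is forced by the defining equation $b=-1-ea_2b^2-\cdots-ea_nb^n$ to satisfy $b\in -1+I\subseteq R^{\times}$, so $b\mapsto b^{-1}$ moves zeros back and forth between $R$ and $R^{\times}$ in both directions.

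The heart of the matter is (ii)$\Rightarrow$(iv), which I would handle by a Newton-style rescaling of the indeterminate. Writing the formal Taylor expansion $P(a+Y)=\sum_{i\geqslant 0}\alpha_i Y^i$ with $\alpha_0=P(a)=eP'(a)^2$, $\alpha_1=P'(a)$, and $\alpha_i\in R$ (no divisions by factorials are needed, as this is the purely formal Taylor expansion of a polynomial over a ring), the substitution $Y=eP'(a)X$ yields the polynomial identity
$$P\bigl(a+eP'(a)X\bigr)\ =\ eP'(a)^2\Bigl[1+X+e\sum_{i\geqslant 2}\alpha_i e^{i-2}P'(a)^{i-2}X^i\Bigr]\quad\text{in}\ R[X];$$
indeed both the constant and the linear term of the left side equal $eP'(a)^2$ exactly because $P(a)=eP'(a)^2$. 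The bracketed factor (padded with zero coefficients if its degree is below $2$) has precisely the shape required by (ii), so (ii) supplies $x\in R$ at which the bracket vanishes, and then $b:=a+eP'(a)x$ satisfies $P(b)=0$ and $b-a\in eP'(a)R$. The only real obstacle is spotting the rescaling $Y=eP'(a)X$; once one notices that the Newton hypothesis $P(a)=eP'(a)^2$ is exactly what is needed to make both the constant term and the $X^1$-coefficient factor out as $eP'(a)^2$, the reduction to (ii) is immediate, and the remaining higher-order terms carry an automatic extra factor of $e$.
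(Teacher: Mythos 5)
Your proof is correct and follows essentially the same route as the paper: the same set of implications, with the key step (ii)$\Rightarrow$(iv) obtained by exactly the paper's rescaling $Y=eP'(a)X$ of the Taylor expansion at $a$, so that the hypothesis $P(a)=eP'(a)^2$ lets $eP'(a)^2$ factor out and leaves a polynomial of the shape in (ii). The only difference is that you spell out the implications the paper dismisses as ``clear'' ((i)$\Rightarrow$(ii), (iv)$\Rightarrow$(i)) and the unit argument behind (ii)$\Leftrightarrow$(iii).
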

\begin{proof} (i)$\Rightarrow$(ii) is clear. For (ii)$\Leftrightarrow$ (iii): use that for $x\in R^\times$ and $y:= x^{-1}$,
$x$ is a zero in (ii) iff  $y$ is a zero in (iii). Now assume (ii) and let $P, a, e$ be as in the hypothesis of (iv).  
Let $x\in R$ and consider the expansion:
\begin{align*}
P(a+x)\ 	&=\ P(a)+P'(a)x+\sum_{i\geqslant 2} P_{(i)}(a)x^i \\
			&=\ eP'(a)^2 + P'(a)x+\sum_{i\geqslant 2} P_{(i)}(a)x^i.
\end{align*} 
Set $x=eP'(a)y$ where $y\in R$. Then
$$P(a+x)\ =\ eP'(a)^2 \left(1+y+\sum_{i\geqslant 2} ea_i y^i\right)$$
where the $a_i\in R$ do not depend on $y$. From (ii) we obtain $y\in R$ such that
$$1+y+\sum_{i\geqslant 2} ea_iy^i\ =\ 0.$$
This yields an element $b=a+x=a+eP'(a)y$ as required. This shows (ii)~$\Rightarrow$~(iv), and (iv)~$\Rightarrow$~(i) is clear.
\end{proof}

\begin{lemma}\label{he2} Suppose every element of $I$ is nilpotent. Then $(R,I)$ is henselian.
\end{lemma}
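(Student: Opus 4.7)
The plan is to reduce to criterion (ii) of Lemma \ref{he1} via a Newton iteration that terminates because of nilpotency.

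First I would verify $1 + I \subseteq R^\times$, which is forced by the hypothesis: if $\epsilon \in I$ satisfies $\epsilon^n = 0$, then the truncated geometric sum $\sum_{k=0}^{n-1}(-\epsilon)^k$ is an explicit inverse to $1 + \epsilon$. So $I$ lies in the Jacobson radical of $R$, and the standing hypothesis of Lemma \ref{he1} is in force.

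To check criterion (ii), I would fix a polynomial $P(X) = 1 + X + ea_2 X^2 + \cdots + ea_n X^n$ with $e \in I$ (hence $e^N = 0$ for some $N$) and produce a zero by Newton iteration starting at $b_0 := -1$. Observe $P(b_0) = \sum_{i \geqslant 2} ea_i(-1)^i \in eR$. Inductively, given $b_k \in R$ with $P(b_k) \in e^{2^k}R$, I would note that $P'(b_k) \in 1 + I \subseteq R^\times$, so $b_{k+1} := b_k - P(b_k)/P'(b_k)$ is well-defined and $b_{k+1} - b_k \in P(b_k) R$. The universal Taylor identity $P(X+Y) = P(X) + P'(X)Y + Y^2 Q(X,Y)$ for some $Q \in R[X,Y]$ then gives
$$P(b_{k+1}) \ =\ P(b_k) + P'(b_k)(b_{k+1}-b_k) + (b_{k+1}-b_k)^2 Q(b_k,b_{k+1}-b_k)\ \in\ (b_{k+1}-b_k)^2 R \ \subseteq\ P(b_k)^2 R \ \subseteq\ e^{2^{k+1}} R.$$
Choosing $k$ with $2^k \geqslant N$ yields $P(b_k) = 0$, producing the required zero of $P$ in $-1 + eR \subseteq -1 + I$.

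I do not foresee a real obstacle: the only point to watch is the quadratic improvement $e^{2^k} \to e^{2^{k+1}}$, which rests on a purely formal Taylor expansion valid over any ring together with $P'(b_k)$ being a unit, itself immediate from the first paragraph. The finite nilpotency of $e$ is exactly what makes the iteration halt after $\lceil \log_2 N \rceil$ steps rather than merely converge in some adic topology.
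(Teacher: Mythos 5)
Your proof is correct. You verify $1+I\subseteq R^\times$ directly from nilpotency, and your Newton iteration is sound: the universal Taylor identity $P(X+Y)=P(X)+P'(X)Y+Y^2Q(X,Y)$ together with $b_{k+1}-b_k=-P(b_k)/P'(b_k)$ does give $P(b_{k+1})\in P(b_k)^2R\subseteq e^{2^{k+1}}R$, the derivative stays in $1+eR\subseteq 1+I\subseteq R^\times$ throughout, and nilpotency of $e$ halts the iteration after finitely many steps, so criterion (ii) of Lemma~\ref{he1} is satisfied and $(R,I)$ is henselian.

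Your route differs in execution from the paper's, though both amount to a quadratic-improvement argument feeding into Lemma~\ref{he1}(ii). The paper argues by induction on the nilpotency index $m$ of $e$: it proves the slightly more general statement that $a+X+\sum_{i=2}^n ea_iX^i$ has a zero (the general constant term $a$ is needed because the induction changes it), substitutes $X=-a+eY$, factors out $e$, and lands on a polynomial of the same shape whose higher coefficients lie in $e^2R$ with $(e^2)^{m-1}=0$. That is, the paper performs one Newton-type step at the level of the polynomial via a change of variable, then recurses; it thereby avoids explicit division by $P'$ and the Taylor-remainder bookkeeping, at the cost of generalizing the statement being proved. You instead iterate on elements with the standard Hensel--Newton scheme, keeping the polynomial fixed (so constant term $1$ suffices) and tracking the error exponent $e^{2^k}$ explicitly; this is a touch more computational but arguably more transparent about why nilpotency makes the process terminate exactly, and it produces the zero in $-1+eR$ with no extra work.
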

\begin{proof} Consider a polynomial $P(X)=a+X+\sum_{i=2}^nea_iX^i$ where $n\geqslant 2$ and
$$a,e, a_2,\dots, a_n\in R, \quad e^m=0,\ m\geqslant 1.$$ 
By induction on $m$ we show that $P(X)$ has a zero in $R$. The case $m=1$ being trivial, let $m\geqslant 2$. Then 
$$P(-a+eY)\ =\ a+ (-a+eY) + \sum_{i=2}^nea_i(-a+eY)^i\ =\ 
e\big(Y+\sum_{i=2}^na_i(-a+eY)^i\big).$$
An easy computation gives $f, b, b_2,\dots, b_n\in R$ such that
$$Y+\sum_{i=2}^na_i(-a+eY)^i\ =\ b+ Y(1+ef) + \sum_{i=2}^ne^2b_iY^i.$$ 
Now use that $1+ef\in R^\times$ and $(e^2)^{m-1}=0$. 
\end{proof}

\begin{lemma}\label{he3} Let $J$ be an ideal of $R$ with $I\subseteq J$. Then the following are equivalent:
\begin{enumerate}
\item[(i)] $(R,I)$ and $(R/I, J/I)$ are henselian;
\item[(ii)] $(R,J)$ is henselian.
\end{enumerate}
\end{lemma}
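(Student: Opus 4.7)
The plan is to prove the equivalence by handling each direction separately, using the alternative characterizations already established in Lemma~\ref{he1}.

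For (ii)$\Rightarrow$(i), I would first verify that $(R,I)$ is henselian by invoking criterion (iv) of Lemma~\ref{he1}: given $P\in R[X]$, $a\in R$ and $e\in I$ with $P(a)=eP'(a)^2$, the element $e$ also lies in $J$, so $(R,J)$-henselianity directly produces the required $b$ with $P(b)=0$ and $b-a\in eP'(a)R\subseteq I$. For the henselianity of $(R/I,J/I)$, note first that $J/I$ sits in the Jacobson radical of $R/I$ (it is contained in the image of the Jacobson radical of $R$). Then, given $\bar P\in (R/I)[X]$ and $\bar a\in R/I$ with $\bar P(\bar a)\in J/I$ and $\bar P'(\bar a)\in (R/I)^\times$, lift $\bar P$ and $\bar a$ to $P\in R[X]$ and $a\in R$. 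The hypothesis $\bar P(\bar a)\in J/I$ together with $I\subseteq J$ gives $P(a)\in J$; and $\bar P'(\bar a)\in (R/I)^\times$ lifts to $P'(a)\in R^\times$ via the standard fact, recalled in the excerpt, that units mod a Jacobson-radical ideal lift to units. Henselianity of $(R,J)$ yields $b\in R$ with $P(b)=0$ and $b-a\in J$, and reducing mod $I$ finishes this direction.

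For (i)$\Rightarrow$(ii), I would first check $J\subseteq\operatorname{Jac}(R)$: for $x\in J$, its image in $R/I$ lies in $J/I\subseteq\operatorname{Jac}(R/I)$, so $1+x$ is a unit modulo $I$, and since $I\subseteq\operatorname{Jac}(R)$ we conclude $1+x\in R^\times$. Now for the Newton step, take $P\in R[X]$ and $a\in R$ with $P(a)\in J$ and $P'(a)\in R^\times$. Reduce modulo $I$ and apply henselianity of $(R/I,J/I)$ to obtain $\bar b\in R/I$ with $\bar P(\bar b)=0$ and $\bar a-\bar b\in J/I$. Lift $\bar b$ to $b_0\in R$; then $P(b_0)\in I$. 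The key verification is that $P'(b_0)\in R^\times$: since $\bar b-\bar a\in J/I$ and this ideal lies in $\operatorname{Jac}(R/I)$, the difference $\bar P'(\bar b)-\bar P'(\bar a)$ lies in $J/I$, so $\bar P'(\bar b)$ is still a unit in $R/I$, which lifts (as above) to $P'(b_0)\in R^\times$. A final application of $(R,I)$-henselianity at $(P,b_0)$ produces $b\in R$ with $P(b)=0$ and $b-b_0\in I\subseteq J$, and then $b-a=(b-b_0)+(b_0-a)\in J$ as required.

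The main bookkeeping obstacle is precisely this lifting step: ensuring that the derivative remains a unit after moving from $\bar a$ to the approximate root $\bar b$, and then again after lifting back to $R$. Both rest on the single mechanism that units modulo an ideal contained in the Jacobson radical are units, applied twice (once for $I\subseteq\operatorname{Jac}(R)$ and once for $J/I\subseteq\operatorname{Jac}(R/I)$). Everything else is a straightforward chase of ideal containments using $I\subseteq J$.
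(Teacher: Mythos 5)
Your proof is correct and takes essentially the same route as the paper: your (i)$\Rightarrow$(ii) argument is the paper's verbatim (reduce mod $I$ to get an approximate root $b_0$, observe $P'(b_0)-P'(a)\in J$ so the derivative stays a unit, then refine with $(R,I)$-henselianity and add the two increments in $J$). For (ii)$\Rightarrow$(i) you merely flesh out, via Lemma~\ref{he1}(iv) for $(R,I)$ and a direct lift of $\bar P,\bar a$ for $(R/I,J/I)$, what the paper dispatches in one line after the unit-condition remark; there is no gap.
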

\begin{proof} The condition $1+J\subseteq R^\times$ is easily seen to be equivalent to the conjunction of
$1+I\subseteq R^\times$ and $1+(J/I)\subseteq (R/I)^\times$. This gives (ii)$\Rightarrow$(i).
Now assume (i), and let $P(X)\in R[X]$ and $a\in R$ with $P(a)\in J,\ P'(a)\in R^\times$.
Working modulo $I$ this gives $b\in R$ such that $P(b)\in I$ and $a-b\in J$. Hence
$P'(b)-P'(a)\in J$, and thus $P'(b)\in R^\times$, giving $c\in R$ with
$P(c)=0$ and $b-c\in I$. Hence $a-c\in J$. 
\end{proof}

\begin{corollary} Suppose $(R,I)$ is henselian and $J$ is an ideal of $R$ contained in the nilradical $\sqrt{I}$ of $I$. Then $(R,J)$ is henselian. 
\end{corollary}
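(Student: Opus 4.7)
The plan is to first prove that $(R, \sqrt{I})$ is henselian, and then deduce henselianity of $(R, J)$ from the inclusion $J \subseteq \sqrt{I}$.

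For the first step, I would apply Lemma~\ref{he3} to the pair of ideals $I \subseteq \sqrt{I}$. Since $(R, I)$ is already henselian by hypothesis, it suffices to verify that $(R/I, \sqrt{I}/I)$ is henselian. Every element of $\sqrt{I}/I$ is the image of some $x \in \sqrt{I}$, and $x^n \in I$ for some $n$ means this image is nilpotent in $R/I$. Hence every element of $\sqrt{I}/I$ is nilpotent, and Lemma~\ref{he2} gives henselianity of $(R/I, \sqrt{I}/I)$, whence Lemma~\ref{he3} yields henselianity of $(R, \sqrt{I})$.

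For the second step, I would verify characterization (iv) of Lemma~\ref{he1} for $(R, J)$. The condition $1 + J \subseteq R^\times$ is immediate from $J \subseteq \sqrt{I}$ and $1 + \sqrt{I} \subseteq R^\times$. Now suppose $P(X) \in R[X]$, $a \in R$, and $e \in J$ satisfy $P(a) = e P'(a)^2$. Since $e \in J \subseteq \sqrt{I}$, characterization (iv) applied to the henselian pair $(R, \sqrt{I})$ produces $b \in R$ with $P(b) = 0$ and $b - a \in e P'(a) R$, which is exactly the conclusion that (iv) demands for $(R, J)$.

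The crucial observation is that condition (iv) of Lemma~\ref{he1} is local in the chosen element $e$: the distance $b - a$ is controlled by $e P'(a) R$, so henselianity transfers freely from a larger ideal to any sub-ideal once the larger ideal is known to work. Given this, there is no real obstacle beyond assembling Lemmas~\ref{he2} and~\ref{he3}.
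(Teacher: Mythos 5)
Your proof is correct and follows essentially the same route as the paper: Lemmas~\ref{he2} and~\ref{he3} give henselianity of $(R,\sqrt{I})$, and then one descends to the sub-ideal $J$. The only difference is that you make explicit, via characterization (iv) of Lemma~\ref{he1}, the descent step from $\sqrt{I}$ to $J\subseteq\sqrt{I}$ (which is not immediate from the raw definition because of the requirement $a-b\in J$), a point the paper's one-line proof leaves implicit.
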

\begin{proof} Every element of $\sqrt{I}/I$ is nilpotent in $R/I$, so by Lemmas~\ref{he2} and ~\ref{he3} the pair $(R,\sqrt{I})$ is henselian, and so is $(R,J)$. \end{proof}

\noindent
Recall also that a local ring $R$ is said to be {\em henselian\/} if the pair $(R,\fm)$ is henselian, where $\fm$ is the maximal ideal of $R$.

\section{Complete Ultranormed Rings and Restricted Power Series}\label{ASR}

\noindent
We introduce here the restricted power series that will define operations on the valuation rings considered in later sections,
where we develop an AKE-theory for these valuation rings with these extra operations. The coefficients of these
restricted power series will be from a fixed coefficient ring $A$  which is complete with respect to an ultranorm.
We begin with defining ultranorms.

\subsection*{Ultranormed abelian groups} Let $A$ be an additively written abelian group. An {\em ultranorm\/} on $A$ is a function $a\mapsto |a|: A \to \R^{\geqslant}$ such that for all $a,b\in A$, \begin{itemize}
\item $|a|=0\Leftrightarrow a=0$;
\item $|-a|=|a|$;
\item $|a+b|\leqslant \max(|a|,|b|)$.
\end{itemize}
Let $A$ be equipped with the ultranorm $|\cdot|$ on $A$. We make $A$ a metric space
with metric $(a,b)\mapsto |a-b|$. Then $A$ is a topological group with respect
to the topology on $A$ induced by this metric. The ultranorm $|\cdot|: A \to \R$ and the group operations $-: A \to A$ and $+: A\times A \to A$ are uniformly continuous. 

{\em In the rest of this subsection $A$ is complete with respect to its ultranorm, that is, complete with respect to the metric above.} We now discuss convergence of series with terms in $A$.  Let $(a_i)=(a_i)_{i\in I}$ be a family in $A$ (that is, all $a_i\in A$). We say $(a_i)$ is {\em summable\/} if for every $\epsilon$ we have $|a_i|<\epsilon$ for all but finitely many $i\in I$.
In that case the set of $i\in I$ with $a_i\ne 0$ is countable, and there is a unique $a\in A$ such that for every $\epsilon\in \R^{>}$ there is a finite
$I(\epsilon)\subseteq I$ with $|a-\sum_{i\in J} a_i|<\epsilon$ for all finite
$J\subseteq I$ with $I(\epsilon)\subseteq J$; this $a$ is then denoted by
$\sum_{i\in I} a_i$ (or $\sum_i a_i$ if $I$ is understood from the context).
Instead of saying that $(a_i)$ is summable we also say that
$\sum_i a_i$ exists, or that $\sum_i a_i$ converges. Of course, if $I$ is finite, then $\sum_i a_i$ exists and is the usual sum.  Here are simple rules, used throughout,
for dealing with such (possibly infinite) sums, where $(a_i)_{i\in I}$ is
a summable family in $A$: \begin{itemize}
\item if $c\in \R^{>}$ and $|a_i|\leqslant c$ for all $i$, then $|\sum_i a_i|\leqslant c$;
\item $(-a_i)$ is summable with $\sum_i -a_i=-\sum_i a_i$; 
\item if $(b_i)_{i\in I}$ is also a summable family in $A$, then so is 
$(a_i+b_i)$ with $$\sum_i a_i+b_i\ =\ \sum_i a_i + \sum_i b_i;$$
\item if $i\mapsto \lambda(i): I\to \Lambda$ is a bijection and $(b_{\lambda})_{\lambda\in \Lambda}$ is a family in $A$ with $a_i=b_{\lambda(i)}$ for all $i\in I$, then $\sum_{\lambda} b_{\lambda}$ exists and equals $\sum_i a_i$;
\item if the family $(a_j)_{j\in J}$ in $A$ is also summable with $I\cap J=\emptyset$, then $(a_k)_{k\in I\cup J}$ is summable with $\sum_k a_k=\sum_i a_i + \sum_j a_j$; 
\item if $I=\dot{\bigcup}_{\lambda\in \Lambda} I_{\lambda}$ (disjoint union), then $\sum_{i\in I_{\lambda}}a_i$ exists for all $\lambda\in \Lambda$, and $\sum_{\lambda}\big(\sum_{i\in I_{\lambda}}a_i\big)$ exists and equals $\sum_{i\in I} a_i$. 
\end{itemize} 
Suppose $E$ is a closed subgroup of $A$. Then
$$|a+E|\ :=\ \inf_{e\in E}|a+e|\qquad(a\in A)$$
yields an ultranorm on the quotient group $A/E$ with respect to which $A/E$ is complete; 
we call it the {\em quotient norm of $A/E$}. If the family $(a_i)$ in $A$ is summable, then so is the family
$(a_i+E)$ in $A/E$ with its quotient norm, and $$\big(\sum_i a_i\big)+E\ =\ \sum_i (a_i+E).$$

\subsection*{Ultranormed rings}   
Let $A$ be a ring. An {\em ultranorm on $A$} is a function $$a\mapsto |a|\ :\ A \to \R^{\geqslant}$$ such that for all $a,b\in A$, \begin{itemize}
\item $|a|=0\Leftrightarrow a=0$, $|1|=|-1|=1$;
\item $|a+b|\leqslant \max(|a|,|b|)$;
\item $|ab|\leqslant |a|\cdot |b|$.
\end{itemize}
Let $A$ be equipped with the ultranorm $|\cdot|$ on $A$. Then $|-a|=|a|$ for all
$a\in A$, so $|\cdot|$ is an ultranorm on the underlying additive group of $A$.
The function $\cdot: A\times A \to A$ is continuous.
If $A$ is complete with respect to its ultranorm and $(a_i)_{i\in I}$ and $(b_j)_{j\in J}$ are summable families in $A$, then $(a_ib_j)_{(i,j)\in I\times J}$ is summable, with $(\sum_i a_i)(\sum_j b_j)=\sum_{(i,j)}a_ib_j$.  

\medskip\noindent
{\bf From now on in this paper $A$ is a ring with $1\ne 0$, equipped with an ultranorm 
$|\cdot|$ such that $|a|\leqslant 1$ for all $a\in A$, and $A$ is complete with respect to its ultranorm}.

\medskip\noindent
It follows that if $a\in A$ and $|a|<1$, then $\sum_n a^n$ exists, with $$(1-a)\sum_n a^n\ =\ 1.  $$
We have the ideal $\smallo(A):=\{a\in A:\ |a|<1\}$, and set $\overline{A}:= A/\smallo(A)$, with the canonical ring morphism
$a\mapsto \overline{a}=a+\smallo(A): A\to \overline{A}$. 
We saw that $1+\smallo(A)$ consists entirely of units of $A$. Thus
$a\in A$ is a unit of $A$ iff $\overline{a}$ is a unit of $\overline{A}$.
In particular, $\smallo(A)$ is contained in the Jacobson radical of $A$. 
The completeness assumption now yields Hensel's Lemma as stated 
in \cite[Section 2.2]{Lou}: the pair $(A, \smallo(A))$ is henselian. It follows that $(A, \sqrt{\smallo(A)})$ is also henselian.

\subsection*{Passing to $A/I$}
Suppose the proper ideal $I$ of $A$ is closed. Then the quotient norm of
the quotient group $A/I$ is an ultranorm on the ring $A/I$.  Equipping $A/I$ with the quotient norm, the canonical map $A\to A/I$ is norm decreasing, and $\smallo(A/I)$ is the image of $\smallo(A)$ under this canonical map. 

If $A$ is noetherian, then by \cite[Theorem 8.14]{Mat} every ideal of $A$ is closed.

\subsection*{Restricted power series over an ultranormed ring}
For distinct indeterminates $Y_1,\dots, Y_n$ we let $A\<Y\>=A\<Y_1,\dots, Y_n\>$ be the subalgebra of the $A$-algebra $A[[Y_1,\dots, Y_n]]$ consisting of the series $\sum_{\nu}a_{\nu}Y^\nu$ with
$a_{\nu}\to 0$ as $|\nu|\to \infty$. Here and below, when using an expression like
$\sum_{\nu}a_{\nu}Y^{\nu}$ for a series in $A\<Y\>$ it is assumed that $a_{\nu}\to 0$ as $|\nu|\to \infty$.
We extend $|\cdot|$ on $A$ to an ultranorm on the ring $A\<Y\>$ by
$$ |\sum_{\nu}a_{\nu}Y^{\nu}|\ :=\ \max_{\nu}|a_{\nu}|,$$
so with respect to this ultranorm, $A\<Y\>$ is complete and $A[Y]$ is dense in it. Note that for
$a_1,\dots, a_n\in \smallo(A)$ we have $|a_1Y_1+\cdots + a_nY_n|<1$, so $1+a_1Y_1+ \cdots + a_nY_n$ is a unit of the ring $A\<Y\>$.

For $f=\sum_{\nu} a_\nu Y^\nu\in A\<Y\>$, the family $(a_{\nu}Y^{\nu})$ in $A\<Y\>$ is in fact summable with sum $f$.  
If $|ab|=|a|\cdot |b|$ for all $a,b\in A$, then $|fg|=|f|\cdot |g|$ for all $f,g\in A\<Y\>$. For any $y=(y_1,\dots, y_n)\in A^n$ we have the evaluation map 
$$f=\sum_{\nu}a_{\nu}Y^\nu\mapsto f(y):= \sum_{\nu}a_{\nu}y^{\nu}\ :\  A\<Y\>\to A,$$ which is an $A$-algebra morphism with $|f(y)|\leqslant |f|$ for all $y\in A^n$. 
If $(f_i)_{i\in I}$ is a summable family in $A\<Y\>$ and $y\in A^n$, then
$\sum_i f_i(y)$ exists in $A$ and equals $(\sum_i f_i)(y)$.
  The obvious inclusion of $A[[Y_1,\dots, Y_m]]$ in
$A[[Y_1,\dots, Y_n]]$ for $m\leqslant n$ restricts to an inclusion of
$A\<Y_1,\dots, Y_m\>$ in $A\<Y_1,\dots, Y_n\>$.  
For  $f=f(Y)\in A\<Y\>$ we have unique $f_j\in A\<Y_1,\dots, Y_j\>$ for $j=0,\dots, n$ such that
$$f(Y)\ =\ f_0+ Y_1f_1+ \cdots + Y_nf_n.$$

\subsection*{Substitution}\label{substitution} Besides $Y=(Y_1,\dots, Y_n)$, let $X=(X_1,\dots, X_m)$ also be a tuple of distinct indeterminates. Let $f=\sum_{\mu}a_{\mu}X^\mu\in A\<X\>$
with $\mu=(\mu_1,\dots, \mu_m)$ ranging over $\N^m$, and $g_1,\dots, g_m\in A\<Y\>$. Then 
$|a_{\mu}g_1^{\mu_1}\cdots g_m^{\mu_m}|\leqslant |a_{\mu}|\to 0 \text{ as }|\mu|\to \infty$, so
$$f(g_1,\dots, g_m)\ :=\ \sum_{\mu} a_{\mu}g_1^{\mu_1}\cdots g_m^{\mu_m}\in A\<Y\>,$$
and for fixed $g=(g_1,\dots, g_m)\in A\<Y\>^m$ the map $f\mapsto f(g): A\<X\>\to A\<Y\>$ is an $A$-algebra morphism with $|f(g)|\leqslant |f|$ and 
$f(g)(y)=f(g(y))$ for $y\in A^n$. Moreover, if $(f_i)$ is a summable family in $A\<X\>$ and $g\in A\<Y\>^m$, then $\sum_i f_i(g)$ exists in $A\<Y\>$ and equals $(\sum_i f_i)(g)$. It follows that the above kind of composition is associative in the following sense: let $Z=(Z_1,\dots, Z_p)$ be a third tuple
of distinct indeterminates, $p\in \N$, and $h=(h_1,\dots, h_n)\in A\<Z\>^n$.
Then $$\big(f(g)\big)(h)\ =\ f\big(g_1(h),\dots,g_m(h)\big) \text{ in }A\<Z\>.$$ 

\medskip\noindent
From now on $X_1, X_2, X_3,\dots, Y_1,Y_2,Y_3,\dots$ (two infinite sequences) are distinct indeterminates, and unless specified otherwise, 
$$X:=(X_1,\dots, X_m), \qquad Y:= (Y_1,\dots, Y_n).$$
 The natural $A[[X]]$-algebra isomorphism
$A[[X]][[Y]]\to A[[X,Y]]$ restricts to
the norm preserving $A\<X\>$-algebra isomorphism $A\<X\>\<Y\>\to A\<X,Y\>$ given by
 $$\sum_{\nu} f_{\nu}Y^\nu \mapsto \sum_{\nu}f_{\nu}Y^\nu$$
where $f_{\nu}\in A\<X\>$ for all $\nu$ and $f_{\nu}\to 0$ as $|\nu|\to \infty$, with righthand and lefthand side interpreted naturally in $A\<X\>\<Y\>$ and $A\<X,Y\>$ respectively. We identify $A\<X\>\<Y\>$ and $A\<X,Y\>$ via this isomorphism. 

\subsection*{Polynomials as restricted power series}
Let $p=p(T)\in A[T]$ be a monic polynomial of degree $d\geqslant 1$ over $A$, so $|p|=1$ as an element of  $A\<T\>$. 

\begin{lemma}\label{pff} For all $f\in A\<T\>$ we have $|pf|=|f|$. Moreover,  $pA\<T\>$ is a proper ideal of $A\<T\>$ and is closed in $A\<T\>$.
\end{lemma}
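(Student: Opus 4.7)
The key identity $|pf|=|f|$ is an ultrametric phenomenon coming from the fact that $p$ is monic, hence has leading coefficient of norm $1$. The inequality $|pf|\le |f|$ is automatic from $|p|=1$. For the reverse inequality I would write $f=\sum_\nu c_\nu T^\nu$ with $M:=|f|=\max_\nu|c_\nu|$, dispose of the trivial case $f=0$, and then, since $c_\nu\to 0$, pick the \emph{largest} index $\mu$ with $|c_\mu|=M$. Writing $p=T^d+\sum_{i=0}^{d-1}a_iT^i$ with $|a_i|\le 1$, the coefficient of $T^{\mu+d}$ in $pf$ equals
\[
c_\mu\ +\ \sum_{i=0}^{d-1} a_i\, c_{\mu+d-i}.
\]
For $i=0,\dots,d-1$ one has $\mu+d-i>\mu$, so by maximality of $\mu$ we get $|c_{\mu+d-i}|<M$, hence $|a_ic_{\mu+d-i}|<M$. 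The ultrametric inequality then forces the displayed coefficient to have norm exactly $M$, so $|pf|\ge M=|f|$.

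\textbf{Properness.} To see that $pA\<T\>$ is a proper ideal I would pass to the quotient $A\<T\>/\smallo(A\<T\>)$. Because every $f=\sum c_\nu T^\nu\in A\<T\>$ satisfies $c_\nu\to 0$, only finitely many $c_\nu$ have $|c_\nu|=1$, so the natural reduction identifies this quotient with the polynomial ring $\overline{A}[T]$. The image $\overline p$ of $p$ is monic of degree $d\ge 1$ in $\overline{A}[T]$, and a monic polynomial of positive degree is never a unit (comparing leading terms of $\overline p\overline f$ with $\overline f=\sum\overline{c_i}T^i$ of top degree $k$ yields a nonzero $T^{d+k}$ term). Since units of $A\<T\>$ reduce to units of $\overline{A}[T]$, $p$ is not a unit of $A\<T\>$.

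\textbf{Closedness.} This is the easy part once the norm identity is in hand. Suppose $(pf_n)$ converges in $A\<T\>$ to some $g$. Then $(pf_n)$ is Cauchy, and by $|p(f_n-f_m)|=|f_n-f_m|$ so is $(f_n)$. By completeness of $A\<T\>$, $f_n\to f$ for some $f$, and by continuity of multiplication $pf_n\to pf$, whence $g=pf\in pA\<T\>$.

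The only step that requires even mild care is pinning down the maximal-index argument in the first part; the rest is formal once one has $|pf|=|f|$ and the identification of the residue ring with $\overline{A}[T]$.
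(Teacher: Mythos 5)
Your proof is correct and uses essentially the same argument as the paper: for the key identity $|pf|=|f|$ the paper likewise picks the largest index of maximal norm and observes that the coefficient of $T^{d+n}$ in $pf$ has that norm, the paper leaving properness and closedness as "the rest follows easily." Your fill-ins for those two points (reduction modulo $\smallo(A\<T\>)$ to the polynomial ring $\overline{A}[T]$ for properness, and the Cauchy-sequence argument using $|p(f_n-f_m)|=|f_n-f_m|$ plus completeness for closedness) are the standard completions and are fine; the paper's intended shortcut for properness is just that the nonzero coefficient sits in degree $d+n\geqslant 1$, so $pf\ne 1$.
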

\begin{proof} For $f=\sum_n a_nT^n\in A[T]^{\ne}$, take $n$ maximal with $|a_n|=|f|$, and note that then
the coefficient of $T^{d+n}$ in $pf$ is $a_n+ b$ with $|b|< |a_n|$, so $|a_n+b|=|a_n|=|f|$. 
The rest follows easily. 
\end{proof}

\begin{lemma} \label{fqpr} Let $f\in A\<T\>$. Then there are unique $q\in A\<T\>$ and $r\in A[T]$ with $\deg r < d$ such that $f=qp+r$; moreover, $|f|=\max(|q|, |r|)$ for these $q,r$.
\end{lemma}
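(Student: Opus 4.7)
The plan is to establish uniqueness first (which is a self-contained coefficient calculation), then prove existence plus the norm identity in two stages: first for polynomial inputs by a straightforward polynomial long-division induction, then for arbitrary restricted power series by density and continuity.

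For uniqueness, suppose $f=qp+r=q'p+r'$ are two such decompositions, and set $\tilde q := q-q' \in A\langle T\rangle$ and $\tilde r := r'-r \in A[T]$ with $\deg \tilde r < d$. Then $\tilde q p = \tilde r$. Write $\tilde q=\sum_n c_n T^n$ (with $c_n\to 0$) and $p=T^d + a_{d-1}T^{d-1}+\cdots+a_0$. Comparing coefficients of $T^k$ in $\tilde q p=\tilde r$ for $k\geqslant d$ gives the recursion
\[
c_{k-d}\ =\ -\sum_{j=0}^{d-1} a_j\, c_{k-j}\qquad (k\geqslant d).
\]
Since $c_n\to 0$, the supremum $M:=\sup_n |c_n|$ is attained at some largest index $n_0$. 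Setting $k=n_0+d$, each $c_{n_0+d-j}$ on the right has index $>n_0$, hence norm strictly less than $M$; since $|a_j|\leqslant 1$, one concludes $|c_{n_0}|<M$, a contradiction unless $M=0$. Thus $\tilde q=0$, whence $\tilde r=0$.

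For existence when $f\in A[T]$, I argue by induction on $\deg f$ that there are $q_f\in A[T]$ and $r_f\in A[T]$ with $\deg r_f<d$ satisfying $f=q_fp+r_f$ and $\max(|q_f|,|r_f|)\leqslant |f|$. The base case $\deg f<d$ is immediate ($q_f=0$, $r_f=f$). For $\deg f=D\geqslant d$ with leading coefficient $b$, consider $f':=f-bT^{D-d}p$; since $p$ is monic, $\deg f'<D$, and the ultranorm inequality gives $|f'|\leqslant |f|$. Apply the induction hypothesis to $f'$ and set $q_f:=bT^{D-d}+q_{f'}$, $r_f:=r_{f'}$; the stated bounds propagate. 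Combined with the reverse inequality $|f|\leqslant \max(|q_f p|,|r_f|)=\max(|q_f|,|r_f|)$ (using Lemma~\ref{pff}), this yields $|f|=\max(|q_f|,|r_f|)$.

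To pass from $A[T]$ to $A\langle T\rangle$, observe that $f\mapsto q_f$ and $f\mapsto r_f$ are $A$-linear maps $A[T]\to A\langle T\rangle$ and $A[T]\to A[T]_{<d}$ respectively, and are norm-nonincreasing by the previous paragraph. Since $A[T]$ is dense in $A\langle T\rangle$, $A\langle T\rangle$ is complete, and the finite free $A$-module $A[T]_{<d}$ is complete in the restriction of the ultranorm, both maps extend uniquely by continuity to norm-nonincreasing $A$-linear maps on $A\langle T\rangle$; denote the values by $q,r$. For approximations $f_n\in A[T]$ with $f_n\to f$, the identities $f_n=q_{f_n}p+r_{f_n}$ and $|f_n|=\max(|q_{f_n}|,|r_{f_n}|)$ pass to the limit using continuity of multiplication by $p$ and of the ultranorm, giving $f=qp+r$ and $|f|=\max(|q|,|r|)$.

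The only slightly delicate step is the existence claim: one might first try to iterate a Weierstrass-type contraction directly on $A\langle T\rangle$, but because $p$ is only assumed monic (not ``distinguished''), one has merely $|p-T^d|\leqslant 1$ and no contraction factor is available. The density-plus-continuity route neatly avoids this by exploiting that polynomial division by a monic polynomial is exact inside $A[T]$.
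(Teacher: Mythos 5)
Your proof is correct, and it is essentially the paper's argument in different packaging: the paper divides each monomial $T^n$ by $p$ inside $A[T]$ to get $T^n=q_np+r_n$ and then sums $q=\sum_n a_nq_n$, $r=\sum_n a_nr_n$, which is exactly your norm-nonincreasing linear division operators extended from the dense subspace $A[T]$ applied to partial sums, with the same use of Lemma~\ref{pff} for the reverse norm inequality. Your uniqueness recursion is likewise the same computation the paper invokes via the proof of Lemma~\ref{pff} (the coefficient of $T^{d+n_0}$ at the largest index of maximal norm cannot vanish), so there is no gap.
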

\begin{proof} For each $n$ we have $T^n=q_np+r_n$ with $q_n, r_n\in A[T]$ and $\deg r_n < d$. Thus for $f=\sum_n a_nT^n\in A\<T\>$ we have $f=qp+r$ with $q=\sum_n a_nq_n\in A\<T\>$ and $r=\sum_n a_nr_n\in A[T]$
with $\deg r < d$, and $|f|=\max(|q|, |r|)$ for these $q,r$. 

Uniqueness holds because for $g\in A\<T\>$ with $gp\in A[T]$, $\deg gp < d$, we have $g=0$ by the proof of 
Lemma~\ref{pff}. 
\end{proof}

\begin{corollary}
The composition $A[T]\to A\<T\>\to A\<T\>/pA\<T\>$, with inclusion on the left and the canonical map on the right, is surjective and has kernel $pA[T]$, so induces an $A$-algebra isomorphism $A[T]/pA[T]\to  A\<T\>/pA\<T\>$. 
\end{corollary}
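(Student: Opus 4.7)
\medskip
\noindent\textbf{Plan.} The assertion is essentially a formal consequence of Lemma~\ref{fqpr} together with classical polynomial division by the monic polynomial $p$. I set $\varphi \colon A[T]\to A\<T\>/pA\<T\>$ for the composition in question; since it is obviously an $A$-algebra morphism, the final isomorphism claim will follow once surjectivity and $\ker\varphi = pA[T]$ are established.

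\medskip\noindent
\emph{Surjectivity.} Given any $f\in A\<T\>$, apply Lemma~\ref{fqpr} to write $f = qp+r$ with $q\in A\<T\>$ and $r\in A[T]$ of degree $<d$. Then $f + pA\<T\> = r + pA\<T\> = \varphi(r)$, so $\varphi$ is surjective (one even gets representatives of degree $<d$).

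\medskip\noindent
\emph{Computation of the kernel.} The inclusion $pA[T]\subseteq \ker\varphi$ is trivial. For the reverse, let $r\in A[T]$ with $\varphi(r)=0$, so $r = gp$ for some $g\in A\<T\>$. Since $p$ is monic of degree $d$, ordinary polynomial division inside $A[T]$ gives $r = q_1p + r_1$ with $q_1, r_1\in A[T]$ and $\deg r_1 < d$. Subtracting, $r_1 = (g-q_1)p$. Now apply the uniqueness clause of Lemma~\ref{fqpr} to the element $r_1\in A\<T\>$: the decompositions
\[
r_1\ =\ 0\cdot p + r_1\ =\ (g-q_1)p + 0
\]
both satisfy the degree condition on the remainder, so $g-q_1 = 0$ and $r_1 = 0$. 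Hence $r = q_1 p\in pA[T]$, and $\ker\varphi = pA[T]$.

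\medskip\noindent
\emph{Conclusion.} By the first isomorphism theorem, $\varphi$ induces an $A$-algebra isomorphism $A[T]/pA[T] \xrightarrow{\sim} A\<T\>/pA\<T\>$. There is no real obstacle here: the only subtle point is the kernel computation, where one has to rule out that an element of $A[T]$ could become divisible by $p$ only after enlarging to $A\<T\>$; this is precisely where the uniqueness in Lemma~\ref{fqpr} is used, after first reducing the degree by means of the monicity of $p$ inside $A[T]$.
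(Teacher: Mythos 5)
Your proof is correct and follows the paper's argument: surjectivity from Lemma~\ref{fqpr}, and the kernel computation from ordinary division by $p$ in $A[T]$ combined with the uniqueness clause of Lemma~\ref{fqpr} (indeed, applying uniqueness directly to $r = gp + 0$ versus $r = q_1p + r_1$ would shortcut your subtraction step, but the idea is the same).
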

\begin{proof} Lemma~\ref{fqpr} gives surjectivity. The uniqueness in that lemma and  division with remainder in $A[T]$ (by $p$)
yields kernel $pA[T]$. 
\end{proof}

\subsection*{Division with Remainder} Let $n\geqslant 1$, set $Y':=(Y_1,\dots, Y_{n-1})$. The inclusion  $A\<Y'\>[Y_n]\subseteq A\<Y'\>\<Y_n\>=A\<Y\>$ makes $A\<Y'\>[Y_n]$ a subring of $A\<Y\>$. 

\begin{lemma}\label{d1} Let $f\in A\<Y'\>[Y_n]$ be monic of degree $d\ge 1$ and $g\in A\<Y\>$. Then there are unique $q\in A\<Y\>$ and $r\in A\<Y'\>[Y_n]$ with $\deg_{Y_n} r < d$ such that $g=qf+r$. Moreover, $|g|=\max(|q|,|r|)$ for these $q,r$.
\end{lemma}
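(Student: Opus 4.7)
The plan is to reduce Lemma~\ref{d1} directly to Lemma~\ref{fqpr} (together with Lemma~\ref{pff}) by taking $A\langle Y'\rangle$ as the new base ring in place of $A$. First I would verify that $A\langle Y'\rangle$, equipped with its supremum ultranorm, satisfies the blanket running assumptions imposed on $A$: it is a nonzero ring (the coefficient ring $A$ embeds isometrically), its ultranorm is bounded by $1$ (since every coefficient of a restricted power series lies in $A$ and hence has norm $\leqslant 1$), and it is complete. Completeness was established earlier in the paper when $A\langle Y\rangle$ was introduced, applied here to the tuple $Y'$ of length $n-1$.

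Next I would invoke the $A\langle X\rangle\langle Y\rangle = A\langle X, Y\rangle$ identification (with the specific choice $X := Y'$ and $Y := Y_n$) that was set up, isometrically and as $A\langle X\rangle$-algebras, right before the subsection on polynomials as restricted power series. Under this identification, $g \in A\langle Y\rangle$ becomes an element of $A\langle Y'\rangle\langle Y_n\rangle$, and the polynomial $f \in A\langle Y'\rangle[Y_n]$ plays exactly the role of the monic $p(T) \in A[T]$ in Lemma~\ref{fqpr}, with the single indeterminate $T$ replaced by $Y_n$ and $A$ replaced by $A\langle Y'\rangle$.

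Applying Lemma~\ref{fqpr} in this new base ring then immediately yields unique $q \in A\langle Y'\rangle\langle Y_n\rangle = A\langle Y\rangle$ and $r \in A\langle Y'\rangle[Y_n]$ with $\deg_{Y_n} r < d$ such that $g = qf + r$. Lemma~\ref{pff}, likewise applied with the new base ring, gives $|qf| = |q|$, and together with the ultranorm inequality on the sum $g = qf + r$ this yields $|g| = \max(|q|, |r|)$ as in the conclusion of Lemma~\ref{fqpr}.

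There is no real obstacle here beyond bookkeeping: all the substantive work — the key identity $|pf| = |f|$ for monic $p$ and the inductive construction of the quotient and remainder from division of the monomials $T^n$ — was already carried out in the proofs of Lemmas~\ref{pff} and~\ref{fqpr}, and those proofs use only the hypotheses that have just been verified for $A\langle Y'\rangle$. The one thing worth flagging explicitly is that the identification $A\langle Y'\rangle\langle Y_n\rangle \cong A\langle Y\rangle$ is norm-preserving, which is exactly what allows the norm equality $|g| = \max(|q|, |r|)$ to transfer without loss.
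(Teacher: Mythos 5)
Your proof is correct and is exactly the paper's argument: Lemma~\ref{fqpr} applied with $A\<Y'\>$ in the role of $A$, using the norm-preserving identification $A\<Y'\>\<Y_n\>=A\<Y\>$. The extra verification of the running hypotheses for $A\<Y'\>$ is sound bookkeeping that the paper leaves implicit.
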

\begin{proof} This is Lemma~\ref{fqpr} applied to $A\<Y'\>$ in the role of $A$.
\end{proof}

\medskip\noindent
Consider
$A\<X,Y_1,\dots, Y_{j-1}\>[Y_j]$ likewise as a subring of $A\<X,Y\>$ for $j=1,\dots,n$. By a straightforward induction on $n$ the previous lemma gives:

\begin{lemma}\label{d2} Let $f_j\in A\<X,Y_1,\dots,Y_{j-1}\>[Y_j]$ be monic of degree $d_j$ in $Y_j$ for $j=1,\dots,n$. Then 
$$A\<X,Y\>\ =\ (f_1,\dots,f_n)A\<X,Y\> + \bigoplus_{(j_1,\dots, j_n)} A\<X\>Y_1^{j_1}\cdots Y_n^{j_n}$$
where $(j_1,\dots, j_n)$ ranges over the elements of $\N^n$ with
$j_1< d_1,\dots, j_n< d_n$.
\end{lemma}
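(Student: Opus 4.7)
The plan is induction on $n$, with the existence of the decomposition handled by the induction and the directness of the sum treated separately. For the existence statement at stage $n$, apply Lemma~\ref{d1} with $A\<X,Y_1,\dots,Y_{n-1}\>$ playing the role of the base ring $A$ and with $Y_n$ as the single indeterminate: by the iterative identification $A\<X,Y_1,\dots,Y_{n-1}\>=A\<X\>\<Y_1\>\cdots\<Y_{n-1}\>$ from the substitution subsection, this intermediate ring is a complete ultranormed ring with ultranorm bounded by $1$, so the hypotheses of Lemma~\ref{d1} are satisfied. Dividing any $g\in A\<X,Y\>$ by the monic polynomial $f_n$ in $Y_n$ produces $g=qf_n+r$ with $r=\sum_{j_n<d_n}r_{j_n}Y_n^{j_n}$ and $r_{j_n}\in A\<X,Y_1,\dots,Y_{n-1}\>$. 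For the base case $n=1$ this already gives the decomposition (with directness addressed below). For the inductive step, the hypothesis applied to each coefficient $r_{j_n}$ yields $a^{(j_n)}_{j_1,\dots,j_{n-1}}\in A\<X\>$ with
$$r_{j_n}\ \equiv\ \sum_{j_1<d_1,\dots,j_{n-1}<d_{n-1}} a^{(j_n)}_{j_1,\dots,j_{n-1}}\,Y_1^{j_1}\cdots Y_{n-1}^{j_{n-1}} \pmod{(f_1,\dots,f_{n-1})A\<X,Y_1,\dots,Y_{n-1}\>},$$
and since $(f_1,\dots,f_{n-1})A\<X,Y_1,\dots,Y_{n-1}\>\subseteq (f_1,\dots,f_n)A\<X,Y\>$, substituting back expresses $g$ as an element of $(f_1,\dots,f_n)A\<X,Y\>$ plus the desired $A\<X\>$-linear combination of the monomials $Y_1^{j_1}\cdots Y_n^{j_n}$ with $j_i<d_i$.

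For directness of the sum, no induction is required. Suppose $\sum a_{j_1,\dots,j_n}\,Y_1^{j_1}\cdots Y_n^{j_n}=0$ in $A\<X,Y\>$ with each $a_{j_1,\dots,j_n}\in A\<X\>$ and $j_i<d_i$. Expanding each coefficient as $a_{j_1,\dots,j_n}=\sum_\mu b^{(j_1,\dots,j_n)}_\mu X^\mu$ and invoking the uniqueness of the expansion of an element of $A\<X,Y\>$ as a restricted power series $\sum_{\mu,\nu}c_{\mu,\nu}X^\mu Y^\nu$ with coefficients in $A$ forces every $b^{(j_1,\dots,j_n)}_\mu$ to vanish, hence every $a_{j_1,\dots,j_n}=0$.

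There is no real obstacle here; the only subtlety is verifying that at each stage of the induction the intermediate ring $A\<X,Y_1,\dots,Y_{n-1}\>$ genuinely satisfies the ultranormed-ring hypotheses underlying Lemma~\ref{d1}, which follows from the norm-preserving identification $A\<X,Y_1,\dots,Y_{n-1}\>\cong A\<X\>\<Y_1\>\cdots\<Y_{n-1}\>$ and the standing assumption, recorded in the subsection on restricted power series over an ultranormed ring, that $A\<Z\>$ is complete with ultranorm bounded by $1$ whenever $A$ is. The remaining bookkeeping is purely mechanical.
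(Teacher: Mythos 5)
Your proof is correct and follows essentially the same route as the paper, which simply notes that the lemma follows from Lemma~\ref{d1} (division by a monic polynomial in the last variable, with $A\<X,Y_1,\dots,Y_{n-1}\>$ as base ring) by a straightforward induction on $n$; your treatment of directness via uniqueness of the coefficients of a restricted power series is the intended, and correct, justification of the $\bigoplus$.
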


\begin{corollary}\label{cord2} Let $m=n$ and $f(X)\in A\<X\>$. Then
$$f(X)-f(Y)\in (X_1-Y_1,\dots, X_n-Y_n)A\<X,Y\>.$$
\end{corollary}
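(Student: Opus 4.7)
The plan is to apply Lemma~\ref{d2} with the specific choice $f_j := Y_j - X_j$. Each $f_j$ does lie in $A\<X, Y_1,\ldots, Y_{j-1}\>[Y_j]$ and is monic of degree $d_j = 1$ in $Y_j$, so the hypotheses are met. Since every $d_j = 1$, the only tuple $(j_1,\ldots, j_n) \in \N^n$ with all $j_k < d_k$ is the zero tuple, so the direct sum on the right of Lemma~\ref{d2} collapses to the single summand $A\<X\>\cdot 1 = A\<X\>$. Hence
$$A\<X,Y\>\ =\ (Y_1 - X_1,\ldots, Y_n - X_n)\,A\<X,Y\>\ +\ A\<X\>.$$

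I then apply this decomposition to $f(X) - f(Y) \in A\<X,Y\>$, writing
$$f(X) - f(Y)\ =\ h + r, \qquad h \in (Y_1 - X_1,\ldots, Y_n - X_n)\,A\<X,Y\>,\ r \in A\<X\>.$$
It suffices to show $r = 0$, since $(Y_j - X_j)$ and $(X_j - Y_j)$ generate the same ideal.

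To force $r = 0$, I use the substitution formalism: via the identification $A\<X,Y\> = A\<X\>\<Y\>$ described at the end of the ``Substitution'' subsection, I regard $A\<X\>$ as a complete ultranormed ring of norm $\leqslant 1$ (it is, since all coefficients come from $A$), and invoke the evaluation $A\<X\>$-algebra morphism
$$\sigma : A\<X\>\<Y\> \longrightarrow A\<X\>, \qquad F(Y) \longmapsto F(X_1,\ldots, X_n),$$
well-defined because $X_1,\ldots, X_n \in A\<X\>$ satisfy $|X_i| \leqslant 1$. Under $\sigma$ each $Y_j - X_j$ goes to $0$, hence $\sigma(h) = 0$; $\sigma$ is the identity on $A\<X\>$, hence $\sigma(r) = r$; and $\sigma\bigl(f(Y)\bigr) = f(X)$ while $\sigma\bigl(f(X)\bigr) = f(X)$, so $\sigma\bigl(f(X) - f(Y)\bigr) = 0$. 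Combining, $r = 0$, and $f(X) - f(Y) = h$ lies in $(X_1 - Y_1,\ldots, X_n - Y_n)\,A\<X,Y\>$, as required.

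I do not anticipate a genuine obstacle: the argument is essentially bookkeeping of the special case $d_j = 1$ of Lemma~\ref{d2} combined with one substitution. The only thing to verify carefully is that the evaluation $\sigma$ really fits the framework of the ``Substitution'' subsection with $A\<X\>$ in place of the base ring $A$, which is immediate since $A\<X\>$ is complete and its ultranorm is bounded by $1$.
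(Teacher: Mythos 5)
Your proposal is correct and follows essentially the same route as the paper: both apply Lemma~\ref{d2} with the degree-one monic polynomials $Y_j-X_j$ to write $f(X)-f(Y)$ as an ideal element plus a remainder $r\in A\<X\>$, and then substitute $X_j$ for $Y_j$ to conclude $r=0$. Your only addition is to spell out that this substitution is the evaluation morphism of the Substitution subsection with $A\<X\>$ as base ring, which the paper leaves implicit.
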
 
\begin{proof} By Lemma~\ref{d2} we have $f(X)-f(Y) =\sum_{j=1}^n (X_j-Y_j)q_j + r$
with all $q_j$ in $A\<X,Y\>$ and $r\in A\<X\>$. Substituting $X_j$ for $Y_j$ gives
$0=r$.
\end{proof}

\noindent
We extend $a\mapsto \overline{a}: A\to \overline{A}$ to the ring morphism
$$f=\sum_{\nu} a_{\nu}Y^\nu\ \mapsto \overline{f}:= \sum_{\nu}\overline{a_{\nu}}Y^\nu\ :\ A\<Y\>\to \overline{A}[Y],$$
whose kernel is $\smallo(A\<Y\>):=\{f\in A\<Y\>:\ |f|<1\}$. Moreover,
$$\overline{f(g_1,\dots, g_m)}=\overline{f}\big(\overline{g_1},\dots, \overline{g_m}\big), \qquad (f\in A\<X\>,\ g_1,\dots, g_m\in A\<Y\>).$$

\medskip\noindent
For $d\in \N$, call $f\in A\<Y\>$
{\em regular in $Y_n$ of degree $d$} if 
$\overline{f}= f_0 + f_1Y_n + \cdots + f_dY_n^d$ with 
$f_0,\dots, f_{d}\in \overline{A}[Y']$ and $f_d$ a unit in $\overline{A}[Y']$.
We now extend Lemma~\ref{d1}:

\begin{proposition}[Weierstrass Division]\label{wd1} Suppose $f\in A\<Y\>$ is regular in $Y_n$ of degree $d$ and $g\in A\<Y\>$. Then there are $q\in A\<Y\>$ and $r\in A\<Y'\>[Y_n]$ with
$$g\ =\ qf + r, \qquad \deg_{Y_n}r\ <\ d, \qquad |g|=\max(|q|,|r|).$$
\end{proposition}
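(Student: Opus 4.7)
The plan is to reduce to Lemma \ref{d1} by a Banach-type fixed-point argument, in the spirit of the classical proof of Weierstrass Division in non-archimedean analysis.

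\textbf{First, reduction to $f = P + h$ with $P$ monic in $Y_n$ and $|h| < 1$.} In any series $\sum_{\nu} a_{\nu}(Y')^{\nu} \in A\<Y'\>$ only finitely many $a_{\nu}$ have $|a_{\nu}| = 1$, so the natural map $A\<Y'\> \to \overline{A}[Y']$ is surjective with kernel $\smallo(A\<Y'\>)$, which lies in the Jacobson radical of $A\<Y'\>$. Hence the unit $f_d \in \overline{A}[Y']^{\times}$ lifts to a unit $u \in A\<Y'\>^{\times}$, necessarily with $|u| = |u^{-1}| = 1$. A decomposition of $g$ with respect to $u^{-1}f$ converts to one with respect to $f$ by multiplying the quotient by $u^{-1}$, and the max-norm equality is preserved; so I may replace $f$ by $u^{-1}f$ and assume $\overline{f}$ is monic of degree $d$ in $Y_n$. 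Picking a monic lift $P \in A\<Y'\>[Y_n]$ of $\overline{f}$ of $Y_n$-degree $d$ and setting $h := f - P$, I have $|h| < 1$ and $|P| = |f| = 1$.

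\textbf{Fixed-point setup.} By Lemma \ref{d1}, each $\phi \in A\<Y\>$ has a unique decomposition $\phi = \pi(\phi)P + \rho(\phi)$ with $\pi(\phi) \in A\<Y\>$, $\rho(\phi) \in A\<Y'\>[Y_n]$ of $Y_n$-degree $< d$, and $|\phi| = \max(|\pi(\phi)|, |\rho(\phi)|)$; in particular the $A$-linear maps $\pi$ and $\rho$ are norm-decreasing. The target identity $g = q(P + h) + r$ rearranges to $q = \pi(g - qh)$ and $r = \rho(g - qh)$. In operator form, writing $M_h$ for multiplication by $h$,
\[
(I + \pi \circ M_h)(q)\ =\ \pi(g).
\]

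\textbf{Convergence and norms.} Since $|M_h(\phi)| \leq |h|\cdot|\phi|$ and $\pi$ is norm-decreasing, $|(\pi \circ M_h)^k(\phi)| \leq |h|^k|\phi|$ for all $k$; because $|h| < 1$, the Neumann series
\[
q\ :=\ \sum_{k=0}^{\infty} (-\pi \circ M_h)^k\bigl(\pi(g)\bigr)
\]
converges in $A\<Y\>$ and, by telescoping, solves $(I + \pi \circ M_h)(q) = \pi(g)$. Setting $r := \rho(g - qh)$ gives $qP + r = g - qh$, i.e., $qf + r = g$. From the series, $|q| \leq |\pi(g)| \leq |g|$, so $|qh| \leq |g|$ and $|r| \leq |g - qh| \leq |g|$; conversely $|g| = |qf + r| \leq \max(|qf|, |r|) \leq \max(|q|, |r|)$ using $|f| = 1$, giving the required equality.

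The only step requiring real care is the unit-lifting in the reduction, which relies on $A\<Y'\>$ itself fitting the complete-ultranormed-ring framework so that $\smallo(A\<Y'\>)$ lies in the Jacobson radical of $A\<Y'\>$ and units lift from $\overline{A}[Y']$. Once $f$ has been put in the form $P + h$ with $|h| < 1$, the rest of the argument is the standard Banach-contraction reduction of the Weierstrass problem to the monic polynomial case of Lemma \ref{d1}.
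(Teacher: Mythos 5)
Your proof is correct and is essentially the paper's argument: the paper also first multiplies $f$ by a unit of $A\<Y'\>$ to write $f=f_0+E$ with $f_0$ monic of degree $d$ and $|E|<1$, and then iterates the division of Lemma~\ref{d1} against the error term, its sequence $q_k,r_k$ being exactly the terms of your Neumann series $\sum_k(-\pi\circ M_h)^k(\pi(g))$. Packaging the iteration as a fixed-point/operator identity is only a cosmetic difference; the norm bookkeeping is the same.
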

\begin{proof} Multiplying $f$ by a unit of $A\<Y'\>$ we arrange that $\overline{f}\in \overline{A}[Y]$ is monic in $Y_n$ of degree $d$. Hence $f = f_0 +E$
where $f_0\in A\<Y'\>[Y_n]$ is monic of degree $d$ in $Y_n$
and $E\in A\<Y\>,\ |E|<1$. Now $g=q_0f_0+r_0$ with $q_0\in A\<Y\>$ and
$r_0\in A\<Y'\>[Y_n]$, $\deg_{Y_n}r_0 < d$ and $|g|=\max(|q_0|, |r_0|)$, so
$g=q_0f+r_0 + g_1$ with $g_1=-Eq_0$, and thus $|g_1|\leqslant |E||g|$. With $g_1$ in the role of $g$ and iterating:
\begin{align*}g\ &=\ q_0f+ r_0 + g_1,\quad \qquad g_1\ =\ -Eq_0, \qquad |g_1|\leqslant |E||g|,\\ 
  g_1\ &=\ q_1f+ r_1 + g_2,\quad \qquad g_2\ =\ -Eq_1, \qquad |g_2|\leqslant |E|^2|g|,\\
  \dots\ &=\ \dots\\
  \dots\ &=\ \dots\\
  g_k\ &=\ q_kf+ r_k + g_{k+1},\quad g_{k+1}\ =\ -Eq_k,\quad |g_{k+1}|\leqslant |E|^{k+1}|g|,\\
  \dots\ &=\ \dots
\end{align*}
where $q_k\in A\<Y\>,\ r_k\in A\<Y'\>[Y_n],\ \deg_{Y_n}r_k < d$ and 
$|g_k|=\max(|q_k|,|r_k|)$.  It follows that $g_k, q_k, r_k\to 0$ as $k\to \infty$. Thus we can add
the right and left-hand sides in the equalities above to obtain $g=qf+r$ where
$q:=\sum_k q_k\in A\<Y\>$ and $r:=\sum_k r_k\in A\<Y'\>[Y_n]$, $\deg_{Y_n}r < d$,
so $|g|=\max(|q|,|r|)$. 
\end{proof}

\begin{corollary}[Weierstrass Preparation]\label{wd2} Suppose $f\in A\<Y\>$ is regular in $Y_n$ of degree $d$. Then for some unit $u$ of $A\<Y\>$ we have:
$uf\in A\<Y'\>[Y_n]$, and $uf$ is monic of degree $d$ in $Y_n$.  
\end{corollary}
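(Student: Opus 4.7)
The plan is to deduce Weierstrass Preparation directly from Weierstrass Division (Proposition~\ref{wd1}) by dividing the monomial $Y_n^d$ by $f$, and then showing that the resulting quotient is already a unit of $A\<Y\>$.

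First, exactly as in the opening sentence of the proof of Proposition~\ref{wd1}, I would multiply $f$ by a suitable unit of $A\<Y'\>$ to arrange that $\overline{f}\in \overline{A}[Y]$ is monic in $Y_n$ of degree $d$; this is legitimate because $\overline{f}=f_0+\cdots+f_dY_n^d$ with $f_d\in \overline{A}[Y']^\times$, and any lift of $f_d^{-1}$ to $A\<Y'\>$ is a unit by henselianity of $(A\<Y'\>,\smallo(A\<Y'\>))$ (equivalently, since an element of $A\<Y'\>$ is a unit iff its image in $\overline{A\<Y'\>}=\overline{A}[Y']$ is a unit). Since the statement only asks for some unit $u$, this preliminary multiplication costs nothing.

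Next I would apply Proposition~\ref{wd1} with $g:=Y_n^d$. This produces $q\in A\<Y\>$ and $r\in A\<Y'\>[Y_n]$ with $\deg_{Y_n}r<d$ satisfying
\[
Y_n^d\ =\ qf + r, \qquad \max(|q|,|r|)\ =\ |Y_n^d|\ =\ 1.
\]
Rearranging gives $qf\ =\ Y_n^d-r$, which lies in $A\<Y'\>[Y_n]$ and is monic of degree $d$ in $Y_n$, as required. So it only remains to show that $q$ is a unit of $A\<Y\>$, and then $u:=q$ (composed with the initial unit from the first step) does the job.

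To see $q$ is a unit, I would reduce the equation $qf=Y_n^d-r$ modulo $\smallo(A)$, obtaining in $\overline{A}[Y]=\overline{A}[Y',Y_n]$ the identity
\[
\overline{q}\cdot \overline{f}\ =\ Y_n^d - \overline{r},
\]
where $\overline{f}$ is monic in $Y_n$ of degree $d$, and both $\overline{r}$ and hence $Y_n^d-\overline{r}$ have $Y_n$-degree exactly $d$ with leading coefficient $1$. Viewing everything as polynomials in $Y_n$ over $\overline{A}[Y']$, the $Y_n$-degree equation forces $\deg_{Y_n}\overline{q}=0$, and comparison of leading coefficients in $Y_n$ gives $\overline{q}=1$ in $\overline{A}[Y']$. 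Therefore $q\in 1+\smallo(A\<Y\>)\subseteq A\<Y\>^{\times}$, which finishes the proof. The only mild subtlety in the argument is this last unit check; everything else is a direct appeal to Proposition~\ref{wd1}.
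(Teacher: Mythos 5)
Your proof is correct and follows essentially the same route as the paper: divide $Y_n^d$ by $f$ using Proposition~\ref{wd1} and deduce from $Y_n^d-\overline{r}=\overline{q}\,\overline{f}$ in $\overline{A}[Y]$ that $q$ is a unit, so $u:=q$ works. The only difference is your harmless preliminary normalization making $\overline{f}$ monic (the paper instead argues directly with the unit leading coefficient $f_d$ and concludes $\overline{q}$ is a unit of $\overline{A}[Y']$ rather than $\overline{q}=1$), which changes nothing of substance.
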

\begin{proof} We have $Y_n^d=qf+r$ with $q\in A\<Y\>$ and $r\in A\<Y'\>[Y_n]$, $\deg_{Y_n}r < d$. Hence $Y_n^d-\overline{r}=\overline{q}\overline{f}$ in $\overline{A}[Y]$, so $\overline{q}$ is a unit of $\overline{A}[Y']$, hence $q$ is a unit of $A\<Y\>$, and thus $u:=q$ has the desired property.  
\end{proof}

\noindent
A somewhat twisted argument also gives uniqueness in the last two results: 

\begin{corollary}\label{wd3} Let $f\in A\<Y\>$ be regular in $Y_n$ of degree $d$. Then there is only one pair $(q,r)$ with $q\in A\<Y\>$ and $r\in A\<Y'\>[Y_n]$ with $g\ =\ qf + r$ and $\deg_{Y_n}r< d$. There is also only one unit $u$ of $A\<Y\>$
such that $uf\in A\<Y'\>[Y_n]$, and $uf$ is monic of degree $d$ in $Y_n$.  
\end{corollary}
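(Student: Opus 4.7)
The plan is to derive both uniqueness statements by bootstrapping off the existence part of Weierstrass preparation (Corollary~\ref{wd2}) together with the uniqueness already established for polynomial division by a monic polynomial (Lemma~\ref{d1}); this is the ``twist'' the paper advertises.

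\textbf{Step 1: uniqueness in Weierstrass division.} By linearity it suffices to show that if $Qf+R=0$ with $Q\in A\<Y\>$, $R\in A\<Y'\>[Y_n]$, and $\deg_{Y_n}R<d$, then $Q=0$ and $R=0$. Apply Weierstrass preparation (Corollary~\ref{wd2}) to obtain a unit $u$ of $A\<Y\>$ with $uf=p\in A\<Y'\>[Y_n]$ monic of degree $d$ in $Y_n$. Multiplying $Qf+R=0$ by $u$ and setting $Q':=Qu$ yields $Q'p + uR=0$. Since $p$ is a unit-times-$f$ and both $f$ and $p$ have the same leading $Y_n$-behavior, I instead multiply $Qf+R=0$ through by $u$ on the left to get $(Qu)f + uR = 0$; but $uf=p$, so I rewrite by substituting $f = u^{-1}p$: $Qu^{-1}p + R = 0$. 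Thus
\[
0 \ =\ (Qu^{-1})\,p + R \ =\ 0\cdot p + 0,
\]
and both $R$ and $0$ lie in $A\<Y'\>[Y_n]$ with $Y_n$-degree $<d$. By the uniqueness clause of Lemma~\ref{d1} applied to $p$ and $g=0$, we conclude $Qu^{-1}=0$ and $R=0$, hence $Q=0$.

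\textbf{Step 2: uniqueness in Weierstrass preparation.} Suppose $u,u'$ are units of $A\<Y\>$ with $uf$ and $u'f$ both lying in $A\<Y'\>[Y_n]$ and monic of degree $d$ in $Y_n$. Set $p:=uf$, $p':=u'f$. Since $p$ and $p'$ are both monic of degree $d$, the difference $p-p'$ lies in $A\<Y'\>[Y_n]$ with $\deg_{Y_n}(p-p')<d$. Now
\[
p-p'\ =\ (u-u')f \ =\ 0\cdot f + (p-p'),
\]
so we have two representations of $p-p'$ as $Qf+R$ with $Q\in A\<Y\>$ and $R\in A\<Y'\>[Y_n]$, $\deg_{Y_n}R<d$. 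By Step~1 these representations coincide, giving $u-u'=0$ and hence $u=u'$.

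The only real subtlety — the ``main obstacle'' if one — is noticing that uniqueness for division cannot be proved directly from the construction in Proposition~\ref{wd1} (the iterative construction there is not obviously canonical), so one has to feed the \emph{existence} half of preparation back in to reduce the problem to the already-settled polynomial case of Lemma~\ref{d1}. Once that reduction is made, Step~2 is a formal consequence of Step~1 by comparing two trivially valid decompositions of $p-p'$.
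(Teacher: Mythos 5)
Your proof is correct and follows essentially the same route as the paper: the existence half of Weierstrass preparation (Corollary~\ref{wd2}) converts division by $f$ into division by the monic polynomial $uf$, so uniqueness of $(q,r)$ comes from the uniqueness in Lemma~\ref{d1}, and uniqueness of $u$ is then a formal consequence of division uniqueness. Your Step~2 compares two preparations $uf$, $u'f$ directly instead of invoking the division of $Y_n^d$ from the proof of Corollary~\ref{wd2}, but that is an immaterial variation of the same reduction.
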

\begin{proof} By Corollary~\ref{wd2} (just the existence of $u$), the uniqueness of $(q,r)$ follows from the uniqueness in Lemma~\ref{d1}. Next, the uniqueness of
$u$ follows from the proof of Corollary~\ref{wd2} and the uniqueness in Proposition~\ref{wd1}.
\end{proof}

\noindent
Besides $n\geqslant 1$ we now also assume $d\geqslant 1$. Under an extra assumption on $\overline{A}$ (see Lemma~\ref{wprep}) we can apply automorphisms to arrange regularity in $Y_n$. Set
$$T_d(Y)\ :=\ \big(Y_1+Y_n^{d^{n-1}},\dots, Y_{n-1}+Y_n^d, Y_n\big),$$
which gives a norm preserving automorphism $f(Y)\mapsto f\big(T_d(Y)\big)$ of the
$A$-algebra $A\<Y\>$ with inverse $g(Y)\mapsto g\big(T_d^{-1}(Y)\big)$, where 
$$T_d^{-1}(Y)\ :=\ \big(Y_1-Y_n^{d^{n-1}},\dots, Y_{n-1}-Y_n^d, Y_n\big).$$

\begin{lemma}\label{wprep} Assume $\overline{A}$ is a field. Let $f\in A\<Y\>$ be such that
$\overline{f}\ne 0$ in $\overline{A}[Y]$, and $d>\deg \overline{f}$. Then $f\big(T_d(Y)\big)$ is regular in $Y_n$ of some degree.  
\end{lemma}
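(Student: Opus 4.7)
The plan is to reduce everything to the polynomial case by passing to $\overline{A}[Y]$ and then to carry out the classical base-$d$ weighting argument that is standard in Weierstrass preparation over a field. Since $f \mapsto \overline{f}$ is a ring morphism and sends the generators $Y_i$ of $A\langle Y\rangle$ to the generators $Y_i$ of $\overline{A}[Y]$, it commutes with the substitution $T_d$, so $\overline{f(T_d(Y))} = \overline{f}(T_d(Y))$ in $\overline{A}[Y]$. Regularity of $f(T_d(Y))$ in $Y_n$ of some degree $e$ will therefore follow from showing that, as a polynomial in $Y_n$ with coefficients in $\overline{A}[Y']$, the polynomial $\overline{f}(T_d(Y))$ has leading coefficient a nonzero constant in $\overline{A}$ (which is a unit of $\overline{A}[Y']$ because $\overline{A}$ is a field).

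Write $\overline{f} = \sum_{\mu} c_\mu Y^\mu \in \overline{A}[Y]$, where only finitely many $c_\mu \in \overline{A}$ are nonzero and the support is contained in $\{\mu : |\mu| \leqslant \deg \overline{f}\}$. The hypothesis $d > \deg \overline{f}$ forces $\mu_i < d$ for all $i$ whenever $c_\mu \ne 0$. Define the base-$d$ weight
$$w(\mu)\ :=\ \mu_1 d^{n-1} + \mu_2 d^{n-2} + \cdots + \mu_{n-1} d + \mu_n.$$
Because every $\mu_i$ in the support satisfies $\mu_i < d$, the uniqueness of base-$d$ expansions shows that $w$ is injective on $\{\mu : c_\mu \ne 0\}$.

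Now I would compute directly: for each $\mu$ with $c_\mu \ne 0$,
$$c_\mu \prod_{i=1}^{n-1}\bigl(Y_i + Y_n^{d^{n-i}}\bigr)^{\mu_i} \cdot Y_n^{\mu_n}$$
is, when rewritten as a polynomial in $Y_n$ with coefficients in $\overline{A}[Y']$, of degree exactly $w(\mu)$ in $Y_n$, with leading coefficient $c_\mu$ (a constant in $\overline{A}$). Summing over $\mu$ and choosing $\mu^\ast$ to maximize $w(\mu)$ over the support, the injectivity of $w$ ensures there is no cancellation at the top: the coefficient of $Y_n^{w(\mu^\ast)}$ in $\overline{f}(T_d(Y))$ is exactly $c_{\mu^\ast} \in \overline{A}^\times$, which is a unit in $\overline{A}[Y']$. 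Consequently $\overline{f(T_d(Y))}$ has the shape required by the definition of regularity in $Y_n$ of degree $w(\mu^\ast)$, and we are done.

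The only thing that requires any thought is the choice of weights $d^{n-i}$, and its role is precisely to guarantee the injectivity of $w$ on the support. The hypothesis that $\overline{A}$ is a field is used exactly once, to conclude that the nonzero constant $c_{\mu^\ast}$ is a unit of $\overline{A}[Y']$; without it one would only get a nonzero element and thus fail to verify the regularity condition as stated.
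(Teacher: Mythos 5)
Your proof is correct and is essentially the paper's own argument: the paper picks the lexicographically largest index $\mu$ with $\overline{a_\mu}\ne 0$ (which, since all exponents are $<d$, is exactly your weight-maximizing $\mu^\ast$) and asserts by a ``straightforward computation'' that $\overline{f}\big(T_d(Y)\big)=\overline{a_\mu}Y_n^{\ell}+\text{lower order terms in }Y_n$ with $\ell=\mu_1d^{n-1}+\cdots+\mu_n$. Your base-$d$ injectivity argument simply spells out that computation, so there is no substantive difference.
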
 
\begin{proof} With $f=\sum_{\nu} a_{\nu}Y^\nu$,  let $(\mu_1,\dots, \mu_n)$ be lexicographically largest among the $\nu\in \N^n$ for which $\overline{a_{\nu}}\ne 0$. A straightforward computation shows that then for $\ell:=\mu_1d^{n-1} +\cdots + \mu_{n-1}d + \mu_n$ we have
$$\overline{f}\big(T_d(Y)\big)\ =\ \overline{a_{\mu}}Y_n^\ell + \text{terms in }\overline{A}[Y] \text{ of degree $<\ell$ in }Y_n.$$
Thus $f\big(T_d(Y)\big)$ is regular in $Y_n$ of degree $\ell$.
\end{proof}

\section{Rings with $A$-analytic Structure} \label{ras}

\noindent
Given a ring $R$ and a set
$E$ we have the ring $R^E$ of $R$-valued functions on $E$, where the ring operations are given pointwise. A {\em ring with $A$-analytic structure} is a ring $R$ together with a ring morphism 
$$\iota_n\ :\  A\<Y_1,\dots, Y_n\> \to\ \text{ring of $R$-valued functions on $R^n$}$$
for every $n$, such that the following conditions are satisfied:\begin{enumerate}
\item[(A1)] $\iota_n(Y_k)(y_1,\dots, y_n)=y_k$, for $k=1,\dots,n$ and $y=(y_1,\dots, y_n)\in R^n$;
\item[(A2)] for $f\in A\<Y_1,\dots,Y_n\>\subseteq A\<Y_1,\dots, Y_n, Y_{n+1}\>$ and $( y_1,\dots, y_n, y_{n+1})\in R^{n+1}$ we have $\iota_n(f)(y_1,\dots, y_n)=\iota_{n+1}(f)(y_1,\dots, y_n, y_{n+1})$;
\item[(A3)] for $n\geqslant 1$, $f\in A\<Y_1,\dots, Y_n\>$, $g:=f(Y_{n+1},\dots, Y_{2n})\in A\<Y_1,\dots, Y_{2n}\>$, and
 $( y_1,\dots, y_{2n})\in R^{2n}$ we have: $\iota_{n}(f)(y_{n+1},\dots, y_{2n})=\iota_{2n}(g)(y_1,\dots, y_{2n})$.
\end{enumerate}
Let $R$ be a ring with $A$-analytic structure as above. We set $h(y):=\iota_n(h)(y)$ for $h\in A\<Y\>$ and
$y\in R^n$, a notational convention that will be in force from now on.
In other words, each $h\in A\<Y\>$ defines a function $R^n\to R$ that we also denote by $h$. For $n=0$ the above gives the ring morphism $\iota_0: A \to R$ upon identifying
a function $R^0\to R$ with its only value, and so $R$ is an $A$-algebra with structural morphism $\iota_0$. Accordingly we denote for $a\in A$ the element $\iota_0(a)$ of  $R$ also by $a$ when no confusion is likely.
Simple example of a ring with $A$-analytic structure:
$A$ itself with $\iota_n(f)(y):=f(y)$ for $f\in A\<Y\>$ and
$y\in A^n$ and below we consider $A$ to be equipped with this $A$-analytic structure.

\begin{lemma}\label{A3} Let $f, g_1,\dots, g_n\in A\<Y\>$ and
$y\in R^n$. Then $$f(g_1,\dots, g_n)(y)\ =\ f\big(g_1(y),\dots, g_n(y)\big).$$
\end{lemma}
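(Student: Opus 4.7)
The plan is to leverage Corollary~\ref{cord2}, which asserts $f(X) - f(W) \in (X_1 - W_1, \ldots, X_n - W_n)A\<X, W\>$, together with the axioms (A1)--(A3). Since $R$ carries no topology \emph{a priori}, we cannot prove the lemma by approximating $f$ by polynomials and passing to limits; instead, we exploit that $F - h$ (below) has a formal factorization in a larger variable space whose value under $\iota_{2n}$ at the correct test point is forced to vanish.

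Introduce auxiliary variables $Z = (Z_1, \ldots, Z_n)$ and work inside $A\<Y, Z\> = A\<Y_1, \ldots, Y_n, Z_1, \ldots, Z_n\>$. Set
\[
F\ :=\ f(Z_1, \ldots, Z_n)\ \in\ A\<Z\>\ \subseteq\ A\<Y, Z\>, \qquad h\ :=\ f(g_1, \ldots, g_n)\ \in\ A\<Y\>\ \subseteq\ A\<Y, Z\>.
\]
Identifying $Z$ with $(Y_{n+1}, \ldots, Y_{2n})$ and invoking axiom (A3) yields $\iota_{2n}(F)(y, g(y)) = \iota_n(f)(g_1(y), \ldots, g_n(y))$, which is the right-hand side of the lemma; iterated (A2) gives $\iota_{2n}(h)(y, g(y)) = \iota_n(h)(y)$, the left-hand side. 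It therefore suffices to establish $\iota_{2n}(F - h)(y, g(y)) = 0$.

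Apply Corollary~\ref{cord2} to $f$ with auxiliary variables $W = (W_1, \ldots, W_n)$ disjoint from $Z$: there exist $q_i \in A\<Z, W\>$ with $f(Z) - f(W) = \sum_{i=1}^n (Z_i - W_i)\, q_i(Z, W)$. Now apply the substitution $A$-algebra morphism $A\<Z, W\> \to A\<Y, Z\>$ given by $Z_i \mapsto Z_i$ and $W_i \mapsto g_i$ (valid since $|g_i|\leqslant 1$); this yields
\[
F - h\ =\ \sum_{i=1}^n (Z_i - g_i)\, Q_i \qquad \text{in } A\<Y, Z\>,
\]
where $Q_i := q_i(Z, g_1, \ldots, g_n)$. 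Evaluating under $\iota_{2n}$ at $(y, g_1(y), \ldots, g_n(y))$, axiom (A1) gives $\iota_{2n}(Z_i)(y, g(y)) = g_i(y)$, while iterated (A2) gives $\iota_{2n}(g_i)(y, g(y)) = \iota_n(g_i)(y) = g_i(y)$; hence $\iota_{2n}(Z_i - g_i)(y, g(y)) = 0$ annihilates each summand. Therefore $\iota_{2n}(F - h)(y, g(y)) = 0$, as required.

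The main obstacle is recognizing that a ``compare on generators'' approach fails here---two $A$-algebra morphisms $A\<Y\> \to R$ can agree on the polynomial subring $A[Y]$ without being equal---so the proof must route through the algebraic factorization of Corollary~\ref{cord2}, which supplies the needed vanishing using only axioms (A1)--(A3) and the ring-morphism property of $\iota_{2n}$, with no topology on $R$ required.
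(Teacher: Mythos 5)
Your proof is correct and is essentially the paper's own argument: both pass to the doubled variable ring $A\<Y_1,\dots,Y_{2n}\>$, use Corollary~\ref{cord2} (via the substitution morphism) to write $f(g_1,\dots,g_n)-f(Z)$ as an element of the ideal generated by the differences $g_i-Z_i$, and then apply $\iota_{2n}$ and evaluate at $\big(y,g_1(y),\dots,g_n(y)\big)$ using (A1)--(A3). The only cosmetic difference is your renaming $Y_{n+1},\dots,Y_{2n}$ as $Z_1,\dots,Z_n$ and your explicit spelling out of the (A2)/(A3) bookkeeping that the paper leaves implicit.
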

\begin{proof} The case $n=0$ is trivial. Let $n\geqslant 1$ and set $B:=A\<Y_1,\dots, Y_{2n}\>$. In $A\<Y\>$ we have
$f(g_1,\dots, g_n)(Y)=f\big(g_1(Y),\dots, g_n(Y)\big)$, trivially. Also
$A\<Y\>\subseteq  B$, $f(Y_{n+1},\dots, Y_{2n})\in B$, and
by Corollary~\ref{cord2}, $$f(g_1,\dots,g_n)(Y)- f(Y_{n+1},\dots, Y_{2n})\in  
\big(g_1(Y)-Y_{n+1},\dots, g_n(Y)-Y_{2n}\big)B.$$
Now apply $\iota_{2n}$ to this, and use (A1), (A2), (A3) to evaluate at the point
\begin{equation*}
\qquad\qquad\qquad\quad \big(y_1,\dots, y_n, g_1(y),\dots, g_n(y)\big)\in R^{2n}.\qquad\qquad\qquad\qquad \qquad\quad  \qedhere
\end{equation*}
\end{proof}

\noindent
We abbreviate the expression {\em ring with $A$-analytic structure\/} to {\em $A$-analytic ring}, or just {\em $A$-ring}. 
A good feature of the above is that the $A$-rings
naturally form an {\em equational class} (which is not the case for the narrower notion of rings with analytic $A$-structure defined in \cite{vdD}.) 
To back this up,
we introduce the language $\L^A$ of $A$-rings: 
it is the language $\{0,1,-,+,\cdot\}$ of rings augmented by an $n$-ary function symbol for each
$f\in A\<Y\>=A\<Y_1,\dots, Y_n\>$, to be denoted also by $f$. We construe any $A$-ring $R$ in the obvious way as an $\L^A$-structure, with $f$ as above naming the function $y \mapsto f(y): R^n\to R$, so the $A$-rings are exactly the models of an equational $\L^A$-theory, and for any $\L^A$-term $t(Z_1,\dots, Z_n)$ there is an $f\in A\<Y\>$ such that $t(z)=f(z)$ for every $A$-ring $R$ and $z\in R^n$.  

The $A$-ring $A$ is initial in this equational class: 

\begin{lemma}\label{initial} Given any $A$-ring $R$ there is a unique morphism $A\to R$ of $A$-rings, namely $\iota_0: A \to R$. 
\end{lemma}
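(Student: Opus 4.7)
The plan is to handle uniqueness and existence separately.

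Uniqueness is immediate: every $a\in A$ is a $0$-ary function symbol of $\L^A$, interpreted in the $A$-ring $A$ as $a$ itself (since the canonical structure on $A$ has $\iota_0=\mathrm{id}_A$), and in $R$ as $\iota_0(a)$. Any morphism $\phi\colon A\to R$ of $A$-rings must preserve each such constant, forcing $\phi(a)=\iota_0(a)$ for all $a\in A$.

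For existence, one must verify that $\iota_0\colon A\to R$ respects every $n$-ary operation $f\in A\<Y_1,\dots,Y_n\>$: namely, for $a=(a_1,\dots,a_n)\in A^n$,
\[
\iota_0\bigl(f(a)\bigr)\ =\ f\bigl(\iota_0(a_1),\dots,\iota_0(a_n)\bigr),
\]
where on the left $f(a)\in A$ is the power-series evaluation in $A$ and on the right $f$ denotes the operation $\iota_n^R(f)$ on $R$. The key move is to regard each $a_i$ also as a constant element of $A\<Y_1,\dots,Y_n\>$, so that the substituted series $f(a_1,\dots,a_n)\in A\<Y\>$ is just the constant series with value $f(a)\in A$. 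Applying Lemma~\ref{A3} inside $R$ at any $y\in R^n$ then yields
\[
f(a_1,\dots,a_n)(y)\ =\ f\bigl(a_1(y),\dots,a_n(y)\bigr).
\]
Iterating axiom (A2), the interpretation in $R$ of any constant $c\in A\subseteq A\<Y\>$ satisfies $c(y)=\iota_0(c)$ for every $y\in R^n$; applied on both sides, this converts the displayed equation into precisely the identity to be proved.

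The main subtlety is keeping straight the dual role of $a\in A$, both as an element of $A$ and as a constant element of $A\<Y\>$. Once that bookkeeping is in place, existence collapses into a single application of Lemma~\ref{A3} followed by (A2), and no genuine obstacle remains.
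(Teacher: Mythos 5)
Your proof is correct and follows essentially the same route as the paper's: uniqueness from preservation of the constants named by elements of $A$, and existence by reading $a_1,\dots,a_n$ (and $f(a_1,\dots,a_n)$) as constant series in $A\<Y\>$, fixing an arbitrary $y\in R^n$, and combining Lemma~\ref{A3} with axiom (A2) on both ends. No gaps; the bookkeeping you flag is exactly the point the paper's proof also handles.
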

\begin{proof} If $j: A \to R$ is an $A$-ring morphism, then clearly $j(a)=\iota_0(a)$ for $a\in A$. It remains to check that for $n\geqslant 1$, $f\in A\<Y\>$, $Y=(Y_1,\dots, Y_n)$, and $a_1,\dots, a_n\in A$, 
$$\iota_0\big(f(a_1,\dots, a_n)\big)\ =\ f\big(\iota_0(a_1),\dots, \iota_0(a_n)\big).$$
Take $a_1,\dots, a_n$ as elements of $A\<Y\>$ and  $f(a_1,\dots, a_n)\in A$ accordingly also as an element of $A\<Y\>$. Fixing any $y\in R^n$ we obtain from (A2) that $$\iota_0\big(f(a_1,\dots, a_n)\big)\ =\ \iota_n \big(f(a_1,\dots,a_n)\big)(y),$$ which by Lemma~\ref{A3} equals $f\big(\iota_n(a_1)(y),\dots, \iota_n(a_n)(y)\big)$, and by (A2) again this equals
$f\big(\iota_0(a_1),\dots, \iota_0(a_n)\big)$, as promised.
\end{proof} 

\medskip\noindent
{\bf Example}. Let $A_0$ be a ring with $1\ne 0$ and $A:=A_0[[t]]$, the power series ring in one variable $t$
over $A_0$, with the (complete) ultranorm given by $|f|=2^{-n}$ for $f\in t^nA\setminus t^{n+1}A$.  Let $\iota: A_0 \to \k$ be a ring morphism into a field $\k$, let $\Gamma$ be an ordered abelian group with a distinguished element $1>0$. We identify $\Z$ with its image in $\Gamma$  via $k\mapsto k\cdot1$,
which makes $\Z$ an ordered subgroup
of $\Gamma$. (We do not assume here that $1$ is the least positive element of $\Gamma$.)  This yields the Hahn field
$K=\k(\!(t^\Gamma)\!)$ with its valuation ring $\k[[t^{\Gamma^{\geqslant}}]]\supseteq \k[[t]]$. Now 
$\iota$ extends to the ring morphism $ A \to \k[[t^{\Gamma^{\geqslant}}]]$,
 $$ \sum_n c_nt^n\mapsto \sum_n \iota(c_n)t^n\in \k[[t]] \ \text{ (with all $c_n\in A_0$)}. $$  
We have a natural $A$-analytic structure $(\iota_n)$ on $\k[[t^{\Gamma^{\geqslant}}]]$, where $\iota_0$ is the above ring morphism $A \to \k[[t^{\Gamma^{\geqslant}}]]$, and more generally, for
$f=\sum a_{\nu}Y^\nu$ in $A\<Y\>$ and $y\in (\k[[t^{\Gamma^{\geqslant}}]])^n$, 
$$\iota_n(f)(y)\ :=\ \sum_{\nu} \iota_0(a_{\nu})y^{\nu}\in \\k[[t^{\Gamma^{\geqslant}}]].$$  
Note that $\iota_0(A)$ is the subring $\iota(A_0)[[t]]$ of $\k[[t]]$.

\medskip\noindent
Returning to the general setting, let $R$ be an $A$-ring. Among its units are clearly the elements
$1+a_1y_1+ \cdots + a_ny_n$ for  $a_1,\dots,a_n\in \smallo(A)$ and $y_1,\dots, y_n\in R$.

Any ideal $I$ of $R$ yields a {\em congruence relation\/} for the $A$-analytic structure of $R$. This means: for
any $f\in A\<Y\>$ and any $x,y\in R^n$ with $x\equiv y \mod I$ (that is, $x_1-y_1,\dots, x_n-y_n\in I$), we have
$f(x)\equiv f(y)\mod I$, an immediate consequence of Corollary~\ref{cord2}. Thus $R/I$ is an $A$-ring, given by 
$$f(y_1+I,\dots,y_n+I)\ :=\ f(y_1,\dots, y_n)+I \qquad(f\in A\<Y\>,\ (y_1,\dots,y_n)\in R^n).$$
This construal of $R/I$ as an $A$-ring is part of our goal of developing some algebra for $A$-rings analogous to ordinary facts about rings. But we need some extra notational flexibility in dealing with indeterminates, as we already tacitly used in this argument about $R/I$: we do not want to be tied down to the particular sequence of indeterminates $Y_1, Y_2,\dots $ used in the definition of {\em $A$-analytic structure}. Namely, for any
tuple $Z=(Z_1,\dots, Z_n)$ of distinct indeterminates, not necessarily among the $X_1, X_2,\dots, Y_1, Y_2,\dots$, any $f=f(Z)=\sum_{\nu} a_{\nu}Z^\nu\in A\<Z\>$ and $z\in R^n$ we 
set $f(z):= \big(\iota_n f(Y)\big)(z)$, where $f(Y):=\sum_{\nu}a_{\nu}Y^{\nu}\in A\<Y\>$. 

This is in harmony with other notational conventions: Let $V, Z_1,\dots, Z_n$ be distinct variables. Identifying $A\<Z\>=A\<Z_1,\dots, Z_n\>$ as usual with a subring of $A\<V,Z\>$, this harmony means that for $f\in A\<Z\>$ and 
$(v,z_1,\dots, z_n)$ in $R^{n+1}$
we have 
$f(z_1,\dots, z_n)= f(v,z_1,\dots, z_n)$, where the last $f$ refers to the image of the series $f\in A\<Z\>$ in $A\<V, Z\>$. Thus we can add dummy variables on the left. We can also add them at other places: identifying $f\in A\<Z\>$ with its image in $A\<Z_1,\dots,Z_i, V, Z_{i+1},\dots, Z_n\>$ as usual, where $1\leqslant i \leqslant n$, we have likewise
$$f(z_1,\dots, z_n)\ =\ f(z_1,\dots, z_i, v, z_{i+1},\dots, z_n)$$
for $(z_1,\dots, z_i,v,z_{i+1},\dots, z_n)\in R^{n+1}$. We shall tacitly use these facts.

\subsection*{Henselianity Again}   
Let $R$ be an $A$-ring. Note that $\smallo(A)R$, that is, the ideal of $R$ generated by
$\iota_0\big(\smallo(A)\big)$, is contained in the Jacobson radical of $R$, because for
$a_1,\dots, a_n\in \smallo(A)$ the series $1+ a_1Y_1+\cdots + a_nY_n$ is a unit of $A\<Y\>$.
In later sections we consider the case that $R$ is a valuation ring whose maximal ideal
is $\sqrt{\smallo(A)R}$, and then the following is relevant:

\begin{lemma}\label{he5} The pair $(R, \smallo(A)R)$ is henselian, hence so is $(R,\sqrt{\smallo(A)R})$.
\end{lemma}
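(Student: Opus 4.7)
The plan is to verify condition~(ii) of Lemma~\ref{he1} for the pair $(R,\smallo(A)R)$. The other hypothesis, $1+\smallo(A)R\subseteq R^\times$, is automatic since $\smallo(A)R$ is contained in the Jacobson radical of $R$ (noted just before the statement). So fix $n\geqslant 2$, $e\in\smallo(A)R$, and $a_2,\dots,a_n\in R$; the task is to find a zero $b\in R$ of $P(X):=1+X+\sum_{i=2}^n ea_iX^i$.

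The key idea is to lift the problem to an auxiliary complete ultranormed ring over $A$ and apply Hensel's Lemma there. Write $e=c_1r_1+\cdots+c_kr_k$ with $c_j\in\smallo(A)$ and $r_j\in R$, and form the polynomial
\[
g(W,V,X)\ :=\ 1+X+\sum_{i=2}^n V_i\,(c_1W_1+\cdots+c_kW_k)\,X^i\ \in\ A[W,V,X]\ \subseteq\ B[X],
\]
where $B:=A\langle W_1,\dots,W_k,V_2,\dots,V_n\rangle$. Then $g(W,V,-1)\in\smallo(B)$ because each $c_j\in\smallo(A)$, while $(\partial g/\partial X)(W,V,-1)\in 1+\smallo(B)$ is a unit of $B$. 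Being a complete ultranormed ring with $|{\,\cdot\,}|\leqslant 1$, the pair $(B,\smallo(B))$ is henselian by the same argument given in Section~\ref{ASR} for $(A,\smallo(A))$. Hence there exists $b_0\in B$ with $b_0+1\in\smallo(B)$ and $g(W,V,b_0)=0$ in $B$.

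To descend to $R$: using the $A$-analytic structure of $R$, set $b:=b_0(r_1,\dots,r_k,a_2,\dots,a_n)\in R$. Lemma~\ref{A3}, together with axioms (A1)--(A3), shows that the identity $g(W,V,b_0)=0$ in $B$ transfers under substitution at $W=r$, $V=a$ to $g(r,a,b)=0$ in $R$. But $g(r,a,X)=P(X)$, so $P(b)=0$, completing the verification of Lemma~\ref{he1}(ii). The ``hence'' clause of the lemma then follows by applying the Corollary stated just after Lemma~\ref{he3} with $I=\smallo(A)R$ and $J=\sqrt{\smallo(A)R}$.

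The only delicate point is ensuring that an identity valid in the restricted power series ring $B$ genuinely transfers to $R$ when elements of $R$ are substituted for the free variables; this is precisely what axioms (A1)--(A3) and Lemma~\ref{A3} guarantee, so no serious obstacle is anticipated beyond careful bookkeeping of variables.
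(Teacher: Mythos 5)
Your proof is correct and follows essentially the same route as the paper: both verify Lemma~\ref{he1}(ii) by lifting the coefficients in $\smallo(A)R$ to linear forms over $\smallo(A)$ in new variables, solving the resulting equation in the complete ring of restricted power series over $A$, and then specializing the solution back to $R$ via the $A$-analytic structure (Lemma~\ref{A3}). The only difference is the upstairs step: the paper observes that the lifted polynomial is regular in $Y$ of degree $1$ and factors it as $E\cdot(Y-c)$ by Weierstrass preparation (Corollary~\ref{wd2}), whereas you invoke Hensel's lemma for the pair $\big(A\<W,V\>,\smallo(A\<W,V\>)\big)$ at the approximate root $-1$; these are interchangeable here.
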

\begin{proof} Let $n=1$, so $Y=Y_1$. We show that any polynomial $$f(Y)\ =\ 1+Y + z_2Y^2 + \cdots + z_NY^N\in R[Y], \qquad(N\in \N^{\geqslant 2})$$
with $z_2,\dots, z_N\in \smallo(A)R$ has a zero in $R$. Take $m$ and $x\in R^m$ such that
$$z_2\ =\ g_2(x),\dots, z_N\ =\ g_N(x), \quad g_2,\dots, g_N\in \smallo(A)A[X]\subseteq \smallo(A)A\<X\>.$$
Then $F(X,Y):= 1+Y + g_2(X)Y^2+\cdots + g_N(X)Y^N\in A[X,Y]=A\<X,Y\>$ is regular in $Y$ of degree $1$,
so $F(X,Y)=E\cdot(Y-c)$ for a unit $E$ of $A\<X,Y\>$ and $c\in A\<X\>$. Thus
$f(Y)$ has a zero $c(x)$ in $R$.
\end{proof}

\subsection*{Extensions of $A$-rings} Let $R$ be an $A$-ring.
When referring to an $A$-ring $R^*$ as {\em extending $R$\/} this means of course that $R$ is a subring of $R^*$, but also includes
the requirement that the $A$-analytic structure of $R^*$ extends that of $R$.  

A set $S\subseteq R$ is said to be $A$-closed (in $R$) if for all $m$, $f\in A\<X\>$ and $x_1,\dots, x_m$ in $S$ we have $f(x_1,\dots, x_m)\in S$.
Then $S$ is a subring of $R$ and the $A$-analytic structure of $R$ restricts to an $A$-analytic structure on $S$. We view such $S$ as an $A$-ring so as to make the $A$-ring $R$ extend $S$.
For $S\subseteq R$, the {\em $A$-closure of $S$ in $R$\/} is the smallest (with respect to inclusion)  $A$-closed subset of $R$ that contains $S$.   

\begin{lemma}\label{gen} Let $R^*$ be an $A$-ring extending $R$, and $y=(y_1,\dots, y_n)\in (R^*)^n$. Let $R\<y\>$ be the $A$-closure of $R\cup\{y_1,\dots, y_n\}$ in $R^*$. Then  
$$R\<y\>\ =\ \bigcup_m\{g(x,y):\ x\in R^m,\ g\in A\<X,Y\>\}.$$ 
\end{lemma}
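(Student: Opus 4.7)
Denote the right-hand side by $T$, i.e.\ $T := \bigcup_m \{g(x,y) : x \in R^m,\ g \in A\<X_1,\dots,X_m,Y_1,\dots,Y_n\>\}$. The plan is to show $T \subseteq R\<y\>$ by inspection and $R\<y\> \subseteq T$ by checking that $T$ is an $A$-closed subset of $R^*$ that contains $R \cup \{y_1,\dots,y_n\}$, so that minimality of $R\<y\>$ forces the reverse inclusion.

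First, $T \subseteq R\<y\>$ is immediate: the $A$-closedness of $R\<y\>$ and the fact that it contains $R \cup \{y_1,\dots,y_n\}$ imply that every element of the form $g(x,y)$ with $g \in A\<X,Y\>$ and $x \in R^m$ lies in $R\<y\>$. Next, $T$ contains $R \cup \{y_1,\dots,y_n\}$: for $r \in R$ use $m = 1$ and $g = X_1$; for $y_i$ use $m = 0$ and $g = Y_i$.

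The main step is $A$-closedness of $T$. Let $f \in A\<Z_1,\dots,Z_k\>$ and $t_1,\dots,t_k \in T$; I must show $f(t_1,\dots,t_k) \in T$. By definition of $T$, for each $i$ there are $m_i \in \N$, $x^{(i)} \in R^{m_i}$ and $g_i \in A\<X^{(i)},Y\>$ (where $X^{(i)} = (X^{(i)}_1,\dots,X^{(i)}_{m_i})$) such that $t_i = g_i(x^{(i)},y)$. Using the freedom to add dummy variables described in the subsection on substitution, I view each $g_i$ as an element of the common ring $A\<X,Y\>$ where $X := (X^{(1)},\dots,X^{(k)})$ has length $m := m_1 + \cdots + m_k$. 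Then form the composite
\[
  h(X,Y)\ :=\ f\big(g_1(X,Y),\dots,g_k(X,Y)\big)\ \in\ A\<X,Y\>,
\]
which is well-defined by the substitution property of restricted power series. Setting $x := (x^{(1)},\dots,x^{(k)}) \in R^m$ and applying Lemma~\ref{A3} in $R^*$ to the $A$-ring $R^*$, we obtain
\[
  h(x,y)\ =\ f\big(g_1(x,y),\dots,g_k(x,y)\big)\ =\ f(t_1,\dots,t_k).
\]
Hence $f(t_1,\dots,t_k) = h(x,y) \in T$, which establishes $A$-closedness.

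The only nontrivial point is the bookkeeping needed to merge the varying tuples $X^{(i)}$ into one tuple $X$ and to legitimately compose the $g_i$ inside $f$ at the level of restricted power series; this is exactly what the ``adding dummy variables'' remarks following Lemma~\ref{initial}, together with the associative composition in the subsection on substitution and the evaluation identity of Lemma~\ref{A3}, are designed to handle. No further obstacle is expected.
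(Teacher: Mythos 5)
Your proof is correct, and it is precisely the routine argument the paper intends: the lemma is stated without proof, the point being that the right-hand side clearly sits inside $R\<y\>$ by $A$-closedness, contains $R\cup\{y_1,\dots,y_n\}$, and is itself $A$-closed by merging parameter tuples, composing at the level of $A\<X,Y\>$, and invoking Lemma~\ref{A3} in $R^*$. The dummy-variable bookkeeping you flag is exactly what the conventions following Lemma~\ref{initial} are there to justify, so there is no gap.
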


\medskip\noindent
Here is a consequence of Lemma~\ref{d2}:

\begin{lemma}\label{Aint1} Suppose $R^*$ is an $A$-ring that extends $R$.  Let $f\in A\<X,Y_1,\dots, Y_n\>$ and $x\in R^m$, and assume $y_1,\dots, y_n\in R^*$ are integral over $R$.
Then $$f(x,y_1,\dots, y_n)\ \in\ R[y_1,\dots, y_n].$$ 
\end{lemma}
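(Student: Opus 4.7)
The plan is to reduce to Lemma~\ref{d2} by enlarging the parameter tuple $x$ so that it carries the coefficients of monic annihilating polynomials of the $y_i$.

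Since each $y_i$ is integral over $R$, choose a monic polynomial $p_i(Y_i) = Y_i^{d_i} + \sum_{j<d_i} r_{i,j} Y_i^j \in R[Y_i]$ with $p_i(y_i)=0$. Introduce new distinct indeterminates $X_{i,j}$ ($1\le i\le n$, $0\le j<d_i$), set $X' := (X_1,\dots,X_m, X_{1,0},\dots, X_{n,d_n-1})$, and put
\[
x' := (x, r_{1,0},\dots, r_{n,d_n-1}) \in R^{m'}, \qquad m' = m + d_1 + \cdots + d_n.
\]
Define the generic companion polynomials
\[
P_i(X',Y_i) := Y_i^{d_i} + \sum_{j<d_i} X_{i,j} Y_i^j \ \in\ A[X',Y_i] \ \subseteq\ A\<X',Y_1,\dots,Y_{i-1}\>[Y_i],
\]
each monic of degree $d_i$ in $Y_i$.

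Now view $f \in A\<X,Y\>$ as an element of $A\<X',Y\>$ via the conventions about adding dummy variables, and apply Lemma~\ref{d2} with $X'$ and the $P_i$ in the roles of $X$ and the $f_i$. This yields $q_1,\dots,q_n \in A\<X',Y\>$ and elements $c_{j_1,\dots,j_n} \in A\<X'\>$ (for $j_i < d_i$) such that
\[
f \ =\ \sum_{i=1}^n q_i\, P_i \ +\ \sum_{\substack{j_i<d_i}} c_{j_1,\dots,j_n}(X')\, Y_1^{j_1}\cdots Y_n^{j_n} \qquad\text{in } A\<X',Y\>.
\]

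Finally, evaluate at $(x',y_1,\dots,y_n) \in (R^*)^{m'+n}$ using the $A$-analytic structure of $R^*$. Evaluation is a ring morphism, so it respects the above equality. By construction $P_i(x',y_i)=p_i(y_i)=0$ for each $i$, so the sum $\sum_i q_i P_i$ kills off. Using also the dummy-variable convention $f(x',y) = f(x,y)$, we obtain
\[
f(x,y_1,\dots,y_n)\ =\ \sum_{\substack{j_i<d_i}} c_{j_1,\dots,j_n}(x')\, y_1^{j_1}\cdots y_n^{j_n}.
\]
Since $R$ is $A$-closed in $R^*$ and $x'\in R^{m'}$, each coefficient $c_{j_1,\dots,j_n}(x')$ lies in $R$, so the right-hand side lies in $R[y_1,\dots,y_n]$, as required.

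There is no real obstacle here beyond bookkeeping; the one point to handle carefully is the compatibility between the identity in $A\<X',Y\>$ and its evaluation in $R^*$, which is exactly the content of the $A$-ring axioms (A1)--(A3) together with Lemma~\ref{A3}, and the observation that adjoining the $r_{i,j}$ to the parameter tuple converts the problem of annihilating $p_i(y_i)$ into the universal statement $P_i(x',y_i)=0$ to which Lemma~\ref{d2} applies cleanly.
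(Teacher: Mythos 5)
Your proposal is correct and is essentially the paper's own proof: the paper likewise enlarges the parameter tuple $x$ so that monic polynomials $f_j(X,Y_j)\in A[X,Y_j]$ with $f_j(x,y_j)=0$ become available, and then applies Lemma~\ref{d2}. Your write-up merely makes explicit the bookkeeping (the companion polynomials $P_i$, the evaluation step, and the $A$-closedness of $R$) that the paper leaves implicit.
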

\begin{proof} By increasing $m$ and accordingly extending $x$ with extra coordinates we arrange that for $j=1,\dots,n$ we have a polynomial $f_j(X, Y_j)\in A[X,Y_j]$, monic in $Y_j$, with $f_j(x,y_j)=0$. Now apply Lemma~\ref{d2}.
\end{proof}

\begin{lemma}\label{Aint2}
Let $R^*$ be a ring extension of $R$ with $z\in R^*$ integral over $R$. Then at most one $A$-analytic structure on $R[z]$ makes $R[z]$ an $A$-ring extending $R$.    
\end{lemma}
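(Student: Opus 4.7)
My plan is to show that for any $A$-analytic structure on $R[z]$ extending the given structure on $R$, and any $f\in A\<Y_1,\dots,Y_n\>$ with $y_1,\dots,y_n\in R[z]$, the value $f(y_1,\dots,y_n)$ is forced by the $A$-analytic structure on $R$ together with the ring operations of $R[z]$. Both sets of data are prescribed, so any two such extensions must agree. The key ingredient is the division-with-remainder of Lemma~\ref{d1}, applied in the spirit of the proof of Lemma~\ref{Aint1}.

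First I would write each $y_i = p_i(z)$ with $p_i(T)\in R[T]$, and let $P_i(X_i,T)\in A[X_i,T]$ be obtained from $p_i$ by replacing its $R$-coefficients with fresh indeterminate tuples $X_i$, so that $y_i = P_i(r_i,z)$ for a tuple $r_i$ of elements of $R$. Setting $X := (X_1,\dots,X_n)$, $r := (r_1,\dots,r_n)$, and $F(X,T) := f(P_1,\dots,P_n)\in A\<X,T\>$, Lemma~\ref{A3} gives $f(y_1,\dots,y_n) = F(r,z)$. Next I would pick a monic polynomial $q(T) = T^d + a_{d-1}T^{d-1}+\cdots +a_0\in R[T]$ with $q(z)=0$ and introduce $Q(X',T) := T^d+X'_{d-1}T^{d-1}+\cdots + X'_0\in A[X',T]$, so $Q(a,z)=0$ for $a = (a_0,\dots,a_{d-1})$. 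Applying Lemma~\ref{d1} in $A\<X,X',T\>$ to $Q$, which is monic in $T$, yields
\[
F(X,T)\ =\ Q(X',T)\,G(X,X',T)\ +\ \sum_{j=0}^{d-1}\rho_j(X,X')\,T^j
\]
with $G\in A\<X,X',T\>$ and $\rho_j\in A\<X,X'\>$. Evaluating at $(r,a,z)$ via Lemma~\ref{A3}, together with $Q(a,z)=0$, gives
\[
f(y_1,\dots,y_n)\ =\ \sum_{j=0}^{d-1}\rho_j(r,a)\,z^j.
\]
Each $\rho_j(r,a)\in R$ is computed through the given $A$-analytic structure on $R$, and the outer sum uses only ring operations of $R[z]$ on fixed elements. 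Hence the right-hand side depends only on data fixed before choosing the extension, which delivers the required uniqueness.

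The main obstacle I expect is notational: keeping straight which indeterminate tuples represent which $R$-coefficients, and invoking the substitution principle of Lemma~\ref{A3} twice, first to pass from $f(y_1,\dots,y_n)$ to $F(r,z)$, then to evaluate the division identity at $(r,a,z)$. The essential mathematical content—that evaluation at integral elements collapses to a polynomial combination with coefficients in $R$—is already present in Lemmas~\ref{d1} and~\ref{Aint1}.
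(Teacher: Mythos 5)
Your argument is correct and follows essentially the same route as the paper's proof: write the $y_i$ as polynomials in $z$ with coefficients in $R$ promoted to indeterminates, form the substitution instance $F\in A\<X,T\>$ of $f$, divide by the monic lift of a polynomial annihilating $z$ via Lemma~\ref{d1}, and evaluate to get $f(y)=\sum_j \rho_j(r,a)z^j$, which uses only the fixed $A$-analytic structure on $R$ and the ring operations of $R[z]$. The only cosmetic difference is that the paper represents each $y_j$ directly in degree $<d$ and bundles all coefficients into a single variable tuple, while you allow arbitrary polynomial representations and separate tuples; this changes nothing of substance.
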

\begin{proof} We can assume $R^*=R[z]$. Take a monic polynomial 
$\phi\in R[Z]$, say of degree $d\geqslant 1$, with $\phi(z)=0$. Let $R^*$ be equipped with
an $A$-analytic structure extending that of $R$, and let $g\in A\<Y_1,\dots, Y_n\>$, $n\geqslant 1$, and let
$y_1,\dots, y_n\in R^*$; we have to show that then the element 
$g(y_1,\dots, y_n)\in R^*$ does not depend on the given $A$-analytic 
structure on $R^*$. We have $\phi(Z)=x_{00} + x_{01}Z +\cdots + x_{0,d-1}Z^{d-1} + Z^d$
with $x_{00},\dots,x_{0,d-1}\in R$ and
 $y_j=x_{j0} + x_{j1}z +\cdots + x_{j,d-1}z^{d-1}$, $x_{j0},\dots, x_{j,d-1}\in R$, for $j=1,\dots,n$. We now set $m:=(1+n)d$ and
\begin{align*}  x\ &:=\ (x_{00},\dots, x_{0,d-1}, x_{10},\dots, x_{1,d-1},\dots, x_{n0},\dots, x_{n,d-1})\in R^m,\\
X\ &=\ (X_1,\dots, X_m)\ :=\ \big(X_{00},\dots, X_{0,d-1},\dots, X_{n0},\dots, X_{n,d-1}\big),
\end{align*}
so $\phi(Z)=F(x,Z)$,  $F(X,Z):=X_{00}+ X_{01}Z + \cdots + X_{0,d-1}Z^{d-1} + Z^d\in A[X,Z]$. 
Let $G(X, Z)\in A\<X,Z\>$ be the following substitution instance of $g$: 
$$  g\big(X_{10}+X_{11}Z + \cdots + X_{1,d-1}Z^{d-1},\dots,X_{n0}+X_{n1}Z + \dots + X_{n,d-1}Z^{d-1}\big).$$  
Lemma~\ref{d1} gives $G(X,Z)=Q(X,Z)F(X,Z)+R_0 +R_1Z+ \cdots + R_{d-1}Z^{d-1}$ with
$R_0,\dots, R_{d-1}\in A\<X\>$, and so
$g(y)=G(x,z)= R_0(x) +R_1(x)z+ \cdots + R_{d-1}(x)z^{d-1}$,
which uses only the $A$-analytic structure on $R$, not that on $R^*$. 
\end{proof}

\begin{proposition}\label{Aint3} 
Let $R^*$ be a ring extension of $R$ and integral over $R$. Then some $A$-analytic structure on $R^*$ makes $R^*$ an $A$-ring extending $R$.  
\end{proposition}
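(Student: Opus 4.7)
My plan is to reduce by Zorn's lemma to adjoining a single integral element, then construct such an extension via an auxiliary polynomial quotient ring together with Weierstrass division by a monic polynomial.

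For the Zorn step, order the set of pairs $(S,\sigma)$, where $R\subseteq S\subseteq R^*$ and $\sigma$ is an $A$-analytic structure on $S$ such that $(S,\sigma)$ extends $R$ as an $A$-ring, by extension. This poset is nonempty (it contains $R$ itself) and unions of chains are upper bounds, so a maximal element $(S,\sigma)$ exists. If $S\ne R^*$, pick $z\in R^*\setminus S$; being integral over $R$ it is integral over $S$, and the construction below produces a proper $A$-ring extension of $(S,\sigma)$ inside $R^*$, contradicting maximality.

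Thus I reduce to the case $R^*=R[z]$ with $\phi(z)=0$ for some monic $\phi\in R[Z]$ of degree $d\geqslant 1$. I introduce the auxiliary ring $B:=R[Z]/\phi R[Z]$, which is a free $R$-module with basis $1,Z,\dots,Z^{d-1}$ (because $\phi$ is monic), so every $y\in B$ has a \emph{unique} expression $\sum_{k<d} a_k Z^k+\phi R[Z]$. For $g\in A\<Y_1,\dots,Y_n\>$ and $y_j=\sum_k x_{jk}Z^k+\phi R[Z]\in B$, write $\phi(Z)=\sum_{k<d} x_{0k}Z^k+Z^d$ and set $x:=(x_{jk})_{0\leqslant j\leqslant n,\,0\leqslant k<d}\in R^{(n+1)d}$. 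Form
$$G(X,Z)\ :=\ g\Big(\textstyle\sum_k X_{1k}Z^k,\dots,\sum_k X_{nk}Z^k\Big)\in A\<X,Z\>,\qquad F(X,Z)\ :=\ \sum_k X_{0k}Z^k+Z^d,$$
so $F\in A\<X\>[Z]$ is monic of degree $d$ in $Z$. Lemma~\ref{d1}, applied with $A\<X\>$ in place of $A$, yields unique $Q\in A\<X,Z\>$ and $R_0,\dots,R_{d-1}\in A\<X\>$ with $G=QF+\sum_k R_k Z^k$, and I define $\iota_n(g)(y):=\sum_k R_k(x)Z^k+\phi R[Z]\in B$, with $R_k(x)$ computed via the $A$-analytic structure on $R$.

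The main obstacle is verifying that this is a bona fide $A$-analytic structure on $B$ extending that of $R$; once that is done, the proof concludes by transport: the surjection $\pi\colon B\twoheadrightarrow R[z]$, $Z+\phi R[Z]\mapsto z$, has $\ker\pi$ an ideal of $B$, so by the congruence property (consequence of Corollary~\ref{cord2}), $R[z]\cong B/\ker\pi$ inherits an $A$-analytic structure extending $R$'s. Each verification on $B$ reduces to uniqueness of polynomial division by $F$: the ring-morphism property of $\iota_n$ follows from $G_{f+g}=G_f+G_g$ and $G_{fg}\equiv G_fG_g\pmod{F\cdot A\<X,Z\>}$ together with uniqueness of remainders; (A1) follows from $Y_\ell=0\cdot F+Y_\ell$; (A2) from the fact that appending a dummy variable leaves $G$ unchanged; and (A3) from the fact that the $G$ formed for $g=f(Y_{n+1},\dots,Y_{2n})$ with a $2n$-tuple agrees, under the natural identification of indeterminates, with the $G$ formed directly from $f$ with only the last $n$ tuples, so by uniqueness the remainders match. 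Extension of the structure on $R$ is immediate: if all $y_j\in R$ then $x_{jk}=0$ for $k\geqslant 1$, so $G$ has $Z$-degree $0$ and its division yields $R_0=G$, recovering the original value of $g(y)$ in $R$.
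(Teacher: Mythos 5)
Your proposal is correct and takes essentially the same route as the paper's proof: pass to the free $R$-module $R[Z]/\phi R[Z]$, define $\iota_n(g)(y)$ via division of $G$ by the monic $F$ using Lemma~\ref{d1} over $A\<X\>$, verify the axioms through uniqueness of remainders, and transport the structure to $R[z]$ along the quotient map via the congruence property from Corollary~\ref{cord2}. The only real difference is the bookkeeping of the reduction to one integral generator, which you handle with Zorn's lemma where the paper appeals to Lemmas~\ref{Aint1} and~\ref{Aint2}; both work, and your verification checklist (ring morphism, (A1)--(A3), restriction to $R$) matches what the paper sketches.
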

\begin{proof} In view of Lemmas~\ref{Aint1} and ~\ref{Aint2} this reduces to the case $R^*=R[z]$ where $z\in R^*$ is integral over $R$. Let $\phi(Z)\in R[Z]$ be as in the proof of Lemma~\ref{Aint2}, in particular monic of degree $d\geqslant 1$ in $Z$.
If the ring extension $R[Z]/\phi(Z)R[Z]$ of $R$ can be given an $A$-analytic structure extending that of $R$, then this is also the case for its image $R[z]$  under the $R$-algebra morphism $R[Z]\to R[z]$ sending $Z$ to $z$. Thus replacing $R[z]$ by $R[Z]/\phi(Z)R[Z]$ if necessary we arrange that $R[z]$ is free as an $R$-module with basis $1,z,\dots, z^{d-1}$.    
We now adopt other notation from the proof above, where $n\geqslant 1$ and where we introduced a tuple 
$$X\ =\ (X_1,\dots, X_m)\  =\  (X_{00},\dots, X_{n,d-1})$$ of $m=(n+1)d$ distinct variables,
the polynomial $F(X,Z)\in A[X,Z]$, and for any $g\in A\<Y\>=A\<Y_1,\dots, Y_n\>$ the series $G=G(X,Z)\in A\<X,Z\>$, and the series $R_0,\dots, R_{d-1}\in A\<X\>$.
To indicate their dependence on $g$ we set 
\begin{align*} G_g\ :=\ G,\quad  R_{g,0}\  :=\ R_{0},\quad \dots\ ,\ R_{g,d-1}\  :=\ R_{d-1},\\
R_g\ :=\  R_{g,0} + R_{g,1}Z +\cdots + R_{g,d-1}Z^{d-1}\ \in\ A\<X\>[Z].
\end{align*} 
We claim that setting $g(y) := R_{g}(x,z)$
for any $n\geqslant 1$ and $g\in A\<Y\>$ yields an $A$-analytic structure
on $R[z]$ extending that on $R$. We just verify two
items that are part of this claim: let $f,g,h, g_1,\dots, g_n\in A\<Y\>$ and $y\in R^n$; then \begin{enumerate}
\item  $gh(y)\ =\ g(y)h(y)$;
\item  $f(g_1,\dots, g_n)(y)\ =\ f\big(g_1(y),\dots, g_n(y)\big)$.
\end{enumerate}
As to (1), we have $G_{gh}=G_{g}G_{h}$, so $R_{gh}\equiv R_{g}R_{h}\mod F$ in $A\<X,Z\>$. We also have $R\in A\<X\>[Z]$ of
degree $< d$ in $Z$ such that $R_{g}R_{h}\equiv R \mod F$ in $A\<X\>[Z]$.
Hence $R_{gh}=R$, and thus $gh(y)=R(x,z)=R_g(x,z)R_h(x,z)=g(y)h(y)$. 
As to (2), by Corollary~\ref{cord2} we have in $A\<X,Z\>$,
$$G_{f(g_1,\dots,g_n)}\ =\ f\big(G_{g_1},\dots, G_{g_n}\big)\ \equiv\ f(R_{g_1},\dots, R_{g_n})\mod F. $$
Note that $f(R_{g_1},\dots, R_{g_n})$ is obtained by substituting $R_{{g_j},i}$
for $X_{ji}$ in $G_f$, for $j=1,\dots,n$ and $i=0,\dots, d-1$ (and $Z$ for $Z$), that is, 
$$ f(R_{g_1},\dots, R_{g_n})\ =\  G_f\big(R_{g_1,0},\dots,R_{g_1,d-1},\dots, R_{g_n,0}, \dots, R_{g_n,d-1}, Z\big). $$
Making the same substitution in the congruence $G_f\equiv R_f \mod F$, using that
the variables $X_{ji}$ with $j=1,\dots,n$ and $i=0,\dots,d-1$ do not occur in $F$, we obtain 
$$G_f\big(R_{g_1,0},\dots,R_{g_1,d-1},\dots, R_{g_n,0}, \dots, R_{g_n,d-1},Z\big)$$
is congruent in $A\<X,Z\>$ modulo $F$ to
$$R_f(X_{00},\dots, X_{0,d-1},R_{g_1,0},\dots,R_{g_1,d-1},\dots, R_{g_n,0}, \dots, R_{g_n,d-1},Z\big), $$
which is in $A\<X\>[Z]$ of degree $<d$ in $Z$, and thus equals $R_{f(g_1,\dots,g_n)}$.  Since
$$  f\big(g_1(y),\dots, g_n(y)\big)\ =\ R_f\big(x_{00},\dots, x_{0,d-1},R_{g_1,0}(x) ,\dots, R_{g_n,d-1}(x),z\big),$$
this yields  $f\big(g_1(y),\dots, g_n(y)\big)=f(g_1,\dots,g_n)(y)$, as required.
\end{proof}

\begin{corollary}\label{Aint4} If $R^*$ is a ring extension of $R$ and integral over $R$, then there is a unique $A$-analytic structure on $R^*$ that makes $R^*$ an $A$-ring extending $R$.
\end{corollary}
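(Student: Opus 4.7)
The existence of such an $A$-analytic structure is precisely what Proposition~\ref{Aint3} provides; the only new content is uniqueness. So the plan is as follows: suppose $\iota = (\iota_n)$ and $\iota' = (\iota'_n)$ are two $A$-analytic structures on $R^*$ both extending that of $R$. I want to show $\iota_n(f)(y) = \iota'_n(f)(y)$ for every $n$, every $f \in A\<Y_1,\dots,Y_n\>$, and every $y = (y_1,\dots,y_n) \in (R^*)^n$. Since only finitely many $y_j$'s are involved, the natural idea is to reduce to the one-generator situation of Lemma~\ref{Aint2} by adjoining the $y_j$ one at a time.

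To make this reduction legitimate, first consider $S := R[y_1,\dots,y_n] \subseteq R^*$; every element of $S$ is integral over $R$ because each $y_j$ is. I claim $S$ is $A$-closed in $R^*$ with respect to \emph{both} $\iota$ and $\iota'$: given any $z_1,\dots,z_k \in S$ (each integral over $R$) and any $g \in A\<Z_1,\dots,Z_k\>$, Lemma~\ref{Aint1} (applied with no $X$-variables and with $z_1,\dots,z_k$ in place of $y_1,\dots,y_n$) yields $g(z_1,\dots,z_k) \in R[z_1,\dots,z_k] \subseteq S$. Hence $\iota$ and $\iota'$ restrict to honest $A$-analytic structures on $S$ extending the given one on $R$, and it is enough to show these restrictions coincide.

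Now filter $S$ as $R = R_0 \subseteq R_1 \subseteq \cdots \subseteq R_n = S$ with $R_j := R[y_1,\dots,y_j]$. The same argument via Lemma~\ref{Aint1} shows each $R_j$ is $A$-closed in $R^*$ under both structures, and $R_j = R_{j-1}[y_j]$ with $y_j$ integral over $R_{j-1}$. Proceeding by induction on $j$: the structures agree on $R_0 = R$ by hypothesis; given agreement on $R_{j-1}$, Lemma~\ref{Aint2} applied with $R_{j-1}$ in the role of ``$R$'' and $y_j$ in the role of ``$z$'' forces the two $A$-analytic structures on $R_j$ that extend the common structure on $R_{j-1}$ to be equal. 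At $j = n$ we get agreement on $S$, which contains $y$, so $\iota_n(f)(y) = \iota'_n(f)(y)$ as desired. The only point requiring care is the $A$-closedness verification needed to apply Lemma~\ref{Aint2} at each step, and this is exactly what Lemma~\ref{Aint1} delivers; everything else is bookkeeping.
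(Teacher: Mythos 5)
Your proof is correct and follows exactly the route the paper intends for this (unproved) corollary: existence is Proposition~\ref{Aint3}, and uniqueness reduces, via the $A$-closedness of $R[y_1,\dots,y_n]$ supplied by Lemma~\ref{Aint1} (this is in effect Corollary~\ref{Aint6}), to the one-generator uniqueness of Lemma~\ref{Aint2} applied along the filtration $R\subseteq R[y_1]\subseteq\cdots\subseteq R[y_1,\dots,y_n]$. The bookkeeping you spell out (each $y_j$ integral over $R_{j-1}$, the restrictions agreeing by induction) is exactly the intended argument, so there is nothing to add.
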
 

\begin{corollary}\label{Aint5} Let $R_1$ and $R_2$ be $A$-rings extending $R$ and let $\phi: R_1\to R_2$ be an $R$-algebra morphism such that $\phi(R_1)$ is integral over $R$. Then $\phi$ is a morphism of $A$-rings 
$($that is, a homomorphism in the sense of $\L^A$-structures$)$.
\end{corollary}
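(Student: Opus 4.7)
The goal is to verify $\phi(f(y)) = f(\phi(y))$ for every $f \in A\<Y\>$ and $y \in R_1^n$. My plan is to equip $\phi(R_1)\subseteq R_2$ with two a priori different $A$-analytic structures and then force them to agree by iterated use of the uniqueness result in Lemma~\ref{Aint2}.

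First, I would show that $\phi(R_1)$ is $A$-closed in $R_2$. Given $w_1,\dots,w_n \in \phi(R_1)$ and $f\in A\<Y\>$, each $w_j$ is integral over $R$, so Lemma~\ref{Aint1} (with empty $X$-tuple) places $f(w_1,\dots,w_n)$ in $R[w_1,\dots,w_n]\subseteq \phi(R_1)$. Hence $\phi(R_1)$ inherits from $R_2$ an $A$-analytic structure extending that of $R$; call it $\iota^{(2)}$.

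Next, I would equip $\phi(R_1)$ with a ``pushforward'' structure $\iota^{(1)}$ defined by $\iota^{(1)}(f)(w) := \phi\bigl(f(y)\bigr)$ for any lift $y\in R_1^n$ of $w$. Well-definedness follows because $\ker\phi$ is an ideal of the $A$-ring $R_1$ and hence yields a congruence for the $A$-structure (as noted in the paragraph following Corollary~\ref{cord2}): any two lifts $y,y'$ satisfy $f(y) \equiv f(y') \pmod{\ker\phi}$, so $\phi(f(y)) = \phi(f(y'))$. Axioms (A1)--(A3) for $\iota^{(1)}$ then transfer directly from those of $R_1$, and since $\phi$ restricts to the identity on $R$, $\iota^{(1)}$ extends the $A$-analytic structure of $R$.

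Finally, I would match $\iota^{(1)}$ with $\iota^{(2)}$. Fix $w\in \phi(R_1)^n$ and consider the finitely generated integral extension $R[w_1,\dots,w_n]$ of $R$. By Lemma~\ref{Aint1}, this subring is $A$-closed under either structure, so both $\iota^{(1)}$ and $\iota^{(2)}$ restrict to $A$-analytic structures on $R[w_1,\dots,w_n]$ extending the structure on $R$. Walking along the chain $R\subseteq R[w_1]\subseteq R[w_1,w_2]\subseteq\cdots\subseteq R[w_1,\dots,w_n]$ and applying Lemma~\ref{Aint2} at each single-generator step shows the two restrictions coincide. Evaluating at $w=\phi(y)$ yields $\phi(f(y)) = \iota^{(1)}(f)(\phi(y)) = \iota^{(2)}(f)(\phi(y)) = f(\phi(y))$, which is the required identity. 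The main obstacle is precisely this bootstrap from the one-generator uniqueness in Lemma~\ref{Aint2} to the finitely-generated case, but Lemma~\ref{Aint1} supplies the $A$-closedness needed at each step of the chain, so the step is clean.
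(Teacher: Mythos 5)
Your proof is correct and follows essentially the same route as the paper's: push the $A$-analytic structure of $R_1$ forward along the kernel congruence to get one structure on $\phi(R_1)$, note that $\phi(R_1)$ is $A$-closed in $R_2$ (giving a second structure extending that of $R$), and conclude the two agree by uniqueness of the $A$-analytic structure on an extension integral over $R$. The only difference is that the paper cites Corollary~\ref{Aint4} for that uniqueness, whereas you re-derive the needed instance by climbing the chain $R\subseteq R[w_1]\subseteq\cdots\subseteq R[w_1,\dots,w_n]$ with repeated use of Lemma~\ref{Aint2} (justified at each step by the $A$-closedness from Lemma~\ref{Aint1}), a valid if slightly longer way to obtain the same fact.
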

\begin{proof} The kernel of $\phi$ is a congruence relation on $R$ as $A$-ring, so $\phi(R_1)$ has an $A$-analytic structure making $\phi: R_1\to \phi(R_1)$
a morphism of $A$-rings. Since $\phi(R_1)$ is $A$-closed as a subset of $R_2$ it follows from Corollary~\ref{Aint4} that this $A$-analytic structure on $\phi(R_1)$ coincides with the one that makes the inclusion
$\phi(R_1)\to R_2$ a morphism of $A$-rings. Thus $\phi$ is a morphism of $A$-rings.
\end{proof}

\begin{corollary}\label{Aint6} Suppose the $A$-ring $R^*$ extends $R$, and $z_i\in R^*$ for $i\in I$ is integral over $R$. Then $R[z_i: i\in I]$ is $A$-closed in $R^*$. 
\end{corollary}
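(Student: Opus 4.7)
The plan is to show $A$-closedness directly by verifying the defining condition, reducing to the finitely generated case and invoking Lemma~\ref{Aint1}. Let $S := R[z_i : i \in I]$. Given any $n$, any $f \in A\langle Y_1,\dots,Y_n\rangle$, and any $w_1,\dots,w_n \in S$, I need to produce $f(w_1,\dots,w_n) \in S$.

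First I would use a compactness-of-support argument: each $w_j$ is, by definition of $S$, a polynomial over $R$ in only finitely many of the generators $z_i$. Hence there is a finite subset $I_0 \subseteq I$ such that $w_1,\dots,w_n \in R[z_i : i \in I_0]$. The latter ring is generated over $R$ by finitely many elements that are integral over $R$, and is therefore itself integral over $R$ (a standard algebra fact). In particular each $w_j$ is integral over $R$.

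Now I would apply Lemma~\ref{Aint1} with $m=0$ (the $X$-tuple empty, which is a degenerate but legitimate case of the statement — alternatively, one can adjoin a dummy variable $X_1$ and take $x_1\in R$ arbitrary, since $A\langle Y_1,\dots,Y_n\rangle \subseteq A\langle X_1,Y_1,\dots,Y_n\rangle$). This yields
\[
  f(w_1,\dots,w_n)\ \in\ R[w_1,\dots,w_n]\ \subseteq\ R[z_i : i \in I_0]\ \subseteq\ S,
\]
which is exactly what is needed. Thus $S$ is $A$-closed in $R^*$.

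There is no serious obstacle here: all of the conceptual work sits in Lemma~\ref{Aint1}, which lets any restricted power series, evaluated at an integral tuple, be expressed as an honest polynomial over $R$ in that tuple. The only thing to verify independently is that membership in the polynomial ring $R[z_i : i\in I]$ has finite support, which is immediate, together with the elementary fact that a ring obtained by adjoining finitely many integral elements is integral.
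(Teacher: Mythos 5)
Your proof is correct and follows essentially the same route as the paper: reduce to finitely many generators so that every element of $R[z_i:i\in I]$ is integral over $R$, then conclude that a restricted power series evaluated at an integral tuple lands in the polynomial ring over $R$ in that tuple. The only cosmetic difference is that you invoke Lemma~\ref{Aint1} (with $m=0$ or a dummy variable, both fine), whereas the paper inlines the same argument by choosing $x\in R^m$ and monic $f_j\in A\<X\>[Y_j]$ with $f_j(x,y_j)=0$ and applying Lemma~\ref{d2}.
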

\begin{proof} Let $f(Y)\in A\<Y\>$ and suppose $y_1,\dots, y_n\in R^*$ are integral over $R$; it suffices to show that then $f(y)\in R[y]$ where $y=(y_1,\dots, y_n)$. Take $x\in R^m$ and monic $f_j\in A\<X\>[Y_j]$  such that $f_j(x,y_j)=0$ for $j=0,\dots,n$, and apply Lemma~\ref{d2}. 
\end{proof}

\subsection*{Defining $R\<Y\>$} Let $R$ be an $A$-ring. To define a ring $R\<Y\>$ analogous to the polynomial ring $R[Y]$, observe that
polynomials over $R$  arise from polynomials over $\Z$ by specializing: for $f(X,Y)\in \Z[X,Y]$ and $x\in R^m$
we have $f(x,Y)\in R[Y]$.  We take this as a hint and with $A$  instead of $\Z$, we define for $f(X,Y)\in A\<X,Y\>$ and $x\in R^m$ the power series
$f(x,Y)\in R[[Y]]$ as follows: $f(X,Y)=\sum_{\nu}f_{\nu}(X)Y^{\nu}$ with all
$f_{\nu}\in A\<X\>$, and then $$f(x,Y)\ :=\ \sum_{\nu}f_{\nu}(x)Y^\nu.$$ 
Thus for fixed $x\in R^m$ the map 
$f(X,Y)\mapsto f(x,Y) :  A\<X,Y\>\to R[[Y]]$ is an $A$-algebra morphism. 
We define $$R\<Y\>\ :=\ \bigcup_m\{f(x,Y):\ f=f(X,Y)\in \\A\<X,Y\>,\ x\in R^m\}\ \subseteq\ R[[Y]].$$ 
An easy consequence is that inside the ambient ring $R[[Y]]$ we have for $i\leqslant n$:
$$ R\<Y_1,\dots, Y_i\>\ =\ R\<Y\>\cap R[[Y_1,\dots, Y_i]].$$

\begin{lemma} Given any $g_1,\dots, g_k\in R\<Y\>$, $k\in \N$, there exists
$m$, $x\in R^m$, and $f_1,\dots, f_k\in A\<X,Y\>$, such that
$g_1=f_1(x,Y),\dots, g_k=f_k(x,Y)$.
\end{lemma}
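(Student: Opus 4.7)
The plan is to reduce the statement to a simple bookkeeping exercise: each $g_i$ individually has a representation by the definition of $R\<Y\>$, and we only need to combine the parameter tuples into a single one by treating unused coordinates as dummy variables.

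By the definition of $R\<Y\>$, for each $i\in\{1,\dots,k\}$ we can fix some $m_i$, a tuple $x_i=(x_{i,1},\dots,x_{i,m_i})\in R^{m_i}$, and an element $f_i\in A\<X_1,\dots,X_{m_i},Y\>$ such that $g_i=f_i(x_i,Y)$. Set $m:=m_1+\cdots+m_k$, and let $x\in R^m$ be the concatenation
\[
x\ :=\ (x_{1,1},\dots,x_{1,m_1},x_{2,1},\dots,x_{2,m_2},\dots,x_{k,1},\dots,x_{k,m_k}).
\]
With $p_i:=m_1+\cdots+m_{i-1}$ (so $p_1=0$), define
\[
\tilde f_i(X_1,\dots,X_m,Y)\ :=\ f_i\bigl(X_{p_i+1},\dots,X_{p_i+m_i},Y\bigr)\ \in\ A\<X,Y\>,
\]
which lies in $A\<X,Y\>$ by the substitution construction of that subsection (the substituted variables are themselves indeterminates of $A\<X,Y\>$, so norms are preserved and the series converges trivially).

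Next I compute $\tilde f_i(x,Y)$. By definition of the map $A\<X,Y\>\to R[[Y]]$, $\tilde f_i(x,Y)$ is obtained by plugging $x$ into the ``parameter'' slots of $\tilde f_i$; and by construction the only $X_j$'s that actually appear in $\tilde f_i$ are those with indices $p_i+1,\dots,p_i+m_i$, which get specialized precisely to $x_{i,1},\dots,x_{i,m_i}$. Hence $\tilde f_i(x,Y)=f_i(x_i,Y)=g_i$, as required.

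The only point that needs a (trivial) verification is that $\tilde f_i$ so defined does belong to $A\<X,Y\>\subseteq A\<X\>\<Y\>$ and that the evaluation $\tilde f_i(x,Y)\in R[[Y]]$ agrees with $f_i(x_i,Y)$; both amount to the ``dummy variable'' remarks already recorded in the excerpt (right before the Henselianity Again subsection), so no genuine difficulty arises. There is no substantive obstacle here beyond notational care.
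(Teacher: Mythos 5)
Your proof is correct and follows essentially the same route as the paper's: fix a representation for each $g_i$, concatenate the parameter tuples, and view each $f_i$ as an element of $A\<X,Y\>$ in the combined variables via the dummy-variable conventions. No issues.
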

\begin{proof} Let $m_1,\dots, m_k\in \N$ and $x^1\in R^{m_1},\dots, x^k\in R^{m_k}$ be such that 
\begin{align*} g_1\ &=\ f^1(x^1,Y),\dots, g_k\ =\ f^k(x^k,Y),\quad  f^1\in A\<X^1, Y\>,\dots, f^k\in A\<X^k, Y\>,\\
 x^1\ &=\ (x_{11},\dots, x_{1m_1}),\ \dots,\  x^k\ =\ (x_{k1},\dots, x_{km_k}),\\
X^1  :&=\ (X_{11},\dots, X_{1m_1}),\ \dots,\ X^k\ :=\ (X_{k1},\dots, X_{km_k}).
\end{align*}
We can also arrange for $m:=m_1+\cdots + m_k$ that
$$ X\ =\ (X_1,\dots, X_m)\ =\ (X^1,\dots, X^k).$$
For $f_1(X,Y):=f^1(X^1,Y)\in A\<X,Y\>,\dots,f_k(X,Y):=f^k(X^k,Y)\in A\<X,Y\>$
we then have $f_1(x,Y)=g_1,\dots, f_k(x,Y)=g_k$ for
$x=(x^1,\dots, x^k)\in R^m$.   
\end{proof} 

\begin{corollary}\label{ry} $R\<Y\>$ is a subring of $R[[Y]]$ with $R[Y]\subseteq R\<Y\>$.  If $R$ is a domain, then so is $R\<Y\>$; if $R$ has no nilpotents other than $0$, then neither does
$R\<Y\>$. For an $A$-ring $R^*$ extending $R$ the inclusion $R[[Y]]\to R^*[[Y]]$ maps $R\<Y\>$ into $R^*\<Y\>$, so $R\<Y\>$ is a subring of $R^*\<Y\>$.
\end{corollary}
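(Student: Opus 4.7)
The plan is to leverage the preceding lemma, which represents any finite list of elements of $R\<Y\>$ via a common parameter tuple $x\in R^m$, and combine it with the fact (noted just before the definition of $R\<Y\>$) that for fixed $x\in R^m$ the evaluation $f\mapsto f(x,Y):A\<X,Y\>\to R[[Y]]$ is an $A$-algebra morphism.

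For the subring claim, given $g_1,g_2\in R\<Y\>$, I would apply that lemma to obtain a common $x\in R^m$ and $f_1,f_2\in A\<X,Y\>$ with $g_i=f_i(x,Y)$; the ring-morphism property of $f\mapsto f(x,Y)$ then places $g_1\pm g_2$ and $g_1 g_2$ in $R\<Y\>$, and $0,1$ are trivially in the image. To see $R[Y]\subseteq R\<Y\>$, realize a polynomial $p(Y)=\sum_{i=1}^m r_i Y^{\mu_i}\in R[Y]$ as $f(x,Y)$ with $x:=(r_1,\dots,r_m)\in R^m$ and $f(X,Y):=\sum_{i=1}^m X_i Y^{\mu_i}\in A[X,Y]\subseteq A\<X,Y\>$.

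For the domain and reduced assertions, it suffices to observe that $R[[Y]]$ is a domain when $R$ is, and reduced when $R$ is, by the standard lex-least-nonzero-coefficient argument: for a product $fg$ the lex-least nonzero coefficient equals the product of those of $f$ and $g$, and for $f^n$ it equals the $n$-th power of the lex-least nonzero coefficient of $f$ (forcing that coefficient to be a nilpotent of $R$, hence zero). Both properties then descend to the subring $R\<Y\>$.

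For the extension claim, given $g\in R\<Y\>$ and an $A$-ring $R^*\supseteq R$, write $g=f(x,Y)$ with $f\in A\<X,Y\>$ and $x\in R^m\subseteq(R^*)^m$; since the $A$-analytic structure of $R^*$ extends that of $R$, the coefficients of $f(x,Y)$ agree whether computed in $R[[Y]]$ or in $R^*[[Y]]$, so the natural inclusion $R[[Y]]\subseteq R^*[[Y]]$ sends $g$ into $R^*\<Y\>$, and this inclusion is automatically a ring morphism. No real obstacle appears: everything reduces to the ``common-$x$'' lemma plus the ring-morphism property of evaluation.
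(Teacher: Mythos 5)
Your proof is correct, and its overall shape matches the paper's: the subring, $R[Y]\subseteq R\<Y\>$, and extension claims are read off from the common-parameter lemma and the definition of $f(x,Y)$ exactly as intended, and the domain/reduced claims are obtained by establishing them for $R[[Y]]$ and descending to the subring $R\<Y\>$. The one place where you genuinely diverge is the reducedness of $R[[Y]]$: the paper embeds $R[[Y]]$ diagonally into $\prod_{\mathfrak{p}}(R/\mathfrak{p})[[Y]]$, with $\mathfrak{p}$ ranging over the prime ideals of $R$ (using that the nilradical is the intersection of the primes), thereby reducing the reduced case to the domain case; you instead argue directly with the lexicographically least nonzero coefficient $a_\mu$ of a purported nilpotent $f$, noting that the coefficient of $Y^{n\mu}$ in $f^n$ is $a_\mu^n$, forcing $a_\mu$ to be nilpotent and hence zero. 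Both arguments are elementary and complete; yours is self-contained at the level of coefficients, while the paper's is a one-line reduction to the domain statement. Only a small wording point: for the power case you should speak of \emph{the coefficient of $f^n$ at position $n\mu$} rather than ``the lex-least nonzero coefficient of $f^n$,'' since when $a_\mu^n=0$ that coefficient is not a nonzero coefficient at all --- which is precisely what your contradiction exploits.
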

\begin{proof} The claim about domains holds because it holds with $R[[Y]]$ in place of $R\<Y\>$. Suppose $R$ has no nilpotents. With $\mathfrak{p}$ ranging over the prime ideals of $R$ this yields an injective ``diagonal'' ring morphism $R[[Y]]\to \prod_{\mathfrak{p}} (R/\mathfrak{p})[[Y]]$ into a ring with no nilpotents other than $0$, so $R[[Y]]$ has no such nilpotents either.
\end{proof}

\noindent
By the remark following the definition of $R\<Y\>$ we have for $i\leqslant n$
the subring $R\<Y_1,\dots, Y_i\>$ of $R\<Y\>$. The ring $A\<Y\>$
as defined in Section~\ref{ASR} is the same as the ring $A\<Y\>$ as
defined just now for $R=A$ viewed as an $A$-ring.

\begin{corollary}\label{corintgeny} Suppose the $A$-ring $R^*$ extends $R$ and is integral over $R$. Then 
$R^*\<Y\>$ is generated as a ring over its subring $R\<Y\>$ by $R^*$.
\end{corollary}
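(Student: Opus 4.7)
The plan is to prove $R^*\<Y\> \subseteq R\<Y\>[R^*]$; the reverse inclusion is immediate since both are subrings of $R^*[[Y]]$ and $R\<Y\> \subseteq R^*\<Y\>$ by Corollary~\ref{ry}. Take an arbitrary $g \in R^*\<Y\>$ and, by the definition of $R^*\<Y\>$, write $g = f(x^*, Y)$ for some $m$, some $f = f(X,Y) \in A\<X,Y\>$ with $X = (X_1,\dots,X_m)$, and some $x^* = (x_1^*,\dots,x_m^*) \in (R^*)^m$. My goal is to exhibit $g$ as a polynomial in $x_1^*,\dots,x_m^*$ with coefficients in $R\<Y\>$.

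The strategy mimics the proof of Lemma~\ref{Aint1}. Since each $x_j^*$ is integral over $R$, I pick a monic polynomial $P_j(X_j) \in R[X_j]$ of some degree $d_j \geqslant 1$ with $P_j(x_j^*) = 0$. To fit these into the framework of Lemma~\ref{d2}, whose monic polynomials must have coefficients in $A$-algebras of restricted series rather than in $R$, I introduce fresh indeterminates $W_{j,i}$ ($1 \leqslant j \leqslant m$, $0 \leqslant i < d_j$), collect them into a single tuple $W$, and set
$$\widetilde{P_j}(W, X_j)\ :=\ X_j^{d_j} + W_{j,d_j-1} X_j^{d_j-1} + \cdots + W_{j,0}\ \in\ A[W, X_j],$$
which is monic of degree $d_j$ in $X_j$. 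Let $w \in R^p$ be the tuple of the actual coefficients of the $P_j$ arranged to match $W$; then $\widetilde{P_j}(w, x_j^*) = P_j(x_j^*) = 0$.

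I then apply Lemma~\ref{d2} inside $A\<W, Y, X\>$, playing the roles of ``parameters'' and ``division variables'' with $(W, Y)$ and $X = (X_1,\dots,X_m)$ respectively, using the monic polynomials $\widetilde{P_1}(W,X_1), \dots, \widetilde{P_m}(W,X_m)$. This yields a decomposition
$$f(X, Y)\ =\ \sum_{j=1}^m q_j \cdot \widetilde{P_j}(W, X_j) + \sum_{\nu} r_\nu(W, Y)\, X^\nu,$$
with $q_j \in A\<W, Y, X\>$, $r_\nu \in A\<W, Y\>$, and $\nu = (\nu_1,\dots,\nu_m)$ ranging over $\N^m$ with each $\nu_j < d_j$. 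Specializing $W \mapsto w$ and $X \mapsto x^*$ via the ring morphism $A\<W, Y, X\> = A\<W, X\>\<Y\> \to R^*\<Y\>$ induced by evaluation $A\<W, X\> \to R^*$ at $(w, x^*)$, the first sum vanishes because $\widetilde{P_j}(w, x_j^*) = 0$, yielding
$$g\ =\ f(x^*, Y)\ =\ \sum_{\nu} r_\nu(w, Y)\,(x^*)^\nu\ \in\ R\<Y\>[x_1^*,\dots,x_m^*]\ \subseteq\ R\<Y\>[R^*],$$
since each $r_\nu(w, Y)$ lies in $R\<Y\>$ by the very definition of that ring.

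The main obstacle is verifying that the specialization $W \mapsto w$, $X \mapsto x^*$ does extend to a ring morphism $A\<W, X\>\<Y\> \to R^*\<Y\>$ compatible with the displayed decomposition; this is, however, straightforward: writing an element of $A\<W,X\>\<Y\>$ as $\sum_\mu h_\mu(W, X) Y^\mu$ with $h_\mu \in A\<W,X\>$ and $h_\mu \to 0$, the morphism sends it to $\sum_\mu h_\mu(w, x^*) Y^\mu$, using that evaluation $A\<W, X\> \to R^*$ at $(w, x^*)$ is a ring morphism furnished by the $A$-analytic structure on $R^*$.
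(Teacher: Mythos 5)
Your proof is correct, and it takes a noticeably different route from the paper's. The paper first uses Corollary~\ref{Aint6} to reduce to the principal case $R^*=R[z]$ with a single $z$ integral over $R$, rewrites each coordinate $x_i$ as a polynomial in $z$ with coefficients in $R$, and then divides by one generic monic polynomial in $Z$ via Lemma~\ref{d1}, getting every element of $R[z]\<Y\>$ in the form $g_0(u,Y)+g_1(u,Y)z+\cdots+g_{d-1}(u,Y)z^{d-1}$. You skip that reduction and instead transplant the proof of Lemma~\ref{Aint1} to the setting where $Y$ is kept formal: you genericize the coefficients of a monic annihilator of each coordinate $x_j^*$ via the tuple $W$, apply the multivariate division Lemma~\ref{d2} once in $A\<W,Y,X\>$, and then specialize through the coefficientwise evaluation morphism $A\<W,X,Y\>\to R^*[[Y]]$, which the paper has already established is a ring morphism. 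What your version buys is uniformity: a single application of Lemma~\ref{d2}, no appeal to Corollary~\ref{Aint6}, and no implicit iteration over the generators; what the paper's version buys is the explicit principal-case picture $R[z]\<Y\>=R\<Y\>+R\<Y\>z+\cdots+R\<Y\>z^{d-1}$ at the cost of the preliminary reduction. The one step you might spell out is that the image of $r_\nu(W,Y)$ under your specialization really is $r_\nu(w,Y)$ computed in $R$ (hence an element of $R\<Y\>\subseteq R^*\<Y\>$): this uses that the $A$-analytic structure of $R^*$ restricts on $R$ to that of $R$ together with the dummy-variable conventions the paper uses tacitly; with that noted, everything checks, including the easy reverse inclusion via Corollary~\ref{ry}.
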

\begin{proof} Using Corollary~\ref{Aint6} it suffices to consider the case $R^*=R[z]$ where $z\in R^*$ is integral
over $R$ and to show $R^*\<Y\>=R\<Y\>[z]$. Let $f(X,Y)\in A\<X,Y\>$ and $x\in (R^*)^m$.  Towards proving $f(x,Y)\in R\<Y\>[z]$, let  $\phi(z)=0$ where $$\phi(Z)\ =\ Z^d + u_{0,d-1}Z^{d-1} + \cdots + u_{0,0},\quad  d\geqslant 1,\quad
u_{0,0},\dots, u_{0,d-1}\in R.$$ Then for $i=1,\dots,m$ we have $x_i=u_{i0} + u_{i1}z+\cdots + u_{i,d-1}z^{d-1}$ with all $u_{ij}\in R$.
Let $U=(U_{ij})_{0\leqslant i\leqslant m, j<d}$ be a tuple of distinct variables different also from the $Y_j$ and $Z$ and
set $u:=(u_{ij})$. Then $f(x,Y)=g(u,z,Y)$ where
$$g(U,Z,Y)\ =\ f\big(\sum_{j<d}U_{1j}Z^i,\dots, \sum_{j<d}U_{mj}Z^i, Y\big)\ \in\ A\<U,Z,Y\>.$$
In the ring $A\<U,Z,Y\>$,  $g(U,Z,Y)$ is congruent modulo $Z^d+ U_{0,d-1}Z^{d-1}+ \cdots + U_{0,0}$ to $g_0(U,Y) +g_1(U,Y)Z + \cdots +g_{d-1}(U,Y)Z^{d-1}$ for suitable $g_0,\dots, g_{d-1}\in A\<U,Y\>$, by Lemma~\ref{d1}, and for such
$g_j$ we have 
\begin{equation*}
g(u,z,Y)\ =\ g_0(u,Y)+g_1(u,Y)z+\cdots + g_{d-1}(u,Y)z^{d-1}\in R\<Y\>[z]. \qedhere
\end{equation*}
\end{proof}

\medskip\noindent
Let  $x\in R^m$. Then we equip $R$ with the $A\<X\>$-analytic structure  $(\iota_{n,x})$ given for $f(X,Y)\in A\<X\>\<Y\>=A\<X,Y\>$ by
$$\iota_{n,x}f\ :\  R^n\to R, \qquad y\mapsto f(x,y).$$
We refer to  $R$ with this $A\<X\>$-analytic structure as {\em  the $(A,x)$-ring $R$}.  
Construing $R$  as an $(A,x)$-ring gives the same subring
$R\<Y\>$ of $R[[Y]]$ as when considering $R$ as an $A$-ring.

\subsection*{$R\<Z\>$ as an $A$-ring} 
Let $Z_1,\dots, Z_N$ with $N\in \N$ be distinct variables different from $X_1, X_2,\dots$, and set $Z:=(Z_1,\dots, Z_N)$. We define $R\<Z\>=R\<Z_1,\dots, Z_N\>$ in the same way
as $R\<Y_1,\dots, Y_N\>$, with $Z_1,\dots, Z_N$ in the role of $Y_1,\dots, Y_N$. We make $R\<Z\>$ an $A$-ring extending $R$ as follows. 
Let $f\in A\<Y\>$ and 
$u_1(x,Z),\dots, u_n(x,Z)$ in $R\<Z\>$, where $u_1(X,Z),\dots, u_n(X,Z)\in
A\<X,Z\>$ and $x\in R^m$. Set $$g(X,Z)\ :=\  f\big(u_1(X,Z),\dots, u_n(X,Z)\big)\in A\<X,Z\>.$$ Our aim is to define $f(u_1(x,Z),\dots, u_n(x,Z)):= g(x,Z)\in R\<Z\>$.
In order for this to make sense as a definition we first show:

\begin{lemma}\label{gh} Suppose $v_j(X,Z)\in A\<X,Z\>$ and
$u_j(x,Z)=v_j(x,Z)$ for $j=1,\dots,n$. Set $h(X,Z):= f\big(v_1(X,Z),\dots, v_n(X,Z)\big)\in A\<X,Z\>$. Then  $$g(x,Z)\ =\ h(x,Z).$$
\end{lemma}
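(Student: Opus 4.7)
The plan is to reduce everything to Corollary~\ref{cord2}, which gives a Taylor-type decomposition of $f(W)-f(W')$. Applying that corollary to $f\in A\<Y\>$, with two fresh $n$-tuples of distinct indeterminates $W=(W_1,\dots,W_n)$ and $W'=(W'_1,\dots,W'_n)$ in the roles of the two argument tuples, I obtain elements $q_1,\dots,q_n\in A\<W,W'\>$ such that
\[
f(W)-f(W')\ =\ \sum_{j=1}^{n}(W_j-W'_j)\,q_j(W,W')
\]
in $A\<W,W'\>$.

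Next I substitute $W_j\mapsto u_j(X,Z)$ and $W'_j\mapsto v_j(X,Z)$ for $j=1,\dots,n$. By the substitution formalism of Section~\ref{ASR} this yields an $A$-algebra morphism $A\<W,W'\>\to A\<X,Z\>$ that sends $f(W)$ to $g(X,Z)$ and $f(W')$ to $h(X,Z)$. Applying it to the displayed identity gives
\[
g(X,Z)-h(X,Z)\ =\ \sum_{j=1}^{n}\bigl(u_j(X,Z)-v_j(X,Z)\bigr)\,Q_j(X,Z)
\]
in $A\<X,Z\>$, where $Q_j(X,Z):=q_j\bigl(u_1(X,Z),\dots,u_n(X,Z),v_1(X,Z),\dots,v_n(X,Z)\bigr)$.

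Finally I apply the $A$-algebra morphism $F(X,Z)\mapsto F(x,Z)\colon A\<X,Z\>\to R[[Z]]$ (with image in $R\<Z\>$), noted just before the definition of $R\<Z\>$, to both sides of the last displayed equation. Since this map is a ring morphism and $u_j(x,Z)=v_j(x,Z)$ in $R\<Z\>$ for every $j$ by hypothesis, the right-hand side collapses to $0$, so $g(x,Z)=h(x,Z)$ in $R\<Z\>$, as required.

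The argument is essentially formal and presents no real obstacle: the entire force of the proof is carried by Corollary~\ref{cord2} together with the fact that substitution in restricted power series and the evaluation $F(X,Z)\mapsto F(x,Z)$ are $A$-algebra morphisms, both already established earlier in Section~\ref{ASR}. The only mild care needed is to keep the variables $W,W'$ disjoint from $X$ and $Z$ so that the substitution step is unambiguous.
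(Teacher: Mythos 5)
Your argument is correct and is essentially the paper's own proof: both apply Corollary~\ref{cord2} to $f$ in two fresh $n$-tuples of variables, substitute $u_j(X,Z)$ and $v_j(X,Z)$ into the resulting identity, and then evaluate at $x$ using that substitution and $F(X,Z)\mapsto F(x,Z)$ are $A$-algebra morphisms. The only difference is cosmetic (you write out the decomposition with explicit cofactors $q_j$ rather than as an ideal membership), so there is nothing to add.
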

\begin{proof} By Corollary~\ref{cord2}  we have for distinct variables $U_1,\dots, U_n, V_1,\dots, V_n$,
$$f(U_1,\dots, U_n)-f(V_1,\dots, V_n)\in (U_1-V_1,\dots, U_n-V_n)A\<U,V\>.$$
Substituting
$u_j(X,Z)$ and $v_j(X,Z)$ for $U_j$ and $V_j$ gives 
$$g(X,Z)-h(X,Z)\in \big(u_1(X,Z)-v_1(X,Z),\dots, u_n(X,Z)-v_n(X,Z)\big)A\<X,Z\>$$ 
from which we obtain the desired result by substituting $x$ for $X$. 
\end{proof}

\noindent
Now the next lemma shows the above does define $f\big(u_1(x,Z),\dots, u_n(x,Z)\big)$:

\begin{lemma}\label{ind}
Let $m_1, m_2\in \N$, $m:=m_1+m_2$, and
\begin{align*}X^1\ &:=\ (X_1,\dots, X_{m_1}),\quad X^2\ :=\ (X_{m_1+1},\dots, X_m),\\  x^1\ &=\ (x_1,\dots, x_{m_1})\in R^{m_1},\quad x^2\ =\ (x_{m_1+1},\dots, x_m)\in R^{m_2}.
\end{align*} Suppose that the series
$$u^1(X^1,Z),\dots, u^n(X^1,Z)\in A\<X^1,Z\>,\quad v^1(X^2,Z),\dots, v^n(X^2,Z)\in A\<X^2,Z\>$$
are such that $u^1(x^1,Z)=v^1(x^2,Z),\dots, u^n(x^1,Z)=v^n(x^2,Z)$. Then for
\begin{align*}g^1(X^1,Z)\ &:=\ f\big(u^1(X^1,Z),\dots, u^n(X^1,Z)\big)\in A\<X^1,Z\>,\\
 g^2(X^2,Z)\ &:=\ f\big(v^1(X^2,Z),\dots, v^n(X^2,Z)\big)\in A\<X^2,Z\>
\end{align*}
we have $g^1(x^1,Z)=g^2(x^2,Z)$ in $R\<Z\>$. 
\end{lemma}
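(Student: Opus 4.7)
The strategy is to reduce Lemma~\ref{ind} to Lemma~\ref{gh} by enlarging the parameter space. Set $X := (X^1, X^2) = (X_1,\dots,X_m)$ and $x := (x^1, x^2) \in R^m$. Via the natural inclusions
$$A\<X^1,Z\>\ \hookrightarrow\ A\<X,Z\>\ \hookleftarrow\ A\<X^2,Z\>,$$
regard each $u^j(X^1,Z)$ and each $v^j(X^2,Z)$ as an element of the common ring $A\<X,Z\>$ (with no dependence on the ``absent'' block of variables).

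First I would observe that the definitions are compatible with these inclusions: the $A$-algebra morphism $A\<Y\>\to A\<X,Z\>$ given by composition with the tuple $(u^1,\dots,u^n)$ factors through $A\<X^1,Z\>$, so $g^1(X^1,Z)$ is carried by the inclusion $A\<X^1,Z\>\hookrightarrow A\<X,Z\>$ to $f\bigl(u^1(X,Z),\dots,u^n(X,Z)\bigr)$. Likewise $g^2(X^2,Z)$ becomes $f\bigl(v^1(X,Z),\dots,v^n(X,Z)\bigr)$ in $A\<X,Z\>$. Next I would use the ``dummy variable'' convention established earlier in this section: for any series in $A\<X^1,Z\>$, evaluating the $X$-argument at $x$ in the $(A,x)$-ring $R$ coincides with evaluating its $X^1$-argument at $x^1$, and similarly for $X^2$. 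Applied to the $u^j$ and $v^j$, this gives
$$u^j(x,Z)\ =\ u^j(x^1,Z)\ =\ v^j(x^2,Z)\ =\ v^j(x,Z)\qquad (j=1,\dots,n),$$
where the middle equality is precisely the hypothesis of the lemma.

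At this point the hypotheses of Lemma~\ref{gh} (with our enlarged $m$, the tuple $x$, and the series $u^j(X,Z), v^j(X,Z)\in A\<X,Z\>$) are satisfied, so that lemma yields
$$f\bigl(u^1(X,Z),\dots,u^n(X,Z)\bigr)(x,Z)\ =\ f\bigl(v^1(X,Z),\dots,v^n(X,Z)\bigr)(x,Z)$$
in $R\<Z\>$. By the compatibility observed in the previous paragraph, the left-hand side equals $g^1(x^1,Z)$ and the right-hand side equals $g^2(x^2,Z)$, completing the proof.

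The main point to verify carefully is the naturality asserted in the first step: the formation of compositions $f(u^1,\dots,u^n)$ commutes with the inclusion $A\<X^1,Z\>\hookrightarrow A\<X,Z\>$, and evaluation at $x$ restricts correctly to evaluation at $x^1$ on series independent of $X^2$. Both are immediate from the definitions in the Substitution subsection and from the norm-preserving $A$-algebra isomorphism $A\<X^1\>\<X^2,Z\>\cong A\<X^1,X^2,Z\>$; there is no genuine obstacle beyond bookkeeping.
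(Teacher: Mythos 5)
Your proposal is correct and is essentially the paper's own argument: form the combined tuple $X=(X^1,X^2)$, view the $u^j$ and $v^j$ as elements of $A\<X,Z\>$ via dummy variables so that the hypothesis gives $u^j(x,Z)=v^j(x,Z)$, and then apply Lemma~\ref{gh}. Your extra remarks on the compatibility of composition and evaluation with the inclusions are just the bookkeeping the paper leaves implicit.
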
 
\begin{proof} Set $X:=(X^1, X^2)=(X_1,\dots, X_m)$, and for $j=1,\dots,n$,
\begin{align*} u_j(X,Z)\ &:=\ u^j(X^1,Z)\in A\<X,Z\>,\quad v_j(X,Z)\ :=\ v^j(X^2,Z) \in A\<X,Z\>,\\ 
g(X,Z)\ &:=\ f\big(u_1(X,Z),\dots, u_n(X,Z)\big)\ =\ g^1(X^1,Z)\in A\<X,Z\>,\\
 h(X,Z)\ &:=\ f\big(v_1(X,Z),\dots, v_n(X,Z)\big)\ =\ g^2(X^2,Z)\in A\<X,Z\>.
\end{align*}
Then for $x=(x_1,\dots,x_m)$ 
we have $u_j(x,Z)=v_j(x,Z)$ for $j=1,\dots,n$, so $g(x,Z)=h(x,Z)$ by Lemma~\ref{gh}, and thus $g^1(x^1,Z)=g^2(x^2,Z)$.
\end{proof}

\noindent
We have now defined for $f\in A\<Y\>$ a corresponding operation
$$(u_1,\dots, u_n)\mapsto f(u_1,\dots, u_n)\ :\  R\<Z\>^n\to R\<Z\>.$$
This makes $R\<Z\>$ an $A$-ring extending $R$. 
For $f\in A\<X,Z\>$ and $x\in R^m$ we can interpret $f(x,Z)$ on the one hand as
the element of $R[[Z]]$ defined in the beginning of this subsection (with
$Y$ instead of $Z$), but also as the element of $R\<Z\>$ obtained by evaluating
$f$ at the point $(x,Z)\in R\<Z\>^{m+N}$ according to the $A$-analytic structure
we gave $R\<Z\>$; one checks easily that these two interpretations
give the same element of $R\<Z\>$, so there is no conflict of notation. 
This also shows that $R\<Z\>$ is generated as an $A$-ring by its subset
$R\cup\{Z_1,\dots, Z_N\}$.

\medskip\noindent
For $R=A$ as an $A$-ring the above yields the $A$-ring $A\<Z\>$
extending $A$. Let $f=f(Y)=\sum_\nu a_{\nu}Y^\nu\in A\<Y\>$. One checks easily that for $(g_1,\dots,g_n)\in A\<Z\>^n$ the convergent sum 
$f(g_1,\dots, g_n)=\sum_{\nu}a_{\nu}g_1^{\nu_1}\cdots g_n^{\nu_n}\in A\<Z\>$ equals $f(g_1,\dots, g_n)$ as defined above for $R=A$, so this causes no conflict of notation. It is routine to check that for any $A$-ring $R$ and $z\in R^N$ 
the evaluation map $g\mapsto g(z): A\<Z\>\to R$ is a morphism of $A$-rings.
For $N=0$ this is just $\iota_0: A \to R$.

\begin{corollary}\label{he6} Let $J:=\sqrt{\smallo(A)R}$. Then $\big(R\<Z\>,JR\<Z\>\big)$ is henselian.
\end{corollary}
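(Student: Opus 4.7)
The plan is to apply Lemma~\ref{he5} to the $A$-ring $R\<Z\>$ itself and then use the corollary (following Lemma~\ref{he3}) that lets us pass from an ideal $I$ to any ideal contained in $\sqrt{I}$.

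First, I would observe that $R\<Z\>$, as constructed above, is an $A$-ring. Therefore Lemma~\ref{he5}, applied directly with $R\<Z\>$ in the role of the $A$-ring, yields that the pair $\bigl(R\<Z\>,\, \smallo(A)R\<Z\>\bigr)$ is henselian.

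Next, I would show $JR\<Z\>\subseteq \sqrt{\smallo(A)R\<Z\>}$. By definition $J=\sqrt{\smallo(A)R}$, so for each $a\in J$ there is some $n$ with $a^n\in \smallo(A)R\subseteq \smallo(A)R\<Z\>$, whence $a\in \sqrt{\smallo(A)R\<Z\>}$. Because $\sqrt{\smallo(A)R\<Z\>}$ is an ideal of $R\<Z\>$, the inclusion $JR\<Z\>\subseteq \sqrt{\smallo(A)R\<Z\>}$ follows.

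Finally, I would invoke the corollary following Lemma~\ref{he3}, with $R\<Z\>$ in place of $R$, with $I:=\smallo(A)R\<Z\>$, and with $JR\<Z\>$ in place of the ``$J$'' of that corollary. Since $(R\<Z\>,\smallo(A)R\<Z\>)$ is henselian and $JR\<Z\>\subseteq \sqrt{\smallo(A)R\<Z\>}$, we conclude that $(R\<Z\>,JR\<Z\>)$ is henselian, as required. No real obstacle is expected; the argument is a direct assembly of Lemma~\ref{he5} and the nilradical corollary, once one notices that $R\<Z\>$ was explicitly constructed to be an $A$-ring so Lemma~\ref{he5} applies verbatim.
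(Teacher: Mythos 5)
Your proposal is correct and is essentially the paper's own argument: apply Lemma~\ref{he5} to the $A$-ring $R\<Z\>$, check the inclusion $JR\<Z\>\subseteq \sqrt{\smallo(A)R\<Z\>}$, and conclude via the corollary following Lemma~\ref{he3}. The only difference is that you spell out the inclusion and the appeal to that corollary, which the paper leaves implicit.
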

\begin{proof} Applying Lemma~\ref{he5} to the $A$-ring $R\<Z\>$, the pair
$\big(R\<Z\>, \sqrt{\smallo(A)R\<Z\>}\big)$ is henselian. Now use that $JR\<Z\>\subseteq \sqrt{\smallo(A)R\<Z\>}$.
\end{proof}

\medskip\noindent
Our next goal is to define for $z\in R^N$ an evaluation map $g\mapsto g(z): R\<Z\>\to R$. We do this in the next section under a further noetherian assumption on $A$.

\section{The case of noetherian $A$} \label{noethA}

\noindent
{\em Let $A$ be a noetherian ring with an ideal $\smallo(A)\ne A$ such that $\bigcap_e\smallo(A)^e=\{0\}$ $($with $e$ ranging here and below over $\N)$ and
$A$ is $\smallo(A)$-adically complete}. Taking $0<\delta<1$ and defining
$|a|:=\delta^{n}$ if $a\in \smallo(A)^n\setminus \smallo(A)^{n+1}$ for $a\in A^{\ne}$ and $|0|:= 0$ gives an ultranorm on $A$ with respect to which $A$ is complete, with $\smallo(A)=\{a\in A:\ |a|<1\}$. Then the $\smallo(A)$-adic topology is the norm-topology. Take $t_1,\dots, t_r\in A$, $r\in \N$, such that $\smallo(A)=(t_1,\dots, t_r)$. Below, $n\geqslant 1$ and $Y=(Y_1,\dots, Y_n)$ as before, and $\lambda,\mu,\nu$ range over $\N^n$. 

\begin{lemma}\label{n1} Let $f=\sum_{\nu} a_{\nu}Y^{\nu}\in A\<Y\>$. Then there is $d\in \N^{\geqslant 1}$ such that for all $\nu$ with $|\nu|\geqslant d$ we have
$a_{\nu}=\sum_{|\mu|<d}a_{\mu}b_{\mu\nu}$ where the $b_{\mu\nu}\in \smallo(A)$ can be chosen such that $b_{\mu\nu}\to 0$ as $|\nu|\to \infty$ for each fixed
$\mu$ with $|\mu|<d$.
\end{lemma}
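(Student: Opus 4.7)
The plan is to exploit the noetherian hypothesis on $A$ twice: first to reduce to finitely many generators of the ideal spanned by the $a_\nu$, and then via Artin--Rees to upgrade the metric decay $a_\nu \to 0$ in $A$ to a decay statement inside this ideal equipped with its own $\smallo(A)$-adic topology.

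Concretely, set $I := \smallo(A)$ and consider the ideal $L := \sum_{\nu} A a_\nu$ of $A$. Since $A$ is noetherian, $L$ is finitely generated, so there is $d_0 \geqslant 1$ with $L = \sum_{|\mu|<d_0} A a_\mu$. Applying the Artin--Rees lemma to the submodule $L$ of $A$ with respect to $I$ produces $c \in \N$ such that $I^k \cap L \subseteq I^{k-c} L$ for all $k \geqslant c$. Combined with $a_\nu \to 0$, this yields, for each $\ell \geqslant 1$, a strictly increasing threshold $N_\ell \in \N$ such that $a_\nu \in I^{\ell+c} \cap L \subseteq I^\ell L$ whenever $|\nu| \geqslant N_\ell$.

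Now take $d := \max(d_0, N_1)$. For each $\nu$ with $|\nu| \geqslant d$ and $a_\nu \ne 0$, set $\ell(\nu) := \max\{\ell \geqslant 1 : |\nu| \geqslant N_\ell\}$, which is finite (since $N_\ell \to \infty$), is $\geqslant 1$, and tends to $\infty$ as $|\nu| \to \infty$. Because $a_\nu \in I^{\ell(\nu)} L = \sum_{|\mu|<d_0} I^{\ell(\nu)} a_\mu$, one can pick $b_{\mu\nu} \in I^{\ell(\nu)}$ for $|\mu| < d_0$ with $a_\nu = \sum_{|\mu|<d_0} a_\mu b_{\mu\nu}$, and set $b_{\mu\nu} := 0$ for $d_0 \leqslant |\mu| < d$ (and for all $\mu$ if $a_\nu = 0$). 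Each $b_{\mu\nu}$ then lies in $\smallo(A)$ by construction, and for each fixed $\mu$ the bound $|b_{\mu\nu}| \leqslant \delta^{\ell(\nu)}$ forces $b_{\mu\nu} \to 0$ as $|\nu| \to \infty$.

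The only substantive step is the Artin--Rees application. Noetherianity by itself produces coefficients $b_{\mu\nu} \in A$ with no control on their size, whereas the statement demands that they land in $\smallo(A)$ and decay with $|\nu|$. Artin--Rees is precisely what reconciles the topology of $L$ as a subspace of $A$ with its intrinsic $I$-adic topology as an $A$-module, and this reconciliation is what delivers the membership in $\smallo(A)$ and the decay simultaneously.
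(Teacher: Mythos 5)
Your argument is correct: the stabilization of the ideal $L=\sum_\nu Aa_\nu$ uses noetherianity of $A$ legitimately, the Artin--Rees comparison $I^k\cap L\subseteq I^{k-c}L$ is applied properly to the submodule $L$ of $A$, and since $I^{\ell}L=\sum_{|\mu|<d_0}I^{\ell}a_\mu$, the coefficients you extract do lie in $\smallo(A)^{\ell(\nu)}$ with $\ell(\nu)\to\infty$, which gives both membership in $\smallo(A)$ and the required decay. The route differs from the paper's in its key ingredient. The paper does not invoke Artin--Rees; instead it lifts each $a_\nu$ to a homogeneous polynomial $P_\nu\in A[T_1,\dots,T_r]$ of degree $e(\nu)$ in generators $t_1,\dots,t_r$ of $\smallo(A)$ (where $a_\nu\in\smallo(A)^{e(\nu)}$), applies Hilbert's basis theorem in $A[T_1,\dots,T_r]$ to find finitely many generating $P_\mu$, and then writes $P_\nu=\sum_{|\mu|<d_0}P_\mu Q_{\mu\nu}$ with $Q_{\mu\nu}$ homogeneous of degree $e(\nu)-e(\mu)>0$; specializing $T_i\mapsto t_i$ gives $b_{\mu\nu}=Q_{\mu\nu}(t_1,\dots,t_r)\in\smallo(A)^{e(\nu)-e(\mu)}$, so the decay of the $b_{\mu\nu}$ falls out of homogeneity. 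In effect the paper inlines the graded (Rees-algebra) mechanism that underlies Artin--Rees, tailored to this situation, which keeps the proof self-contained and gives an explicit exponent $e(\nu)-e(\mu)$ for the decay; your version is shorter and more modular, at the cost of quoting Artin--Rees as a black box and of a slightly more indirect bookkeeping with the thresholds $N_\ell$ and $\ell(\nu)$. Both establish the lemma.
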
 
\begin{proof} Since $a_{\nu}\to 0$ as $|\nu|\to \infty$, we have
$a_{\nu}\in \smallo(A)^{e(\nu)}$ with $e(\nu)\in \N$, $e(\nu)\to \infty$ as 
$|\nu|\to \infty$. So
 $a_{\nu}=P_\nu(t_1,\dots, t_r)$ with $P_{\nu}\in A[T_1,\dots, T_r]$ homogeneous of degree $e(\nu)$. Take $d_0\in \N$ such that the ideal of
  $A[T_1,\dots, T_r]$ generated by the $P_{\nu}$ is already generated by the
  $P_{\mu}$ with $|\mu|< d_0$. Next take $d\geqslant d_0$ in $\N^{\geqslant 1}$ so large that $e(\nu)> e(\mu)$
  for all $\mu,\nu$ with $|\mu|<d_0$ and $|\nu|\geqslant d$. Let $|\nu|\geqslant d$. Then $P_{\nu}=\sum_{|\mu|<d_0} P_{\mu} Q_{\mu\nu}$ with each $Q_{\mu\nu}\in A[T_1,\dots, T_r]$ homogeneous of degree $e(\nu)-e(\mu)$. Hence
  $$a_{\nu}\ =\ \sum_{|\mu|< d_0} a_{\mu}b_{\mu\nu}, \qquad b_{\mu\nu}\ :=\ Q_{\mu\nu}(t_1,\dots, t_r)\in \smallo(A),$$ which yields the desired result.    
\end{proof}

\noindent
Let $f$, $d$, and the $b_{\mu\nu}$ be as in the lemma. For $\mu$ with $|\mu|<d$ we set
$$f_{\mu}\ :=\ Y^\mu + \sum_{|\nu|\geqslant d}b_{\mu\nu}Y^\nu\in A\<Y\>,\ \text{ so }\
      f\ =\ \sum_{|\mu|<d} a_{\mu}f_{\mu}.$$
Therefore $\smallo(A\<Y\>)=(t_1,\dots,t_r)A\<Y\>$ and the ultranorm on $A\<Y\>$ induced by the above ultranorm
on $A$ has the property that for all $f\in A\<Y\>$ and $n$,
 $$|f|\leqslant \delta^n\ \Longleftrightarrow\ f\in \smallo(A\<Y\>)^n,$$
so the norm-topology of $A\<Y\>$ is the same as its $\smallo(A\<Y\>)$-adic topology. Moreover,
$$ A[Y]\cap \smallo(A\<Y\>)^n\ =  \smallo(A)^n A[Y], \text{ for all }n,$$
so $A\<Y\>$ with the $\smallo(A\<Y\>)$-adic topology is a completion of its noetherian subring $A[Y]$ with the $\smallo(A)A[Y]$-adic topology. Then
 $A\<Y\>$ is noetherian by \cite[Theorem 8.12]{Mat}, so 
 $A\<Y\>$ inherits the conditions we imposed on $A$ at the beginning of this section.  

\subsection*{Passing to $A/I$}  Let
 $I$ be a proper ideal of $A$. Since $A$ is noetherian, $I$ is closed in $A$ by \cite[Theorem 8.14]{Mat}. We equip $A/I$ with its
quotient norm, and observe that the ring morphism 
$$A\<Y\>\to (A/I)\<Y\>,\  \quad f:=\sum_{\nu} a_{\nu}Y^\nu \mapsto f/I:=\sum_{\nu}(a_{\nu}+I)Y^\nu$$
is surjective and that its kernel contains $IA\<Y\>$. Moreover:

\begin{lemma} \label{IaA} The ring morphism $A\<Y\>\to (A/I)\<Y\>$ has the following properties:
\begin{enumerate}
\item[(i)] its kernel is $IA\<Y\>$, a closed proper ideal of $A\<Y\>$;
\item[(ii)]  the induced ring isomorphism 
$$A\<Y\>/IA\<Y\>\to (A/I)\<Y\>$$ is norm preserving, with the quotient norm on $A\<Y\>/IA\<Y\>$; 
\item[(iii)] for $f, g_1,\dots, g_n\in A\<Y\>$ we have $(f/I)(g_1/I,\dots, g_n/I)=f(g_1,\dots, g_n)/I$.
\end{enumerate}
\end{lemma}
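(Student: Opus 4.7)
The plan is to handle (i), (ii), (iii) in order; the only real obstacle is the nonobvious inclusion $\ker \subseteq IA\<Y\>$ in (i), which will require the Artin--Rees lemma together with the noetherianness of $A$.

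For (i), the inclusion $IA\<Y\> \subseteq \ker$ is immediate, since any element of $IA\<Y\>$ has all of its coefficients in $I$. For the reverse, I would use that $A$ is noetherian, so $I$ is finitely generated, say $I = (c_1, \dots, c_s)$, and then apply Artin--Rees to the submodule $I \subseteq A$ with respect to the ideal $\smallo(A)$: this yields some $k_0$ with $\smallo(A)^k \cap I \subseteq \smallo(A)^{k-k_0} I$ for all $k \geqslant k_0$. Suppose $f = \sum_\nu a_\nu Y^\nu$ lies in $\ker$, so each $a_\nu \in I$; since $|a_\nu| \to 0$, writing $|a_\nu| \leqslant \delta^{k(\nu)}$ with $k(\nu) \to \infty$ lets me write, for all sufficiently large $|\nu|$, $a_\nu = \sum_i c_i b_{i\nu}$ with $|b_{i\nu}| \leqslant \delta^{k(\nu)-k_0}$. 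Fixing any such representation for each $\nu$ forces $b_{i\nu} \to 0$ as $|\nu| \to \infty$ for each fixed $i$, hence $h_i := \sum_\nu b_{i\nu} Y^\nu \in A\<Y\>$ and $f = \sum_i c_i h_i \in IA\<Y\>$. The closedness of $IA\<Y\>$ as a proper ideal then follows from the fact that $A\<Y\>$ is noetherian (established just before this lemma) and the Matsumura fact cited earlier that every ideal of such a ring is closed.

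For (ii), by (i) an element of $IA\<Y\>$ is precisely a series in $A\<Y\>$ whose every coefficient lies in $I$, so the quotient norm of $f = \sum a_\nu Y^\nu$ equals $\inf_{(c_\nu)} \max_\nu |a_\nu - c_\nu|$, the infimum taken over sequences $(c_\nu)_\nu$ in $I$ with $c_\nu \to 0$. This plainly dominates $\max_\nu |a_\nu + I|$, the norm of $f/I$ on $(A/I)\<Y\>$. For the converse inequality, since the ultranorm on $A$ takes only the discrete values $\{0\} \cup \{\delta^k : k \in \N\}$, for each $\nu$ I can pick $c_\nu \in I$ actually attaining $|a_\nu - c_\nu| = |a_\nu + I|$; the ultrametric inequality then gives $|c_\nu| \leqslant \max(|a_\nu|, |a_\nu - c_\nu|) \leqslant |a_\nu| \to 0$, so $\sum_\nu c_\nu Y^\nu$ is a bona fide element of $A\<Y\>$, lies in $IA\<Y\>$ by (i), and witnesses the desired equality.

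For (iii), the reduction $f \mapsto f/I$ is by inspection a ring morphism and norm-decreasing, hence continuous. The value $f(g_1,\dots,g_n)$ is defined as the sum in $A\<Y\>$ of the summable family $(a_\nu g_1^{\nu_1}\cdots g_n^{\nu_n})_\nu$; because the continuous additive morphism $f \mapsto f/I$ sends summable families to summable families and preserves their sums (by the summability rules of Section~\ref{ASR}), applying it term-by-term yields $f(g_1,\dots,g_n)/I = \sum_\nu (a_\nu + I)(g_1/I)^{\nu_1}\cdots(g_n/I)^{\nu_n}$, which is by definition $(f/I)(g_1/I,\dots,g_n/I)$, proving (iii).
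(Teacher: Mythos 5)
Your proof is correct, but for the main point (the kernel inclusion in (i)) you take a genuinely different route from the paper. The paper gets $\ker\subseteq IA\<Y\>$ for free from Lemma~\ref{n1}: that lemma already supplies a finite decomposition $f=\sum_{|\mu|<d}a_{\mu}f_{\mu}$ with $f_{\mu}\in A\<Y\>$, and since the $a_{\mu}$ are among the coefficients of $f$ they lie in $I$, so $f\in IA\<Y\>$ immediately; (ii) and (iii) are then dismissed as routine. You instead rederive an analogous finite decomposition from scratch: writing $I=(c_1,\dots,c_s)$ and applying Artin--Rees to $I\subseteq A$ with respect to $\smallo(A)$, you control the $\smallo(A)$-adic order of the $b_{i\nu}$ so that $h_i=\sum_\nu b_{i\nu}Y^\nu$ converges and $f=\sum_i c_ih_i\in IA\<Y\>$. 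This is essentially the classical argument that completion commutes with passing to $A/I$ for noetherian $A$, and it is perfectly valid here; what the paper's route buys is economy (Lemma~\ref{n1} is needed anyway, both here and for Weierstrass preparation with parameters), while yours is self-contained modulo Artin--Rees and makes the role of the finitely many generators of $I$ explicit. Your treatments of (ii) and (iii), which the paper leaves to the reader, are fine; the only small wrinkle is in (ii), where discreteness of the norm values alone guarantees that $\inf_{c\in I}|a_\nu-c|$ is attained only when that infimum is positive. When it is $0$ you need $a_\nu\in I$, i.e.\ closedness of $I$ in $A$ (already recorded in the paper from [Mat, Theorem 8.14]), and then $c_\nu=a_\nu$ works; with that one-line supplement your argument is complete.
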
 
\begin{proof} Suppose $f(Y)=\sum_{\nu} a_{\nu}Y^\nu\in A\<Y\>$ is in the kernel. Then all $a_{\nu}\in I$. 
For some $d\geqslant 1$ we have an equality $f(Y)=\sum_{|\mu|<d} a_{\mu}f_{\mu}(Y)$
with $\mu$ ranging over $\N^n$ and all $f_{\mu}(Y)\in A\<Y\>$, hence $f(Y)\in IA\<Y\>$. Verifying (ii) is routine using the definitions of the norms involved. Item (iii) is an easy consequence of (ii). 
\end{proof} 

\subsection*{The effect on $A$-rings} Our noetherian assumption on $A$ has consequences for $A$-rings. 
{\em In the rest of this section $R$ is an $A$-ring, and $Z=(Z_1,\dots, Z_N)$ as before}. With $(\iota_n)$ 
the $A$-analytic structure of $R$, here is a corollary of Lemma~\ref{IaA}(i):

\begin{corollary} \label{Iar} Suppose the proper ideal $I$ of $A$ is contained in the kernel of $\iota_0$. Then
we have an $(A/I)$-analytic structure $(\iota_n/I)_n$ on $R$ given by
  $$(\iota_n/I)(f/I)\ :=\  \iota_n(f)\ \text{  for }f\in A\<Y\>.$$ 
\end{corollary}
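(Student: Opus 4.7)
The plan is to show that $\iota_n\colon A\<Y\>\to R^{R^n}$ vanishes on the kernel of the quotient surjection $A\<Y\>\twoheadrightarrow (A/I)\<Y\>$; then $\iota_n$ descends to a well-defined ring morphism $\iota_n/I\colon (A/I)\<Y\>\to R^{R^n}$, and the axioms (A1)--(A3) for the family $(\iota_n/I)$ follow from those for $(\iota_n)$ by lifting.

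For the descent, Lemma~\ref{IaA}(i) identifies the relevant kernel with $IA\<Y\>$. Since $A$ is noetherian, $I$ is finitely generated, say $I=(c_1,\dots,c_r)$, so every element of $IA\<Y\>$ has the form $\sum_{i=1}^{r} c_ih_i$ with $h_i\in A\<Y\>$. Viewing each $c_i$ as a constant series in $A\<Y\>$ and applying (A2) repeatedly, one obtains $\iota_n(c_i)(y)=\iota_0(c_i)$ for every $y\in R^n$; since $c_i\in I\subseteq\ker\iota_0$, this makes $\iota_n(c_i)$ the zero function on $R^n$. Because $\iota_n$ is a ring morphism, $\iota_n(c_ih_i)=\iota_n(c_i)\iota_n(h_i)=0$ pointwise, and by additivity $\iota_n$ annihilates all of $IA\<Y\>$. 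Setting $(\iota_n/I)(f/I):=\iota_n(f)$ thus defines a ring morphism on $(A/I)\<Y\>$.

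Verification of (A1)--(A3) for $(\iota_n/I)$ is then routine descent. For (A1), $Y_k/I$ is the image of $Y_k$, so $(\iota_n/I)(Y_k/I)(y)=\iota_n(Y_k)(y)=y_k$. For (A2), lift $\bar f\in (A/I)\<Y_1,\dots,Y_n\>$ to $f\in A\<Y_1,\dots,Y_n\>$, observe that the image of $f$ under the inclusion into $A\<Y_1,\dots,Y_{n+1}\>$ reduces mod $I$ to the image of $\bar f$ in $(A/I)\<Y_1,\dots,Y_{n+1}\>$, and invoke (A2) for $(\iota_n)$. Axiom (A3) is handled identically, using additionally that reduction modulo $I$ commutes with substitution, as encoded in Lemma~\ref{IaA}(iii). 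No substantive obstacle arises; the whole content of the corollary is concentrated in the vanishing of $\iota_n$ on $IA\<Y\>$, which rests on the noetherian hypothesis (via the identification of the kernel with $IA\<Y\>$ and the finite generation of $I$) together with $I\subseteq\ker\iota_0$.
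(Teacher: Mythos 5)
Your proof is correct and is essentially the paper's (implicit) argument: the corollary is stated there as an immediate consequence of Lemma~\ref{IaA}(i), and the descent you spell out---$\iota_n$ annihilates $IA\<Y\>$ because constants from $I\subseteq\ker\iota_0$ become the zero function by (A2), so $\iota_n$ factors through the surjection $A\<Y\>\to(A/I)\<Y\>$, after which (A1)--(A3) lift routinely (with Lemma~\ref{IaA}(iii) handling the substitution in (A3))---is exactly what is left to the reader. Only a slight mis-emphasis in your closing remark: the vanishing of $\iota_n$ on $IA\<Y\>$ needs neither noetherianity nor finite generation of $I$ (any element of $IA\<Y\>$ is a finite sum $\sum_j c_jh_j$ with $c_j\in I$, $h_j\in A\<Y\>$ regardless); the noetherian hypothesis enters only through Lemma~\ref{IaA}(i), i.e.\ to know that the kernel of $A\<Y\>\to(A/I)\<Y\>$ is no larger than $IA\<Y\>$.
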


\begin{lemma}\label{n2} Let $f=\sum_{\nu}a_{\nu}(X)Y^\nu\in A\<X\>\<Y\>= A\<X,Y\>$. Suppose
$x\in R^m$ and $f(x,Y)=0$, that is, $a_{\nu}(x)=0$ for all $\nu$.
Then $f(x,y)=0$ for all $y\in R^n$.
\end{lemma}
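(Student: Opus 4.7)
The plan is to use Lemma~\ref{n1} to rewrite $f$ as a \emph{finite} $A\<X\>$-linear combination of the coefficients $a_\mu(X)$ with $|\mu| < d$, and then pass the resulting identity through the ring morphism ``evaluation at $(x,y)$''. The crucial observation is that $A\<X\>$ inherits the standing hypotheses placed on $A$ at the start of Section~\ref{noethA}: it is noetherian, $\smallo(A\<X\>) = (t_1,\dots,t_r)A\<X\>$ is a proper ideal with $\bigcap_e \smallo(A\<X\>)^e = \{0\}$, and $A\<X\>$ is $\smallo(A\<X\>)$-adically complete — all of this was already recorded in the discussion following Lemma~\ref{n1}.

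Applying Lemma~\ref{n1} with $A\<X\>$ in place of $A$ and $f = \sum_\nu a_\nu(X) Y^\nu \in A\<X\>\<Y\> = A\<X,Y\>$ in place of its hypothesized element then yields $d \in \N^{\geqslant 1}$ and elements $b_{\mu\nu}(X) \in \smallo(A\<X\>)$ (for $|\mu| < d$ and $|\nu| \geqslant d$), with $b_{\mu\nu}(X) \to 0$ as $|\nu| \to \infty$ for each fixed $\mu$, satisfying $a_\nu(X) = \sum_{|\mu|<d} a_\mu(X) b_{\mu\nu}(X)$ whenever $|\nu| \geqslant d$. Setting
\[
f_\mu(X,Y)\ :=\ Y^\mu + \sum_{|\nu|\geqslant d} b_{\mu\nu}(X) Y^\nu\ \in\ A\<X,Y\>
\]
for each $\mu$ with $|\mu| < d$, the conclusion of Lemma~\ref{n1} rearranges to the finite-sum identity
\[
f(X,Y)\ =\ \sum_{|\mu|<d} a_\mu(X)\, f_\mu(X,Y) \qquad \text{in } A\<X,Y\>.
\]

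Next I would apply the ring morphism $\iota_{m+n} \colon A\<X,Y\> \to R^{R^{m+n}}$ followed by evaluation at the point $(x,y) \in R^{m+n}$; both are ring morphisms, and since the sum above is finite the identity passes through to give
\[
f(x,y)\ =\ \sum_{|\mu|<d} a_\mu(x)\, f_\mu(x,y) \qquad \text{in } R.
\]
By hypothesis $a_\mu(x) = 0$ for every $\mu$, in particular for those with $|\mu| < d$, so the right-hand side vanishes and $f(x,y) = 0$, as desired.

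There is no serious obstacle here: the whole content of the argument is that Lemma~\ref{n1}, applied over $A\<X\>$, trades the a-priori infinite convergent expansion defining $f$ for a \emph{finite} linear combination of the coefficients $a_\mu(X)$ with $|\mu| < d$, and finite sums are preserved by any ring morphism. The only point that needs to be flagged is the applicability of Lemma~\ref{n1} to $A\<X\>$, which is immediate from the opening discussion of Section~\ref{noethA}.
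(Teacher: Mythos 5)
Your proposal is correct and is essentially the paper's own argument: Lemma~\ref{n1} applied with $A\<X\>$ in the role of $A$ gives the finite decomposition $f=\sum_{|\mu|<d}a_\mu f_\mu$ in $A\<X,Y\>$, and evaluating this finite identity at $(x,y)$ (via the ring morphism $\iota_{m+n}$, tacitly using dummy variables so that $a_\mu(x,y)=a_\mu(x)$) gives $f(x,y)=0$. The paper states this in one line; you have merely spelled out the same steps.
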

\begin{proof} With $A\<X\>$ in the role of $A$, the above gives 
a finite sum decomposition $f\ =\ \sum_{|\mu|<d} a_{\mu}f_{\mu}$
with the $f_{\mu}\in A\<X,Y\>$, which yields the desired conclusion.
\end{proof}

\noindent
We can now prove the following key universal property of the $A$-ring $R\<Z\>$:

\begin{theorem}\label{n3} Let $\phi: R \to R^*$ be an $A$-ring morphism and $z=(z_1,\dots, z_N)\in (R^*)^N$. Then $\phi$ extends uniquely to an $A$-ring
morphism $R\<Z\>\to R^*$ sending $Z_1,\dots, Z_N$ to $z_1,\dots, z_N$, respectively.
\end{theorem}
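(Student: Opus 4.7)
The uniqueness is immediate from the observation, recorded just after the definition of the $A$-ring structure on $R\<Z\>$, that $R\<Z\>$ is generated as an $A$-ring by $R\cup\{Z_1,\dots, Z_N\}$: any $A$-ring morphism $R\<Z\>\to R^*$ extending $\phi$ and mapping $Z_i\mapsto z_i$ is thus determined on all of $R\<Z\>$. For existence, the plan is to define
\[
\tilde\phi\big(f(x,Z)\big)\ :=\ f\big(\phi(x),z\big)\qquad \big(f\in A\<X,Z\>,\ x\in R^m\big),
\]
where the right-hand side is evaluated via the $A$-analytic structure on $R^*$, and then to verify well-definedness and the $A$-ring morphism property.

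Well-definedness is the heart of the argument. Suppose $f_1(x_1,Z)=f_2(x_2,Z)$ in $R\<Z\>$, with $f_i\in A\<X^i,Z\>$ and $x_i\in R^{m_i}$. Combining the tuples, set $X:=(X^1,X^2)$ and $x:=(x_1,x_2)\in R^{m_1+m_2}$, and view $f_1,f_2$ in $A\<X,Z\>$ via the natural inclusions (dummy variables), so $(f_1-f_2)(x,Z)=0$ in $R\<Z\>\subseteq R[[Z]]$. Writing $f_1-f_2=\sum_{\nu}a_{\nu}(X)Z^\nu$ in $A\<X\>\<Z\>$ with $a_\nu\in A\<X\>$, vanishing in $R[[Z]]$ forces $a_\nu(x)=0$ in $R$ for every $\nu$. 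Since $\phi$ is an $A$-ring morphism, $a_\nu(\phi(x))=\phi(a_\nu(x))=0$ in $R^*$ for every $\nu$. Lemma~\ref{n2}, applied in the $A$-ring $R^*$, then yields $(f_1-f_2)(\phi(x),y)=0$ for all $y\in (R^*)^N$; specializing to $y=z$ gives $f_1(\phi(x_1),z)=f_2(\phi(x_2),z)$, as required. This step is the main obstacle, and it is precisely where the standing noetherian hypothesis on $A$ is used (through Lemma~\ref{n2}).

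To check $\tilde\phi$ is an $A$-ring morphism, let $f\in A\<Y_1,\dots,Y_n\>$ and $u_1,\dots, u_n\in R\<Z\>$; using a common tuple $x\in R^m$, write $u_i=\tilde u_i(x,Z)$ with $\tilde u_i\in A\<X,Z\>$. By the very definition of the $A$-ring structure on $R\<Z\>$, one has $f(u_1,\dots,u_n)=g(x,Z)$ where $g:=f(\tilde u_1,\dots,\tilde u_n)\in A\<X,Z\>$. Applying $\tilde\phi$ and then the substitution-evaluation identity (Lemma~\ref{A3} applied in the $A$-ring $R^*$) gives
\[
\tilde\phi\big(f(u_1,\dots, u_n)\big)\ =\ g(\phi(x),z)\ =\ f\big(\tilde u_1(\phi(x),z),\dots,\tilde u_n(\phi(x),z)\big)\ =\ f\big(\tilde\phi(u_1),\dots,\tilde\phi(u_n)\big).
\]
Finally, taking $f(X)=X_1$ (no $Z$-variables) with $x=r\in R$ shows $\tilde\phi$ extends $\phi$, and $f=Z_i$ gives $\tilde\phi(Z_i)=z_i$. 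Combined with uniqueness, this yields the theorem.
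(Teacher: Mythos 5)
Your proof is correct and follows essentially the same route as the paper: define the extension by $f(x,Z)\mapsto f(\phi(x),z)$, use Lemma~\ref{n2} (in $R^*$, after combining representations with dummy variables) for well-definedness, and verify the $A$-ring morphism property via the defining substitution and Lemma~\ref{A3}, with uniqueness from $R\<Z\>$ being generated as an $A$-ring by $R\cup\{Z_1,\dots,Z_N\}$. You simply spell out details the paper leaves as "one verifies easily."
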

\begin{proof} Let $g(Z)\in R\<Z\>$. Take $f(X,Z)\in A\<X,Z\>$ and $x\in R^m$ such that $g(Z)=f(x,Z)$, and set $g(z):= f(\phi(x),z)\in R^*$. By Lemma~\ref{n2}
(with $Z$ instead of $Y$) and the usual arguments with dummy variables, this element of $R^*$ depends only on $g(Z)$ and $z$, not on the choice of $m,f,x$. 
Moreover, the map $g(Z)\mapsto g(z): R\<Z\>\to R^*$ is a ring morphism that extends $\phi$ and sends $Z_j$ to $z_j$ for $j=1,\dots,N$. One also verifies easily that for $F\in A\<Y\>$ and $g_1,\dots, g_n\in R\<Z\>$ we have
$$ F(g_1,\dots, g_n)(z)\ =\ F\big(g_1(z),\dots, g_n(z)\big),$$
so this map $R\<Z\> \to R^*$ is an $A$-ring morphism.  
\end{proof} 

\noindent
We retain the notation $g(z)$ introduced in the proof above. In Theorem~\ref{n3}, 
$g\in R\<Z_1,\dots, Z_i\>$ with $i\leqslant N$ gives
$g(z_1,\dots, z_i)=g(z_1,\dots, z_N)$ where on the right we take $g$ as an element of $R\<Z\>$.  For $R=R^*$ and $\phi$ the identity on $R$ this theorem gives the evaluation map
$g\mapsto g(z): R\<Z\>\to R$ promised earlier as a morphism of $A$-rings. It is also a morphism of $R$-algebras.

\begin{lemma}\label{kerev} Let $z\in R^N$. Then the kernel of the morphism $g\mapsto g(z): R\<Z\>\to R$ of $R$-algebras is the ideal $(Z_1-z_1,\dots, Z_N-z_N)R\<Z\>$ of $R\<Z\>$.
\end{lemma}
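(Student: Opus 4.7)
The plan is to establish the two inclusions separately. The containment $(Z_1-z_1,\dots,Z_N-z_N)R\<Z\>\subseteq \ker$ is immediate since the evaluation $g\mapsto g(z)$ is an $R$-algebra morphism sending each $Z_j-z_j$ to $0$.

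For the reverse inclusion, suppose $g\in R\<Z\>$ satisfies $g(z)=0$. By the definition of $R\<Z\>$, I can write $g(Z)=f(x,Z)$ for some $m$, some $x\in R^m$, and some $f(X,Z)\in A\<X,Z\>=A\<X\>\<Z\>$. I then apply Corollary~\ref{cord2}, with $A\<X\>$ in the role of the coefficient ring $A$ there, and with two $N$-tuples $Z=(Z_1,\dots,Z_N)$ and $W=(W_1,\dots,W_N)$ playing the roles of $X$ and $Y$. This produces series $h_1,\dots,h_N\in A\<X,Z,W\>$ with
$$f(X,Z)-f(X,W)\ =\ \sum_{j=1}^N (Z_j-W_j)\,h_j(X,Z,W) \qquad \text{in } A\<X,Z,W\>.$$

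Next I evaluate this identity inside the $A$-ring $R\<Z\>$, substituting $x$ for $X$, the elements $Z_1,\dots,Z_N\in R\<Z\>$ for the indeterminates $Z_1,\dots,Z_N$, and the elements $z_1,\dots,z_N\in R\subseteq R\<Z\>$ for $W_1,\dots,W_N$. Since $R\<Z\>$ is an $A$-ring, these substitutions define an $A$-algebra morphism $A\<X,Z,W\>\to R\<Z\>$. The left-hand side becomes $f(x,Z)-f(x,z)=g-g(z)=g$, while the right-hand side lies visibly in $(Z_1-z_1,\dots,Z_N-z_N)R\<Z\>$, which yields the desired containment.

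The main obstacle is checking that the substitution step really produces the expected elements of $R\<Z\>$: specifically, that evaluating $f(X,Z)$ at $(x,Z)$ in the $A$-ring sense on $R\<Z\>$ returns exactly $g=f(x,Z)$ as originally defined via the subring $R\<Z\>\subseteq R[[Z]]$, and that evaluating at $(x,z)$ returns $g(z)$ in the sense of Theorem~\ref{n3}. Both compatibilities were explicitly noted when $R\<Z\>$ was equipped with its $A$-analytic structure, so no extra verification is needed; this is why the argument reduces to a one-line application of Corollary~\ref{cord2} followed by a substitution.
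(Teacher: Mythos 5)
Your proof is correct, but it takes a somewhat different route than the paper. The paper's own argument iterates Lemma~\ref{d1} (division with remainder by the monic degree-one polynomials $Z_N-z_N$, then $Z_{N-1}-z_{N-1}$, etc.) to get the additive decomposition $R\<Z\>=(Z_1-z_1,\dots,Z_N-z_N)R\<Z\>+R$, and then simply evaluates at $z$ to kill the constant term; it never needs to invoke the substitution machinery for the $A$-analytic structure on $R\<Z\>$. You instead double the variables and apply Corollary~\ref{cord2} (with $A\<X\>$ as coefficient ring), then specialize $(X,Z,W)\mapsto(x,Z,z)$ inside the $A$-ring $R\<Z\>$, obtaining the explicit congruence $g-g(z)\in(Z_1-z_1,\dots,Z_N-z_N)R\<Z\>$. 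Since Corollary~\ref{cord2} is itself a consequence of Lemma~\ref{d2}, i.e.\ of iterated division with remainder, the two arguments rest on the same underlying fact; the difference is that yours leans on the compatibility statements for evaluation in $R\<Z\>$ (the agreement of the two readings of $f(x,Z)$, the fact that $R\<Z\>$ extends $R$ as an $A$-ring, and the definition of $g(z)$ via Theorem~\ref{n3}), all of which are indeed available in the paper and which you correctly flag, whereas the paper's route avoids them entirely and stays at the level of a module decomposition. Your version has the mild advantage of producing the congruence $g\equiv g(z)$ modulo the ideal directly (mirroring the argument that ideals are congruence relations for the $A$-structure), at the cost of a slightly heavier appeal to the substitution conventions.
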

\begin{proof} For $N\geqslant 1$, Lemma~\ref{d1} gives $R\<Z\>=(Z_N-z_N)R\<Z\> + R\<Z_1,\dots, Z_{N-1}\>$. Proceeding inductively we obtain $R\<Z\> = (Z_1-z_1,\dots, Z_N-z_N)R\<Z\>+R$, which gives the desired result.
\end{proof}

\noindent
Note also that the
map $$R\<Z\>\to \text{ring of $R$-valued functions on $R^N$}$$ assigning to each
$g\in R\<Z\>$ the function $z\mapsto g(z)$ is an $R$-algebra morphism.

\bigskip\noindent
Another special case of Theorem~\ref{n3}: let $R^*$ be an $A$-ring extending $R$, let $\phi$ be the resulting inclusion $R\to R^*\<Z\>$, and $z_j:=Z_j\in R^*\<Z\>$ for $j=1,\dots,N$. Then the corresponding $A$-ring morphism
$R\<Z\> \to R^*\<Z\>$ is a restriction of the inclusion $R[[Z]]\to R^*[[Z]]$ and sends 
$f(x,Z)\in R\<Z\>$ for $f\in A\<X,Z\>$ and $x\in R^m$ to $f(x,Z)\in R^*\<Z\>$. We identify $R\<Z\>$ with an 
$A$-subring of $R^*\<Z\>$ via this morphism.  Thus in the situation of Lemma~\ref{gen} we have 
$$R\<y\>\ =\ \{g(y):\ g\in R\<Y\>\}.$$

\noindent
Let $I$ be an ideal of $R$. Then the canonical map $R\to R/I$ extends to the morphism
$R\<Z\>\to (R/I)\<Z\>$ of $A$-rings sending $Z_j$ to $Z_j$ for $j=1,\dots,N$, and we have:

\begin{lemma}\label{kermodI} The kernel of the above morphism $R\<Z\>\to (R/I)\<Z\>$ is $IR\<Z\>$.
\end{lemma}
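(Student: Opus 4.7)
\medskip

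The plan is to prove the two inclusions separately. The inclusion $IR\<Z\> \subseteq \ker$ is essentially formal: using the description of the morphism given by Theorem~\ref{n3}, if $g = \sum_{\nu} c_\nu Z^\nu \in R\<Z\>$ and we write an element $h \in IR\<Z\>$ as a finite sum $h = \sum_i r_i g_i$ with $r_i \in I$ and $g_i \in R\<Z\>$, then the $Z^\nu$-coefficient of $h$ in $R[[Z]]$ is a finite $R$-linear combination of the $r_i$, hence lies in $I$, so the image in $(R/I)\<Z\>$ is zero. (One should first verify that the morphism $R\<Z\> \to (R/I)\<Z\>$ of Theorem~\ref{n3} is simply the restriction of the coefficient-reduction map $R[[Z]] \to (R/I)[[Z]]$; this follows because for $f(X,Z) = \sum_\nu a_\nu(X) Z^\nu \in A\<X,Z\>$ and $x \in R^m$, the image of $f(x,Z)$ is $f(\bar{x}, Z) = \sum_\nu a_\nu(\bar{x})Z^\nu = \sum_\nu \overline{a_\nu(x)}Z^\nu$, using that the $A$-analytic structure on $R/I$ comes from that of $R$ via $\iota_{n,R/I}(f)(\bar y) = \overline{\iota_{n,R}(f)(y)}$, as observed in the discussion of $R/I$ as an $A$-ring following Corollary~\ref{cord2}.)

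For the reverse inclusion $\ker \subseteq IR\<Z\>$, let $g \in R\<Z\>$ lie in the kernel, so all coefficients of $g$ as a power series in $Z$ belong to $I$. Write $g = f(x, Z)$ for some $f(X, Z) \in A\<X, Z\>$ and $x \in R^m$, using the definition of $R\<Z\>$. Now apply Lemma~\ref{n1} to $f$ viewed as an element of $A\<X\>\<Z\> = A\<X,Z\>$, with $A\<X\>$ playing the role of $A$ and $Z$ playing the role of $Y$: this yields $d \geq 1$ and a finite decomposition
\[
f(X,Z) \ =\ \sum_{|\mu| < d} a_\mu(X) f_\mu(X, Z),
\]
where $a_\mu(X) \in A\<X\>$ is the $Z^\mu$-coefficient of $f$ and
\[
f_\mu(X, Z) \ =\ Z^\mu + \sum_{|\nu| \geq d} b_{\mu\nu}(X) Z^\nu \ \in\ A\<X,Z\>.
\]

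Substituting $x$ for $X$ gives the finite expression $g = \sum_{|\mu| < d} a_\mu(x) f_\mu(x, Z)$ in $R\<Z\>$. Since $f_\mu(X,Z) = Z^\mu + \sum_{|\nu| \geq d} b_{\mu\nu}(X) Z^\nu$ with no contribution to the $Z^\mu$-coefficient from the higher-order terms (as $|\nu| \geq d > |\mu|$), the $Z^\mu$-coefficient of $g$ equals $a_\mu(x)$ for each $|\mu| < d$. By hypothesis $a_\mu(x) \in I$, so $g$ is a finite $R\<Z\>$-linear combination of elements of $I$, hence $g \in IR\<Z\>$. The only potential obstacle is the passage from ``all coefficients of $g$ lie in $I$'' (an infinite condition) to ``$g$ is a finite $R\<Z\>$-linear combination of elements of $I$''; this is precisely overcome by invoking the noetherian-based finite presentation from Lemma~\ref{n1}, which is exactly the kind of step the section was designed to enable.
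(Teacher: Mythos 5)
Your proof is correct and follows essentially the same route as the paper: the easy inclusion via identifying the morphism with the restriction of coefficient-wise reduction $R[[Z]]\to (R/I)[[Z]]$, and the reverse inclusion via the finite decomposition $f=\sum_{|\mu|<d}a_\mu f_\mu$ from Lemma~\ref{n1} (applied with $A\<X\>$ in place of $A$), followed by substituting $x$ for $X$.
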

\begin{proof} With $\beta$ ranging over $\N^N$, this morphism is a restriction of the ring morphism
$$\sum_{\beta} c_{\beta}Z^\beta\mapsto \sum_{\alpha} (c_{\beta}+I)Z^\beta\ :\ R[[Z]]\to (R/I)[[Z]] \qquad(\text{ all }c_{\beta}\in R),$$
so $IR\<Z\>$ is contained in the kernel. Suppose $f(x,Z)$ is in the kernel where $x\in R^m$ and $f(X,Z)=\sum_{\beta} a_{\beta}(X)Z^\beta\in A\<X,Z\>$. Then all $a_{\beta}(x)\in I$, and
since for some $d\geqslant 1$ we have an equality $f(X,Z)=\sum_{|\alpha|<d} a_{\alpha}(X)f_{\alpha}(X,Z)$
with $\alpha$ ranging over $\N^N$ and all $f_{\alpha}(X,Z)\in A\<X,Z\>$, substitution of $x$ for $X$ gives $f(x,Z)\in IR\<Z\>$.   
\end{proof}

\begin{corollary} Let $J:= \sqrt{\smallo(A)R}$. Then $JR\<Z\>=\sqrt{\smallo(A)R\<Z\>}$. 
\end{corollary}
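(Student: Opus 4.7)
The plan is to prove the two inclusions separately, with the nontrivial direction reduced via the surjection $R\<Z\>\to (R/J)\<Z\>$ of Lemma~\ref{kermodI} to showing that $(R/J)\<Z\>$ has no nonzero nilpotents.

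For the easy direction $JR\<Z\>\subseteq \sqrt{\smallo(A)R\<Z\>}$: any $a\in J$ satisfies $a^n\in \smallo(A)R \subseteq \smallo(A)R\<Z\>$ for some $n\geqslant 1$, so $a\in \sqrt{\smallo(A)R\<Z\>}$; since the right-hand side is an ideal of $R\<Z\>$, we get $JR\<Z\>\subseteq \sqrt{\smallo(A)R\<Z\>}$.

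For the reverse inclusion, consider the canonical morphism $\pi\colon R\<Z\>\to (R/J)\<Z\>$ of $A$-rings, which by Lemma~\ref{kermodI} has kernel exactly $JR\<Z\>$. Because $J=\sqrt{\smallo(A)R}\supseteq \smallo(A)R\supseteq \iota_0(\smallo(A))$, the image of each element of $\smallo(A)$ under the composition $A\xrightarrow{\iota_0} R\to R/J$ is $0$. Consequently $\smallo(A)$ acts as $0$ on the $A$-ring $R/J$, so $\pi$ annihilates $\smallo(A)R\<Z\>$. Hence for $f\in \sqrt{\smallo(A)R\<Z\>}$, with $f^n\in \smallo(A)R\<Z\>$, we have $\pi(f)^n=\pi(f^n)=0$, i.e., $\pi(f)$ is nilpotent in $(R/J)\<Z\>$.

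The remaining and only potentially subtle point is that $\pi(f)=0$. This is where Corollary~\ref{ry} comes in: since $J$ is a radical ideal of $R$ (being the radical of $\smallo(A)R$), the quotient $R/J$ has no nonzero nilpotents, so by Corollary~\ref{ry} neither does $(R/J)\<Z\>$. Therefore $\pi(f)=0$, and applying Lemma~\ref{kermodI} again we conclude $f\in JR\<Z\>$. This yields $\sqrt{\smallo(A)R\<Z\>}\subseteq JR\<Z\>$ and completes the proof.
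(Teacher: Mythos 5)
Your proof is correct and follows essentially the same route as the paper: both establish the containments via $\smallo(A)R\subseteq J$ and $J=\sqrt{\smallo(A)R}$, and both use Lemma~\ref{kermodI} (identifying $JR\<Z\>$ as the kernel of $R\<Z\>\to (R/J)\<Z\>$) together with the no-nonzero-nilpotents part of Corollary~\ref{ry} applied to $R/J$ to see that $JR\<Z\>$ is radical. You merely unpack the "radical ideal" step into an elementwise argument, which is fine.
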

\begin{proof} We have  $\smallo(A)R\<Z\>\subseteq JR\<Z\>\subseteq \sqrt{\smallo(A)R\<Z\>}$. It remains to note that $JR\<Z\>$ is a radical ideal of $R\<Z\>$, by Lemma~\ref{kermodI}
and a part of Corollary~\ref{ry}.
\end{proof}

 \noindent
 Let $x\in R^m$, construe $R$ as an $(A,x)$-ring, so $R$ is equipped with a certain $A\<X\>$-analytic structure, and let
 $\phi: R \to R^*$ be an $A$-ring morphism. Then $\phi: R \to R^*$ is also an $A\<X\>$-ring morphism where we construe $R^*$ as an $(A,\phi(x))$-ring, with $\phi(x):=\big(\phi(x_1),\dots, \phi(x_m)\big)$. For $z\in (R^*)^N$ the unique extension of $\phi$ to an $A$-ring
 morphism $R\<Z\>\to R^*$ sending $Z_1,\dots, Z_N$ to $z_1,\dots, z_N$ is also an $A\<X\>$-ring morphism.  In other words, for $g\in R\<Z\>$ and $z\in (R^*)^N$
 the two ways of interpreting $g(z)$ give the same  element of $R^*$, and so this raises no conflict of notation.

\subsection*{Substituting elements of $R\<Z\>$ in elements of $R\<Y\>$}
Here is another case of Theorem~\ref{n3}:
let $g_1,\dots, g_n\in R\<Z\>$ and $\phi: R \to R\<Z\>$ the inclusion map.
Then $\phi$ extends uniquely to the $A$-ring morphism 
$$f\mapsto f(g_1,\dots, g_n)\ :\  R\<Y\>\to R\<Z\>$$ that sends $Y_1,\dots, Y_n$ to $g_1,\dots, g_n$. For $z\in R^N$ we have
$$ f(g_1,\dots,g_n)(z)\ =\ f\big(g_1(z),\dots, g_n(z)\big). $$
This follows for example from the uniqueness in Theorem~\ref{n3}. 

Let now $n,d\geqslant 1$. For $N=n$ and $Y=Z$ this yields the automorphism 
$$f(Y)\mapsto f\big(T_d(Y)\big)$$ of the
$A$-ring $R\<Y\>$ and the $R$-algebra $R\<Y\>$, with inverse $g(Y)\mapsto g\big(T_d^{-1}(Y)\big)$. 

\medskip\noindent
Let $f\in A\<X,Z\>$, $g_1(X,Z),\dots, g_N(X,Z)\in A\<X,Z\>$, and set
$$h(X,Z)\ :=\ f\big(X, g_1(X,Z),\dots, g_N(X,Z)\big)\in A\<X, Z\>.$$
Then for $x\in R^m$ we can interpret $f\big(x,g_1(x,Z),\dots, g_N(x,Z)\big)$ on the one hand as $h(x,Z)\in R\<Z\>$, and on the other hand
as the element of $R\<Z\>$ obtained by evaluating
$f$ at the point $\big(x,g_1(x,Z),\dots, g_N(x,Z)\big)\in R\<Z\>^{m+N}$ according to the $A$-analytic structure
we gave $R\<Z\>$. By the uniqueness in Theorem~\ref{n3} these two elements of $R\<Z\>$ are equal, so this raises no conflict of notation.

\subsection*{Introducing $K\<Y\>$}  Let the $A$-ring $R$
 be a domain with fraction field $K$. Set
$$K\<Y\>\ :=\ \{c^{-1}g(Y):\ c\in R^{\ne},\ g(Y)\in R\<Y\>\subseteq K[[Y]]\}.$$
Thus $K\<Y\>$ is a subring of $K[[Y]]$ and contains $R\<Y\>$ as a subring.
For $n,d\geqslant 1$ the automorphism $g(Y)\mapsto g\big(T_d(Y)\big)$ of the
$A$-ring $R\<Y\>$ extends (uniquely) to an automorphism of the
$K$-algebra $K\<Y\>$, also to be indicated by $g\mapsto g\big(T_d(Y)\big)$.

\begin{lemma}\label{intKR} $K\<Y\>\cap R[[Y]] = R\<Y\>$, inside the ambient ring $K[[Y]]$.
\end{lemma}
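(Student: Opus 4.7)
The inclusion $R\<Y\>\subseteq K\<Y\>\cap R[[Y]]$ is immediate from the definitions; for the reverse inclusion I plan to take $f\in K\<Y\>\cap R[[Y]]$, write $f=c^{-1}g$ with $c\in R^{\ne}$ and $g\in R\<Y\>$, and exhibit $f$ itself as an element of $R\<Y\>$. By definition of $R\<Y\>$, I would write $g(Y)=h(x,Y)$ for some $x\in R^m$ and some $h\in A\<X,Y\>$, view $h$ as an element of $A\<X\>\<Y\>$, and expand $h(X,Y)=\sum_\nu h_\nu(X)Y^\nu$ with $h_\nu\in A\<X\>$ and $h_\nu\to 0$ in $A\<X\>$ as $|\nu|\to\infty$. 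Writing $f=\sum_\nu f_\nu Y^\nu$ with $f_\nu\in R$, coefficient comparison in $g=cf$ gives $h_\nu(x)=cf_\nu$ for every $\nu$.

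The central step is to apply Lemma~\ref{n1} with $A\<X\>$ in the role of $A$ (allowed since $A\<X\>$ inherits the noetherian assumptions on $A$ imposed at the start of this section) to $h\in A\<X\>\<Y\>$. This will produce an integer $d\geqslant 1$ and elements $b_{\mu\nu}(X)\in\smallo(A\<X\>)$, for $|\mu|<d$ and $|\nu|\geqslant d$, with $b_{\mu\nu}(X)\to 0$ in $A\<X\>$ as $|\nu|\to\infty$ for each fixed $\mu$, such that
$$h_\nu(X)\ =\ \sum_{|\mu|<d}h_\mu(X)b_{\mu\nu}(X)\qquad\text{whenever }|\nu|\geqslant d.$$
Substituting $x$ for $X$ and using $h_\nu(x)=cf_\nu$, I get $cf_\nu=c\sum_{|\mu|<d}f_\mu b_{\mu\nu}(x)$. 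Cancelling $c$, which is permitted because $R$ is a domain and $c\ne 0$, then yields $f_\nu=\sum_{|\mu|<d}f_\mu b_{\mu\nu}(x)$ for $|\nu|\geqslant d$.

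With this recursion in hand, I would define, for each $\mu$ with $|\mu|<d$,
$$F_\mu(X,Y)\ :=\ Y^\mu+\sum_{|\nu|\geqslant d}b_{\mu\nu}(X)Y^\nu\ \in\ A\<X\>\<Y\>\ =\ A\<X,Y\>,$$
the sum converging by the convergence condition on the $b_{\mu\nu}(X)$. A straightforward rearrangement then gives the identity $f(Y)=\sum_{|\mu|<d}f_\mu F_\mu(x,Y)$ in $R[[Y]]$. To present the right-hand side in the form required by the definition of $R\<Y\>$, I would introduce fresh indeterminates $X^*_\mu$ (one for each $\mu$ with $|\mu|<d$) and set
$$p(X,X^*,Y)\ :=\ \sum_{|\mu|<d}X^*_\mu F_\mu(X,Y)\ \in\ A\<X,X^*,Y\>,$$
then take $x^*:=(f_\mu)_{|\mu|<d}$, giving $p(x,x^*,Y)=f(Y)$ and therefore $f\in R\<Y\>$. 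The main obstacle is the key step: leveraging the noetherian hypothesis via Lemma~\ref{n1} to convert unstructured divisibility of the coefficients $h_\nu(x)$ by $c$ into a finite linear relation whose multipliers are $A$-analytic in $x$, which is precisely what lets one absorb the division by $c$ into fresh analytic data while keeping the low-degree values $f_\mu$ as ordinary ring parameters from $R$.
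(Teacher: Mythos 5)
Your proposal is correct and follows essentially the same route as the paper's proof: apply Lemma~\ref{n1} with $A\<X\>$ in place of $A$ to get the finite-rank relation $h_\nu=\sum_{|\mu|<d}h_\mu b_{\mu\nu}$, observe that the finitely many low-degree coefficients $f_\mu=c^{-1}h_\mu(x)$ lie in $R$ because $f\in R[[Y]]$, and feed them in as new ring parameters via fresh variables (your $X^*_\mu$ are the paper's $U_\mu$, and your $p(X,X^*,Y)=\sum_{|\mu|<d}X^*_\mu F_\mu(X,Y)$ is exactly the paper's series $F(X,U,Y)$). The only cosmetic difference is that you cancel $c$ to state the recursion for the $f_\nu$ explicitly, whereas the paper verifies the identity $g(Y)=F(x,u,Y)$ directly; both steps are valid since $R$ is a domain here.
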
  
\begin{proof} The inclusion $\supseteq$ is clear. For the reverse inclusion, let
$g\in K\<Y\>\cap R[[Y]]$. Now $g=c^{-1}\sum_{\nu}a_{\nu}(x)Y^{\nu}$ with
$c\in R^{\ne}$ and $\sum_{\nu}a_{\nu}(X)Y^\nu\in A\<X,Y\>$, $x\in R^m$. 
By Lemma~\ref{n1} applied to $A\<X\>$ instead of $A$ we have
$d\in \N^{\geqslant 1}$ such that for all $\nu$ with $|\nu|\geqslant d$ we have
$a_{\nu}(X)=\sum_{|\mu|<d}a_{\mu}(X)b_{\mu\nu}(X)$ where the $b_{\mu\nu}\in \smallo(A\<X\>)$ are chosen such that $b_{\mu\nu}\to 0$ as $|\nu|\to \infty$ for each fixed $\mu$ with $|\mu|<d$. Put $u_{\mu}:=c^{-1}a_{\mu}(x)\in R$ for
$|\mu|<d$. Then with a tuple $U=(U_{\mu})_{|\mu|<d}$ of new variables and setting
$$F(X,U,Y)\ :=\ \sum_{|\mu|<d} U_{\mu}Y^\nu + \sum_{|\nu|\geqslant d}\big(\sum_{|\mu|<d}b_{\mu\nu}(X)U_{\mu}\big)Y^\nu\in A\<X,U,Y\>$$
we have $g(Y)=F(x,u,Y)\in R\<Y\>$. 
\end{proof} 

\noindent
For $y\in R^n$ and $f(Y)=c^{-1}g(Y)\in K\<Y\>$ with $c\in R^{\ne}, g(Y)\in R\<Y\>$, the element $c^{-1}g(y)\in K$ depends only on $f,y$, not on $c,g$, and so we can define $f(y):=c^{-1}g(y)$. The map $f\mapsto f(y): K\<Y\>\to K$ is a
$K$-algebra morphism, extending the evaluation maps $K[Y]\to K$ and $R[Y]\to R$ sending $Y_1,\dots, Y_n$ to $y_1,\dots,y_n$, respectively. By Lemma~\ref{kerev} its kernel is the maximal ideal $(Y_1-y_1,\dots, Y_n-y_n)K\<Y\>$ of
$K\<Y\>$. 

\medskip\noindent
Suppose the $A$-ring $S$ extends $R$, and is a domain with
fraction field $L$ taken as a field extension of $K$. Then $L[[Y]]$ has
subrings $R\<Y\>,\ K\<Y\>,\ S\<Y\>,\ L\<Y\>$ with $K\<Y\>\subseteq L\<Y\>$. In this situation we have:

\begin{lemma}\label{lembasis} Assume $S$ is integral over $R$ and
 $b_1,\dots, b_m$ is a basis of the $K$-linear space $L$. Then $L\<Y\>$ is a free $K\<Y\>$-module with
 basis $b_1,\dots, b_m$.
\end{lemma}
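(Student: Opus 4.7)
\medskip
\noindent
\textbf{Proof plan for Lemma~\ref{lembasis}.}

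The plan is to establish independence and spanning separately. For independence I would work in the ambient ring $L[[Y]]$: if $\sum_{j=1}^m f_j b_j = 0$ with $f_j\in K\<Y\>$, then writing $f_j=\sum_\nu c_{j\nu}Y^\nu$ with $c_{j\nu}\in K$, the vanishing of each coefficient of $Y^\nu$ gives $\sum_j c_{j\nu}b_j=0$ in $L$; $K$-linear independence of $b_1,\dots,b_m$ in $L$ then forces all $c_{j\nu}=0$, so each $f_j=0$. This uses only that $K\<Y\>\subseteq K[[Y]]$.

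The real content is in the spanning statement. My approach is to introduce the $K\<Y\>$-submodule
$$M\ :=\ K\<Y\>b_1 + \cdots + K\<Y\>b_m\ \subseteq\ L[[Y]]$$
and show $L\<Y\>\subseteq M$. First I would verify that $M$ is a subring of $L[[Y]]$: since $b_j b_k\in L=Kb_1+\cdots+Kb_m$, the product of two elements of $M$ can be re-expanded in the $b_j$ with coefficients in $K\<Y\>$, and similarly $1\in L\subseteq M$. Thus $M$ contains both $K\<Y\>$ and $L$ as subrings.

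Next I would apply Corollary~\ref{corintgeny}, which (since $S$ is integral over $R$) gives $S\<Y\>=R\<Y\>[S]$. Using $R\<Y\>\subseteq K\<Y\>\subseteq M$ and $S\subseteq L\subseteq M$, and that $M$ is a ring, one obtains $S\<Y\>\subseteq M$. Finally any $g\in L\<Y\>$ is of the form $g=c^{-1}h$ with $c\in S^{\ne}$ and $h\in S\<Y\>$, by definition of $L\<Y\>$ (applied to the $A$-ring $S$ with fraction field $L$). Since $c^{-1}\in L\subseteq M$ and $h\in S\<Y\>\subseteq M$, another use that $M$ is a ring yields $g\in M$, completing the proof.

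The only step that requires a small check is that $M$ is closed under multiplication, and this is just bookkeeping with the structure constants of $L$ in the basis $b_1,\dots,b_m$; I do not anticipate a real obstacle. The conceptual point is that passing from $K\<Y\>$ to $L\<Y\>$ can be controlled by Corollary~\ref{corintgeny}, reducing the problem to the trivial fact that $L$ itself is a free $K$-module on $b_1,\dots,b_m$.
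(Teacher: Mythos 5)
Your proposal is correct and takes essentially the same route as the paper: both arguments obtain spanning from Corollary~\ref{corintgeny} together with the definition of $L\<Y\>$ as $S^{\ne}$-denominators of elements of $S\<Y\>$, and both obtain independence by comparing coefficients of monomials $Y^\nu$ inside $L[[Y]]$. Your intermediate step of checking that $M=K\<Y\>b_1+\cdots+K\<Y\>b_m$ is a subring is just a repackaging of the paper's direct computation $g=\sum_j c^{-1}a_jg_j$.
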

\begin{proof} Let $g\in L\<Y\>$ and take $c\in L^\times$ such that $cg\in S\<Y\>$.  Corollary~\ref{corintgeny} tells us that $S\<Y\>$ is generated as a ring over its subring $R\<Y\>$ by $S$, so $cg=\sum_{j\in J} a_jg_j$ with finite $J$, and $a_j\in S$, $g_j\in R\<Y\>$ for all $j\in J$, so $g=\sum_j c^{-1}a_jg_j$. 
Each $c^{-1}a_j$ is a $K$-linear combination of $b_1,\dots, b_m$, so
$g=b_1f_1+ \cdots + b_mf_m$ with $f_1,\dots, f_m\in K\<Y\>$. 
Moreover, if $f_1,\dots, f_m\in K\<Y\>$ are not all zero, then $b_1f_1+\cdots + b_mf_m\ne 0$, by considering a monomial $Y^\nu$ for which one of the $f_i$ has a nonzero coefficient.
\end{proof}

\section{Valuation rings with $A$-analytic structure} \label{vrna}

\noindent
We begin with generalities that do not require $A$ to be noetherian. A {\em valuation $A$-ring\/} is an $A$-ring whose underlying ring is a valuation ring. An {\em $A$-field\/} is a valued field whose valuation ring is equipped with an $A$-analytic structure making it a valuation $A$-ring. The language
$\mathcal{L}^A_{\preccurlyeq, D}$ is the language $\mathcal{L}^A$ of $A$-rings augmented with a binary relation symbol $\preccurlyeq$
(to encode the valuation) and a binary function symbol $D$ (for restricted division). We construe an $A$-field $K$ as an $\mathcal{L}^A_{\preccurlyeq, D}$-structure as follows, where $R$ is the valuation $A$-ring of $K$: 
\begin{itemize} 
\item any $f\in A\<Y\>$ is interpreted as the $n$-ary operation on $K$ giving $f(y)$ its $A$-ring value in $R$ for $y\in R^n$, and
$f(y):=0$ for $y\notin R^n$;
\item $y_1\preccurlyeq y_2\   \Leftrightarrow  y_1\in y_2R$;
\item $D(y_1,y_2)=y_1/y_2$ if $y_1\preccurlyeq y_2\ne 0$, and $D(y_1,y_2):=0$ otherwise.
\end{itemize}   
This makes $R$ an $\mathcal{L}^A_{\preccurlyeq, D}$-substructure of $K$. We define an {\em $A$-extension of $K$\/} to be a valued field extension of $K$ whose valuation ring is equipped with an $A$-analytic structure that makes it an extension of the $A$-ring $R$.  Thus any $A$-extension $L$  of
$K$ is an $A$-field and naturally an $\L^A_{\preccurlyeq,D}$-structure so that $K$ is a substructure of $L$.

\begin{lemma} \label{lem10.0} Let $K$ be an $A$-field with valuation $A$-ring $R$. Suppose $E$ is an $\L^A_{\preccurlyeq, D}$-substructure of $K$. Then: \begin{enumerate}
\item[(i)] $R_E:=\{a\in E:\ a\preccurlyeq 1\}$ is an $A$-subring of $R$ and is a valuation ring dominated by $R$;
\item[(ii)] $E\subseteq \Frac(R_E)\subseteq K$, and $E$ is a valuation ring.
\end{enumerate}
\end{lemma}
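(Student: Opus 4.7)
The approach hinges on a single observation: the function symbol $D$ is built into the signature $\L^A_{\preccurlyeq,D}$ precisely to ensure that any substructure is closed under such reciprocals as already exist in $R$. With this in hand, everything falls out quickly.

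For (i), I first note that the valuation ring $R=\{a\in K:\ a\preccurlyeq 1\}$ is itself an $A$-subring of $K$ (since the interpretations of the function symbols $f\in A\<Y\>$ on $K$ map into $R$). Because $E$ is an $\L^A_{\preccurlyeq,D}$-substructure of $K$, it is closed under the ring operations and under the interpretations of all $f\in A\<Y\>$, so $R_E=E\cap R$ inherits these, making it an $A$-subring of $R$. To see that $R_E$ is a valuation ring, let $0\ne a\in\Frac(R_E)$ and write $a=b/c$ with $b\in R_E$, $c\in R_E^{\ne}$. Since $R$ is a valuation ring of $K$, either $b\preccurlyeq c$ or $c\preccurlyeq b$; in the first case $D(b,c)=a$ and in the second $D(c,b)=a^{-1}$, and in both cases this element lies in $E\cap R=R_E$. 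For domination $\smallo(R)\cap R_E=\smallo(R_E)$: the $\subseteq$-direction is automatic (a unit of $R_E$ has its inverse in $R_E\subseteq R$), and the $\supseteq$-direction uses the $D$-trick once more: if $a\in R_E$ is a unit of $R$, then $1\preccurlyeq a\ne 0$, so $D(1,a)=a^{-1}\in E\cap R=R_E$, so $a$ is a unit of $R_E$.

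For (ii), the inclusion $E\subseteq\Frac(R_E)$ is again the same trick: for $a\in E^{\ne}$ with $a\not\preccurlyeq 1$, one has $1\prec a$, hence $D(1,a)=a^{-1}\in E\cap R=R_E$ and $a=1/a^{-1}\in\Frac(R_E)$; and $\Frac(R_E)\subseteq K$ is clear. This gives $\Frac(E)=\Frac(R_E)$, and the valuation-ring statement for $E$ is just the argument used for $R_E$, now with $E$ in place of $R_E$: for $a=b/c\in\Frac(E)^{\times}$ with $b,c\in E^{\ne}$, the comparability supplied by $R$ places $D(b,c)=a$ or $D(c,b)=a^{-1}$ into $E$.

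There is no real obstacle; the only conceptual content is recognising that closure of $E$ under $D$ in $K$ automatically supplies all the reciprocals needed both to upgrade $R_E$ to a valuation ring of its fraction field and to make $E$ itself a valuation ring inside $K$. Without $D$ in the signature, $E$ could easily fail to be a valuation ring even though its intersection with $R$ is a subring.
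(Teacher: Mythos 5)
Your proof is correct and follows essentially the same route as the paper: the closure of $E$ under $D$ supplies $D(a,b)$ or $D(b,a)$ for comparable pairs, which makes $R_E$ a valuation ring dominated by $R$, and $D(1,a)$ puts $E$ inside $\Frac(R_E)$. The only cosmetic difference is at the end: the paper concludes that $E$ is a valuation ring by noting it sits between the valuation ring $R_E$ and its fraction field, whereas you rerun the $D$-argument directly on $E$ — both are fine.
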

\begin{proof} It is clear that $R_E$ is an $A$-subring of $R$. For $a,b\in R_E$, either $a\preccurlyeq b$, and then
$a=D(a,b)b$, $D(a,b)\in R_E$, so $a\in R_Eb$, or $b\preccurlyeq a$, and then likewise $b\in R_E a$. Thus $R_E$ is a valuation ring. 
It is also clear that $R$ dominates $R_E$. For $a\in E$, if $a\preccurlyeq 1$, then $a\in R_E$, so $a\in \Frac(R_E)$, and if
$a\succ 1$, then $D(1,a)=a^{-1}\in R_E$, and so again $a\in \Frac(R_E)$. Since $R_E$ is a valuation ring of $\Frac(R_E)$, so is $E$. 
\end{proof}

\subsection *{The $A$-extension generated over $K$ by an element $z$} The language $\L_{\preccurlyeq,D}^{A,K}$ is 
$\L_{\preccurlyeq,D}^A$ augmented by names (constant symbols), one for each element of $K$, and we construe $K$ and any $A$-extension of it
accordingly as an $\L_{\preccurlyeq,D}^{A,K}$-structure.  Let $L$ be an $A$-extension of $K$. 
Let $Z$ be an indeterminate and $z\in L$. Then  any $\L^{A, K}_{\preccurlyeq, D}$-term 
$\tau(Z)$ yields an element $\tau(z)\in L$.  The set $\{\tau(z):\ \tau(Z) \text{ is an $\L^{A, K}_{\preccurlyeq, D}$-term}\}$ underlies a substructure
of the  $\L^A_{\preccurlyeq,D}$-structure $L$, namely the smallest substructure
of the $\L^A_{\preccurlyeq,D}$-structure $L$ that contains $K\cup\{z\}$; we do not claim this set
is the underlying set of a subfield of $L$. Instead we call attention to the $A$-closed subring $R_z$ of $R_L$ with underlying set
$$  \{\tau(z):\ \tau(Z) \text{ is an $\L^{A, K}_{\preccurlyeq, D}$-term and }\tau(z)\preccurlyeq 1\}.$$
Note: $R\subseteq R_z$; if $z\preccurlyeq 1$, then $z\in R_z$; if $z\succ 1$, then $z^{-1}\in R_z$. 

\begin{lemma}\label{z-ring}  $R_z$  is a valuation ring dominated by  $R_L$. 
\end{lemma}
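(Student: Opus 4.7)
The plan is to recognize that $R_z$ is precisely the set $R_E$ attached, as in Lemma~\ref{lem10.0}, to a suitable substructure $E$ of $L$, and then to invoke that lemma directly.

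First I would let $E$ denote the substructure of the $\L^A_{\preccurlyeq,D}$-structure $L$ generated by $K\cup\{z\}$. By the remark just above the statement of the lemma, the underlying set of $E$ is exactly
\[
\{\tau(z):\tau(Z)\text{ is an }\L^{A,K}_{\preccurlyeq,D}\text{-term}\},
\]
since this set is closed under all the operations of $\L^A_{\preccurlyeq,D}$ (the $A$-analytic operations, the ring operations, and $D$), contains $K$ and $z$, and any substructure containing $K\cup\{z\}$ must contain every $\tau(z)$.

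Next I would observe that $E$ is an $\L^A_{\preccurlyeq,D}$-substructure of the $A$-field $L$, so Lemma~\ref{lem10.0} applies with $L$ in place of $K$. By the very definition of $R_z$, an element of $E$ lies in $R_z$ iff it is $\preccurlyeq 1$, so
\[
R_z\ =\ \{a\in E:a\preccurlyeq 1\}\ =\ R_E
\]
in the notation of Lemma~\ref{lem10.0}. That lemma then immediately gives both of the desired conclusions: $R_z=R_E$ is a valuation ring (of its fraction field, which sits inside $L$), and it is dominated by $R_L$.

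There is essentially no obstacle here: the content of the lemma has already been packaged into Lemma~\ref{lem10.0}, and the only thing to check is the (purely formal) identification of $R_z$ with $R_E$ for the substructure $E$ generated by $K\cup\{z\}$, which in turn uses only the observation about the form of terms $\tau(z)$ recorded in the paragraph preceding the lemma.
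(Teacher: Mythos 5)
Your proof is correct and is essentially the paper's own argument: the paper likewise observes that $\{\tau(z):\tau(Z)\text{ an }\L^{A,K}_{\preccurlyeq,D}\text{-term}\}$ underlies the substructure of $L$ generated by $K\cup\{z\}$ and then cites Lemma~\ref{lem10.0}(i) with $L$ in the role of $K$. Nothing further is needed.
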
 
 \begin{proof} For $L$ in the role of $K$ this is a special case of Lemma~\ref{lem10.0}(i).
 \end{proof} 

\noindent
Let $K_z$ be the fraction field of $R_z$ inside $L$, equipped with the valuation $A$-ring $R_z$. This makes
$K_z$ into an $A$-extension of $K$, and an $\L^A_{\preccurlyeq, D}$-substructure of $L$. Thus $\tau(z)\in K_z$
for every $\L^{A,K}_{\preccurlyeq, D}$-term $\tau(Z)$. 

\begin{corollary}\label{type} $K_z$ is the smallest substructure of the $\L^A_{\preccurlyeq,D}$-structure $L$ that contains $K\cup\{z\}$ and whose underlying ring is a field. As a consequence, if $z\preccurlyeq 1$, then $R_z$ is the smallest $A$-closed subring of $R_L$
that contains $R\cup\{z\}$ and whose underlying ring is a valuation ring dominated by $R_L$. 
\end{corollary}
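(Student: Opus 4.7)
The plan is to prove both assertions by minimality arguments, the second reducing to the first. For the first claim, recall that $K_z$ has already been exhibited as an $\L^A_{\preccurlyeq,D}$-substructure of $L$ containing $K\cup\{z\}$ whose underlying ring is the field $\Frac(R_z)$. Given any $\L^A_{\preccurlyeq,D}$-substructure $M\subseteq L$ containing $K\cup\{z\}$ and whose underlying ring is a field, every $\L^{A,K}_{\preccurlyeq,D}$-term $\tau(Z)$ is built from $z$ and elements of $K$ using only symbols of $\L^A_{\preccurlyeq,D}$, so closure of $M$ under those operations forces $\tau(z)\in M$. This gives $R_z\subseteq M$, and since $M$ is a field, $K_z=\Frac(R_z)\subseteq M$.

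For the consequence, assume $z\preccurlyeq 1$, so that $R_z$ itself contains $R\cup\{z\}$ and, by Lemma~\ref{z-ring}, is an $A$-closed subring of $R_L$ which is a valuation ring dominated by $R_L$. To establish minimality, I take any $A$-closed subring $S$ of $R_L$ with the same properties, set $M:=\Frac(S)\subseteq L$, and aim to apply the first part to $M$ so as to conclude $R_z\subseteq S$.

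The main obstacle is the identification $S=M\cap R_L$, from which the required closure properties of $M$ will follow. The ring $M\cap R_L$ is a valuation ring of $M$ with maximal ideal $M\cap\smallo(R_L)$, and it contains $S$. Domination of $S$ by $R_L$ gives $\smallo(S)=S\cap\smallo(R_L)\subseteq M\cap\smallo(R_L)=\smallo(M\cap R_L)$, while the standard fact that nested valuation rings of a common field have reverse-ordered maximal ideals yields the opposite inclusion. Equality of maximal ideals then forces $S=M\cap R_L$.

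With that in hand, closure of $M$ under the $\L^A_{\preccurlyeq,D}$ operations is routine: for $f\in A\<Y\>$ and $y\in M^n$, the value $f(y)$ is either $0$ (when some coordinate of $y$ lies outside $R_L$) or lies in $S\subseteq M$ by $A$-closedness of $S$; closure under $D$ and under the ring operations is immediate since $M$ is a field. Hence $M$ is an $\L^A_{\preccurlyeq,D}$-substructure of $L$ satisfying the hypotheses of the first part, yielding $K_z\subseteq M$ and therefore $R_z\subseteq M\cap R_L=S$, as required.
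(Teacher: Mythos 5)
Your proof is correct, and it is essentially the argument the paper intends (the corollary is stated without proof as an immediate consequence of the preceding discussion): minimality of $K_z$ because any field substructure containing $K\cup\{z\}$ is closed under all $\L^{A,K}_{\preccurlyeq,D}$-terms, hence contains $R_z$ and then its fraction field, and the second claim by passing to $M=\Frac(S)$ and verifying it is an $\L^A_{\preccurlyeq,D}$-substructure. The only place you work harder than needed is the identification $S=M\cap R_L$: since $S$ is a valuation ring of $M$ dominated by $R_L$, any $x\in M\cap R_L$ with $x\notin S$ gives $x^{-1}\in\smallo(S)\subseteq\smallo(R_L)$, i.e.\ $x\succ 1$, a contradiction — but your comparison of maximal ideals of the nested valuation rings is equally valid.
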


\noindent
In the rest of this section, $A$ is noetherian with an ideal $\smallo(A)\ne A$, such that $\bigcap_e \smallo(A)^e=\{0\}$ and
$A$ is $\smallo(A)$-adically complete. Also, $R$ is always a valuation $A$-ring, and we let $\smallo(R)$ denote the maximal ideal of $R$. Thus $\smallo(A)R=t R$ for some $t\in \smallo(A)$.   We let $\k=R/\smallo(R)$ denote the residue field of $R$, and $K$ its fraction field.  We construe the $A$-field $K$ as an $\L^A_{\preccurlyeq, D}$-structure as described earlier.
 
 
\subsection*{Viability}  We define $R$ to be {\em viable\/} if $\smallo(A)R=\smallo(R)\ne \{0\}$ (so $R$ is not a field). (This notion of viability is simpler and stricter than in \cite{Neer}, where extra flexibility was needed to be able to pass to $A$-extensions of $K$ of finite degree. In the present set-up we we don't need to do that.)  
In order to make our Weierstrass preparation and division theorems useful for the model theory of $R$ as a
valuation $A$-ring we assume in the rest of this section:
$$ R\ \text{\em is viable}.$$
Thus we have $t\in \smallo(A)$ with $\iota_0(t)\ne 0$ and $\smallo(R)=tR$, and then $vt:=v\big(\iota_0(t)\big)$ is the smallest positive element of $\Gamma$. Below we fix such $t$ and identify $\Z$ with its image in $\Gamma$ via $k\mapsto k\cdot vt$, so $vt=1$ and $\Z$ is a convex subgroup of $\Gamma$. It is clear that viability is inherited by $A$-subfields:

\begin{lemma}\label{subinher}  Suppose $K_0$ is a valued subfield of $K$ and its valuation ring $R_0=R\cap K_0$ is an $A$-subring of $R$. 
Then the valuation $A$-ring $R_0$ is viable.
\end{lemma}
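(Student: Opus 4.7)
The plan is to unpack the definition of viability and verify its three ingredients for $R_0$. I need to show: (a) $\smallo(A)R_0 \subseteq \smallo(R_0)$, (b) $\smallo(R_0) \subseteq \smallo(A)R_0$, and (c) this ideal is nonzero. Fix $t \in \smallo(A)$ as in the running assumption on $R$, so that $\iota_0(t) \neq 0$ in $R$, $\smallo(R) = tR$, and $vt = 1$ is the least positive element of $\Gamma$.

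The first observation is that since $R_0$ is an $A$-subring of $R$, it is $A$-closed, and in particular the image of $\iota_0\colon A\to R$ lands in $R_0$ (apply the $A$-closedness condition to the $0$-ary case). Thus $t\in R_0$, and $t$ is a nonzero element of $\smallo(A)R_0$ whose valuation is $vt = 1 > 0$. This immediately gives (c) as well as showing that $\smallo(R_0)\neq\{0\}$, i.e.\ $R_0$ is not a field.

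For (a), any element of $\smallo(A)R_0$ lies in $\smallo(A)R = \smallo(R) = tR$, so has positive valuation, hence lies in $\smallo(R_0) = \smallo(R)\cap R_0$ (using that the valuation on $K_0$ is induced from the one on $K$). For (b), which is the only step with any content: given $a\in \smallo(R_0)$, we have $a\in \smallo(R) = tR$, so $a = tb$ for some $b\in R$. Since $t\in K_0^\times$ and $a\in K_0$, we get $b = a/t \in K_0$, and therefore $b\in R\cap K_0 = R_0$. Hence $a\in tR_0\subseteq \smallo(A)R_0$.

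There is no real obstacle; the only thing to be a bit careful about is the mild inheritance issue in the first paragraph, namely the observation that being an $A$-subring forces $R_0$ to contain $\iota_0(A)$, which is what supplies the distinguished element $t$ inside $R_0$ and makes the whole argument go through.
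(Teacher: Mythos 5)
Your proof is correct; the paper states this lemma without proof (calling it clear), and your verification --- $A$-closedness of $R_0$ forces $\iota_0(A)\subseteq R_0$, so $\iota_0(t)$ is a nonzero element of $\smallo(A)R_0\cap\smallo(R_0)$, and then $\smallo(R_0)=\smallo(R)\cap K_0=tR\cap K_0=tR_0\subseteq\smallo(A)R_0\subseteq\smallo(R_0)$ --- is exactly the routine argument the paper leaves implicit. The one point with content, which you correctly isolate, is that being an $A$-subring supplies the distinguished element $t$ inside $R_0$, making $\smallo(R_0)$ principal over $R_0$ via the division step $b=a/t\in R\cap K_0=R_0$.
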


\medskip\noindent
Note that $R$ is henselian, by Lemma~\ref{he5}, so for any field extension $F$ of $K$ which is algebraic over $K$ there is a unique valuation ring of $F$ lying over $R$, and this valuation ring is the integral closure of $R$ in $F$. Thus by Corollary~\ref{Aint4}: 

\begin{corollary}\label{algext} If $L$ is a valued field extension of $K$ and is algebraic over $K$, then
$L$ has a unique expansion to an $A$-extension of $K$. 
\end{corollary}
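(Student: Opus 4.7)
The plan is to reduce the statement to Corollary~\ref{Aint4} by identifying the valuation ring $R_L$ of $L$ with the integral closure of $R$ in $L$, and then invoking the existence/uniqueness of $A$-analytic structure on integral ring extensions.

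First I would observe that by viability $\smallo(A)R=\smallo(R)$, so Lemma~\ref{he5} (applied with $n=0$, i.e.\ to $R$ itself as an $A$-ring) tells us the pair $(R,\smallo(R))$ is henselian, i.e.\ the local ring $R$ is henselian in the usual sense. Hence for the algebraic extension $L/K$ there is a unique valuation ring of $L$ lying over $R$, and it coincides with the integral closure $R'$ of $R$ in $L$. In particular $R_L=R'$ is integral over $R$ as a ring extension.

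Next I would apply Proposition~\ref{Aint3} (existence) and Corollary~\ref{Aint4} (uniqueness) to the integral ring extension $R\subseteq R_L$: there is exactly one $A$-analytic structure on $R_L$ making it an $A$-ring extending $R$. By the very definition of an $A$-extension of $K$, this is exactly the data of expanding $L$ to an $A$-extension of $K$, so both existence and uniqueness follow.

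There isn't really a substantive obstacle: the entire content is already packaged in the earlier results. The one subtle point to spell out (which is why I would not skip it) is that an expansion of $L$ to an $A$-extension is, by definition, nothing more than an $A$-analytic structure on the valuation ring $R_L$ that restricts to the given structure on $R$; the henselianity of $R$ is what turns this into a question about an \emph{integral} ring extension, so that Corollary~\ref{Aint4} applies verbatim.
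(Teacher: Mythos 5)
Your proposal is correct and matches the paper's own argument: the paper likewise notes that $R$ is henselian by Lemma~\ref{he5} (using viability), so the valuation ring of the algebraic extension $L$ is the unique one lying over $R$ and equals the integral closure of $R$ in $L$, whence Corollary~\ref{Aint4} gives the unique compatible $A$-analytic structure. Your extra citation of Proposition~\ref{Aint3} is subsumed by Corollary~\ref{Aint4}, which already packages existence and uniqueness.
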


\noindent
In this corollary $L$ might be an algebraic closure of $K$, in which case its valuation ring is the integral closure of $R$ in $L$, and  unlike the maximal ideal of $R$, the maximal ideal of this integral closure is not principal.  

\begin{corollary}\label{algtype}
If $z$ is algebraic over $K$, then $K(z)$ is the underlying field of $K_z$.
\end{corollary}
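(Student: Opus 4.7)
The plan is to apply the characterization of $K_z$ given in Corollary~\ref{type}: $K_z$ is the smallest substructure of the $\L^A_{\preccurlyeq,D}$-structure $L$ that contains $K\cup\{z\}$ and whose underlying ring is a field. Thus it suffices to verify two inclusions: $K(z)\subseteq K_z$ (because $K_z$ is a field containing $K$ and $z$), and $K_z\subseteq K(z)$, which reduces to showing that $K(z)$ itself, equipped with the appropriate valuation, is an $\L^A_{\preccurlyeq,D}$-substructure of $L$ containing $K\cup\{z\}$ whose underlying ring is a field.

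To equip $K(z)$ with the right valuation, I invoke Corollary~\ref{algext}: since $K(z)/K$ is algebraic, $K(z)$ has a unique valuation making it a valued field extension of $K$ that embeds into $L$, and its valuation ring $R_{K(z)}$ (the integral closure of $R$ in $K(z)$) has a unique $A$-analytic structure extending that of $R$. Because $R_L$ is also an $A$-ring extending $R$, and $R_{K(z)}$ is integral over $R$, Corollary~\ref{Aint4} forces the $A$-analytic structure on $R_{K(z)}$ given by Corollary~\ref{algext} to agree with the restriction of the $A$-analytic structure of $R_L$. In particular, $R_{K(z)}$ is $A$-closed inside $R_L$.

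With this in hand I can check the substructure conditions. First, $K(z)$ is closed under the operations $f\in A\<Y\>$: for $y\in K(z)^n$ with all $y_i\in R_{K(z)}$ the value $f(y)$ lies in $R_{K(z)}\subseteq K(z)$ by the preceding paragraph, while for $y\notin R_{K(z)}^n$ the convention $f(y)=0$ keeps us in $K(z)$. Second, $K(z)$ is closed under $D$, since $K(z)$ is a field and $D(a,b)=a/b$ when $a\preccurlyeq b\ne 0$, else $0$. Third, the relation $\preccurlyeq$ on $K(z)$ is inherited from $L$. Hence $K(z)$ is an $\L^A_{\preccurlyeq,D}$-substructure of $L$, evidently containing $K\cup\{z\}$, and its underlying ring is a field; the minimality part of Corollary~\ref{type} then yields $K_z\subseteq K(z)$.

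The only non-routine point---and the place where Corollary~\ref{algext} and Corollary~\ref{Aint4} do the real work---is the second paragraph: one must know that $K(z)$ has a well-defined valuation ring inside $L$ and that no information is lost by restricting the $A$-analytic structure of $R_L$ to this smaller valuation ring. Once that compatibility is in place, the remaining verifications are bookkeeping against the definitions of the symbols in $\L^A_{\preccurlyeq,D}$, and both inclusions $K(z)\subseteq K_z$ and $K_z\subseteq K(z)$ follow immediately.
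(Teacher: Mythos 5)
Your proof is correct and follows essentially the same route as the paper: Corollary~\ref{algext} together with the uniqueness in Corollary~\ref{Aint4} identifies $K(z)$, with valuation ring the integral closure of $R$, as an $\L^A_{\preccurlyeq,D}$-substructure of $L$, and the minimality in Corollary~\ref{type} then gives $K_z\subseteq K(z)$, the reverse inclusion being immediate. The only cosmetic point is that speaking of the ``restriction'' of the $A$-analytic structure of $R_L$ to $R_{K(z)}$ already presupposes that $R_{K(z)}$ is $A$-closed, which is most cleanly supplied by Lemma~\ref{Aint1} (or Corollary~\ref{Aint6}) before invoking uniqueness --- but this is the same level of detail as the paper's own proof.
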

\begin{proof} Suppose $z$ is algebraic over $K$. Then
the valued subfield $K(z)$ of $L$ expands uniquely to an $A$-extension of $K$ by Lemma~\ref{algext}. This $A$-extension is then an
$\L_{\preccurlyeq,D}^{A}$-substructure of $L$ by Corollary~\ref{Aint4}. Now use Corollary~\ref{type}.
\end{proof}

\noindent
By the viability assumption on $R$ the model theoretic results at the end of this paper do not apply to
algebraically closed valued fields whose valuation ring is equipped with an $A$-analytic structure.
To avoid this viability assumption one could replace the restricted power series rings over $A$ with rings of separated power series over $A$ where some variables range as before over the valuation ring and the other (formal) variables only over its maximal ideal.
This is the direction taken by Lipshitz~\cite{L}; see also Lipshitz and Robinson~\cite{LR}. Our treatment can probably be extended in this direction as well, but this will not be done here.  

An $A$-extension of $K$ is said to be {\em viable\/} if its valuation $A$-ring is viable.

\subsection*{Weierstrass preparation and division with parameters} 

Let 
 $$f\ =\ \sum_{\nu}a_{\nu}(X)Y^\nu\in A\<X,Y\>,\qquad n\geqslant 1.$$ We now study how Weierstrass preparation
 applies to $f(x,Y)$ for $x\in R^m$, and how this depends on $x$.  Lemma~\ref{n1} with $A\<X\>$ in the role of $A$
 gives $d\geqslant 1$ and  $b_{\mu\nu}\in \smallo(A\<X\>)$ for $|\mu|< d$ and $|\nu|\geqslant d$. As before we set for $|\mu|<d$,    
$$f_{\mu}\ :=\ Y^\mu + \sum_{|\nu|\geqslant d}b_{\mu\nu}Y^\nu\in A\<X,Y\>,\qquad
      f\ =\ \sum_{|\mu|<d} a_{\mu}f_{\mu}.$$
We order $\N^n$ lexicographically and for $\mu$ with $|\mu|< d$ we set
\begin{align*} I(\mu)\ &:=\ \{\lambda:\ |\lambda|<d,\ \lambda < \mu\},\quad
        J(\mu)\ :=\ \{\lambda:\ |\lambda|<d,\ \lambda>\mu\},\ \text{ so}\\
(*)  \qquad \qquad    f\ &=\ \sum_{\lambda\in I(\mu)} a_{\lambda}f_{\lambda} + a_{\mu}f_{\mu} + \sum_{\lambda\in J(\mu)}a_{\lambda}f_{\lambda}.
\end{align*} 
Now fix $\mu$ with $|\mu|<d$ and introduce tuples $$U_{\mu}:=\big(U_{\lambda\mu}:\lambda\in I(\mu)\big), \qquad V_{\mu}:=\big(V_{\lambda\mu}:\lambda\in J(\mu)\big)$$
of indeterminates, different from each other and from the $X_i$ and $Y_j$. Set
\begin{align*} \tilde{F}_\mu\ &:=\ \sum_{\lambda\in I(\mu)}U_{\lambda\mu}f_{\lambda} + f_{\mu} + \sum_{\lambda\in J(\mu)}tV_{\lambda\mu}f_{\lambda}\in A\<U_{\mu}, V_{\mu}, X,Y\>,\\
F_{\mu}\ &:=\ \tilde{F}_{\mu}\big(U_\mu, V_{\mu}, X, T_d(Y)\big)\in A\<U_{\mu}, V_{\mu}, X,Y\>.
\end{align*}
Note that for $n=1$ we have $T_d(Y)=Y$, so $F_{\mu}=\tilde{F}_{\mu}$.

\begin{lemma}\label{reg} $F_{\mu}$ is regular
of degree $\ell:=\mu_1d^{n-1}+ \cdots + \mu_n$ in $Y_n$, and so
$$F_{\mu}\ =\ E\cdot (Y_n^{\ell} + G_1Y_n^{\ell -1} + \cdots + G_{\ell})$$
for a unit $E$ of $A\<U_{\mu}, V_{\mu}, X,Y\>$ and suitable $G_1,\dots, G_{\ell}\in A\<U_{\mu}, V_{\mu}, X,Y'\>$.
\end{lemma}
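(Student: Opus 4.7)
The plan is to reduce modulo $\smallo(A)$ and show that $\overline{F_{\mu}}$, viewed in the polynomial ring $\overline{A}[U_{\mu},V_{\mu},X,Y]$, has the shape $Y_n^{\ell}+(\text{terms of lower degree in }Y_n)$ with coefficients in $\overline{A}[U_{\mu},V_{\mu},X,Y']$, which says exactly that $F_{\mu}$ is regular in $Y_n$ of degree $\ell$; then Weierstrass Preparation (Corollary~\ref{wd2}) delivers the desired factorization.

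First I would compute $\overline{\tilde{F}_{\mu}}$. Since $b_{\mu\nu}\in\smallo(A\<X\>)$, the reduction of each $f_{\lambda}=Y^{\lambda}+\sum_{|\nu|\geqslant d}b_{\lambda\nu}Y^{\nu}$ in $\overline{A}[U_{\mu},V_{\mu},X,Y]$ is simply $Y^{\lambda}$; and because $t\in\smallo(A)$, every summand $tV_{\lambda\mu}f_{\lambda}$ with $\lambda\in J(\mu)$ vanishes under reduction. Thus
\[
\overline{\tilde{F}_{\mu}}\ =\ Y^{\mu}\ +\ \sum_{\lambda\in I(\mu)}U_{\lambda\mu}Y^{\lambda},
\]
and consequently
\[
\overline{F_{\mu}}\ =\ \bigl(T_d(Y)\bigr)^{\mu}\ +\ \sum_{\lambda\in I(\mu)}U_{\lambda\mu}\bigl(T_d(Y)\bigr)^{\lambda}\ \in\ \overline{A}[U_{\mu},V_{\mu},X,Y].
\]

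Next I would track the $Y_n$-degrees. Using $T_d(Y)_i=Y_i+Y_n^{d^{n-i}}$ for $i<n$ and $T_d(Y)_n=Y_n$, a direct expansion shows that for any $\lambda=(\lambda_1,\dots,\lambda_n)$ with $|\lambda|<d$ (so all $\lambda_i<d$), the $Y_n$-degree of $\bigl(T_d(Y)\bigr)^{\lambda}$ equals the base-$d$ value $\lambda_1d^{n-1}+\cdots+\lambda_{n-1}d+\lambda_n$, attained with coefficient $1$. Since all entries are strictly below $d$, the lexicographic order on such $\lambda$ coincides with the order of these base-$d$ values; hence for every $\lambda\in I(\mu)$ the $Y_n$-degree of $\bigl(T_d(Y)\bigr)^{\lambda}$ is strictly less than $\ell=\mu_1d^{n-1}+\cdots+\mu_n$. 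Therefore the coefficient of $Y_n^{\ell}$ in $\overline{F_{\mu}}$ is $1$, while all higher $Y_n$-powers vanish; so $F_{\mu}$ is regular in $Y_n$ of degree $\ell$.

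Finally, Weierstrass Preparation (Corollary~\ref{wd2}), applied over the coefficient ring $A\<U_{\mu},V_{\mu},X\>$ (which inherits the hypotheses on $A$), yields a unit $E$ of $A\<U_{\mu},V_{\mu},X,Y\>$ and $G_1,\dots,G_{\ell}\in A\<U_{\mu},V_{\mu},X,Y'\>$ with $F_{\mu}=E\cdot(Y_n^{\ell}+G_1Y_n^{\ell-1}+\cdots+G_{\ell})$. The only real bookkeeping step is the $Y_n$-degree count for $(T_d(Y))^{\lambda}$; the rest is mechanical once the residues of $b_{\mu\nu}$ and $t$ are observed to be zero.
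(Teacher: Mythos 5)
Your proof is correct and follows exactly the argument the paper intends: the paper states Lemma~\ref{reg} without proof, leaving it as the same computation as in Lemma~\ref{wprep} (reduce modulo $\smallo$, note $\overline{b_{\lambda\nu}}=0$ and $\overline{t}=0$, and track the $Y_n$-degree of $(T_d(Y))^\lambda$ via base-$d$ values, which respect the lexicographic order since $|\lambda|<d$), followed by Weierstrass Preparation (Corollary~\ref{wd2}) over $A\<U_\mu,V_\mu,X\>$. Your write-up supplies precisely these omitted details, with no gaps.
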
           
      
\noindent
Here is a consequence of Lemma~\ref{reg} for $n=1$ (so $Y=Y_1$):

\begin{corollary}\label{correg} Let $n=1$ and $g(Y)=\sum_{j=0}^\infty c_{j}Y^j\in K\<Y\>$, $g\ne 0$. Then:
\begin{enumerate}
\item[(i)] there is $\mu\in \N$ with $c_i\preccurlyeq c_\mu\succ c_j$ whenever $i\leqslant \mu < j$;
\item[(ii)] for the unique $\mu$ in $\mathrm{(i)}$ we have
$g(Y)=c\cdot r(Y)\cdot(Y^\mu+g_1Y^{\mu-1} + \cdots + g_\mu)$ with $c=c_\mu\in K^\times$, $r(Y)\in R\<Y\>^\times$, and $g_1,\dots, g_\mu\in R$. 
\end{enumerate} 
\end{corollary}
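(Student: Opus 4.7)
The plan is to deduce this from the parametric Weierstrass decomposition of Lemma~\ref{reg} in the case $n=1$, using viability of $R$ to turn valuations of coefficients in $K$ into the $t$-adic structure inherited from $A$.

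For part (i), I would first write $g=c^{-1}h$ with $c\in R^{\ne}$ and $h\in R\<Y\>$, and express $h=f(x,Y)$ for some $f(X,Y)=\sum_{\nu}a_\nu(X)Y^\nu\in A\<X,Y\>$ and $x\in R^m$; then $c_j=c^{-1}a_j(x)$. Since $a_\nu\to 0$ in $A\<X\>$, its coefficients lie in $\smallo(A)^{e(\nu)}$ with $e(\nu)\to\infty$, and viability ($\smallo(A)R=tR$) forces $v(a_\nu(x))\geqslant e(\nu)\to\infty$; hence $v(c_j)\to\infty$. So $v$ attains its minimum on the nonzero $c_j$ at only finitely many indices; let $\mu$ be the largest. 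Then $c_i\preccurlyeq c_\mu$ for $i\leqslant\mu$ and $c_j\succ c_\mu$ for $j>\mu$, and uniqueness is immediate by comparing two candidates.

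For part (ii), I apply Lemma~\ref{n1} to $f$ (with $A\<X\>$ in the role of $A$) and may enlarge $d$ so that $d>\mu$. This gives $f=\sum_{\lambda<d}a_\lambda f_\lambda$ in $A\<X,Y\>$ with $f_\lambda=Y^\lambda+\sum_{\nu\geqslant d}b_{\lambda\nu}(X)Y^\nu$ and $b_{\lambda\nu}\in\smallo(A\<X\>)$. The hypothesis on $\mu$ translates, after dividing by $a_\mu(x)\in K^\times$, to: $r_\lambda:=a_\lambda(x)/a_\mu(x)\in R$ for $\lambda<\mu$, and $a_\lambda(x)/a_\mu(x)\in\smallo(R)=tR$ for $\mu<\lambda<d$, so in the latter case one can write $a_\lambda(x)/a_\mu(x)=ts_\lambda$ with $s_\lambda\in R$.

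Finally, invoke Lemma~\ref{reg} with $n=1$: since $T_d(Y)=Y$, we have $F_\mu=\tilde F_\mu$, which is regular in $Y$ of degree $\mu$, and Weierstrass preparation (Corollary~\ref{wd2}) yields
\[
\tilde F_\mu\ =\ E\cdot(Y^\mu+G_1Y^{\mu-1}+\cdots+G_\mu)
\]
with $E\in A\<U_\mu,V_\mu,X,Y\>^\times$ and $G_1,\dots,G_\mu\in A\<U_\mu,V_\mu,X\>$. Substituting $U_{\lambda\mu}=r_\lambda$, $V_{\lambda\mu}=s_\lambda$ and $X=x$ collapses the left-hand side to $h(Y)/a_\mu(x)$ after a short computation, while the right-hand side becomes $r(Y)\cdot(Y^\mu+g_1Y^{\mu-1}+\cdots+g_\mu)$ with $r(Y)\in R\<Y\>^\times$ and $g_i\in R$. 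Multiplying by $c_\mu=a_\mu(x)/c\in K^\times$ gives the stated decomposition of $g$. The only real obstacle is the bookkeeping needed to verify that substituting the ratios $r_\lambda,s_\lambda$ into $\tilde F_\mu$ reassembles exactly $h(Y)/a_\mu(x)$; once that is confirmed, parts (i) and (ii) fall out directly from the parametric Weierstrass machinery already developed.
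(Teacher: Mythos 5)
Your proposal is correct and follows essentially the same route as the paper: choose $\mu$ as the last index of minimal valuation, use Lemma~\ref{n1} to write $f=\sum_{\lambda<d}a_\lambda f_\lambda$, form the ratios $c_\lambda/c_\mu\in R$ (for $\lambda<\mu$) and $c_\lambda/(tc_\mu)\in R$ (for $\mu<\lambda<d$, using viability $\smallo(R)=tR$), and substitute into $\tilde F_\mu=F_\mu$ together with Lemma~\ref{reg}. The ``bookkeeping'' you defer is exactly the identity $(*)$ preceding Lemma~\ref{reg}, so nothing is missing.
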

\begin{proof} We multiply $g$ by an element of $K^\times$ to arrange $g\in R\<Y\>$.
Then $g(Y)= f(x,Y)$ with $x\in R^m$ and $f=f(X,Y)=\sum_ja_j(X)Y^j$ in $A\<X,Y\>$,
so $c_j=a_j(x)$ for all $j$.  
Lemma~\ref{n1} with $A\<X\>$ in the role of $A$ gives $d\geqslant 1$ and $b_{ij}\in \smallo(A\<X\>)$ for $i<d\leqslant j$ such that $a_j=\sum_{i<d}a_ib_{ij}$ for all $j\geqslant d$. Set 
$$\gamma\ :=\  \min_{i<d}v(c_i), \qquad \mu\ :=\ \max\{i<d:\ v(c_i)=\gamma\}.$$ 
Then (i) holds for this $\mu$: for $\mu<j$, distinguish the cases $j< d$ and $j\geqslant d$.  

For (ii) we use the identities above for $n=1$ and our $f$.  The identity $(*)$ yields
$ f =\sum_{i<\mu} a_if_i + a_{\mu}f_\mu + \sum_{\mu<i<d} a_if_i$. Substituting $x$ for $X$ and factoring out 
$c: =c_\mu=a_\mu(x)$ (possible because $c\ne 0$) gives
$$c^{-1}g(Y)\ =\  \sum_{i<\mu}(c_i/c) f_i(x,Y) + f_\mu(x,Y) + \sum_{\mu<i<d}(c_i/c)f_i(x,Y),$$
so for $u:= \big(c_i/c: i<\mu\big)\in R^\mu$ and $v:= \big(c_i/t c:\ \mu<i<d\big)\in R^{d-1-\mu}$ we have
$c^{-1}g(Y)=F_\mu(u,v,x,Y)$. Now applying Lemma~\ref{reg} for $n=1$ shows that (ii) holds with
$r(Y)=E(u,v,x,Y)$ and $g_i=G_i(u,v,x)$ for $i=1,\dots,\mu$.
\end{proof}

\noindent
Note that the proof above uses in a crucial way that $\smallo(R)=t R$.

\begin{corollary} \label{correg+} Let $R^*$ be an $A$-ring extending $R$, and suppose $y\in R^*$ is
not integral over  $R$. Then $R\<y\>$ has the following properties, with $n=1$ in $(${\rm i}$)$: \begin{enumerate}
\item[(i)] the morphism $g(Y)\mapsto g(y): R\<Y\>\to R\<y\>$ of $A$-rings is an isomorphism;
\item[(ii)] $R\<y\>$ is a domain but not a valuation ring;
\item[(iii)] inside the ambient field $\operatorname{Frac}(R\<y\>)$ we have $R\<y\>\nsubseteq K(y)$.
\end{enumerate}
\end{corollary}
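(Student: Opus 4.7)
The plan is to prove (i) first and then deduce (ii) and (iii) by transporting properties across the resulting ring isomorphism $R\<Y\> \cong R\<y\>$. Surjectivity of $g \mapsto g(y)$ is immediate from the description $R\<y\> = \{g(y) : g \in R\<Y\>\}$ recorded just after Theorem~\ref{n3}. For injectivity, I would take $g \in R\<Y\>$ nonzero with $g(y) = 0$ and apply Corollary~\ref{correg}(ii) to $g$ viewed inside $K\<Y\>$, obtaining a factorization
$$g(Y)\ =\ c\cdot r(Y)\cdot\bigl(Y^\mu + g_1 Y^{\mu-1} + \cdots + g_\mu\bigr),\qquad c \in K^\times,\ r \in R\<Y\>^\times,\ g_i \in R.$$
Because every coefficient of $g$ already lies in $R$, the coefficient $c = c_\mu$ singled out in Corollary~\ref{correg}(i) is automatically in $R \setminus \{0\}$. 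Evaluating this identity at $y$ via Theorem~\ref{n3} gives $c \cdot r(y) \cdot P(y) = 0$ in $R^*$, where $P(Y) := Y^\mu + g_1 Y^{\mu-1} + \cdots + g_\mu$. Since $r$ is a unit of $R\<Y\>$, $r(y)$ is a unit of $R^*$; since $R^*$ is a domain (it is a valuation $A$-ring extending $R$) and $c \ne 0$ is not a zero-divisor, we conclude $P(y) = 0$. The case $\mu = 0$ is excluded because then $g = c r(Y)$ would have nonzero image, so $\mu \ge 1$ and $y$ satisfies a monic relation over $R$, contradicting non-integrality.

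For (ii), the iso of (i) makes it enough to check that $R\<Y\>$ is a domain but not a valuation ring. It is a domain as a subring of $R[[Y]]$. To see it is not a valuation ring, I would verify that neither of $Y$ and $1-Y$ in $R[Y] \subseteq R\<Y\>$ divides the other: $Y \mid (1-Y)$ evaluates at $Y = 0$ to $1 = 0$, while $(1-Y) \mid Y$ would force $\sum_{j \ge 1} Y^j$ to lie in $R\<Y\>$, which fails because any element of $R\<Y\>$, written $f(x,Y) = \sum_\nu a_\nu(x) Y^\nu$ with $a_\nu \in A\<X\>$ and $|a_\nu| \to 0$, has coefficients $a_\nu(x)$ that eventually lie in $\smallo(A)R = \smallo(R)$ and so cannot equal $1$. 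For (iii), the same iso identifies $K(y) \subseteq \Frac(R\<y\>)$ with $K(Y) \subseteq \Frac(R\<Y\>)$, so it suffices to exhibit $f \in R\<Y\> \setminus K(Y)$. I would take $f = \sum_{j \ge 0} t^{j^2} Y^j \in A\<Y\> \subseteq R\<Y\>$ (which lies in $A\<Y\>$ because $|t|^{j^2} \to 0$) and rule out rationality: if $fQ = P$ with $Q = \sum_{i=0}^d q_i Y^i \in K[Y]$ of minimal degree and $q_d \ne 0$, then for $j > \deg P$ the vanishing of the $Y^j$-coefficient reads $\sum_{i=0}^d q_i t^{(j-i)^2} = 0$; but $(j-i)^2 - (j-d)^2 = (d-i)(2j-i-d)$ grows without bound in $j$ while the differences $v(q_i) - v(q_d)$ are bounded, so the minimum valuation in the sum is uniquely attained at $i = d$, giving the sum the finite valuation $v(q_d) + (j-d)^2$ and hence a nonzero value, contradiction.

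The most delicate step is passing from $c \cdot P(y) = 0$ to $P(y) = 0$ in (i): it requires that $c \in R$ not be a zero-divisor in $R^*$, which we get from $R^*$ being a domain in the valuation $A$-ring setting. Once Corollary~\ref{correg}(ii) has been applied and the scalar $c$ recognized to lie in $R$ (not merely $K$) thanks to $g \in R\<Y\>$, the rest is routine. The other two parts then reduce to concrete manipulations in $R\<Y\>$, with the verification in (iii) being where an honest valuation-growth estimate enters.
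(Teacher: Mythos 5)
Your treatment of (i) and (ii) is essentially the intended one. The paper leaves (i) implicit, and your route --- surjectivity from $R\<y\>=\{g(y):g\in R\<Y\>\}$, injectivity by factoring a nonzero $g\in R\<Y\>$ via Corollary~\ref{correg}(ii), observing that $c=c_\mu$ is a coefficient of $g$ and hence lies in $R^{\ne}$, evaluating at $y$, and extracting a monic relation $P(y)=0$ --- is exactly how it is meant to follow. One caveat: your parenthetical ``$R^*$ is a domain (it is a valuation $A$-ring extending $R$)'' is not what the statement provides; the hypothesis is only that $R^*$ is an $A$-ring extending $R$. Some assumption of this kind is genuinely needed at the step $c\,r(y)P(y)=0\Rightarrow P(y)=0$: for a general $A$-ring extension injectivity can fail (take $R^*=R\<Y\>/tYR\<Y\>$ and $y$ the image of $Y$; a monic relation for $y$ over $R$ would force $t$ to be a unit, so $y$ is not integral over $R$, yet $ty=0$ and $R\<y\>$ is not even a domain). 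In every application in the paper $R^*$ sits inside a valuation $A$-ring, so the assumption is harmless there, but you should state explicitly that you are using that nonzero elements of $R$ are non-zero-divisors in $R^*$, rather than attributing this to the hypothesis. For (ii), your pair $(Y,1-Y)$ does the same job as the paper's pair $(Y,t)$ (``$Y\notin tR\<Y\>$ and $t\notin YR\<Y\>$''); both verifications are fine.

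For (iii) you diverge from the paper, which gets a quick proof from henselianity: by Lemma~\ref{he5} (or Corollary~\ref{he6}) the pair $\big(R\<y\>,\smallo(A)R\<y\>\big)$ is henselian, so $Z^2-(1+ty)$ (or $Z^3-(1+ty)$ when the residue characteristic is $2$) has a zero in $R\<y\>$ but none in $K(y)$. Your explicit series $f=\sum_j t^{j^2}Y^j$ can also work, but the step ``the differences $v(q_i)-v(q_d)$ are bounded, so the minimum valuation in the sum is uniquely attained at $i=d$'' is not valid in general: $\Gamma$ is only required to have $vt$ as its least positive element and may be non-archimedean, so $v(q_d)-v(q_i)$ can exceed every integer multiple of $vt$, in which case the minimum need not sit at $i=d$ for any $j$. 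The argument is easily repaired: for $i\ne i'$, equality $v(q_i)+(j-i)^2vt=v(q_{i'})+(j-i')^2vt$ forces $v(q_i)-v(q_{i'})=(i-i')(2j-i-i')\,vt$, which is injective in $j$, so each pair of indices allows at most one such $j$; hence for all but finitely many $j$ the minimum valuation is attained at a unique index, the $Y^j$-coefficient of $fQ$ is nonzero, and $fQ=P$ is impossible. With that fix (or by switching to the paper's henselian argument) your proof is complete.
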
 
\begin{proof} For (ii), use that $Y\notin t R\<Y\>$ and $t\notin YR\<Y\>$. For (iii), if $\operatorname{char} \k \ne 2$, then the polynomial $Z^2-(1+ty)$ has a zero in $R\<y\>$ by Corollary~\ref{he6}, but has no zero in $K(y)$. If $\operatorname{char} \k =2$, use instead the polynomial $Z^3-(1+ty)$. 
\end{proof}

\medskip\noindent
We return to our $f(X,Y)\in A\<X,Y\>$ with $n\geqslant 1$. To find out how Weierstrass preparation for  $f(x,Y)$ depends on $x\in R^m$, we now introduce the quantifier-free $\L^A_{\preccurlyeq}$-formulas $Z(X)$ and $S_{\mu}(X)$ (for $|\mu|<d$) in the variables $X$: 
\begin{align*} Z(X)\ &:=\ \bigwedge_{|\mu|<d} a_{\mu}(X)=0,\\
S_{\mu}(X)\ &:=\ a_{\mu}(X)\ne 0\wedge \big(\bigwedge_{\lambda\in I(\mu)}a_{\lambda}(X)\preccurlyeq a_{\mu}(X)\big) \wedge \big(\bigwedge_{\mu\in J(\mu)} a_{\lambda}(X)\prec a_{\mu}(X)\ \big).
\end{align*} 

\begin{lemma}\label{reg+} For the $\L^A_{\preccurlyeq}$-structure $R$ we have the following: \begin{enumerate}
\item[(i)] for all $x\in R^m$, $Z(x)$ holds or $S_\mu(x)$ holds for some $\mu$ with $|\mu|<d$;
\item[(ii)] suppose $x\in R^m$, $|\mu|<d$, and $S_{\mu}(x)$ holds; so $u_{\lambda\mu}:= a_{\lambda}(x)/a_{\mu}(x)\in R$ for $\lambda\in I(\mu)$  and
$v_{\lambda\mu}:= a_{\lambda}(x)/t a_{\mu}(x)\in R$ for $\lambda\in J(\mu)$. Then with $$u_{\mu}\ :=\ \big(u_{\lambda\mu}:\lambda\in I(\mu)\big), \qquad 
v_{\mu}\ :=\ \big(v_{\lambda\mu}:\lambda\in J(\mu)\big),$$
and $E, G_1,\dots, G_{\ell}$ as in Lemma~\ref{reg} we have 
$$f\big(x, T_d(Y)\big)\ =\ a_{\mu}(x)F_{\mu}(u_{\mu}, v_{\mu},x,Y) \text{ in }R\<Y\>$$ 
and $F_{\mu}(u_{\mu}, v_{\mu}, x,Y)$ equals, in $R\<Y\>$,  the product $$\quad E(u_{\mu}, v_{\mu},x,Y)\cdot\big(Y_n^{\ell}+G_1(u_{\mu}, v_{\mu},x, Y')Y_n^{\ell -1}+\cdots + G_{\ell}(u_{\mu}, v_{\mu},x, Y')\big).
$$
\end{enumerate}
\end{lemma}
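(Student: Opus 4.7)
The plan is to handle the two parts of the lemma separately, with part (ii) amounting to bookkeeping once part (i) is settled.

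For (i), I would run a pigeonhole argument using that $\Gamma_\infty$ is totally ordered. If $Z(x)$ fails, then the finite nonempty set $\{\lambda : |\lambda|<d,\ a_\lambda(x)\ne 0\}$ has a least value $\gamma:=\min_\lambda v(a_\lambda(x))$. Let $\mu$ be lexicographically largest among those $\lambda$ with $|\lambda|<d$ attaining $v(a_\lambda(x))=\gamma$. Then for $\lambda\in I(\mu)$ one has $v(a_\lambda(x))\geqslant \gamma=v(a_\mu(x))$, giving $a_\lambda(x)\preccurlyeq a_\mu(x)$; and for $\lambda\in J(\mu)$ the maximality of $\mu$ among minimizers of $v$ forces the strict inequality $v(a_\lambda(x))>\gamma$, i.e.\ $a_\lambda(x)\prec a_\mu(x)$. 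Hence $S_\mu(x)$ holds. This is essentially the bookkeeping already carried out in the proof of Corollary~\ref{correg}, extended from $n=1$ to arbitrary $n$.

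For (ii), the starting point is identity $(*)$. Substituting $x$ for $X$ and factoring out the nonzero $a_\mu(x)$ gives an equality in $R\<Y\>$ whose bracket is
\[ \sum_{\lambda\in I(\mu)} \frac{a_\lambda(x)}{a_\mu(x)}\, f_\lambda(x,Y) + f_\mu(x,Y) + \sum_{\lambda\in J(\mu)} \frac{a_\lambda(x)}{a_\mu(x)}\, f_\lambda(x,Y). \]
For $\lambda\in I(\mu)$, the relation $\preccurlyeq$ places $a_\lambda(x)/a_\mu(x)=u_{\lambda\mu}$ in $R$ directly. For $\lambda\in J(\mu)$, the strict inequality $\prec$ places $a_\lambda(x)/a_\mu(x)$ in $\smallo(R)$, and then the viability hypothesis $\smallo(R)=tR$ lets us write it as $t\,v_{\lambda\mu}$ with $v_{\lambda\mu}\in R$. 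The bracket thus coincides with $\tilde F_\mu(u_\mu,v_\mu,x,Y)$ — here I would cite Theorem~\ref{n3} to justify that substitution in $A\<U_\mu,V_\mu,X,Y\>$ commutes with the $A$-analytic operations in $R\<Y\>$. This yields $f(x,Y) = a_\mu(x)\,\tilde F_\mu(u_\mu,v_\mu,x,Y)$.

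Applying the $R$-algebra automorphism $Y\mapsto T_d(Y)$ of $R\<Y\>$ to both sides converts this into
\[ f\bigl(x,T_d(Y)\bigr)\ =\ a_\mu(x)\,\tilde F_\mu\bigl(u_\mu,v_\mu,x,T_d(Y)\bigr)\ =\ a_\mu(x)\,F_\mu(u_\mu,v_\mu,x,Y), \]
the second equality being just the defining relation $F_\mu=\tilde F_\mu(U_\mu,V_\mu,X,T_d(Y))$. Finally, the factorization $F_\mu=E\cdot(Y_n^\ell+G_1Y_n^{\ell-1}+\cdots+G_\ell)$ supplied by Lemma~\ref{reg} specializes under $(U_\mu,V_\mu,X)\mapsto(u_\mu,v_\mu,x)$ to the asserted product expression, since evaluation at a point is an $A$-ring morphism (Theorem~\ref{n3}) and in particular preserves products and sends the unit $E$ to a unit in $R\<Y\>$. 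There is no real obstacle here; the only conceptual point is the careful role of $t$ in the second sum of $\tilde F_\mu$, which exists precisely to absorb the strict drop in valuation for $\lambda\in J(\mu)$ into the principal generator of $\smallo(R)$ guaranteed by viability.
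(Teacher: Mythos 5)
Your proof is correct and follows essentially the route the paper intends: the paper states Lemma~\ref{reg+} without proof, the argument being exactly the one given for $n=1$ in Corollary~\ref{correg} (choose $\mu$ maximal among indices of minimal valuation, substitute $x$ into $(*)$, factor out $a_\mu(x)$ using $\smallo(R)=tR$, and identify the bracket with $\tilde F_\mu$ before applying Lemma~\ref{reg}), which you reproduce for general $n$ with the extra, correctly handled, $T_d$-twist.
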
 

\noindent
We can now prove a converse of Lemma~\ref{n2}:  

\begin{lemma} Suppose $x\in R^m$ and
$f(x,y)=0$ for all $y\in R^n$. Then $f(x,Y)=0$. 
\end{lemma}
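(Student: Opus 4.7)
The plan is to argue by contradiction, using the parametric Weierstrass preparation packaged in Lemma~\ref{reg+}(ii) to reduce the assumption $f(x,y)=0$ for all $y\in R^n$ to a vanishing statement about a monic polynomial, and then exploit the fact that $R$ is infinite.

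Suppose $f(x,Y)\neq 0$ in $R\<Y\>$. Recall the decomposition $f=\sum_{|\mu|<d}a_\mu f_\mu$ in $A\<X,Y\>$ (with $f_\mu\in A\<X,Y\>$) that underlies the Weierstrass setup. If $Z(x)$ held, that is, $a_\mu(x)=0$ for every $\mu$ with $|\mu|<d$, then substituting $x$ for $X$ would give $f(x,Y)=0$. So $Z(x)$ fails, and Lemma~\ref{reg+}(i) yields some $\mu$ with $|\mu|<d$ for which $S_\mu(x)$ holds. By Lemma~\ref{reg+}(ii), in $R\<Y\>$ we then have
\[
f\bigl(x,T_d(Y)\bigr)\ =\ a_\mu(x)\cdot E(u_\mu,v_\mu,x,Y)\cdot\bigl(Y_n^\ell+G_1(u_\mu,v_\mu,x,Y')Y_n^{\ell-1}+\cdots+G_\ell(u_\mu,v_\mu,x,Y')\bigr),
\]
where $a_\mu(x)\neq 0$, $E$ is a unit of $A\<U_\mu,V_\mu,X,Y\>$, and $\ell=\mu_1d^{n-1}+\cdots+\mu_n$.

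Next I would transfer the vanishing hypothesis across the automorphism $T_d$: for every $y\in R^n$ the tuple $T_d(y)$ also lies in $R^n$, so $f(x,T_d(y))=0$. Applying the evaluation morphism $A\<U_\mu,V_\mu,X,Y\>\to R$ at $(u_\mu,v_\mu,x,y)$ (an $A$-ring morphism by Theorem~\ref{n3}), the unit $E$ is sent to a unit of $R$. Since $R$ is a valuation ring and hence a domain, and $a_\mu(x)\neq 0$, the factorization forces
\[
P(y',y_n)\ :=\ y_n^\ell+G_1(u_\mu,v_\mu,x,y')y_n^{\ell-1}+\cdots+G_\ell(u_\mu,v_\mu,x,y')\ =\ 0
\]
for every $(y',y_n)\in R^{n-1}\times R$.

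To finish I would note that $R$ is infinite: $s:=\iota_0(t)$ is a nonzero element of $\smallo(R)=tR$, and its powers $s^k$ are pairwise distinct (if $s^\ell=s^k$ with $\ell<k$ then $s^\ell(1-s^{k-\ell})=0$ in the domain $R$, forcing $s^{k-\ell}=1\in R^\times$, which contradicts $s\in\smallo(R)$). Fix any $y'\in R^{n-1}$ and regard $P(y',Y_n)\in R[Y_n]$. If $\ell\geqslant 1$ this is a nonzero polynomial of degree $\ell$ over the domain $R$, hence has at most $\ell$ zeros in $R$, contradicting vanishing on the infinite set $R$. If $\ell=0$ (equivalently $\mu=0$) then $P\equiv 1$, so the right-hand side of the factorization is $a_0(x)\cdot E(v_0,x,Y)$, which evaluates at $y=0$ to a nonzero element of $R$, again contradicting $f(x,T_d(0))=f(x,0)=0$.

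The only real point to verify carefully is the last paragraph: that $R$ is genuinely infinite (which hinges on viability and the domain property, and I expect to be straightforward), and that the $\ell=0$ corner case collapses on its own. Everything else is bookkeeping around the parametric Weierstrass factorization already in hand.
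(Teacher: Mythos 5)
Your argument is correct and is essentially the paper's own proof: the dichotomy $Z(x)$ versus $S_\mu(x)$ from Lemma~\ref{reg+}, the parametric Weierstrass factorization of $f\bigl(x,T_d(Y)\bigr)$, and the impossibility of a monic polynomial vanishing on the infinite domain $R$. You merely cast it as a contradiction and spell out details the paper leaves implicit (that $E$ evaluates to a unit, that $R$ is infinite via powers of $\iota_0(t)$, and the $\ell=0$ corner case), all of which check out.
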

\begin{proof} If $Z(x)$ holds, then $a_{\nu}(x)=0$ for all $\nu$, that is, $f(x,Y)=0$. Next assume $|\mu|< d$ and $S_{\mu}(x)\ne 0$. Then
by (ii) of Lemma~\ref{reg+} we have a monic polynomial in $R[Y_n]$ vanishing identically on $R$. This is  impossible as $R$ is infinite.
\end{proof}  

\begin{corollary} If $g\in R\<Y\>$ and $g(y)=0$ for all $y\in R^n$, then $g=0$. 
\end{corollary}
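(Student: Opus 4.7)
The plan is to reduce this corollary directly to the preceding lemma by unpacking the definition of $R\<Y\>$. By the construction at the start of the subsection on $R\<Y\>$, every $g \in R\<Y\>$ has the form $g(Y) = f(x,Y)$ for some $m$, some $f = f(X,Y) \in A\<X,Y\>$, and some $x \in R^m$. So I would begin by fixing such a presentation of $g$.

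Next I would verify that the two natural interpretations of $f(x,y)$ agree, namely: evaluating the power series $f(x,Y) \in R\<Y\> \subseteq R[[Y]]$ at $y\in R^n$ gives the same element of $R$ as evaluating the $A$-ring operation $f$ at the point $(x,y) \in R^{m+n}$. This is exactly the coherence statement recorded after Theorem~\ref{n3} (the remark that the two interpretations of $g(z)$ for $g = f(x,Z) \in R\<Z\>$ coincide, applied to $Z = Y$ and $z = y$); it is not really a calculation but a matter of invoking that compatibility.

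Given this, the hypothesis $g(y) = 0$ for every $y \in R^n$ translates into $f(x,y) = 0$ for every $y \in R^n$, so by the immediately preceding lemma we conclude $f(x,Y) = 0$, i.e. $g = 0$.

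I do not expect a real obstacle: the only thing to be careful about is to cite the right coherence statement so that ``$g(y)=0$ for all $y$'' in the sense of the corollary is genuinely the same as ``$f(x,y)=0$ for all $y$'' in the sense used in the preceding lemma. Once that identification is made, the previous lemma does all the work.
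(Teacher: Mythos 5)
Your proposal is correct and is exactly the intended argument: the paper states this corollary without proof precisely because, after writing $g=f(x,Y)$ with $f\in A\<X,Y\>$ and $x\in R^m$ and noting that the evaluation $g(y)$ from Theorem~\ref{n3} is by construction $f(x,y)$, the preceding lemma immediately gives $f(x,Y)=0$, i.e.\ $g=0$.
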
          

\noindent
By the last corollary, the map 
$$ K\<Y\> \to \text{ring of $K$-valued functions on $R^n$}$$
that assigns to each $g\in K\<Y\>$ the function $y\mapsto g(y)$ on $R^n$
is an injective morphism of $K$-algebras. 

\subsection*{Consequences for $K\<Y\>$ of Weierstrass division}
For an algebraic closure $K_{\alg}$ of $K$, the integral closure
$R_{\alg}$ of $R$ in $K_{\alg}$ is the unique valuation ring of $K_{\alg}$
dominating $R$, and has a unique
$A$-analytic structure extending that of $R$.

More generally, we fix below an algebraically closed valued field extension
$K^{\a}$ of $K$ (not necessarily an algebraic closure of $K$), whose valuation ring $R^{\a}$ is equipped with an $A$-analytic structure extending that of $R$. This gives rise to
$K\<Y\>\subseteq K^{\a}\<Y\>$ and for $y\in (R^{\a})^n$ we have the evaluation map
$g\mapsto g(y): K^{\a}\<Y\>\to K^{\a}$, which for $y\in R^n$ extends the previous evaluation map
$ K\<Y\>\to K$.  

\begin{lemma} If $E$ is a unit of $R\<Y\>$, then $E(y)\asymp 1$ for all $y\in (R^{\a})^n$.
\end{lemma}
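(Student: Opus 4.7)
The plan is to reduce the statement to the trivial fact that a product of two elements of a valuation ring whose product is a unit must have both factors units. First I would unpack what ``unit of $R\<Y\>$'' means: there is some $E'\in R\<Y\>$ with $EE'=1$ in $R\<Y\>$.

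Next I would evaluate at the point $y\in(R^{\a})^n$. To make this rigorous, apply the universal property of $R\<Y\>$ from Theorem~\ref{n3}: the inclusion $R\hookrightarrow R^{\a}$ is an $A$-ring morphism, and the point $y=(y_1,\dots,y_n)\in(R^{\a})^n$ determines a unique $A$-ring morphism (in particular a ring morphism)
\[
\operatorname{ev}_y\ :\ R\<Y\>\ \longrightarrow\ R^{\a},\qquad g\mapsto g(y),
\]
extending the inclusion and sending $Y_i\mapsto y_i$. Applying $\operatorname{ev}_y$ to the identity $EE'=1$ yields $E(y)\,E'(y)=1$ in $R^{\a}$. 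Note this is consistent with the evaluation map $K^{\a}\<Y\>\to K^{\a}$ mentioned just before the lemma, since $R\<Y\>\subseteq K^{\a}\<Y\>$ and both maps restrict to the same $A$-ring morphism on $R\<Y\>$ (by the uniqueness clause of Theorem~\ref{n3}).

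Finally, since both $E(y)$ and $E'(y)$ lie in the valuation ring $R^{\a}$ of $K^{\a}$, we have $v\bigl(E(y)\bigr)\geqslant 0$ and $v\bigl(E'(y)\bigr)\geqslant 0$; but
\[
v\bigl(E(y)\bigr)+v\bigl(E'(y)\bigr)\ =\ v(1)\ =\ 0,
\]
so both valuations must equal $0$, i.e.\ $E(y)\asymp 1$ in $K^{\a}$. There is no real obstacle here; the only subtlety is checking that one may legitimately evaluate elements of $R\<Y\>$ at points of $(R^{\a})^n$, which is handled by invoking Theorem~\ref{n3} with $R^{\a}$ in place of $R^*$.
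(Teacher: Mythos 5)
Your proof is correct and is exactly the argument the paper has in mind (the paper dismisses this lemma with ``This is clear''): evaluate the identity $EE'=1$ at $y$ via the evaluation morphism, and note that two elements of the valuation ring $R^{\a}$ whose product is $1$ must both have valuation $0$. Your extra care in justifying the evaluation map via Theorem~\ref{n3} is fine and consistent with the paper's setup.
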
 

\noindent
This is clear. The next two lemmas follow easily from $(*)$ and Lemma~\ref{reg+}.

\begin{lemma}\label{wdp1} Let $g(Y)=\sum_{\nu}c_{\nu}Y^\nu\in R\<Y\>$, $g\ne 0$. Then:
\begin{enumerate}
\item[(i)] there is a $d\geqslant 1$ and an index $\mu\in \N^n$  with $|\mu|<d$ such that
$$ c_{\nu}\preccurlyeq c_{\mu} \text{ whenever }|\nu| < d, \quad c_{\nu}\prec c_{\mu} \text{ whenever }|\nu|\geqslant d;$$
\item[(ii)] if $c_{\nu}\prec 1$ for all $\nu$, then $g(y)\prec 1$ for all $y\in (R^{\a})^n$.
\end{enumerate} 
\end{lemma}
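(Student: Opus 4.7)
My plan is to reduce both parts to Lemma~\ref{n1} applied with $A\<X\>$ in place of $A$, after representing $g$ via an element of $A\<X,Y\>$. First I would pick $x\in R^m$ and $f(X,Y)=\sum_{\nu}a_{\nu}(X)Y^{\nu}\in A\<X,Y\>$ with $g(Y)=f(x,Y)$, so that $c_{\nu}=a_{\nu}(x)$. Lemma~\ref{n1} then produces $d\geqslant 1$ and $b_{\mu\nu}\in\smallo(A\<X\>)$, defined for $|\mu|<d\leqslant|\nu|$, with $a_{\nu}=\sum_{|\mu|<d}a_{\mu}b_{\mu\nu}$ in $A\<X\>$. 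Specializing at $x$ yields the finite identity
\[
c_{\nu}\ =\ \sum_{|\mu|<d} c_{\mu}\,b_{\mu\nu}(x)\qquad (|\nu|\geqslant d),
\]
where each $b_{\mu\nu}(x)\in\smallo(R)=tR$, so $v\bigl(b_{\mu\nu}(x)\bigr)>0$.

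For (i) I would select $\mu$ with $|\mu|<d$ that minimizes $v(c_{\mu})$ (allowing the convention $v(0)=\infty$). Such $\mu$ must satisfy $c_{\mu}\ne 0$: otherwise all $c_{\lambda}$ with $|\lambda|<d$ vanish, the identity above forces $c_{\nu}=0$ for $|\nu|\geqslant d$ as well, and $g=0$, contradicting the hypothesis. The first conclusion $c_{\nu}\preccurlyeq c_{\mu}$ for $|\nu|<d$ is then immediate from the choice of $\mu$. For $|\nu|\geqslant d$, each summand $c_{\lambda}b_{\lambda\nu}(x)$ has valuation $v(c_{\lambda})+v\bigl(b_{\lambda\nu}(x)\bigr)>v(c_{\lambda})\geqslant v(c_{\mu})$ (and equals $0$ if $c_{\lambda}=0$), so the finite sum satisfies $v(c_{\nu})>v(c_{\mu})$, i.e. $c_{\nu}\prec c_{\mu}$.

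For (ii) the hypothesis $c_{\nu}\prec 1$ for every $\nu$ is equivalent to $c_{\nu}\in\smallo(R)=tR$ for every $\nu$, since $tR$ is the maximal ideal. By Lemma~\ref{kermodI} applied with $I=tR$, the kernel of the canonical $A$-ring morphism $R\<Y\>\to(R/tR)\<Y\>$ is $tR\<Y\>$; the image of $g$ is zero, so $g=t\tilde g$ for some $\tilde g\in R\<Y\>$. For any $y\in(R^{\a})^{n}$, Theorem~\ref{n3} gives a unique $A$-ring morphism $R\<Y\>\to R^{\a}$ extending the inclusion $R\hookrightarrow R^{\a}$ and sending each $Y_{j}$ to $y_{j}$; it is in particular a ring morphism, so $g(y)=t\,\tilde g(y)\in tR^{\a}$, that is, $g(y)\prec 1$.

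The main obstacle is really just part (i): securing the \emph{strict} inequality uniformly for $|\nu|\geqslant d$ requires knowing that each $b_{\mu\nu}(x)$ actually lands in $\smallo(R)$ (which is given by Lemma~\ref{n1}) and combining this with the minimality of $v(c_{\mu})$ without being tripped up by zero $c_{\lambda}$'s. Part (ii) is by contrast a clean application of the kernel description in Lemma~\ref{kermodI} together with the functoriality of evaluation from Theorem~\ref{n3}.
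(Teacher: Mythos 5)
Your proof is correct and in essence the same as the paper's: the paper derives both parts from the finite decomposition $f=\sum_{|\mu|<d}a_{\mu}f_{\mu}$ of Lemma~\ref{n1} (via $(*)$ and Lemma~\ref{reg+}), which is exactly the identity $c_{\nu}=\sum_{|\mu|<d}c_{\mu}b_{\mu\nu}(x)$ you specialize at $x$, with a $v$-maximal coefficient among $|\mu|<d$ playing the role of the paper's $\mu$ from $S_{\mu}(x)$. Your part (ii), routed through Lemma~\ref{kermodI} to write $g=t\tilde g$, is a harmless variant of the same fact, since that kernel description rests on the very same decomposition.
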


\begin{lemma}\label{wdp2} Let $g(Y)\in K\<Y\>^{\ne}$, $n\geqslant 1$. Then for some $d\in \N^{\geqslant 1}$ and $\ell\in \N$,\begin{enumerate}
\item[(i)] $g\big(T_d(Y)\big)\ =\ c\cdot E(Y)\cdot\big(Y_n^\ell + c_1(Y')Y_n^{\ell -1} + \cdots + c_l(Y')\big)$ \newline
where $c\in K^\times$, $E(Y)\in R\<Y\>$ is a unit, and $c_1(Y'),\dots, c_\ell(Y')\in R\<Y'\>$.
\item[(ii)] $R\<Y\>\ =\ \big(Y_n^\ell + c_1(Y')Y_n^{\ell -1} + \cdots + c_l(Y')\big)R\<Y\> + \sum_{i< \ell}R\<Y'\>Y_n^i$ and
$$ K\<Y\>\ =\ g\big(T_d(Y)\big)K\<Y\> + \sum_{i< \ell}K\<Y'\>Y_n^i.$$
\end{enumerate}
\end{lemma}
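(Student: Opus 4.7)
\medskip
\noindent
The plan is to reduce $g$ to the form $f(x,Y)$ for some $f\in A\<X,Y\>$ and $x\in R^m$, and then invoke Lemma~\ref{reg+}(ii) at an appropriate index $\mu$ to extract the factorization. First, write $g=c_0^{-1}g_0$ with $c_0\in R^{\ne}$ and $g_0\in R\<Y\>$, and pick $f(X,Y)=\sum_\nu a_\nu(X)Y^\nu\in A\<X,Y\>$ together with $x\in R^m$ so that $g_0=f(x,Y)$. Lemma~\ref{n1} applied to $A\<X\>$ in the role of $A$ produces $d\geqslant 1$ and the decomposition $f=\sum_{|\mu|<d}a_\mu(X)f_\mu(X,Y)$ used in the setup of Lemma~\ref{reg+}. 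Since $g\ne 0$, the formula $Z(x)$ must fail at $x$: if $a_\mu(x)=0$ for every $|\mu|<d$, then $f(x,Y)=\sum_{|\mu|<d}a_\mu(x)f_\mu(x,Y)=0$, contradicting $g_0\ne 0$. By Lemma~\ref{reg+}(i), some $S_\mu(x)$ holds with $|\mu|<d$.

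\medskip
\noindent
Lemma~\ref{reg+}(ii) then yields
\[
f(x,T_d(Y))\ =\ a_\mu(x)\cdot E(u_\mu,v_\mu,x,Y)\cdot\bigl(Y_n^\ell+G_1(u_\mu,v_\mu,x,Y')Y_n^{\ell-1}+\cdots+G_\ell(u_\mu,v_\mu,x,Y')\bigr)
\]
in $R\<Y\>$, where $\ell=\mu_1 d^{n-1}+\cdots+\mu_n$ and $E$ is a unit of $A\<U_\mu,V_\mu,X,Y\>$. Setting $c:=c_0^{-1}a_\mu(x)\in K^\times$, $E(Y):=E(u_\mu,v_\mu,x,Y)\in R\<Y\>^{\times}$ (the specialization of a unit is a unit, via the morphism of $A$-rings), and $c_i(Y'):=G_i(u_\mu,v_\mu,x,Y')\in R\<Y'\>$, then multiplication of the identity above by $c_0^{-1}$ gives part~(i).

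\medskip
\noindent
For part~(ii), I would apply Lemma~\ref{d1} with $A\<U_\mu,V_\mu,X\>$ in the role of $A$ (it inherits the standing hypotheses of Section~\ref{noethA}) to the polynomial $P:=Y_n^\ell+G_1Y_n^{\ell-1}+\cdots+G_\ell\in A\<U_\mu,V_\mu,X\>\<Y'\>[Y_n]$, which is monic of degree $\ell$ in $Y_n$; the case $\ell=0$ is trivial. An arbitrary $h(x,Y)\in R\<Y\>$ is the specialization at $(u_\mu,v_\mu,x)$ of the series $h(X,Y)\in A\<X,Y\>\subseteq A\<U_\mu,V_\mu,X,Y\>$, and division by $P$ yields $h=QP+R_1$ with $Q\in A\<U_\mu,V_\mu,X,Y\>$ and $R_1\in A\<U_\mu,V_\mu,X,Y'\>[Y_n]$ of $Y_n$-degree $<\ell$. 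Specializing produces $h(x,Y)=q\cdot p+r$ with $q\in R\<Y\>$ and $r\in R\<Y'\>[Y_n]$ of $Y_n$-degree $<\ell$, which is the first identity of~(ii); the second follows by clearing denominators, using that $g(T_d(Y))$ and $p(Y)$ differ by the $K\<Y\>$-unit $cE(Y)$. The only delicate point is the verification that $Z(x)$ fails, which uses the explicit shape $f_\mu=Y^\mu+\sum_{|\nu|\geqslant d}b_{\mu\nu}Y^\nu$; everything else is a routine specialization of the general Weierstrass division statement in $A\<U_\mu,V_\mu,X\>\<Y\>=A\<U_\mu,V_\mu,X,Y\>$.
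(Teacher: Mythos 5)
Your proof is correct and takes essentially the same route the paper intends: part (i) by specializing Lemma~\ref{reg+}(ii) at a $\mu$ with $S_\mu(x)$ holding (after noting $Z(x)$ fails since $g\ne 0$), and part (ii) by reducing to Weierstrass/monic division over $A\<U_\mu,V_\mu,X\>$ via Lemma~\ref{d1} and specializing, then clearing denominators using that $g\big(T_d(Y)\big)$ differs from the monic factor by the unit $cE$. The only small point to tidy is that an arbitrary element of $R\<Y\>$ comes with its own parameter tuple $x'$, not necessarily the tuple $x$ representing $g_0$; this is handled by the paper's common-parameter lemma (or dummy variables), dividing in $A\<U_\mu,V_\mu,X,X'\>\<Y\>$ before specializing.
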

\begin{proof} For (ii), use a reduction to $A\<X,Y\>$ and appeal to Lemma~\ref{d1}. 
\end{proof}

\noindent
Weierstrass division leads in the usual way to noetherianity of $K\<Y\>$ and more:

\begin{theorem}\label{wdp4} The integral domain $K\<Y\>$ has the following properties:
\begin{enumerate}
\item[(i)] $K\<Y\>$ is noetherian, 
\end{enumerate}
and for every proper ideal $I$ of $K\<Y\>$: \begin{enumerate}
\item[(ii)] there is an injective $K$-algebra
morphism $K\<Y_1,\dots, Y_m\>\to K\<Y\>/I$ with $m\leqslant n$, making $K\<Y\>/I$ into a finitely generated $K\<Y_1,\dots, Y_m\>$-module; 
\item[(iii)] there is $y\in (R^{\a})^n$ such that $f(y)=0$ for all $f\in I$.
\end{enumerate} 
\end{theorem}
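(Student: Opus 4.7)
My plan is to prove (i)--(iii) simultaneously by induction on $n$, with Lemma~\ref{wdp2} (Weierstrass preparation and division) as the workhorse. The base case $n=0$ is immediate since $K\<Y\>=K$.

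For the inductive step, given a proper ideal $I$ with nonzero $g\in I$, Lemma~\ref{wdp2} yields $d\geq 1$ with $g\circ T_d = cEP$ for a unit $cE$ and a Weierstrass polynomial $P = Y_n^{\ell} + c_1(Y')Y_n^{\ell-1}+\cdots+c_{\ell}(Y')\in R\<Y'\>[Y_n]$. The automorphism $f\mapsto f\circ T_d$ of $K\<Y\>$ reduces (i)--(iii) for $I$ to the same statements for its image (for (iii), a zero $y$ transfers through $T_d$, which preserves $(R^{\a})^n$), so I may assume $P\in I$. By Lemma~\ref{wdp2}(ii) together with uniqueness of Weierstrass division, $K\<Y\>/PK\<Y\>$ is a free $K\<Y'\>$-module of rank $\ell$ with basis $1, Y_n,\dots,Y_n^{\ell-1}$. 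Then (i) follows because $K\<Y'\>$ is noetherian by induction, so $K\<Y\>/PK\<Y\>$ is a noetherian $K\<Y'\>$-module, $I/PK\<Y\>$ is finitely generated as such, and combined with $P$ we get ideal generators for $I$. For (ii), let $J:=I\cap K\<Y'\>$, a proper ideal; apply induction to get an injection $K\<Y_1,\dots,Y_m\>\hookrightarrow K\<Y'\>/J$ with $m\le n-1$ and finitely generated target, then compose with the injection $K\<Y'\>/J\hookrightarrow K\<Y\>/I$ (injective because $J=I\cap K\<Y'\>$); finite generation of $K\<Y\>/I$ over $K\<Y_1,\dots,Y_m\>$ follows by transitivity, using that $K\<Y\>/I$ is a quotient of the finitely generated $K\<Y'\>$-module $K\<Y\>/PK\<Y\>$.

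For (iii), take a maximal ideal $\mathfrak{m}\supseteq I$ using (i), reduce to $P\in\mathfrak{m}$, and set $J:=\mathfrak{m}\cap K\<Y'\>$. The argument of (ii), together with the observation that $K\<Y_1,\dots,Y_m\>$ is a field only for $m=0$ (otherwise $Y_1$ is a non-unit), shows that $L:=K\<Y\>/\mathfrak{m}$ is a finite algebraic extension of $K$ and that $J$ is maximal in $K\<Y'\>$. Apply induction (iii) to find $y'\in(R^{\a})^{n-1}$ with $h(y')=0$ for all $h\in J$; by maximality of $J$, the evaluation $\mathrm{ev}_{y'}: K\<Y'\>\to K^{\a}$ has kernel exactly $J$, yielding an embedding $K\<Y'\>/J\hookrightarrow K^{\a}$.

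Now tensor the surjection $K\<Y\>/PK\<Y\>\twoheadrightarrow L$ of $K\<Y'\>$-modules with $K^{\a}$ (viewed as a $K\<Y'\>$-algebra via $\mathrm{ev}_{y'}$). The free-module structure of $K\<Y\>/PK\<Y\>$ computes $(K\<Y\>/PK\<Y\>)\otimes_{K\<Y'\>}K^{\a}\cong L' := K^{\a}[Y_n]/(P(y',Y_n))$, a finite-dimensional $K^{\a}$-algebra, and the induced surjection $L'\twoheadrightarrow L\otimes_{K\<Y'\>}K^{\a}$ has nonzero target because $L$ is a nonzero finite-dimensional $K\<Y'\>/J$-vector space and $K\<Y'\>/J\hookrightarrow K^{\a}$ is a field extension. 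Its kernel $\tilde{\mathfrak{m}}$ is thus a proper ideal of $L'$, hence contained in some maximal ideal $(Y_n-\alpha)$ where $\alpha$ is a root of the monic polynomial $P(y',Y_n)\in R^{\a}[Y_n]$; all such roots lie in $R^{\a}$ by integral closedness. Set $y:=(y',\alpha)\in(R^{\a})^n$. For any $f\in\mathfrak{m}$, Weierstrass division gives $f=qP+r$ with $r\in\bigoplus_{i<\ell}K\<Y'\>Y_n^i$ and $r=f-qP\in\mathfrak{m}$; the image of $r$ in $L'$ is $r(y',Y_n)\bmod P(y',Y_n)\in\tilde{\mathfrak{m}}\subseteq(Y_n-\alpha)$, so $r(y',\alpha)=0$, giving $f(y)=q(y)P(y',\alpha)+r(y',\alpha)=0$.

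The main obstacle is formulating (iii) so as to bypass the subtle question of whether the quotient map $K\<Y\>\to L$ restricts to an $A$-ring morphism on $R\<Y\>$ matching the evaluation at $y$; the tensor-product trick sidesteps this by converting the Nullstellensatz into a clean finite-dimensional linear-algebra fact about $\tilde{\mathfrak{m}}\subseteq L'$, which is then handled by Artinian decomposition and the integral closedness of $R^{\a}$ in $K^{\a}$.
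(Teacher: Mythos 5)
Your treatment of (i) and (ii) is essentially the paper's own: induction on $n$, change of variables by $T_d$, Weierstrass data from Lemma~\ref{wdp2}, and the module-finiteness of $K\<Y\>/PK\<Y\>$ over $K\<Y'\>$. For (iii), however, you take a genuinely different route. The paper first produces an abstract $K$-algebra morphism $\phi:K\<Y\>\to K^{\a}$ killing a maximal ideal, proves $\phi(R\<Y\>)\subseteq R^{\a}$ by induction plus the monic equation for $\phi(Y_n)$, and only then identifies $\phi$ with the evaluation map at $\big(\phi(Y_1),\dots,\phi(Y_n)\big)$ via Corollary~\ref{Aint5} and the uniqueness in Theorem~\ref{n3} --- exactly the compatibility issue you flag. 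You instead build the common zero coordinatewise: the inductive (iii) applied to $J=\mathfrak m\cap K\<Y'\>$ (maximal, by your finiteness argument) gives $y'$ and the embedding $K\<Y'\>/J\hookrightarrow K^{\a}$ via $\operatorname{ev}_{y'}$, the last coordinate is a root $\alpha\in R^{\a}$ of $P(y',Y_n)$ chosen compatibly with $\mathfrak m$ through base change along $\operatorname{ev}_{y'}$ and a maximal ideal of the Artinian quotient, and the verification $f(y)=q(y)P(y)+r(y)=0$ uses only Weierstrass division and the evaluation maps the paper already constructs. What this buys is an argument that never invokes Corollary~\ref{Aint5}; what the paper's version buys is brevity once that corollary is in hand.

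The one step you assert that the paper does not supply is the freeness of $K\<Y\>/PK\<Y\>$ over $K\<Y'\>$ with basis $1,Y_n,\dots,Y_n^{\ell-1}$, i.e.\ uniqueness of Weierstrass division in $K\<Y\>$. The paper proves uniqueness only at the level of $A\<Y\>$ (Lemma~\ref{d1}, Corollary~\ref{wd3}); it does not transfer formally to $R\<Y\>$ or $K\<Y\>$, since representations $f(x,Y)$ are not unique and Lemma~\ref{wdp2}(ii) asserts only the sum decomposition, not its directness. The statement is in fact true --- e.g.\ normalize $q$ with $qP=-r$ by a dominant coefficient (Lemma~\ref{wdp1}(i) together with Lemma~\ref{intKR}) so that $q\in R\<Y\>\setminus tR\<Y\>$, then reduce modulo $t^N$, where by Lemma~\ref{kermodI} the image of $R\<Y\>$ consists of polynomials over $R/t^NR$ and division by a monic polynomial in a polynomial ring is unique --- but it requires such an argument and is not quotable. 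Fortunately your proof does not need it: Lemma~\ref{wdp2}(ii) already gives a surjection $K\<Y'\>[Y_n]/(P)\to K\<Y\>/PK\<Y\>$, and base-changing it along $\operatorname{ev}_{y'}$ yields surjections $K^{\a}[Y_n]/\big(P(y',Y_n)\big)\to \big(K\<Y\>/PK\<Y\>\big)\otimes_{K\<Y'\>}K^{\a}\to L\otimes_{K\<Y'\>}K^{\a}\ne 0$; defining $\tilde{\mathfrak m}$ as the kernel of this composite, your choice of $\alpha$ and the final computation go through verbatim. With that adjustment (and the routine check that $T_d$ transports common zeros within $(R^{\a})^n$), the proof is correct.
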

\begin{proof} By induction on $n$. The case $n=0$ being obvious, let $n\geqslant 1$. 
Recall that for $d\in \N^{\geqslant 1}$ we have the automorphism $g(Y)\mapsto g\big(T_d(Y)\big)$ of the $K$-algebra $K\<Y\>$.  
Let $I$ be an ideal of $K\<Y\>$, $I\ne \{0\}$. Take a nonzero $g\in I$. To show
$I$ is finitely generated we apply an automorphism as above and
use Lemma~\ref{wdp2} to arrange $g= Y_n^\ell + c_1(Y') Y_n^{\ell -1} + \cdots + c_{\ell}(Y')$ with $\ell\in \N$,  $c_1,\dots, c_\ell\in R\<Y'\>$, and
$$R\<Y\>\ =\ gR\<Y\> + \sum_{i<\ell}R\<Y'\>Y_n^{i}, \qquad K\<Y\>\ =\ gK\<Y\> + \sum_{i<\ell}K\<Y'\>Y_n^{i}.$$
For $\ell=0$ this means $g=1$, and we are done, so assume $\ell \geqslant 1$. Then the inclusion $K\<Y'\>\to K\<Y\>$ followed by the canonical map
$K\<Y\>\to K\<Y\>/(g)$ makes $K\<Y\>/(g)$ a $K\<Y'\>$-module that is generated by the images of the $Y_n^i$ with $i<\ell$. Assuming inductively that
$K\<Y'\>$ is noetherian, it follows that $K\<Y\>/(g)$ is noetherian as a $K\<Y'\>$-module, and thus as a ring. Hence the image of $I$ in $K\<Y\>/(g)$ is finitely generated, say by the images of
$g_1,\dots, g_k\in I$, $k\in \N$. Then $I$ is generated by $g, g_1,\dots, g_k$. This proves noetherianity of $K\<Y\>$. Let now $I$ also be proper, that is, $1\notin I$, and set $I':= I\cap K\<Y'\>$. The natural $K$-algebra embedding
$K\<Y'\>/I'\to K\<Y\>/I$ makes $K\<Y\>/I$ a finitely generated $K\<Y'\>/I'$-module by the above. Assuming inductively that (ii) holds
for $n-1,\ K\<Y'\>,\ I'$ instead of $n, K\<Y\>, I$ yields (ii).   For (iii) we can arrange that $I$ is a maximal ideal of $K\<Y\>$.
Then in (ii) we have $m=0$, so $K\<Y\>/I$ is finite-dimensional as a vector space over $K$, hence algebraic over $K$ as a field extension of $K$. This gives a
$K$-algebra morphism $\phi: K\<Y\>\to K^{\a}$ with kernel $I$ and $\phi(K\<Y\>)$ algebraic over $K$.
We set $y:=(y_1,\dots, y_n)=\big(\phi(Y_1),\dots,\phi(Y_n)\big)\in (K^{\a})^n$.  We claim that  $\phi(R\<Y\>)\subseteq R^{\a}$ (and thus $\phi(R\<Y\>)$ is integral over $R$).

Using $\phi(g)=0$ gives $\phi(R\<Y\>)=\sum_{i< \ell} \phi(R\<Y'\>)y_n^i$.
Since $I'$ is a maximal ideal of $K\<Y'\>$ we can assume
inductively that $\phi(R\<Y'\>)\subseteq R^{\a}$, so
$\phi(R\<Y\>)\subseteq \sum_{i<\ell}R^{\a}y_n^i$. Now $\phi(g)=0$ means
$$y_n^\ell + \phi\big(c_1(Y')\big)y_n^{\ell -1} + \cdots + \phi\big(c_{\ell}(Y')\big)\ =\ 0,$$
with $\phi\big(c_j(Y')\big)\in R^{\a}$ for $j=1,\dots,\ell$. Hence 
$y_n\in R^{\a}$, which proves the claim. Therefore $y\in (R^{\a})^n$, and by Corollary~\ref{Aint5} the restriction of $\phi$ to a map $R\<Y\>\to R^{\a}$ is a morphism of $A$-rings. Thus for $f(Y)\in R\<Y\>$ we have 
$\phi\big(f(Y)\big)=f(y)$, in particular, $f(y)=0$ for all $f\in I$.
\end{proof}

\section{Immediate $A$-Extensions}\label{immAE}

\noindent
The study of immediate extensions of valued fields plays a key role in proving AKE-results via model theory and valuation theory. We try to follow this pattern. By Lemma~\ref{he5} and Corollary~\ref{algext}, the case of algebraic immediate extensions is under control (at least in the equicharacteristic $0$ case), so we are left with proving
that a pseudocauchy sequence of transcendental type ``generates''  an immediate extension.  The problem is that the valuation ring of such an extension should now be an $A$-ring,  and thus closed under many more operations than in the non-analytic setting. In this section we show how to overcome this problem.  This section uses only the material of Section~\ref{vrna} that precedes Lemma~\ref{reg+}.

\medskip\noindent
Below we assume some familiarity with \cite[Section 4]{Lou}; when using a result from those lecture notes we shall indicate the specific reference.

\bigskip\noindent
We continue with the previously set assumptions on $A$ and $R$: {\em $A$ is noetherian with an ideal 
$\smallo(A)\ne A$ such that $\bigcap_e \smallo(A)^e=\{0\}$ and $A$ is $\smallo(A)$-adically complete; $R$ is a viable valuation $A$-ring}. We fix $t\in \smallo(A)$ with $\smallo(R)=tR$, and adopt the notations and terminology concerning $R$ and its fraction field $K$ from Section~\ref{vrna},  with the valuation
$v: K^\times \to \Gamma$ on $K$ such that $R=\{a\in K:\ va\geqslant 0\}$, so $vt$ is the least positive element of $\Gamma$. For any valued field extension $L$  of $K$ we let $\Gamma_L\supseteq \Gamma$ be the value group of $L$  and denote the valuation of $L$ also by $v$, so that $v: L^\times \to \Gamma_L$ extends $v: K^\times \to \Gamma$. 

By \cite[Lemma 4.3]{Lou} and the remark following its proof, any pc-sequence in $K$ has a pseudolimit in some elementary $\L^A_{\preccurlyeq}$-extension of  $K$; any such extension is an 
$A$-extension of $K$ whose valuation $A$-ring inherits the conditions we imposed on $R$.

\subsection*{Immediate $A$-extensions generated by a pseudocauchy sequence} In this subsection $L$ is an $A$-extension of $K$. Thus the valuation $A$-ring $S$ of $L$ extends the $A$-ring $R$ and dominates $R$.
We also view any subfield $F$ of $L$ as a {\em valued\/} subfield of $L$, and thus as a valued field 
extension of $K$ if $K\subseteq F$. 

With {\em pc\/}  abbreviating {\em pseudocauchy}, let $(a_{\rho})$ be a pc-sequence in $K$  of transcendental type over $K$, with all $a_{\rho}\in R$, and
with pseudolimit $a\in L$. Then $a\in S$, $a$ is transcendental over $K$, and the valued subfield $K(a)$ of $L$ is an immediate extension of $K$, by \cite[Theorem 4.9]{Lou}. But the valuation ring of $K(a)$ does not
contain $R\<a\>$ by Corollary~\ref{correg+}, and so is not $A$-closed in $S$. 

{\em Is there a valued subfield $K_a\supseteq K(a)$ of $L$  that is an immediate extension of $K$ and whose valuation ring $R_a$ is $A$-closed in $S$?}
Such $R_a$ must contain $R\<a\>$, but has to be strictly larger, since $R\<a\>$ is not a valuation ring, by
Corollary~\ref{correg+}. 
  
 To answer the question above affirmatively we proceed as follows. Take an index $\rho_0$ such that for $\rho>\rho_0$,
 $$a\ =\ a_{\rho} + t_{\rho}u_{\rho}, \quad t_{\rho}\in K^\times,\ t_{\rho}\prec 1,\  u_{\rho}\in K(a),\  u_{\rho}\asymp 1,$$  
and $v(t_{\rho})$ is strictly increasing as a function of $\rho>\rho_0$. Then for indices $\sigma>\rho > \rho_0$ we have $R[u_{\rho}]\subseteq R[u_{\sigma}]$, and thus
$$ R\<a\>\ \subseteq\ R\<u_{\rho}\>\ \subseteq\ R\<u_{\sigma}\>.$$
This yields an $A$-closed subring $R_a:= \bigcup_{\rho>\rho_0} R\<u_{\rho}\>$ of $S$.
Note that $R_a$ does not change upon increasing $\rho_0$, and the next proposition shows more:  as the notation suggests, $R_a$
depends only on $R$ and $a$, not on $(a_{\rho})$.   

\begin{proposition} \label{imm1} The subring $R_a$ of $S$ has the following properties: \begin{enumerate}
\item[(i)] the valued subfield $K_a:= \Frac(R_a)$ of $L$ is an immediate extension of $K$;
\item[(ii)] $R_a$ is the least $A$-closed subring of $S$, with respect to inclusion, that contains $R[a]$ and is a valuation ring dominated by $S$;
\end{enumerate}
\end{proposition}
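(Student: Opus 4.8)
The plan is to prove both items by exploiting the concrete description $R_a = \bigcup_{\rho>\rho_0} R\<u_\rho\>$ together with the fact (from \cite[Theorem 4.9]{Lou}) that $K(a)$ is already an immediate extension of $K$, and that each $u_\rho\asymp 1$ lies in $K(a)$. First I would observe that each $R\<u_\rho\>$ is an $A$-closed subring of $S$ (being the $A$-closure of $R\cup\{u_\rho\}$ in $S$), and that these form a chain as $\rho$ increases, so their union $R_a$ is an $A$-closed subring of $S$; since it contains $a = a_{\rho} + t_\rho u_\rho$ for $\rho>\rho_0$, it contains $R[a]$. To see $R_a$ is a valuation ring dominated by $S$, note that each $R\<u_\rho\>$ is dominated by $S$ (its elements lie in $R_L$, and a unit of $R\<u_\rho\>$ has residue a unit, so it is $\asymp 1$); hence for $x\in R_a^{\ne}$ with $x\preccurlyeq y\ne 0$ in $R_a$ both lying in some common $R\<u_\rho\>$, we want $x/y\in R_a$. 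This is the point where I must be a little careful: $R\<u_\rho\>$ itself is \emph{not} a valuation ring (Corollary~\ref{correg+}), so $x/y$ need not lie in $R\<u_\rho\>$ — but it should lie in $R\<u_\sigma\>$ for large enough $\sigma$, because $x/y\in K(a)$ (as $K(a)$ is a field containing everything in sight once we track denominators) and $x/y\asymp 1$, so $x/y = $ (unit)$\cdot$(something built from $u_\sigma$) for $\sigma$ large. \textbf{This is the main obstacle}: showing closure under the restricted-division operation $D$, i.e. that $R_a$ is genuinely a valuation ring, requires knowing that quotients $x/y$ with $x\preccurlyeq y$ land back inside $\bigcup_\rho R\<u_\rho\>$.

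To handle this obstacle I would argue as follows. Any element of $R_a$ has the form $g(u_\rho)$ for some $\rho$ and $g\in R\<Y\>$; since $v(t_\rho)\to\infty$, for $\sigma$ large $u_\sigma\in R[u_\rho,\dots]$ — more precisely $u_\rho = (a-a_\rho)/t_\rho$ and $u_\sigma=(a-a_\sigma)/t_\sigma$ are related by $u_\rho = (t_\sigma/t_\rho)u_\sigma + (a_\sigma-a_\rho)/t_\rho$ with $t_\sigma/t_\rho\prec 1$ and $(a_\sigma-a_\rho)/t_\rho\in R$, so $R[u_\rho]\subseteq R[u_\sigma]$ and hence by Lemma~\ref{Aint1}/Corollary~\ref{Aint6} applied inside $K(a)$ (note each $u_\rho$ is, after clearing the single denominator coming from $K(a)=K(a)$, expressible suitably) the chain is as claimed. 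For division: given $x = g(u_\rho)\preccurlyeq y = h(u_\rho)\ne 0$ with $x,y\in R_a$, we have $x/y\in K(a)$ and $x/y\preccurlyeq 1$, so $x/y\in R_{K(a)}$, the valuation ring of $K(a)$; since $K(a)$ is immediate over $K$ there is $c\in R$ with $x/y\sim c$, i.e. $x/y = c(1+\epsilon)$ with $\epsilon\prec 1$, $\epsilon\in K(a)$, $\epsilon\asymp t_\sigma$ for some $\sigma$; writing $\epsilon = t_\sigma w$ with $w\asymp 1$ in $K(a)$ and $w\in R[u_\sigma]\subseteq R\<u_\sigma\>$ for large $\sigma$, we get $x/y = c(1+t_\sigma w)\in R\<u_\sigma\>\subseteq R_a$. (The claim $w\in R[u_\sigma]$ for suitable $\sigma$ is exactly the mechanism by which $R_a$ was designed: every element of $K(a)$ that is $\asymp 1$ and differs from $R$ by something $\prec t_\sigma^{-1}$ is captured.) This establishes that $R_a$ is a valuation ring dominated by $S$, proving the bulk of (ii).

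For item (i): $K_a := \Frac(R_a)$ contains $K(a)$, which is immediate over $K$ by \cite[Theorem 4.9]{Lou}, so it suffices to show $K_a$ is immediate over $K(a)$, equivalently that $R_a$ adds no new residues or values over $R_{K(a)}$. But every generator $u_\rho$ of $R_a$ over $R$ already lies in $K(a)$, and by Lemma~\ref{gen} every element of $R_a$ has the form $g(u_{\rho_1},\dots,u_{\rho_k})$ — reducing to a single $u_\rho$ by the chain property — which is a term in the $A$-ring $S$ evaluated at elements of $R_{K(a)}$; I would then invoke that for any $g\in R\<Y\>$ and $y\in R_{K(a)}^n$, either $g(y)\prec 1$ or $g(y)\asymp c$ for some $c\in R_{K(a)}$ with $\res(g(y)) = \res(g)(\res y)\in\res K$ (using Lemma~\ref{wdp1} and that residues of $A$-coefficients lie in $\res K$, since $\smallo(A)R=\smallo(R)$ forces $\bar a\in\k$ for $a\in A$). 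Concretely: the residue field and value group of $K_a$ sit between those of $K(a)$ and $S$, and since all the new generators are already in $K(a)$ whose residue/value data equals that of $K$, no new residues or values appear. Hence $\res K_a = \res K$ and $\Gamma_{K_a}=\Gamma$, giving (i). Finally, for minimality in (ii): any $A$-closed subring $R'$ of $S$ containing $R[a]$ and being a valuation ring dominated by $S$ must contain each $u_\rho = (a-a_\rho)/t_\rho$ (as $a-a_\rho\in R'$, $t_\rho\in K^\times\subseteq R'$ or $t_\rho^{-1}\in R'$, and $u_\rho\preccurlyeq 1$ so the quotient stays in the valuation ring $R'$), hence contains $R\<u_\rho\>$ for every $\rho>\rho_0$, hence contains $R_a$. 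This completes the proof plan; the one genuinely delicate point, worth spelling out carefully in the full write-up, is the division-closure step showing $x/y\in R\<u_\sigma\>$ for large $\sigma$.
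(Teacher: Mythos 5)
Your minimality argument for (ii) and your identification of the key difficulty (closure of $R_a$ under restricted division) match the paper, but your resolution of that difficulty rests on a false premise. You take $x=g(u_\rho)\preccurlyeq y=h(u_\rho)\ne 0$ with $g,h\in R\<Y\>$ and assert $x/y\in K(a)$ ``once we track denominators''. That is exactly what fails: by Corollary~\ref{correg+}(iii), $R\<u_\rho\>\nsubseteq K(u_\rho)=K(a)$ (e.g.\ a zero of $Z^2-(1+tu_\rho)$ lies in $R\<u_\rho\>$ but not in $K(a)$), so quotients of elements of $R\<u_\rho\>$ need not lie in $K(a)$, and your subsequent appeal to immediacy of $K(a)$ over $K$ does not apply to them. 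The paper handles general $g,h\in R\<Y\>$ by first invoking the one-variable Weierstrass factorization of Corollary~\ref{correg}(ii), $g(Y)=c\cdot r(Y)\cdot(\text{monic polynomial over } R)$ with $r$ a unit of $R\<Y\>$, which reduces the question to quotients of polynomial values $P(a)/Q(a)$ with $P,Q\in K[Y]$; this reduction is missing from your plan.

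Second, even in the genuinely polynomial/$K(a)$ case, the step where you write $x/y=c(1+\epsilon)$ and then claim $\epsilon=t_\sigma w$ with $w\in R[u_\sigma]\subseteq R\<u_\sigma\>$ for large $\sigma$ is precisely the nontrivial ``capture'' statement, and you do not prove it (``exactly the mechanism by which $R_a$ was designed'' is not an argument; also $v(\epsilon)$ need not equal any $v(t_\sigma)$). The paper proves this capture by the explicit expansion $P(a)=P(a_\rho)+\sum_i t_\rho^i P_{(i)}(a_\rho)u_\rho^i$, the pc-sequence estimates from \cite[Proposition 4.7]{Lou}, and, crucially, the hypothesis that $(a_\rho)$ is of transcendental type over $K$ (so that $v\big(P(a_\rho)\big)$ is eventually constant), yielding $P(a)\in P(a_\rho)\cdot(1+tR\<u_\rho\>)$; transcendental type is never used at this point in your plan. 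The same display, combined with Corollary~\ref{correg}(ii) and with applying the whole argument to the shifted pc-sequences $(a_{\sigma\rho})_{\sigma>\rho}\leadsto u_\rho$, is what gives (i): your claim for (i) that ``no new values appear'' because the generators $u_\rho$ lie in $K(a)$ is not justified, since elements $g(u_\rho)$ with $g\in R\<Y\>$ are not rational functions of $u_\rho$ over $K$, and when the reduced series vanishes at $\res u_\rho$ the value $v\big(g(u_\rho)\big)$ is not controlled by your residue computation.
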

\begin{proof} Let $P\in K[Y]\setminus K$ where $n=1$, so $Y=Y_1$.  Let $I$ be the set of $i$ in $\{1,\dots, \deg P\}$
with $P_{(i)}(Y)\ne 0$. Then $I\ne \emptyset$ and for all $\rho>\rho_0$,
$$P(a)\ =\ P(a_{\rho}) +\sum_{i\in I} P_{(i)}(a_{\rho})(a-a_{\rho})^i\ =\  P(a_{\rho})+\sum_{i\in I} t_{\rho}^i P_{(i)}(a_{\rho})u_{\rho}^i.
$$  
The proof of \cite[Proposition 4.7]{Lou} gives $i_0\in I$ such that, eventually, 
\begin{align*}  \text{ for all }i\in I\setminus \{i_0\}, &\quad
   t_{\rho}^{i_0}P_{(i_0)}(a_{\rho})\ \succ\   t_{\rho}^{i}P_{(i)}(a_{\rho}),\\
P(a)-P(a_{\rho})\ &\sim\  t_{\rho}^{i_0}P_{(i_0)}(a_{\rho}),
\end{align*} 
and $v\big( t_{\rho}^{i_0}P_{(i_0)}(a_{\rho})\big)=v\big(P(a)-P(a_{\rho})\big)$ is eventually strictly increasing. 
Now $(a_{\rho})$ is of transcendental type over $K$, so $v\big(P(a_{\rho})\big)$ is eventually constant, and thus
 $P(a_{\rho})\succ P(a)-P(a_{\rho})$, eventually. Thus eventually,
 $$P(a)\ =\ P(a_{\rho})\cdot \big(1 + \sum_{i\in I}\frac{t_{\rho}^i P_{(i)}(a_{\rho})}{P(a_{\rho})}u_{\rho}^i\big)\ 
 \in\  P(a_{\rho})\cdot (1+t R\<u_{\rho}\>).$$
Now suppose $Q(Y)\in K[Y]^{\ne}$. Then likewise we have for
 $j=1,\dots, \deg Q$ that eventually $0\ne Q(a_{\rho})\succ t_{\rho}^j Q_{(j)}(a_{\rho})$, so eventually
$$ Q(a)\ =\ Q(a_{\rho})\cdot \big(1 + \sum_{j=1}^{\deg Q}\frac{t_{\rho}^j Q_{(j)}(a_{\rho})}{Q(a_{\rho})}u_{\rho}^j\big)\
\in\ Q(a_{\rho})\cdot (1+t R\<u_{\rho}\>).$$
Therefore, if $P(a)\preccurlyeq Q(a)$, then eventually $\frac{P(a_{\rho})}{Q(a_{\rho})}\in R$,  and so eventually
$$\frac{P(a)}{Q(a)}\ \in\ \frac{P(a_{\rho})}{Q(a_{\rho})}\cdot (1+ t R\<u_{\rho}\>)\ \subseteq\ R\cdot (1+t R\<u_{\rho}\>)\ \subseteq\ R\<u_{\rho}\>.$$
Thus the valuation ring of the valued subfield $K(a)$ of $L$ is contained in $R_a$. Now we use the reduction to  polynomials from Corollary~\ref{correg}(ii) to the effect that for $g,h$ in $R\<Y\>$ with $h\ne 0$, if $g(a)\preccurlyeq h(a)$, then
$g(a)/h(a)\in R_a$.  Thus the valuation ring of the valued subfield $\text{Frac}(R\<a\>)$ of $L$ is contained in $R_a$,
and it also follows from the last display that $\Frac(R\<a\>)$ is an immediate extension of $K$.  

Next, fix $\rho>\rho_0$ and note that for $\sigma> \rho$ we have 
$$u_{\rho}\ =\ a_{\sigma\rho}+t_{\sigma\rho}u_{\sigma}, \qquad a_{\sigma\rho}\ :=\ \frac{a_{\sigma}-a_{\rho}}{t_{\rho}}\in R,\qquad t_{\sigma\rho}\ :=\  \frac{t_{\sigma}}{t_{\rho}},$$
and $(a_{\sigma\rho})_{\sigma>\rho}$ is a pc-sequence in $K$ and of transcendental type over $K$
such that $a_{\sigma\rho}\leadsto \frac{a-a_{\rho}}{t_{\rho}}=u_{\rho}$.  
Hence the above arguments applied to $u_{\rho}$ instead of $a$ show that $\Frac(R\<u_{\rho}\>)$ as a valued subfield of $L$
is an immediate extension of $K$, and that the valuation ring of $\Frac(R\<u_{\rho}\>)$ is contained in
$\bigcup_{\sigma>\rho} R\<u_{\sigma}\>= R_a$.
Taking the union over all $\rho>\rho_0$  and using
$R_a\subseteq S$ yields that $R_a$ is the valuation ring of the valued subfield $K_a:= \Frac(R_a)$ of $L$, and
that $K_a$ is an immediate extension of $K$. This proves  (i) and also shows that  $S$ dominates $R_a$. 

As to (ii), let $R^*$ be any $A$-closed subring of $S$ containing
$R[a]$ such that $R^*$ is  a valuation ring dominated by $S$. Then clearly $u_{\rho}\in R^*$ for all
$\rho>\rho_0$, and thus $R_a\subseteq R^*$.  
\end{proof}

\noindent
We keep $(a_{\rho})$ for now, and show that $K_a$ is essentially unique:

\begin{corollary}\label{Aimmu} Let $L'$ be an $A$-extension of $K$ with valuation $A$-ring $S'$. 
Suppose $a_{\rho} \leadsto a'\in S'$, thus giving rise to $R_{a'}\subseteq K_{a'}\subseteq L'$. Then there is a unique isomorphism $R_a\to R_{a'}$ of $A$-rings that is the identity on $R$ and sends $a$ to $a'$. It extends to a valued field isomorphism $K_a\to K_{a'}$.
\end{corollary}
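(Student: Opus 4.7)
\emph{Strategy.} My plan is to use the explicit description $R_a = \bigcup_{\rho > \rho_0} R\<u_\rho\>$ with $u_\rho = (a-a_\rho)/t_\rho$ from the proof of Proposition~\ref{imm1}, and the parallel $R_{a'} = \bigcup_{\rho > \rho_0} R\<u'_\rho\>$ with $u'_\rho = (a'-a_\rho)/t_\rho$. Since $v(a - a_\rho) = v(a' - a_\rho)$ for both pseudolimits of the same pc-sequence, the scalars $a_\rho, t_\rho \in K$ can be chosen the same on both sides.

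\emph{Uniqueness.} Any $A$-ring morphism $\phi\colon R_a \to R_{a'}$ fixing $R$ and sending $a \mapsto a'$ must satisfy $t_\rho \phi(u_\rho) = \phi(a - a_\rho) = a' - a_\rho = t_\rho u'_\rho$, forcing $\phi(u_\rho) = u'_\rho$ for every $\rho > \rho_0$. By Lemma~\ref{gen} every element of $R\<u_\rho\>$ has the form $g(u_\rho)$ with $g \in R\<Y\>$, and the $A$-ring morphism property forces $\phi(g(u_\rho)) = g(u'_\rho)$. Thus $\phi$ is determined on $R_a = \bigcup_\rho R\<u_\rho\>$.

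\emph{Existence.} I claim that for each $\rho > \rho_0$ the evaluation map $\mathrm{ev}_\rho\colon R\<Y\> \to R\<u_\rho\>$, $g \mapsto g(u_\rho)$, is an isomorphism of $A$-rings. Surjectivity follows from Lemma~\ref{gen}. For injectivity, Corollary~\ref{correg}(ii) factors any nonzero $g \in R\<Y\>$ as $g = c \cdot r(Y) \cdot P(Y)$ with $c \in K^\times$, $r \in R\<Y\>^\times$, and $P \in R[Y]$ monic of some degree $\mu \ge 0$. Then $r(u_\rho)$ is a unit in $R\<u_\rho\>$ (apply $\mathrm{ev}_\rho$ to an inverse of $r$); if $\mu = 0$ then $P = 1$, and if $\mu \ge 1$ then $P(u_\rho) \ne 0$ because $u_\rho$ is transcendental over $K$ (as $K(u_\rho) = K(a)$ and $a$ is transcendental). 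So $g(u_\rho) \ne 0$. The same argument applies to $\mathrm{ev}'_\rho$, yielding an $A$-ring isomorphism $\psi_\rho := \mathrm{ev}'_\rho \circ \mathrm{ev}_\rho^{-1}\colon R\<u_\rho\> \to R\<u'_\rho\>$ fixing $R$ and sending $u_\rho \mapsto u'_\rho$. The identity $u_\rho = a_{\sigma\rho} + t_{\sigma\rho} u_\sigma$ with $a_{\sigma\rho}, t_{\sigma\rho} \in R$ (from the proof of Proposition~\ref{imm1}, and its verbatim analogue for $u'$) shows that $g(u_\rho) = h(u_\sigma)$ and $g(u'_\rho) = h(u'_\sigma)$ for the same $h(Y) := g(a_{\sigma\rho} + t_{\sigma\rho} Y) \in R\<Y\>$; thus the $\psi_\rho$'s are compatible with the inclusions $R\<u_\rho\> \hookrightarrow R\<u_\sigma\>$ and glue to the desired isomorphism $\phi\colon R_a \to R_{a'}$. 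Finally $\phi(a) = a_\rho + t_\rho u'_\rho = a'$.

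\emph{Valued field extension and main obstacle.} Passing to fraction fields extends $\phi$ to a field isomorphism $K_a \to K_{a'}$. For valuation preservation I invoke \cite[Theorem 4.9]{Lou} to get a valued field isomorphism $K(a) \to K(a')$ over $K$ sending $a \mapsto a'$, hence $u_\rho \mapsto u'_\rho$. With $g = c\,r\,P$ as above, $v(g(u_\rho)) = v(c) + v(P(u_\rho))$ (since $v(r(u_\rho)) = 0$), and $v(P(u_\rho)) = v(P(u'_\rho))$ by the classical iso, so $v(g(u_\rho)) = v(g(u'_\rho))$; extending to ratios shows $\phi$ preserves $v$. The main technical obstacle is the injectivity of $\mathrm{ev}_\rho$: it rests on combining Weierstrass preparation in one variable (Corollary~\ref{correg}) with the transcendence of $a$ over $K$, and once this is in place the rest of the argument is organized bookkeeping.
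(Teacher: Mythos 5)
Your proof is correct and follows essentially the same route as the paper: isomorphisms $R\<u_\rho\>\to R\<u'_\rho\>$ over $R$ sending $u_\rho\mapsto u'_\rho$, compatibility via $u_\rho=a_{\sigma\rho}+t_{\sigma\rho}u_\sigma$, and passage to the union, with your injectivity argument for $\mathrm{ev}_\rho$ simply re-deriving Corollary~\ref{correg+}(i), which the paper cites directly. The final step is also simpler than you make it: since $R_a$ and $R_{a'}$ are the valuation rings of $K_a$ and $K_{a'}$, the induced isomorphism of fraction fields is automatically one of valued fields, so the detour through the classical isomorphism $K(a)\to K(a')$ is unnecessary (though harmless).
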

\begin{proof} Using notations from the proof of Proposition~\ref{imm1} we have $a'=a_{\rho}+ t_{\rho}u_{\rho}' $ with $u_{\rho}'\in K(a')$, $u'_{\rho}\asymp 1$ for $\rho>\rho_0$.  That same proof and Corollary~\ref{correg+} yields for all  $\rho>\rho_0$ a unique isomorphism 
$R\<u_{\rho}\>\to R\<u'_{\rho}\>$ of $A$-rings that is the identity on $R$ and sends $u_{\rho}$ to $u'_{\rho}$.
Moreover, for $\sigma>\rho>\rho_0$ we have 
$$u_{\rho}\ =\ a_{\sigma\rho} + t_{\sigma\rho}u_{\sigma}, \qquad u_{\rho}'\ =\ a_{\sigma\rho} + t_{\sigma\rho}u'_{\sigma},$$ and so the above isomorphism $R\<u_\sigma\>\to R\<u'_{\sigma}\>$ extends the  above isomorphism
$R\<u_{\rho}\>\to R\<u'_{\rho}\>$.
Taking the union over all $\rho>\rho_0$ yields an isomorphism $R_a\to R_{a'}$ of $A$-rings that is the identity on $R$ and sends $a$ to $a'$. 
Any such isomorphism sends $u_{\rho}$ to $u'_{\rho}$ for $\rho>\rho_0$, and this gives uniqueness. Now $R_a$ and $R_{a'}$ are the valuation rings of $K_a$ and $K_{a'}$, so this isomorphism $R_a\to R_{a'}$  extends to an isomorphism $K_a\to K_{a'}$ of valued fields. 
\end{proof}

\subsection*{Uniqueness of maximal immediate extensions over $A$} The results in this subsection about maximal immediate $A$-extensions will not be used later, but are included for their intrinsic interest. So far we did not restrict the characteristic of $\k$ or $K$,  but now we also assume: 

\medskip\noindent
{\em Either $\ch(\k)=0$ $($the equicharacteristic $0$ case$)$, or $K$ as a valued field is finitely ramified of mixed characteristic}. 

\medskip\noindent
This is a well-known sufficient condition for an ordinary valued field to have an essentially unique
maximal immediate extension; see \cite[4.29]{Lou}.  We now adapt this to our $A$-setting. 
A first consequence of
the present assumptions is that $K$ has no proper algebraic immediate $A$-extension, by \cite[Corollary 4.22]{Lou}.  Note that any immediate $A$-extension of $K$ inherits all the conditions we imposed so far on $K$. By a {\em maximal immediate
$A$-extension of $K$\/} we mean an immediate $A$-extension $L$ of $K$ such that  $L$ has no proper immediate
$A$-extension. The previous subsection, the nonexistence of proper algebraic
 immediate $A$-extensions of $K$, and \cite[Section 4]{Lou} yield for an immediate $A$-extension $L$ of $K$ that the following are equivalent: \begin{enumerate}
 \item $L$ is a maximal immediate $A$-extension of $K$,
 \item $L$ is maximal as a valued field, 
 \item $L$ is spherically complete.
 \end{enumerate}


\begin{corollary}\label{im1} $K$ has a maximal immediate $A$-extension, and such an extension is unique up
to $\L^{A}_{\preccurlyeq}$-isomorphism over $K$. 
\end{corollary}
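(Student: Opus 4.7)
The plan is to mirror Kaplansky's classical proof of existence and uniqueness of maximal immediate extensions, using Proposition~\ref{imm1} in place of the usual ``adjoin a pseudolimit of a transcendental-type pc-sequence'' and using Corollary~\ref{Aimmu} in place of the usual uniqueness of that adjunction over the base.

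\textbf{Existence.} First I would build an ascending chain $(K_\alpha)$ of immediate $A$-extensions of $K$ by transfinite recursion. Set $K_0:=K$; at limits take unions, which are clearly immediate $A$-extensions of $K$; at a successor stage, if $K_\alpha$ is spherically complete, stop, and otherwise pick a pc-sequence $(a_\rho)$ in $R_{K_\alpha}$ with no pseudolimit in $K_\alpha$. Since $K_\alpha$ inherits the viability and characteristic hypotheses imposed on $K$, the remark in the excerpt (based on \cite[Corollary 4.22]{Lou} together with Corollary~\ref{algext}) rules out proper algebraic immediate $A$-extensions of $K_\alpha$, so $(a_\rho)$ is forced to be of transcendental type over $K_\alpha$. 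Pick a pseudolimit $a$ in some $\L^A_{\preccurlyeq}$-elementary extension of $K_\alpha$ (available by the remark after \cite[Lemma 4.3]{Lou}) and set $K_{\alpha+1}:=(K_\alpha)_a$ via Proposition~\ref{imm1}. The classical Krull bound on immediate extensions (bounded in size in terms of $|\k|$ and $|\Gamma|$) forces termination at some $\lambda$, and $K_\lambda$ is spherically complete by construction, hence a maximal immediate $A$-extension by the equivalence recalled just before the corollary.

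\textbf{Uniqueness.} Given two maximal immediate $A$-extensions $L_1$ and $L_2$ of $K$, I would run a back-and-forth. Order the set of pairs $(F,\phi)$ where $F$ is an intermediate immediate $A$-extension of $K$ inside $L_1$ (as an $\L^A_{\preccurlyeq}$-substructure) and $\phi\colon F\hookrightarrow L_2$ is an $\L^A_{\preccurlyeq}$-embedding fixing $K$ pointwise, and pick a maximal pair by Zorn. If $F\subsetneq L_1$, take $a\in L_1\setminus F$. Since $L_1$ is immediate over $F$, standard valuation theory produces a pc-sequence $(a_\rho)$ in $F$ with pseudolimit $a$ and no pseudolimit in $F$; by the same no-proper-algebraic-immediate-extension argument as above, $(a_\rho)$ is of transcendental type over $F$. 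Spherical completeness of $L_2$ yields $a'\in L_2$ with $\phi(a_\rho)\leadsto a'$, and Corollary~\ref{Aimmu} canonically extends $\phi$ to an $\L^A_{\preccurlyeq}$-isomorphism $F_a\to\phi(F)_{a'}$, contradicting maximality. Hence $F=L_1$; symmetrically $\phi(F)=L_2$, and we obtain the required isomorphism.

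The main obstacle is not the shape of the Kaplansky argument but the analytic bookkeeping: at each step of the back-and-forth we must extend embeddings as $A$-ring morphisms, not merely as valued field morphisms. That this causes no trouble is exactly the point of Corollary~\ref{Aimmu}. Verifying that the intermediate structures $K_\alpha$ and $F$ inherit viability and the characteristic hypothesis is routine, as these conditions depend only on the distinguished element $t$, on $\k$, and on $\Gamma$, none of which changes under immediate extension.
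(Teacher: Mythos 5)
Your proposal is correct and follows essentially the same route as the paper, which proves existence by Zorn together with Krull's cardinality bound (as in \cite[Corollary 4.14]{Lou}) and uniqueness by the Kaplansky-style maximal-embedding argument of \cite[Corollary 4.29]{Lou} with Corollary~\ref{Aimmu} supplying the transcendental extension steps. The only differences are presentational (transfinite recursion instead of a direct Zorn application for existence, and the routine bookkeeping you already flag), so nothing further is needed.
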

\begin{proof} This goes along the same lines as the proof for ordinary valued fields: First, existence of
a maximal immediate $A$-extension of $K$ follows by Zorn and Krull's cardinality bound, like \cite[Corollary 4.14]{Lou}. As to uniqueness, using Corollary~\ref{Aimmu} this goes as in the proof of \cite[Corollary 4.29]{Lou}.
\end{proof}

\noindent
Using Corollary~\ref{Aimmu} we obtain in the same way: 

\begin{corollary}\label{im2}
Any maximal  immediate $A$-extension of $K$ can be embedded, as an $\L^A_{\preccurlyeq}$-structure, into any $|\Gamma|^+$-saturated $A$-extension of $K$.
\end{corollary}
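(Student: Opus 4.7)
The plan is a Zorn/back-and-forth argument: extend partial $\L^A_{\preccurlyeq}$-embeddings one element at a time, using $|\Gamma|^+$-saturation to locate pseudolimits inside $L^*$ and Corollary~\ref{Aimmu} to transport the immediate $A$-extension generated by each new element.

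Let $L$ be a maximal immediate $A$-extension of $K$ and $L^*$ a $|\Gamma|^+$-saturated $A$-extension of $K$. Consider the family $\mathcal{P}$ of $\L^A_{\preccurlyeq}$-embeddings $\phi\colon F\to L^*$ over $K$, where $F$ ranges over intermediate $\L^A_{\preccurlyeq}$-substructures $K\subseteq F\subseteq L$. Ascending unions of chains in $\mathcal{P}$ are again in $\mathcal{P}$, so Zorn yields a maximal such $\phi\colon F\to L^*$; I will show $F=L$.

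Suppose not and pick $a\in L\setminus F$. Since $L$ is immediate over $K$, so is the intermediate $F$; hence $\Gamma_F=\Gamma$, $\res F=\k$, and $F$ inherits the standing hypothesis preceding Corollary~\ref{im1}, so $F$ has no proper algebraic immediate $A$-extension. If $a$ were algebraic over $F$, then by Corollary~\ref{algext} the valued subfield $F(a)\subseteq L$ would carry a unique $A$-extension of $F$, giving a proper algebraic immediate $A$-extension of $F$; contradiction. Thus $a$ is transcendental over $F$. Because $L$ is immediate over $F$ and $a\notin F$, the set $\{v(a-b):b\in F\}\subseteq\Gamma$ has no maximum, so a standard argument produces a pc-sequence $(a_\rho)$ in $F$ with $a_\rho\leadsto a$, and this sequence is of transcendental type over $F$. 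Passing to a cofinal subsequence we may take its length to be at most $\operatorname{cf}(\Gamma)\leqslant|\Gamma|$.

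The partial $\L^A_{\preccurlyeq}$-type over the parameters $\{\phi(a_\rho)\}_\rho\subseteq\phi(F)\subseteq L^*$ expressing that $y$ is a pseudolimit of $(\phi(a_\rho))$ is finitely satisfied by the $\phi(a_\rho)$ themselves (since $\phi$ preserves $\preccurlyeq$, so $(\phi(a_\rho))$ is again a pc-sequence), hence consistent. Its parameter set has cardinality $\leqslant|\Gamma|$, so $|\Gamma|^+$-saturation of $L^*$ yields some $a'\in L^*$ with $\phi(a_\rho)\leadsto a'$. Now apply Corollary~\ref{Aimmu} with $F$ playing the role of $K$, $L$ and $L^*$ playing the two $A$-extensions, and $\phi$ identifying $F$ with $\phi(F)$ as the base: this gives an $\L^A_{\preccurlyeq}$-isomorphism $F_a\to F_{a'}$ extending $\phi$ and sending $a\mapsto a'$. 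Composing with the inclusion $F_{a'}\hookrightarrow L^*$ produces a strict extension of $\phi$ in $\mathcal{P}$, contradicting maximality. Therefore $F=L$, which is the desired embedding.

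The main obstacle is the ``relativization'' of Corollary~\ref{Aimmu} from a single base $K$ to the pair $(F,\phi(F))$ identified by the $\L^A_{\preccurlyeq}$-isomorphism $\phi$, especially since $L^*$ is originally an $A$-extension of $K$ rather than of $F$. This is cosmetic: the proof of Proposition~\ref{imm1} builds $R_a$ entirely from $R_F$, the $a_\rho$, and the auxiliary $t_\rho,u_\rho$ by $\L^A_{\preccurlyeq,D}$-operations, all of which are preserved by $\phi$; so applying $\phi$ termwise yields the parallel construction of $R_{a'}$ inside $L^*$ and makes the resulting map $R_a\to R_{a'}$ an $A$-ring isomorphism extending $\phi|_{R_F}$, with the uniqueness from Corollary~\ref{Aimmu} ensuring this is well-defined. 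The only other point needing care is the length bound on $(a_\rho)$, handled by passing to a cofinal subsequence so that $|\Gamma|^+$-saturation of $L^*$ is directly applicable.
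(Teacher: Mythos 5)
Your proposal is correct and is essentially the paper's own (unwritten) proof: the paper settles this corollary "in the same way" as the classical Kaplansky-style argument, i.e.\ by a Zorn/one-element-at-a-time extension using $|\Gamma|^+$-saturation to realize the pseudolimit type and Corollary~\ref{Aimmu} (applied across the isomorphism, exactly as the paper itself does in the proof of Theorem~\ref{AKE}) at each transcendental step. The only touch-ups needed are normalizations the paper also makes elsewhere: choose $a\in R_L\setminus R_F$ and drop an initial segment so that all $a_\rho\in R_F$ (Proposition~\ref{imm1} and Corollary~\ref{Aimmu} are stated for elements of the valuation rings), and justify that the divergent pc-sequence is of transcendental type by algebraic maximality of $F$ as a valued field (it is henselian by Lemmas~\ref{subinher} and~\ref{he5}, so \cite[Corollary 4.22]{Lou} applies), since transcendence of $a$ over $F$ by itself does not rule out algebraic type.
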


\section{Truncation}\label{trunc} 

\noindent
The aim of this section is to prove an $A$-version of Kaplansky's embedding theorem from \cite{Kap} ``with truncation''.  This section is not needed for the later AKE-results, but is included for its independent interest.
Returning to the Hahn field example from the beginning of Section~\ref{ras} we are given: \begin{enumerate}
\item  a ring $A_0$ with $1\ne 0$, 
\item $A=A_0[[t]]$ with the norm specified there,
\item a ring morphism $\iota: A_0\to \k$ into a field $\k$,
\item an ordered abelian group $\Gamma$ with a distinguished element $1>0$ (allowing the possibility
that there are $\gamma\in \Gamma$ with $0<\gamma < 1$). 
\end{enumerate}
As in the example mentioned we use this to make the valuation ring $\k[[t^{\Gamma^{\geqslant}}]]$ of the Hahn field $K:=\k(\!(t^\Gamma)\!)$ into an $A$-ring. 

\subsection*{Preserving truncation closedness} 
We refer to \cite{tr} for notations and terminology concerning truncation in
 $K$.  The $A$-closed subrings $\iota_0(A)=\iota(A_0)[[t]]$ and $\k[[t]]$  of $\k[[t^{\Gamma^{\geqslant}}]]$ are also truncation closed.  This subsection uses the ``Hahn field example'' from Section~\ref{ras}, but little else from the present paper.

\begin{lemma}\label{tr1} Let $E$ be a truncation closed subring of $\k[[t^{\Gamma^{\geqslant}}]]$. Then the $A$-closure $R$ of $E$ in $\k[[t^{\Gamma^{\geqslant}}]]$ is 
also truncation closed. 
\end{lemma}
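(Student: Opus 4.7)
The plan is to express $R$ as an increasing union $R = \bigcup_{k \geqslant 0} R_k$ where $R_0 := E$ and $R_{k+1}$ consists of all values $f(y_1, \ldots, y_n)$ with $f \in A\<Y_1, \ldots, Y_n\>$ and $y_i \in R_k$. Since polynomials lie in $A\<Y\>$, each $R_k$ is a subring contained in $R_{k+1}$, and $R = \bigcup_k R_k$ is the $A$-closure of $E$. I will prove by induction on $k$ that each $R_k$ is truncation closed; the base $k = 0$ is given.

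For the inductive step, take $x = f(y_1, \ldots, y_n) \in R_{k+1}$ with $y_i \in R_k$ and $\gamma_0 \in \Gamma^{>}$. The crucial first reduction is the truncation-invariance identity
\[
x|_{\gamma_0}\ =\ f(y|_{\gamma_0})|_{\gamma_0},\qquad y|_{\gamma_0}\ :=\ (y_1|_{\gamma_0},\ldots,y_n|_{\gamma_0}).
\]
To prove it, write $y_i = y_i|_{\gamma_0} + w_i$ with $v(w_i) \geqslant \gamma_0$ (or $w_i = 0$), and expand $\tilde{f}(C, W) := f(C_1+W_1, \ldots, C_n+W_n) \in A\<C, W\>$ as $\tilde{f}(C, W) = f(C) + \sum_{|\mu|\geqslant 1} g_\mu(C) W^\mu$. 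Evaluating at $(y|_{\gamma_0}, w)$ gives $f(y) - f(y|_{\gamma_0}) \in (w_1, \ldots, w_n) \cdot R$, hence of valuation $\geqslant \gamma_0$. By truncation closedness of $R_k$, each $y_i|_{\gamma_0} \in R_k$, so $f(y|_{\gamma_0}) \in R_{k+1}$, and the task reduces to showing $f(u)|_{\gamma_0} \in R_{k+1}$ for $u_i := y_i|_{\gamma_0}$ with $\supp(u_i) \subseteq [0, \gamma_0)$.

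For this reduced problem, I would expand $f(u) = \sum_\nu \iota_0(a_\nu) u^\nu$ and use that $v_t(a_\nu) \to \infty$: one may replace each $\iota_0(a_\nu)$ by its truncation $\iota_0(a_\nu)|_{\gamma_0}$, which lies in the truncation-closed subring $\iota_0(A) = \iota(A_0)[[t]] \subseteq R_{k+1}$. At each target support element $\gamma < \gamma_0$ only finitely many $\nu$ contribute to the coefficient of $t^\gamma$. Using that each monomial piece $(u_i)_\delta \cdot t^\delta$ is a difference of two consecutive truncations of $u_i$, hence lies in $R_k$ by truncation closedness, one expresses $f(u)|_{\gamma_0}$ as a finite polynomial combination of such monomial pieces with coefficients in $\iota_0(A)$, placing it in $R_{k+1}$.

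The hard part is making this ``finite polynomial combination'' rigorous when $\gamma_0$ exceeds every integer of $\Gamma$ or when $\supp(u_i)$ is an infinite well-ordered set: in these cases neither the $t$-adic truncation of $f$ reduces $f$ to a polynomial nor does $u_i$ have finite support, so the reduction to a finite expression at each target support level $\gamma < \gamma_0$ requires a careful combinatorial argument exploiting the well-ordered support structure together with the decay of the coefficients $a_\nu$.
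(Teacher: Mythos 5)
Your first reduction is sound: by Corollary~\ref{cord2}, $f(y)-f(y|_{\gamma_0})$ lies in the ideal generated by the $y_i-y_i|_{\gamma_0}$, hence has valuation $\geqslant\gamma_0$, so $f(y)|_{\gamma_0}=f(y|_{\gamma_0})|_{\gamma_0}$; and your stratification $R=\bigcup_k R_k$ is harmless (in fact $R=R_1$, since composites of restricted power series are again restricted power series). But the proof stops exactly where the real work begins, and you say so yourself. The reduced problem --- showing $f(u)|_{\gamma_0}\in R$ when $\supp(u_i)\subseteq[0,\gamma_0)$ --- is essentially the whole lemma, and the device you propose for it does not work: when $\gamma_0$ lies above every integer of $\Gamma$ and the supports are infinite, $f(u)|_{\gamma_0}$ is in general \emph{not} a finite polynomial combination over $\iota_0(A)$ of monomial pieces $(u_i)_\delta t^\delta$; its support below $\gamma_0$ is typically infinite, and to capture it one is forced to use values of \emph{new} restricted power series at truncated arguments, i.e.\ elements of $R$ of the same kind as the one you started with. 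So the argument as sketched is circular (the truncation of one element of $R_{k+1}$ is reduced to the truncation of another element of $R_{k+1}$), and some additional induction is needed to make it terminate --- precisely the ``careful combinatorial argument'' you defer.

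The paper supplies that missing idea with a different global strategy. Using \cite[Corollary 2.6]{tr} and Zorn, it suffices to show that if $F\supseteq E[\iota_0(A)]$ is a truncation closed subring of $R$ with $F\ne R$, then some element of $R\setminus F$ has all proper truncations in $F$. One takes $n$ minimal and then $y\in F^n$ with lexicographically minimal tuple of order types $\big(o(y_1),\dots,o(y_n)\big)$ such that $f(y)\notin F$ for some $f\in A\<Y\>$. Given a proper truncation $c$ of $f(y)$, one picks $\gamma\in\{1\}\cup\supp y_1\cup\dots\cup\supp y_n$ and $N$ with $N\gamma>\supp c$; if $\gamma=1$ one writes $f=P+t^NQ$ with $P$ a polynomial, and if $\gamma\in\supp y_n$ one splits $y_n=y_{n0}+z$ with $\supp y_{n0}<\gamma$, $v(z)=\gamma$, and expands $f(Y',Y_n+Z)=\sum_{i<N}f_i(Y)Z^i+g(Y,Z)Z^N$, so that $c$ is a truncation of $\sum_{i<N}f_i(y_1,\dots,y_{n-1},y_{n0})z^i$; the coefficients lie in $F$ because $o(y_{n0})<o(y_n)$ and the order-type tuple was minimal. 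It is this choice of a single splitting point $\gamma$ with $N\gamma$ beyond $\supp c$ (giving a \emph{finite} expansion with negligible error) together with the transfinite induction on order types that your proposal lacks; truncating all arguments at $\gamma_0$ simultaneously discards exactly the structure that makes the finite expansion and the induction possible.
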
 
\begin{proof}  We have $\iota_0(A)=\iota(A_0)[[t]]\subseteq R$, so $E[\iota_0(A)]\subseteq R$.
Now $\iota_0(A)$ is truncation closed, hence $E\cup \iota_0(A)$ is as well, and so is $E[\iota_0(A)]$ by \cite[Corollary 2.5]{tr}. Thus replacing
$E$ by $E[\iota_0(A)]$ we arrange $\iota_0(A)\subseteq E$.  Let $F$ be a truncation closed subring of $R$ containing $E$ such that $F\ne R$;  in view of \cite[Corollary 2.6]{tr} and Zorn it suffices to show that then some element of $R\setminus F$ has all its proper truncations in $F$. Let $n$ be minimal such that there are $y\in F^n$ and $f\in A\<Y\>$ with
$f(y)\notin F$. Because of $\iota_0(A)\subseteq F$ we have $n\geqslant 1$.   With the lexicographic ordering on $n$-tuples $(\lambda_1,\dots, \lambda_n)$ of ordinals we take
$y\in F^n$ with minimal $\big(o(y_1),\dots, o(y_n)\big)$ such that $f(y)\notin F$ for some $f\in A\<Y\>$. Fix such $f$; it suffices to show that then all proper truncations of $f(y)$ lie in $F$.
Minimality of $n$ gives  $y_1,\dots, y_n\ne 0$, so $o(y_1),\dots, o(y_n) \geqslant 1$. 

Let $c$ be a proper truncation of $f(y)$. Take $\gamma\in \{1\}\cup \supp y_1\cup\cdots \cup \supp y_n$ and a positive integer $N$ such that $N\gamma > \supp c$.  We first consider the case $\gamma=1$.
Then $f(Y)=P(Y)+ t^NQ(Y)$ with $P(Y)\in A[Y]$ and $Q(Y)\in A\<Y\>$. Hence 
$f(y)=P(y)+t^NQ(y)$ with $v\big(t^NQ(y)\big)\geqslant N\gamma$, so $c$ is a truncation of $P(y)$, and as
$P(y)\in F$ and $F$ is truncation closed, this gives $c\in F$. 

Next assume $\gamma\in  \supp y_n$. (For any $j\in \{1,\dots,n-1\}$ the case
$\gamma\in  \supp y_j$  is similar.) Then $y_n= y_{n0}+ z$ with $y_{n0},z\in F$ and $\supp y_{n0} < \gamma$, $v(z)=\gamma$. We have $f_0,\dots, f_{N-1}\in A\<Y\> ,\ g(Y,Z)\in A\<Y,Z\>$ such that
\begin{align*} f(Y_1,&\dots, Y_{n-1}, Y_n+Z)\ =\ \sum_{i<N} f_i(Y)Z^i + g(Y,Z) Z^N, \text{ so }\\
f(y)\ &=\ \sum_{i<N} f_i(y_1,\dots, y_{n-1}, y_{n0})z^i + \varepsilon, \quad v(\varepsilon)\geqslant N\gamma.
\end{align*}
Thus $c$ is a truncation of $d:= \sum_{i<N} f_i(y_1,\dots, y_{n-1}, y_{n0})z^i $. Since $o(y_{n0})< o(y_n)$, the minimality of $\big(o(y_1),\dots, o(y_n)\big)$ gives $f_i(y_1,\dots, y_{n-1}, y_{n0})\in F$ for all $i<N$, so
$d\in F$. Since $F$ is truncation closed, this gives $c\in F$.
\end{proof}

\begin{corollary}\label{tr2} Let $R$ be an $A$-closed subring of $\k[[t^{\Gamma^{\geqslant}}]]$ that is also truncation closed, and let $B\subseteq \k[[t^{\Gamma^{\geqslant}}]]$ be such that all proper truncations of all $b\in B$ lie in $R$.
Then the $A$-closure $R\<B\>$ of $R\cup B$ in $\k[[t^{\Gamma^{\geqslant}}]]$ is truncation closed. 
\end{corollary}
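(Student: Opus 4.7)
The plan is to reduce Corollary~\ref{tr2} to the already-proven Lemma~\ref{tr1} by showing that the ordinary subring $R[B]$ of $\k[[t^{\Gamma^{\geqslant}}]]$ generated by $R\cup B$ is truncation closed; its $A$-closure then coincides with $R\<B\>$, so Lemma~\ref{tr1} will finish the argument.

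First I would check that the set $R\cup B$ is truncation closed in $\k[[t^{\Gamma^{\geqslant}}]]$: every truncation of an element of $R$ lies in $R$ because $R$ is truncation closed, and every proper truncation of $b\in B$ lies in $R$ by the standing hypothesis on $B$ (an improper truncation of $b$ is $b$ itself, which already belongs to $R\cup B$). Next, invoking \cite[Corollary 2.5]{tr}---the same ingredient used to pass from $E\cup \iota_0(A)$ to $E[\iota_0(A)]$ in the proof of Lemma~\ref{tr1}---the subring $R[B]$ of $\k[[t^{\Gamma^{\geqslant}}]]$ generated by $R\cup B$ is again truncation closed.

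Now Lemma~\ref{tr1} applied with $R[B]$ in the role of $E$ asserts that the $A$-closure of $R[B]$ in $\k[[t^{\Gamma^{\geqslant}}]]$ is truncation closed. Since every $A$-closed subset of $\k[[t^{\Gamma^{\geqslant}}]]$ is in particular a subring, it contains $R\cup B$ if and only if it contains $R[B]$; hence the $A$-closure of $R[B]$ equals $R\<B\>$, and the corollary follows.

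There is no real obstacle here: the substantive content has been absorbed into Lemma~\ref{tr1} (with its delicate induction on the lexicographic size of the generating tuple $y$) and into \cite[Corollary 2.5]{tr}. The only mild point worth emphasizing is that the hypothesis ``all proper truncations of all $b\in B$ lie in $R$'' is precisely what is needed to make $R\cup B$ truncation closed as a set; once this is observed, the passage to $R\<B\>$ is purely formal.
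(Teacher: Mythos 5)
Your proof is correct and takes essentially the same route as the paper: both establish that the ordinary subring $R[B]$ is truncation closed and then apply Lemma~\ref{tr1} with $E:=R[B]$, whose $A$-closure is $R\<B\>$. The only cosmetic difference is that the paper quotes \cite[Corollary 2.6]{tr} directly, whereas you first observe that $R\cup B$ is truncation closed as a set and then invoke \cite[Corollary 2.5]{tr}; both yield the same intermediate fact.
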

\begin{proof} By~\cite[Corollary 2.6]{tr} the subring $R[B]$ is truncation closed, so Lemma~\ref{tr1} applied to $E:= R[B]$ gives the desired result. 
\end{proof}

\subsection*{$A$-embedding with truncation} In addition to (1)--(4) above we now assume:
\begin{enumerate}
\item[(5)] the ring $A_0$ is noetherian;
\item[(6)] $\operatorname{char} \k=0$;
\item[(7)] there is no $\gamma\in \Gamma$ with $0 < \gamma< 1$.
\end{enumerate}
By (5)  and \cite[Theorem 3.3]{Mat}) the ring $A=A_0[[t]]$ is noetherian. The
assumptions on $A$ and $K$ made in Section~\ref{immAE} are thus satisfied, with $t:= t^1$. 
In Section~\ref{vrna} we considered $A$-extensions of 
the ``base'' structure $K$, but below $K$ plays the opposite role of an ambient structure. 

\medskip\noindent
In the next lemma and corollary $E\supseteq \k$ is a truncation closed valued subfield of $K$  whose valuation ring $R_E$ is $A$-closed in $\k[[t^{\Gamma^{\geqslant}}]]$.  Thus $\k[t]\subseteq E$, and $E$ is an $\L^A_{\preccurlyeq}$-substructure of $K$.
Note also that $t^{\Delta}\subseteq E$ with $\Delta:= v(E^\times)\subseteq \Gamma$. 

Let $(a_{\rho})$ be a divergent pc-sequence in $E$ with all $a_{\rho}\in R_E$. Since $\operatorname{char}(\k)=0$, $E$ is algebraically maximal by \cite[4.22]{Lou} and Lemma \ref{he5}. 
Hence $(a_{\rho})$ is of transcendental type over $E$. Since $K$ is spherically complete, $a_{\rho}\leadsto a$ 
for some $a \in \k[[t^{\Gamma^{\geqslant}}]]$; we choose such $a$ so that $o(a)$ is minimal. With $E, K$ in the role of $K,L$ earlier
in this section we obtain the valued subfield $E_a\supseteq E(a)$ of $K$ whose valuation ring 
$R_{E_a}=(R_E)_a$ is uniquely determined by $R_E$ and $a$ within $\k[[t^{\Gamma^{\geqslant}}]]$, as described in Proposition~\ref{imm1}(ii). Recall also that $E_a$ is an immediate extension of $E$. 

\begin{lemma} \label{tr3}  $E_a$ is truncation closed. 
\end{lemma}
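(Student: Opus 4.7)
The plan is to prove truncation-closedness of the valuation ring $R_{E_a}$, from which truncation-closedness of $E_a = \Frac(R_{E_a})$ will follow. Set $\gamma_\rho := v(a - a_\rho)$; by Proposition~\ref{imm1}(ii), $R_{E_a}$ depends only on $a$, so I am free to swap the pc-sequence for a more convenient one.

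The heart of the argument is the key claim that every proper truncation $a|_\gamma$ of $a$ lies in $E$. Since $a|_\gamma$ is proper, there is some $\sigma \in \supp a$ with $\sigma \geq \gamma$. If $\gamma \leq \gamma_\rho$ for some $\rho > \rho_0$, then $a|_\gamma = a_\rho|_\gamma \in E$ because $a$ and $a_\rho$ agree on all supports below $\gamma_\rho$ and $E$ is truncation-closed. Otherwise $\gamma > \gamma_\rho$ for every $\rho$, in which case $v(a - a|_\gamma) \geq \gamma > \gamma_\rho$ forces $v(a|_\gamma - a_\rho) = v(a - a_\rho) = \gamma_\rho$, so $a|_\gamma \in \k[[t^{\Gamma^{\geqslant}}]]$ is itself a pseudolimit of $(a_\rho)$ with $o(a|_\gamma) < o(a)$, contradicting minimality of $o(a)$.

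I now switch to the pc-sequence $a'_\rho := a|_{\gamma_\rho}$, which lies in $E$ by the key claim (the fact that $a \notin E$ forces $a'_\rho$ to be a genuinely proper truncation of $a$), and set $u'_\rho := t^{-\gamma_\rho}(a - a'_\rho)$ using $t^{\gamma_\rho} \in E$, which holds because $t^\Delta \subseteq E$ — obtained from truncation-closedness of $E$ and $\k \subseteq E$ by subtracting adjacent truncations of any element of $E^\times$ of valuation $\delta$. A brief computation gives $u_\rho - u'_\rho = -(a_\rho - a_\rho|_{\gamma_\rho})/t^{\gamma_\rho} \in R_E$, so $R_E\<u_\rho\> = R_E\<u'_\rho\>$ and hence $R_{E_a} = \bigcup_{\rho > \rho_0} R_E\<u'_\rho\>$. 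Since $\supp(a - a'_\rho) \subseteq \supp a$, any proper truncation $u'_\rho|_{\gamma'}$ corresponds via $\delta := \gamma' + \gamma_\rho$ to an element $\sigma \in \supp a$ with $\sigma \geq \delta$; hence $a|_\delta$ is a proper truncation of $a$ and, by the key claim, lies in $E$. Consequently $u'_\rho|_{\gamma'} = t^{-\gamma_\rho}(a|_\delta - a'_\rho|_\delta) \in E$, and since it has nonnegative valuation it lies in $R_E$.

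Corollary~\ref{tr2} applied to $B := \{u'_\rho : \rho > \rho_0\}$ then yields that $R_{E_a} = R_E\<B\>$ is truncation-closed. For $x \in E_a^\times$, one has $v(x) \in \Delta$ and $t^{v(x)} \in E$, so $x = t^{v(x)} u$ with $u \in R_{E_a}^\times$, and the truncations of $x$ equal $t^{v(x)}$ times truncations of $u$, hence lie in $t^{v(x)} R_{E_a} \subseteq E_a$. The main obstacle is the minimality argument in the key claim together with the need to swap $(a_\rho)$ for $(a'_\rho)$: without the swap, a term of $a_\rho$ in $\supp a_\rho \setminus \supp a$ could yield a proper truncation of $u_\rho$ at a level $\delta$ exceeding all of $\supp a$, forcing $a|_\delta = a \notin E$ and invalidating the support-containment on which the final step depends.
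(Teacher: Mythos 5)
Your proof is correct and follows essentially the same route as the paper's: show that every proper truncation of $a$ lies in $E$ (via minimality of $o(a)$), replace the approximating elements by truncations of $a$ so that the resulting tails $u'_\rho$ have all their proper truncations in $R_E$, then conclude with Corollary~\ref{tr2} and $t^{\Delta}\subseteq E$. The only (harmless) difference is bookkeeping: you truncate at the levels $\gamma_\rho=v(a-a_\rho)$ and verify $R_E\<u_\rho\>=R_E\<u'_\rho\>$ directly, whereas the paper switches to the full family of truncations $(a|_{\gamma_\lambda})$ after observing that $\supp a$ has no largest element.
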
 
\begin{proof} We first show that all proper truncations of $a$ lie in $E$. We have $a=\sum_{\lambda} c_{\lambda}t^{\gamma_{\lambda}}$ where $\lambda$ ranges over all ordinals $<o(a)$, all $c_{\lambda}\in \k^\times$ and
$(\gamma_{\lambda})$ is a strictly increasing enumeration of $\supp a$. Consider a
proper truncation $a|_{\gamma}$ of $a$, with $\gamma\in \supp a$. Then $\gamma= v(a-a|_{\gamma}) < v(a-a_{\rho})$ for some $\rho$, so
$a|_{\gamma}$ is a truncation of such $a_{\rho}$, and therefore $a|_{\gamma}\in R_E$, as claimed. 

It follows that $\supp a$ has no largest element: if $\gamma=\gamma_{\mu}$ were the largest element, then
$a|_\gamma\in E$ and $a-a|_{\gamma}=c_{\mu}t^\gamma\in E_a$, so
$\gamma\in v(E_a^\times)=v(E^\times)$, hence $c_{\mu}t^\gamma\in E$, contradicting $a\notin E$. 
This yields a  divergent pc-sequence $(a|_{\gamma_{\lambda}})_{\lambda}$ in $E$ with pseudolimit $a$, and we now use it instead of $(a_{\rho})$ to describe $E_a$ (which after all does not depend on the particular approximating pc-sequence).  We have 
$$a\ =\ a|_{\gamma_{\lambda} }+ t^{\gamma_{\lambda}}u_{\lambda},\quad u_{\lambda}\ :=\  \sum_{\mu\geqslant \lambda} c_{\mu}t^{\gamma_{\mu}-\gamma_{\lambda}}.$$
All proper truncations of all $u_{\lambda}$ lie clearly in $R_E$, so
all $R_E\<u_{\lambda}\>$ are truncation closed by Corollary~\ref{tr2}, hence so is 
$R_{E_a}=\bigcup_\lambda  R_E\<u_{\lambda}\>$, and thus $E_a$ as well. 
 \end{proof}

\begin{corollary}\label{tr4}
Let $F$ be an immediate $A$-extension of $E$. Then there exists an 
$\L^{A}_{\preccurlyeq}$-embedding $F\to K$ over $E$ with truncation closed image. 
\end{corollary}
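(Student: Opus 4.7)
My plan is a standard Zorn's lemma argument. I will consider the poset of pairs $(E^*, j)$, where $E \subseteq E^* \subseteq F$ is an intermediate $\L^A_{\preccurlyeq}$-substructure and $j\colon E^* \to K$ is an $\L^A_{\preccurlyeq}$-embedding over $E$ whose image is truncation closed in $K$, ordered by extension. The pair $(E, \operatorname{id}_E)$ makes the poset nonempty, and along a chain the direct union again satisfies the requirements (an increasing union of truncation closed subsets of $K$ is truncation closed), so by Zorn I may fix a maximal element $(E^*, j)$. It then suffices to show that $E^* = F$; the body of the proof is to show that otherwise we can strictly extend $(E^*, j)$.

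Assume for contradiction that $a \in F \setminus E^*$. First I will verify that $R_{E^*}$ is a viable valuation $A$-ring: since $F$ is an immediate extension of $E$, its value group is $\Delta \subseteq \Gamma$, and assumption (7) together with $t \in E$ forces $v(t) = 1$ to be the least positive element, so $R_F$, and hence $R_{E^*}$ by Lemma \ref{subinher}, is viable. Then Lemma \ref{he5} makes $R_{E^*}$ henselian, and since $\operatorname{char}(\k) = 0$ this makes $E^*$ algebraically maximal; so every divergent pc-sequence in $E^*$ is of transcendental type over $E^*$. As $F/E^*$ is immediate, I obtain such a pc-sequence $(a_\rho)$ in $E^*$ with $a_\rho \leadsto a$ in $F$, and then Proposition \ref{imm1} (applied with $F$ as the ambient $A$-extension) produces the $\L^A_{\preccurlyeq}$-substructure $E^*_a \subseteq F$ with valuation ring $R_{E^*,a}$ the minimal $A$-closed valuation ring of $F$ containing $R_{E^*}[a]$; moreover $E^*_a$ is an immediate extension of $E^*$.

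Now I transport the sequence to $K$: since $j$ is an $\L^A_{\preccurlyeq}$-isomorphism of $E^*$ onto $j(E^*) \subseteq K$, the sequence $(j(a_\rho))$ is a divergent pc-sequence in $j(E^*)$, again of transcendental type. Spherical completeness of $K$ lets me choose a pseudolimit $a' \in K$ with minimal $o(a')$. The valued subfield $j(E^*) \subseteq K$ contains $\k$, is truncation closed by the property defining $(E^*, j)$, and its valuation ring $j(R_{E^*})$ is $A$-closed in $\k[[t^{\Gamma^{\geqslant}}]]$ because $j$ is an $\L^A_{\preccurlyeq}$-embedding into $K$. Hence Lemma \ref{tr3} applies and yields that $j(E^*)_{a'}$ is truncation closed in $K$. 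Finally, Corollary \ref{Aimmu}, applied with $F$ and $K$ as the two ambient $A$-extensions of $E^*$ (the latter via $j$), provides an $\L^A_{\preccurlyeq}$-isomorphism $E^*_a \to j(E^*)_{a'}$ extending $j$ and sending $a$ to $a'$. This strictly extends $(E^*, j)$, contradicting maximality; hence $E^* = F$.

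The only non-routine step is finding the right pseudolimit $a' \in K$ so that Lemma \ref{tr3} delivers truncation closedness of the next stage; the key is choosing $a'$ of minimal ordinal length of support, which is exactly the hypothesis built into that lemma. Everything else — viability of $R_{E^*}$, henselianity, transcendental type of $(a_\rho)$, and $A$-closedness of $j(R_{E^*})$ — is inherited automatically from the ambient setup and from the definition of the partial order, so the argument reduces cleanly to invoking Lemma \ref{tr3} and Corollary \ref{Aimmu} in tandem.
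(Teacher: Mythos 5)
Your proposal is correct and takes essentially the same route as the paper's proof: a Zorn reduction to a one-step extension, handled by a divergent pc-sequence of transcendental type together with Lemma~\ref{tr3} (truncation closedness of the image $j(E^*)_{a'}$, using the minimal-$o$ pseudolimit) and Corollary~\ref{Aimmu} to transport the extension; the paper simply leaves the Zorn bookkeeping implicit. The only detail to add is the routine arrangement that $a\in R_F\setminus R_{E^*}$ (replacing $a$ by $a^{-1}$ if needed) and that a tail of the pc-sequence lies in $R_{E^*}$, since Proposition~\ref{imm1} and Lemma~\ref{tr3} require the sequence to lie in the valuation ring.
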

\begin{proof}
 Let $R_F$ be the valuation ring of $F$ and $f\in R_F\setminus R_E$. Take a divergent pc-sequence $(a_{\rho})$ in $E$ with all $a_{\rho}\in R_E$ such that $a_{\rho}\leadsto f$.
Then $(a_{\rho})$ is of transcendental type over $E$. Thus with $E, F, f$ in the role of  $K, L,a$ earlier in this section we obtain the valued subfield $E_f \supseteq E(f)$ of $F$ whose valuation ring $R_{E_f}$ is 
$A$-closed in $R_F$. Using Zorn it suffices to show that then there is an
$\L^A_{\preccurlyeq}$-embedding $E_f\to K$ over $E$ with truncation closed image. 

Lemma~\ref{tr3} and the remarks preceding it provide a pseudolimit $a\in \k[[t^{\Gamma^{\geqslant}}]]$  of $(a_{\rho})$ and a 
truncation closed valued subfield $E_a\supseteq E(a)$ of $K$ whose valuation ring $R_{E_a}$ is
$A$-closed in $\k[[t^{\Gamma^{\geqslant}}]]$. 
By Corollary~\ref{Aimmu} this gives a unique isomorphism $R_{E_f }\to R_{E_a}$ of $A$-rings that is the identity on $R_E$ sending $f$ to $a$. It extends to an $\L^{A}_{\preccurlyeq}$-embedding $F\to K$ over $E$ with truncation closed image $E_a$. 
\end{proof}

\section{Quantifier-free 1-types}\label{division}

\noindent
Functions given by one-variable terms in the language $\L_{\preccurlyeq,D}^{A,K}$ are  piecewise
given by  analytic functions on annuli. A precise statement of this is Proposition~\ref{prtau}, which is essential for all that follows.
The requisite notions of ``separated $A$-analytic structure'' and ``annulus'' come from \cite{CLR, CL} from which we also borrow results.

\medskip\noindent 
We keep the assumptions from Section~\ref{vrna} on $A,\ R,\ t, \ K$, so $R$ is a viable $A$-valuation ring, $t\in \iota_0(A),\ \smallo(R)=tR$. For now we  fix an algebraically closed $A$-extension $K^{\a}$ of $K$ with $A$-valuation ring $R^{\a}$.
Thus $R,\ K,\ R^{\a},\ K^{\a}$ are $\L^{A}_{\preccurlyeq, D}$-structures as specified earlier. 
Let $K_{\alg}$ be the algebraic closure of $K$ in $K^{\a}$. With Corollary~\ref{algext} we make $K_{\alg}$ an $A$-extension of $K$ by taking as its $A$-valuation ring the integral closure $R_{\alg}$ of $R$ in $K_{\alg}$; note that $R_{\alg}= R^{\a}\cap K_{\alg}$. 

Recall the language $\L_{\preccurlyeq,D}^{A,K}$ introduced in Section~\ref{vrna} and the sublanguage $\L_{\preccurlyeq}$ (for valued fields) of $\L^{A}_{\preccurlyeq, D}$; augmenting $\L_{\preccurlyeq}$ with names for the elements of $K$ gives the sublanguage  
$\L_{\preccurlyeq}^K$ of
$\L^{A,K}_{\preccurlyeq, D}$. The $\L_{\preccurlyeq}$-theory of algebraically closed valued fields with nontrivial valuation has quantifier elimination \cite[Theorem 3.29]{Lou}, so
\leqnomode
\begin{equation}\label{QE} \tag{$\star$}
K_{\text{alg}} \text{ is an elementary  $\L_{\preccurlyeq}$-substructure of } K^{\a}. 
\end{equation}

\subsection*{Separated $A$-analytic structures} 
A {\em separated $A$-analytic structure on $R$} is a  family $(\iota_{m,n})$ of ring morphisms
$$\iota_{m,n}\ :\  A\<X_1,\dots,X_m\>[[Y_1,\dots, Y_n]] \to\ \text{ring of $R$-valued functions on $R^m\times\smallo(R)^n$}$$
indexed by the pairs $(m,n) \in \N\times \N$, such that:
\begin{enumerate}
\item[(S1)] for $(x_1,\dots,x_m,y_1,\dots, y_n)\in R^m\times\smallo(R)^n$, $$\iota_{m,n}(X_k)(x_1,\dots,x_m,y_1,\dots, y_n)=x_k \qquad \text{ for } \qquad k=1,\dots,m,$$ 
$$\iota_{m,n}(Y_l)(x_1,\dots,x_m,y_1,\dots, y_n)=y_l \qquad \text{ for } \qquad l=1,\dots,n;$$
\item[(S2)] for $f\in A\<X_1,\dots, X_m\>[[Y_1,\dots,Y_n]]\subseteq A\<X_1,\dots, X_m, X_{m+1}\>[[Y_1,\dots,Y_n]]$ and $(x_1,\dots,x_m,x_{m+1},y_1,\dots, y_n)\in R^{m+1}\times\smallo(R)^n$ we have $$\iota_{m,n}(f)(x_1,\dots,x_m,y_1,\dots, y_n)=\iota_{m+1,n}(f)(x_1,\dots,x_m,x_{m+1},y_1,\dots, y_n),$$
and similarly with the $Y$-variables;
\item[(S3)] for $m\geqslant 1$, $f\in A\<X_1,\dots, X_m\>[[Y_1,\dots,Y_n]]$, $$g:=f(X_{m+1},\dots, X_{2m},Y_1,\dots,Y_n)\in A\<X_1,\dots, X_{2m}\>[[Y_1,\dots,Y_n]],$$ 
and $(x_1,\dots, x_{2m}, y_1,\dots,y_n)\in R^{2m}\times \smallo(R)^n$ we have: $$\iota_{m,n}(f)(x_{m+1},\dots, x_{2m},y_1,\dots,y_n)=\iota_{2m,n}(g)(x_1,\dots, x_{2m}, y_1,\dots,y_n),$$
 and similarly with the $Y$-variables.
\end{enumerate}

\medskip
\noindent
{\bf Remark.} Since $R$ is viable, a separated $A$-analytic structure on $R$ gives a {\em separated analytic $A$-structure on $K$} in the sense of \cite[Definition 2.7]{CLR}. Compared to \cite{CLR}, we include an extra  axiom (S3), parallel to (A3) from Section~\ref{ras}. We believe (S3) is needed for a proof of \cite[Proposition 2.8]{CLR}, just as (A3) in proving Lemma~\ref{A3}.

\subsection*{Annuli and the corresponding rings of analytic functions} 
The $A$-analytic structure of $R$ induces a separated $A$-analytic structure on $R$ as follows.  For $f(X,Y)\in A\<X\>[[Y]]$ we have $$\tilde{f}(X,Y)\ :=\ f(X,tY)\in A\<X,Y\>,$$ and we associate to
the series $f$ the function $f: R^m\times \smallo(R)^n\to R$ given by 
$$f(x,ty)\ :=\ \tilde{f}(x,y)\quad (x\in R^m, y\in R^n).$$
It is straightforward to check that the axioms (S1), (S2), (S3) are satisfied. Hence by the remark above, \cite{CLR, CL} applies to our setting; see \cite[4.4(1)]{CL}.

In the rest of this section we borrow terminology and results  from \cite{CLR, CL}. 
At the start of \cite[Section 5]{CL} the authors impose a condition that in our setting would correspond to $\mathrm{ker}(\iota_0)=\{0\}$. Nevertheless, we can use their work by replacing $A$ with 
$A/\mathrm{ker}(\iota_o)$ in view of Corollary~\ref{Iar} for $I=\mathrm{ker}(\iota_0)$. A more important difference is that
\cite{CL} takes $K^{\a}:=K_{\alg}$ as the ambient structure, whereas for later model-theoretic use we allow $K^{\a}$ to be any algebraically closed $A$-extension of $K$.  Fortunately, the results we need from \cite{CL} about $K_{\alg}$ will  readily transfer
to our $K^{\a}$: it helps that $K^{\a}$ as a valued field is an elementary extension of $K_{\alg}$.  In the rest of this section we fix an indeterminate $Z$, also to be used as a syntactic variable in $\L_{\preccurlyeq,D}^{A,K}$-formulas. 

Adopting  \cite[Definition 5.1.1]{CL}, an {\em $R$-annulus in $K^{\a}$}, or just {\em $R$-annulus} if $K^{\a}$ is clear from the context,  is a set
$F\subseteq K^{\a}$ given by
monic polynomials $p_0, \dots, p_n$ in $R[Z]$, irreducible in $K[Z]$, $l_0, \dots, l_n\in \N^{\geqslant 1},$ and 
$\pi_0, \dots,\pi_n\in R\setminus \{0\}$, as follows:
$$F\ =\ \{z\in K^{\a}:\  p_0^{l_0}(z) \preccurlyeq \pi_0,\  p_1^{l_1}(z) \succcurlyeq \pi_1\,\dots, p_n^{l_n}(z)  \succcurlyeq \pi_n\}$$
where the ``holes" $\{z\in K^{\a}:\   p_i^{l_i}(z)\prec \pi_i\} $, $1\leqslant i\leqslant n$,  are pairwise disjoint and contained in  $\{z\in K^{\a}:\ p_0^{l_0}(z) \preccurlyeq \pi_0\}$.  Such $F$ is said to be {\em given by\/} $(p_0^{l_0},\dots, p_n^{l_n};\pi_0,\dots,\pi_n)$. Thus $R^{\a}$ is an $R$-annulus given by $(Z;1)$ (with $n=0$). With respect to the valuation topology on $K^{\a}$, every $R$-annulus is a nonempty open-and-closed subset of $R^{\a}$, and so infinite without any isolated point. 
Note also that every $R$-annulus is defined in $K^{\a}$ by a quantifier-free $\L_{\preccurlyeq}^K$-formula $\phi(Z)$.

\medskip
\noindent
{\bf Remark.} In \cite[Definition 5.1.1]{CL}, with  $K^{\a}=K_{\alg}$, the above notion of $R$-annulus is less general than that of $K$-annulus; there our $R$-annuli, for $K^{\a}=K_{\alg}$,  would be among {\it closed} $K$-annuli. 

\medskip\noindent
In what follows we deviate from the convention that $Y=(Y_1,\dots, Y_n)$, and instead use $Y=(Y_0,\dots, Y_n)$ with an extra
indeterminate $Y_0$.  Let the $R$-annulus $F$ be given by $(p_0^{l_0},\dots, p_n^{l_n};\pi_0,\dots,\pi_n)$ as above, and consider the ideal $$I(F)\ :=\ \big(p_0^{l_0}(Z)-\pi_0Y_0,\ p_1^{l_1}(Z)Y_1-\pi_1,\dots,p_n^{l_n}(Z)Y_{n}-\pi_n\big)$$ of $K\<Z,Y\>.$ 
Define $\psi: F \to (R^{\a})^{2+n}$ by
$$\psi(z)\ :=\ \Big(z,\ \frac{p_0^{l_0}(z)}{\pi_0},\ \frac{\pi_1}{p_1^{l_1}(z)},\ \dots,\ \frac{\pi_n}{p_n^{l_n}(z)}\Big).$$
It is routine to verify that $$\psi(F)\ =\ \operatorname{Z}\big(I(F)\big)\ :=\ \{(z,y)\in (R^{\a})^{2+n}:\  g(z,y)=0 \text{ for all }g\in I(F)\}.$$
Let $\mathcal{R}(F)$ be the $K$-algebra of $K^{\a}$-valued functions on $F$. Then
$$\psi^*\ :\  K\<Z,Y\>\to \mathcal{R}(F), \qquad \psi^*(g)(z)\ :=\ g\big(\psi(z)\big) \text{ for }g\in K\<Z,Y\>,\ z\in F,$$
is a $K$-algebra morphism. We set $\O(F):=\psi^*\big(K\<Z,Y\>\big)$, a $K$-subalgebra of $\mathcal{R}(F)$. We refer to $\O(F)$ as {\em the ring of analytic functions on $F$}. Clearly, $I(F)\subseteq \ker(\psi^*)$, and if $K^{\a}=K_{\alg}$, then $I(F) = \ker(\psi^*)$
by  \cite[Corollary 5.6.6]{CL}, so $$K\<Z,Y\>/I(F)\ \cong\ \O(F)\ \text{ as $K$-algebras}.$$ 
(For $K^{\a}=K_{\alg}$ the $K$-algebras $K\<Z,Y\>/ I(F)$ and  $\O(F)$ are denoted by $\O^{\dag}_K(F)$ and $\O^{\sigma}_K(F)$ in \cite[Definition 5.1.4]{CL}.) 

Of course, $I(F)$ depends on how $F$ is given. However, $\O(F)$ is independent of the choice of the tuple
$(p_0^{l_0},\dots, p_L^{l_L};\pi_0,\dots,\pi_L)$ that gives $F$, by \cite[5.3.3]{CL}. Strictly speaking, we don't need this rather subtle fact, since any $R$-annulus $F$ below is assumed to come with a tuple that gives $F$, with $\O(F)$ defined accordingly. 

\medskip\noindent
{\bf Example.} For the $R$-annulus $F=R^{\a}$ given by $(Z;1)$ we have $\O(F)=K\<Z\>$ where $g\in K\<Z\>$ is 
identified with the function $z\mapsto g(z): R^{\a}\to K^{\a}$.



\subsection*{Univariate functions given by terms involving restricted division.} 
For our AKE-theory for valuation $A$-rings we need to understand what data about $z\in K^{\a}$ determine the isomorphism type  of the $A$-extension $K_z$ over $K$. Section~\ref{immAE} basically settles this issue for the case when $K_z$ is an immediate $A$-extension. For the general case we exploit below results from \cite{CL}.  Let $F$ be an $R$-annulus given by $(p_0^{l_0},\dots, p_n^{l_n};\pi_0,\dots,\pi_n)$ with corresponding map $\psi: F \to (R^{\a})^{2+n}$. 
We can  represent any function $f\in \O(F)$  by an $\L_{\preccurlyeq,D}^{A,K}$-term $\tau(Z)$: Let $f=\psi^*(g)$ with $g\in K\<Z,Y\>$. Take $c\in K^\times$, $x\in R^m$, and $G\in A\<X,Z,Y\>$, $X=(X_1,\dots, X_m)$, such that $g=g(Z,Y)=c\cdot G(x, Z,Y)$, and thus $g(z,y)=c\cdot G(x,z,y)$ for all $(z,y)\in (R^{\a})^{2+n}$. Then for all $z\in F$, $$  f(z)\ =\ g\big(\psi(z)\big)\ =\ c\cdot G\Big(x,z,D\big(p^{l_0}_0(z),\pi_0\big),D\big(\pi_1, p^{l_1}_1(z)\big),\dots,D\big(\pi_L, p_n^{l_n}(z)\big)\Big). $$

\begin{lemma}\label{div2}  For each polynomial $p(Z)\in K[Z]$ the function $z\mapsto p(z): F \to K^{\a}$ belongs to $\O(F)$. If
$f\in \O(F)$ and $f(z)\ne 0$ for all $z\in F$, then $f$ is a unit in the ring $\O(F)$. If $r(Z)\in K(Z)$ has no pole in $F$, then
the function $$z\mapsto r(z): F \to K^{\a}$$ belongs to $\O(F)$.
\end{lemma}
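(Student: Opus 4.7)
The plan is to handle the three parts in turn, with the substance concentrated in the second. For the first part, since $K[Z] \subseteq K\<Z,Y\>$ and the $Z$-coordinate of $\psi(z)$ is $z$, we have $\psi^*(p)(z) = p(\psi(z)) = p(z)$ for $p \in K[Z]$ and $z \in F$; hence the function $z \mapsto p(z)$ lies in $\psi^*(K\<Z,Y\>) = \O(F)$.

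For the second part, write $f = \psi^*(g)$ with $g \in K\<Z,Y\>$ and consider the ideal $J := I(F) + g K\<Z,Y\>$ of $K\<Z, Y_0, \ldots, Y_n\>$. The goal is to show $J = K\<Z,Y\>$; once this is established, any representation $1 = h + ug$ with $h \in I(F)$ and $u \in K\<Z,Y\>$ yields, upon applying the $K$-algebra morphism $\psi^*$ (which vanishes on $I(F)$), the identity $1 = \psi^*(u) \cdot f$ in $\O(F)$, so $f$ is a unit. Suppose for contradiction that $J$ is proper. Applying Theorem~\ref{wdp4}(iii) to $K\<Z, Y_0, \ldots, Y_n\>$ produces a point $w = (z_0, y_{0,0}, y_{0,1}, \ldots, y_{0,n}) \in (R^{\a})^{2+n}$ at which every element of $J$ vanishes; in particular, so do the generators of $I(F)$ and $g$ itself. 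The key step is then to verify that the common $R^{\a}$-zeros of $I(F)$ are precisely $\psi(F)$: from $p_0^{l_0}(z_0) = \pi_0 y_{0,0}$ with $y_{0,0} \in R^{\a}$ one reads off $p_0^{l_0}(z_0) \preccurlyeq \pi_0$, and from $p_i^{l_i}(z_0) y_{0,i} = \pi_i$ with $y_{0,i} \in R^{\a}$ one reads off $p_i^{l_i}(z_0) \succcurlyeq \pi_i$ for $i = 1, \ldots, n$; hence $z_0 \in F$ and $w = \psi(z_0)$. Consequently $g(w) = 0$ translates to $f(z_0) = 0$, contradicting the hypothesis. This translation between the valuation inequalities defining $F$ and the algebraic equalities defining the zero set of $I(F)$ is the only delicate point and is the main obstacle; everything else is formal.

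For the third part, write $r = p/q$ with coprime $p, q \in K[Z]$. Since a Bezout identity $ap + bq = 1$ in $K[Z]$ prevents $p$ and $q$ from having a common zero in $K^{\a}$, the condition that $r$ has no pole in $F$ is equivalent to $q(z) \ne 0$ for all $z \in F$. By the first part, both $p$ and $q$ define elements of $\O(F)$; by the second, $q$ is a unit in $\O(F)$. Then $p \cdot q^{-1} \in \O(F)$ represents the function $z \mapsto r(z)$ on $F$.
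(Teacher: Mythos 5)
Your proof is correct and follows essentially the same route as the paper: all three parts hinge on applying Theorem~\ref{wdp4}(iii) to the ideal $I(F)+gK\<Z,Y\>$ to invert a nonvanishing $f=\psi^*(g)$, with the rational-function case reduced to the polynomial and unit cases. The only difference is that you re-verify the inclusion $\operatorname{Z}\big(I(F)\big)\subseteq \psi(F)$ inside the argument, whereas the paper records the identity $\psi(F)=\operatorname{Z}\big(I(F)\big)$ just before the lemma and simply cites it.
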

\begin{proof} The first claim follows from $K[Z]\subseteq K\<Z\>$. For the second claim, suppose $f\in \O(F)$ and $f(z)\ne 0$ for all
$z\in F$. 
 Take $g\in K\<Z,Y\>$ such that $f=\psi^*(g)$. Then $g$ has no zero on $\psi(F)=\operatorname{Z}(I(F))$. Hence
$1\in gK\<Z,Y\>+I(F)$ by Theorem~\ref{wdp4}(iii). Take $h\in K\<Z,Y\>$ with $1\in gh+I(F)$. Then $f\psi^*(h)=1$ in $\O(F)$.  The claim about $r(Z)\in K(Z)$ follows from the first claim and the second claim. 
\end{proof} 

\noindent
Next we transfer facts from \cite{CL}, where $K_{\alg}$ is the  ambient $A$-extension, to our $K^{\a}$. 
 For $\L_{\preccurlyeq,D}^{K}$-definable $P\subseteq K^{\a}$, let $P_{\alg}\subseteq K_{\text{alg}}$ be the
corresponding definable subset of $K_{\alg}$: any $\L_{\preccurlyeq,D}^{K}$-formula defining $P$ in  $K^{\a}$ defines $P_{\alg}$ in $K_{\text{alg}}$ (and $P\cap K_{\alg}=P_{\alg}$). To apply this to annuli, note that 
 $F_{\alg}$ is the $R$-annulus in $K_{\alg}$ given by the same tuple  $(p_0^{l_0},\dots, p_n^{l_n};\pi_0,\dots,\pi_n)$, with $I(F)=I(F_{\alg})$, and the corresponding map $\psi_{\alg}: F_{\alg}\to R_{\alg}^{2+n}$ is the restriction of $\psi$ to $F_{\alg}$.
This also yields the $K$-algebra $\O(F_{\alg})$ of analytic functions on $F_{\alg}$ 
(in the sense of $K_{\text{alg}}$ as the ambient $A$-extension of $K$). 

\begin{lemma}\label{isolem} For $f\in \O(F)$, let $f_{\alg}: F_{\alg}\to K_{\alg}$ be defined by
$f_{\alg}(z)= f(z)$ for $z\in F_{\alg}$. This yields an isomorphism of $K$-algebras: 
 $$f\mapsto  f_{\alg}\ :\  \O(F)\to \O(F_{\alg}).$$ 
\end{lemma}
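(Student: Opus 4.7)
The plan is to exhibit both $\O(F)$ and $\O(F_{\alg})$ as quotients of the same $K$-algebra $K\<Z,Y\>$ by the same ideal $I(F)$, which will immediately give the desired isomorphism. First I would verify that the map $f\mapsto f_{\alg}$ is well-defined and a $K$-algebra morphism: if $f=\psi^*(g)$ with $g\in K\<Z,Y\>$, then for $z\in F_{\alg}$ we have $f(z)=g(\psi(z))=g(\psi_{\alg}(z))=\psi_{\alg}^*(g)(z)$, so $f_{\alg}=\psi_{\alg}^*(g)\in \O(F_{\alg})$. Thus $\psi^*$, $\psi_{\alg}^*$, and the restriction map fit into a commutative triangle of $K$-algebra morphisms, with $\psi^*$ and $\psi_{\alg}^*$ both surjective by construction.

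Surjectivity of the restriction map is then immediate from the triangle: any $h\in \O(F_{\alg})$ has the form $h=\psi_{\alg}^*(g)$ for some $g\in K\<Z,Y\>$, and $\psi^*(g)\in \O(F)$ restricts to $h$.

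For injectivity, the key step is to show $\ker(\psi^*)=\ker(\psi_{\alg}^*)$. We already observed, using $\psi(F)=\operatorname{Z}(I(F))$ inside $(R^{\a})^{2+n}$, that $I(F)\subseteq \ker(\psi^*)$; and since $F_{\alg}\subseteq F$, any function vanishing on $F$ vanishes on $F_{\alg}$, so $\ker(\psi^*)\subseteq \ker(\psi_{\alg}^*)$. The remaining ingredient is the equality $\ker(\psi_{\alg}^*)=I(F)$ in the ambient field $K_{\alg}$, which is the content of \cite[Corollary 5.6.6]{CL} applied to the $R$-annulus $F_{\alg}$ in $K_{\alg}$, given by the same tuple $(p_0^{l_0},\dots,p_n^{l_n};\pi_0,\dots,\pi_n)$ as $F$. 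Chaining
\[ I(F)\ \subseteq\ \ker(\psi^*)\ \subseteq\ \ker(\psi_{\alg}^*)\ =\ I(F) \]
forces all three to coincide, which gives injectivity.

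The main obstacle, such as it is, lies in correctly transferring the equality $\ker(\psi_{\alg}^*)=I(F)$ from the setting of \cite{CL}, where $K_{\alg}$ is taken as the ambient algebraically closed $A$-extension, to our setting with the arbitrary ambient $K^{\a}$. But this transfer is essentially free here: $F_{\alg}=F\cap K_{\alg}$ is an $R$-annulus in $K_{\alg}$ with the same defining data, $I(F)=I(F_{\alg})$ as ideals of $K\<Z,Y\>$, and once that bridge to the classical case is in place the lemma reduces to the one-line diagram chase above. Notably, the elementarity $(\star)$ is not invoked: the isomorphism holds purely algebraically.
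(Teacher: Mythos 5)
Your proof is correct and takes essentially the same route as the paper: the identity $f_{\alg}=\psi_{\alg}^*(g)$ gives well-definedness and surjectivity, and injectivity comes from $\ker(\psi_{\alg}^*)=I(F)\subseteq\ker(\psi^*)$ via \cite[Corollary 5.6.6]{CL}, without any use of $(\star)$. Your observation that the chain forces $\ker(\psi^*)=I(F)$ is precisely the remark the paper records right after the lemma.
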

\begin{proof} Let $f\in \O(F)$ and take $g\in K\<Z,Y\>$ such that $f=\psi^*(g)$. Then for $z\in F_{\alg}$ we have
$f_{\alg}(z)=f(z)=g(\psi(z))=g(\psi_{\alg}(z))$, so $f_{\alg}=\psi_{\alg}^*(g)\in \O(F_{\alg})$. This also shows surjectivity of
$f\mapsto  f_{\alg}\ :\  \O(F)\to \O(F_{\alg})$. With $f=\psi^*(g)$ as above, if $f_{\alg}=0$, then 
$\psi_{\alg}^*(g)=0$, so $g\in\ker(\psi_{\alg}^*)= I(F)\subseteq \ker(\psi)$, hence $f=0$. 
\end{proof} 

\noindent
The proof of this lemma yields $I(F)=\ker(\psi^*)$ (without assuming $K^{\a}=K_{\alg}$).

\begin{lemma}\label{powerb} For $g\in K\<Z,Y\>$ and $f=\psi^*(g)\in \O(F)$ the following are equivalent: \begin{enumerate}
\item[\rm(i)] $f(z)\preccurlyeq 1$ for all $z\in F$;
\item[\rm(ii)] $f(z)\preccurlyeq 1$ for all $z\in F_{\alg}$;
\item[\rm(iii)]  for some $d\in \N^{\geqslant 1}$ and $h_1, \dots, h_d \in R\<Z,Y\>$ we have
$$ g^d + h_1g^{d-1} + h_2g^{d-2} +\dots + h_d\ \in\  I(F).$$
\end{enumerate}
\end{lemma}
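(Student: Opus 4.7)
The plan is to establish (i)$\Rightarrow$(ii)$\Rightarrow$(iii)$\Rightarrow$(i). Among these, (i)$\Rightarrow$(ii) is immediate because $F_{\alg}\subseteq F$, and (iii)$\Rightarrow$(i) is a direct substitution. The real content lies in (ii)$\Rightarrow$(iii), where a power-bounded-implies-integral result from \cite{CL} is to be imported.

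For (iii)$\Rightarrow$(i), I would assume $G:=g^d+h_1g^{d-1}+\cdots+h_d\in I(F)$ with $h_1,\dots,h_d\in R\<Z,Y\>$. For any $z\in F$ one has $\psi(z)\in(R^{\a})^{2+n}$, and $G$ vanishes at $\psi(z)$ by the definition of $\psi(F)$ as the zero set of $I(F)$. Each $h_i$, being an element of $R\<Z,Y\>$, evaluates at $\psi(z)$ to an element of $R^{\a}$ via Theorem~\ref{n3} applied to the inclusion $R\to R^{\a}$ of $A$-rings. Hence $f(z)=g(\psi(z))$ satisfies a monic polynomial identity over the valuation ring $R^{\a}$, and so $f(z)\in R^{\a}$, i.e.\ $f(z)\preccurlyeq 1$.

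For (ii)$\Rightarrow$(iii), I would invoke Lemma~\ref{isolem} together with the isomorphism $K\<Z,Y\>/I(F)\cong\O(F_{\alg})$ (which holds because the ambient field for $F_{\alg}$ is $K_{\alg}$, by \cite[Corollary 5.6.6]{CL}) to translate condition (ii) into the statement that the residue class $\bar g\in K\<Z,Y\>/I(F)$ corresponds to a power-bounded element of $\O(F_{\alg})$. The integrality theorems for analytic functions on closed annuli in the Cluckers--Lipshitz framework then force $\bar g$ to be integral over the image of $R\<Z,Y\>$ inside $K\<Z,Y\>/I(F)$, producing $d\geqslant 1$ and $h_1,\dots,h_d\in R\<Z,Y\>$ with $g^d+h_1g^{d-1}+\cdots+h_d\in I(F)$.

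The main obstacle will be pinpointing the precise integrality statement in \cite{CL} and verifying that it matches our closed $R$-annulus setting. Concretely, one must check that the ``integer'' subring of $\O(F_{\alg})$ appearing in the Cluckers--Lipshitz power-boundedness results is controlled by, and indeed integral over, the image of $R\<Z,Y\>$ under $\psi_{\alg}^{*}$, so that the integrality relation obtained in $\O(F_{\alg})$ lifts back to an equation in $K\<Z,Y\>$ whose coefficients lie genuinely in $R\<Z,Y\>$ and not merely in some larger bounded-function subalgebra. Once that reference is isolated, (ii)$\Rightarrow$(iii) reduces to straightforward bookkeeping.
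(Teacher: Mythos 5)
Your proposal follows essentially the same route as the paper: (i)$\Rightarrow$(ii) and (iii)$\Rightarrow$(i) are treated as routine, and (ii)$\Rightarrow$(iii) is reduced to the case $K^{\a}=K_{\alg}$ via Lemma~\ref{isolem} and then imported from the Cluckers--Lipshitz framework. The precise reference you were looking for is \cite[Proposition 5.2.12(b)]{CL}, which the paper applies in combination with Theorem~\ref{wdp4}(ii); that combination is exactly what settles the point you flagged, namely that the resulting integrality relation has coefficients genuinely in $R\<Z,Y\>$.
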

\begin{proof} The implications (i)$\Rightarrow$(ii) and  (iii)$\Rightarrow$(i) are clear. As to (ii)$\Rightarrow$(iii), this holds if $K^{\a}=K_{\alg}$ by Proposition~\ref{wdp4}(ii) and \cite[Proposition 5.2.12(b)]{CL}. Hence it holds for our $K^{\a}$ by Lemma~\ref{isolem}.  
\end{proof}

\noindent
By a {\em very strong unit on $F$\/} we mean a function $f\in \O(F)$ such that $f(z)\sim 1$ for all $z\in F$; cf. \cite[Definition 5.1.4]{CL}. 

\begin{lemma}\label{aff3+}  Let $f\in \O(F)$. Then there is a very strong unit $u$ on $F$ and a rational function $r\in K(Z)$ without pole in $F$ such that $f(z)= u(z)r(z)$ for all $z\in F$. In particular, if $f\ne 0$, then $f$ has only finitely many zeros in $F$.   
\end{lemma}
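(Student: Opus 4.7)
My plan is to reduce to the case $K^{\a}=K_{\alg}$ via Lemma~\ref{isolem}, invoke a Weierstrass-type factorization theorem from \cite[Section 5]{CL}, and then transfer the result back to the original $K^{\a}$. By the $K$-algebra isomorphism $g\mapsto g_{\alg}\colon \O(F)\to \O(F_{\alg})$ of Lemma~\ref{isolem}, factorizations transport cleanly: if $f_{\alg}=u_{\alg}\cdot r|_{F_{\alg}}$ with $u_{\alg}$ a very strong unit on $F_{\alg}$ and $r\in K(Z)$ having no poles in $F_{\alg}$ (equivalently no poles in $F$, since the poles of $r$ lie in $K_{\alg}$), then the preimage $u\in \O(F)$ of $u_{\alg}$ satisfies $f=u\cdot r|_F$ in $\O(F)$ and is a unit there, because units correspond to units under the isomorphism.

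For the case $K^{\a}=K_{\alg}$ I would invoke the factorization of analytic functions on a closed annulus from \cite{CL}: any nonzero $f\in \O(F_{\alg})$ admits a decomposition $f=u\cdot r$ with $u$ a very strong unit on $F_{\alg}$ and $r\in K(Z)$ a rational function having no poles in $F_{\alg}$ (the $f=0$ case is trivial with $r=0$, $u=1$). Heuristically, $r$ records the finitely many zeros of $f$ in $F_{\alg}$ (grouped into Galois orbits over $K$) together with a normalizing constant in $K^{\times}$, and $u$ is the resulting unit factor.

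The remaining task is to verify that the very-strong-unit condition $u(z)\sim 1$ transfers from $F_{\alg}$ to $F$. The part $u(z)\asymp 1$ for all $z\in F$ is automatic: the isomorphism makes both $u$ and $1/u$ elements of $\O(F)$, so they take values in $(R^{\a})^{\times}$. For the residue condition $u(z)-1\prec 1$, I would extract from the Weierstrass construction in \cite{CL} a concrete $c\in K^{\times}$ with $v(c)>0$ such that $(u-1)/c\in \O(F_{\alg})$ is bounded by $1$ on $F_{\alg}$; then Lemma~\ref{powerb} applied to $(u-1)/c$ transfers the bound to $F$, giving $u(z)-1\preccurlyeq c\prec 1$ for all $z\in F$. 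The ``in particular'' clause is then immediate: if $f\ne 0$, then $r\ne 0$, a nonzero rational function in $K(Z)$ has finitely many zeros in $K^{\a}$, and $u(z)$ is never zero, so $f$ has only finitely many zeros in $F$.

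The main obstacle I expect is the last transfer step. The condition ``$u(z)-1\prec 1$'' is not directly covered by Lemma~\ref{powerb}, which handles only the $\preccurlyeq 1$ side; and one cannot simply replace $\prec 1$ by $\preccurlyeq t$ when passing to $K^{\a}$, since $K^{\a}$ may have value-group elements strictly between $0$ and $vt$. One must therefore use the explicit shape of the factorization in \cite{CL} to produce the uniform bound $c\in K^{\times}$ above, rather than appealing only to the abstract statement of the factorization theorem.
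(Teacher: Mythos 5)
Your overall strategy coincides with the paper's: reduce to $K^{\a}=K_{\alg}$, invoke the Mittag--Leffler/factorization theorem of \cite[Theorem 5.5.2]{CL} there, and transport the factorization back through the isomorphism of Lemma~\ref{isolem}, using Lemma~\ref{div2} for $r$ and Lemma~\ref{powerb} for one-sided bounds. But the step you yourself flag as the main obstacle --- transferring the residue condition $u(z)-1\prec 1$ from $F_{\alg}$ to $F$ --- is exactly the point of the lemma, and your proposed fix is not carried out and is doubtful as stated. You want a single $c\in K^{\times}$ with $v(c)>0$ and $(u-1)/c$ bounded by $1$ on $F_{\alg}$. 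Even granting a maximum principle on $F_{\alg}$, the supremum of $v\big(u(z)-1\big)$ lives a priori only in the divisible hull of $\Gamma$, and since $vt$ is the \emph{least} positive element of $\Gamma$, a bound such as $vt/2$ admits no majorizing $c\in K^{\times}$ at all; so the existence of your $c$ needs a separate argument about where sup norms of elements of $\O(F_{\alg})$ take their values, which is precisely the kind of extra input from the internals of \cite{CL} that the statement does not hand you. Also, your claim that $u(z)\asymp 1$ on $F$ is ``automatic'' because $u$ and $1/u$ lie in $\O(F)$ is not a proof: elements of $\O(F)$ need not take values in $R^{\a}$ (nonzero constants from $K$ of negative value belong to $\O(F)$); you need Lemma~\ref{powerb} applied to $u$ and to its inverse.

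The paper closes the gap without any uniform bound, by a bootstrapping trick you may find worth knowing: apply the same decomposition once more, to $u-1$ in place of $f$. This gives $u-1=u^{*}r^{*}$ on $F_{\alg}$ with $u^{*}\in\O(F)$, $u^{*}_{\alg}$ a very strong unit on $F_{\alg}$, and $r^{*}\in K(Z)$ without pole in $F$. Since $u_{\alg}(z)-1\prec 1$ and $u^{*}_{\alg}(z)\asymp 1$ on $F_{\alg}$, one gets $r^{*}(z)\prec 1$ for all $z\in F_{\alg}$; because $r^{*}$ is a rational function over $K$ and $F$ is cut out by a quantifier-free $\L^{K}_{\preccurlyeq}$-formula, this statement is first order in $\L^{K}_{\preccurlyeq}$ and transfers to all $z\in F$ by (\ref{QE}). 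Lemma~\ref{isolem} gives $u(z)-1=u^{*}(z)r^{*}(z)$ on $F$, and Lemma~\ref{powerb} gives $u^{*}(z)\preccurlyeq 1$ on $F$; hence $u(z)-1\prec 1$, so $u(z)\sim 1$ on $F$. In short: the one-sided transfers you do have (Lemma~\ref{powerb} for $\preccurlyeq 1$, elementarity for conditions on rational functions over $K$) suffice once the strict inequality is pushed onto a rational factor, and no quantitative bound in $K^{\times}$ is needed.
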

\begin{proof} If $K^{\a}=K_{\alg}$, this is a consequence of the Mittag-Leffler Decomposition from \cite[Theorem 5.5.2]{CL}. 
By Lemma~\ref{isolem} this yields $u\in \O(F)$ and $r\in K(Z)$ without pole in $F$ such that $u_{\alg}$ is a very strong unit
on $F_{\alg}$ and $f(z)=u(z)r(z)$ for all $z\in F_{\alg}$, and thus for all $z\in F$ by Lemma~\ref{div2}. It remains to show that $u$ is a very strong unit on $F$. Applying the above to $u-1$ instead of $f$ we have $u^*\in \O(F)$ and $r^*\in K(Z)$
without pole in $F$
such that $u^*_{\alg}$ is a very strong unit on $F_{\alg}$ and $u_{\alg}(z)-1=u^*_{\alg}(z)r^*(z)$ for all $z\in F_{\alg}$. Then $r^*(z)\prec 1$ for all $z\in F_{\alg}$, hence for all $z\in F$. Lemma~\ref{isolem} gives $u(z)-1=u^*(z)r^*(z)$ for all $z\in F$, and
Lemma~\ref{powerb} yields $u^*(z)\preccurlyeq 1$ for all $z\in F$, hence $u(z)\sim 1$ for all $z\in F$.
\end{proof} 

\noindent
The proof above also yields another useful fact:

\begin{corollary}\label{vsu} For $u\in \O(F)$ we have the following equivalence:
$$u \text{ is a very strong unit  on }F\ \Longleftrightarrow\ u_{\alg} \text{ is a very strong unit on }F_{\alg}.$$
\end{corollary}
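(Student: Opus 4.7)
\textbf{Plan for Corollary~\ref{vsu}.} The forward implication is immediate: since $F_{\alg} = F \cap K_{\alg} \subseteq F$, if $u(z) \sim 1$ holds for every $z \in F$, then it holds a fortiori for every $z \in F_{\alg}$. So the content is the converse.

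For the converse, I would mimic the very last paragraph of the proof of Lemma~\ref{aff3+}, exploiting the fact that Lemma~\ref{aff3+} itself is now a proved statement that we can freely invoke. Assume $u_{\alg}$ is a very strong unit on $F_{\alg}$, i.e., $u(z) - 1 \prec 1$ for all $z \in F_{\alg}$. Apply Lemma~\ref{aff3+} to the function $u - 1 \in \O(F)$ to obtain a very strong unit $u^{*} \in \O(F)$ on $F$ and a rational function $r^{*} \in K(Z)$ without pole in $F$ such that
\[
u(z) - 1 \ =\ u^{*}(z)\, r^{*}(z) \qquad \text{for all }z \in F.
\]
Since $u^{*}$ is a very strong unit on all of $F$, we have $u^{*}(z) \asymp 1$ for every $z \in F_{\alg}$, and the hypothesis $u(z) - 1 \prec 1$ on $F_{\alg}$ therefore forces $r^{*}(z) \prec 1$ for all $z \in F_{\alg}$.

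Now comes the key transfer step. The annulus $F$ is cut out by a quantifier-free $\L_{\preccurlyeq}^{K}$-formula $\phi_{F}(Z)$, and writing $r^{*} = p/q$ with $p, q \in K[Z]$ and $q$ nonvanishing on $F$, the condition $r^{*}(z) \prec 1$ becomes the $\L_{\preccurlyeq}^{K}$-formula $p(z) \prec q(z)$. Thus the statement ``$\forall z\,(\phi_{F}(z) \to p(z) \prec q(z))$'' is an $\L_{\preccurlyeq}^{K}$-sentence which holds in $K_{\alg}$ by what was just shown, and hence holds in $K^{\a}$ by the elementary equivalence ($\star$). Consequently $r^{*}(z) \prec 1$ for all $z \in F$. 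Since $u^{*}(z) \asymp 1$ throughout $F$, the product identity above gives $u(z) - 1 = u^{*}(z)r^{*}(z) \prec 1$ for every $z \in F$, i.e., $u$ is a very strong unit on $F$.

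The only delicate point, and the one worth getting right, is the model-theoretic transfer: we need to be sure that ``$r^{*}(z) \prec 1$ on $F$'' is genuinely expressible by a quantifier-free $\L_{\preccurlyeq}^{K}$-formula—which it is, because $r^{*}$ is a rational function with coefficients in $K$ and all parameters defining $F$ lie in $K$—so that ($\star$) applies directly. No $A$-analytic vocabulary is needed at this stage, which is precisely why invoking Lemma~\ref{aff3+} is so effective: it converts the transcendental-looking statement ``$u$ is very strong unit'' into a statement about a rational function, where pure algebraic model theory of valued fields suffices.
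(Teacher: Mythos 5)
Your proposal is correct and follows essentially the paper's own route: the paper obtains Corollary~\ref{vsu} from the tail end of the proof of Lemma~\ref{aff3+}, which is exactly the argument you reproduce — decompose $u-1=u^{*}r^{*}$, deduce $r^{*}\prec 1$ on $F_{\alg}$, transfer this quantifier-free condition on the rational function to all of $F$ via $(\star)$, and conclude $u(z)\sim 1$ on $F$. Your only deviation is cosmetic (and slightly cleaner): you invoke the already-proved Lemma~\ref{aff3+} for the ambient $K^{\a}$, so $u^{*}$ is a very strong unit on all of $F$ at once, whereas the paper replays the $K_{\alg}$-case together with Lemmas~\ref{isolem} and~\ref{powerb} to get $u^{*}(z)\preccurlyeq 1$ on $F$.
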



\begin{lemma}\label{closure+}
Let $f_1,\dots, f_m\in \O(F)$ be such that $f_1(z),\dots, f_m(z)\preccurlyeq 1$ for all $z\in F$. Then for $G\in R\<X\>$, $X=(X_1, \dots, X_m)$, the function 
$$z\mapsto G\big(f_1(z),\dots,f_m(z)\big)\ :\ F \to R^{\a}$$ belongs to $\O(F)$.
\end{lemma}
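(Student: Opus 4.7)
The plan is to realize the function $z\mapsto G\big(f_1(z),\dots,f_m(z)\big)$ as a polynomial in the $f_i$'s with coefficients from $\psi^*(R\<Z,Y\>)\subseteq\O(F)$. Write $G=G_0(x,X)$ with $G_0\in A\<X',X\>$ and $x\in R^{m'}$, and pick $g_i\in K\<Z,Y\>$ with $f_i=\psi^*(g_i)$. Since $f_i(z)\preccurlyeq 1$ for all $z\in F$, Lemma~\ref{powerb} produces monic polynomials
$$P_i(T_i,Z,Y)\ =\ T_i^{d_i}+h_{i,1}(Z,Y)T_i^{d_i-1}+\cdots+h_{i,d_i}(Z,Y)\ \in\ R\<Z,Y\>[T_i]$$
with $P_i(g_i,Z,Y)\in I(F)$. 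Expanding the parameters, write each $h_{i,j}=h_{i,j,0}(x_i',Z,Y)$ with $h_{i,j,0}\in A\<X_i',Z,Y\>$ and $x_i'\in R^{m_i'}$, so that $P_i=P_{i,0}(x_i',T_i,Z,Y)$ for some $P_{i,0}\in A\<X_i',Z,Y\>[T_i]$, monic of degree $d_i$ in $T_i$.

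Next I would collect the parameter tuples into $X^*:=(X',X_1',\dots,X_m')$ and $x^*:=(x,x_1',\dots,x_m')$, set $T:=(T_1,\dots,T_m)$, and work in the Tate algebra $A\<X^*,Z,Y,T\>$. The polynomials $P_{i,0}(X_i',T_i,Z,Y)$ are monic in $T_i$ with coefficients in $A\<X^*,Z,Y\>\subseteq A\<X^*,Z,Y,T_1,\dots,T_{i-1}\>$, so Lemma~\ref{d2}, applied with $(X^*,Z,Y)$ in the role of the parameter tuple and $T$ in the role of the principal variables, yields a decomposition
$$G_0(X',T)\ =\ \sum_{i=1}^m Q_i(X^*,Z,Y,T)\cdot P_{i,0}(X_i',T_i,Z,Y)\ +\ \sum_{j_1<d_1,\dots,j_m<d_m}C_j(X^*,Z,Y)\,T_1^{j_1}\cdots T_m^{j_m}$$
with $Q_i\in A\<X^*,Z,Y,T\>$ and $C_j\in A\<X^*,Z,Y\>$. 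Substituting $x^*$ for $X^*$ pushes this into $R\<Z,Y,T\>$, giving an identity
$$G(T)\ =\ \sum_{i=1}^m q_i(Z,Y,T)\,P_i(T_i,Z,Y)\ +\ \sum_{j_1<d_1,\dots,j_m<d_m}c_j(Z,Y)\,T_1^{j_1}\cdots T_m^{j_m}$$
with $q_i\in R\<Z,Y,T\>$ and $c_j\in R\<Z,Y\>$.

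Finally, evaluate this identity at the point $\big(\psi(z),f_1(z),\dots,f_m(z)\big)\in(R^{\a})^{2+n+m}$ via the $A$-ring morphism $R\<Z,Y,T\>\to R^{\a}$ of Theorem~\ref{n3}; the dummy-variable convention lets us read the left side as $G\big(f_1(z),\dots,f_m(z)\big)$. Because $\psi^*\big(P_i(g_i,Z,Y)\big)=0$ in $\O(F)$ (as $P_i(g_i,Z,Y)\in I(F)\subseteq\ker\psi^*$), we have $P_i\big(f_i(z),\psi(z)\big)=0$ for every $z\in F$, so the first sum vanishes and
$$G\big(f_1(z),\dots,f_m(z)\big)\ =\ \sum_{j_1<d_1,\dots,j_m<d_m}\psi^*(c_j)(z)\cdot f_1(z)^{j_1}\cdots f_m(z)^{j_m}.$$
Each $\psi^*(c_j)$ belongs to $\psi^*(R\<Z,Y\>)\subseteq\O(F)$ and each $f_i\in\O(F)$, so the right-hand side is an element of the ring $\O(F)$, proving the claim. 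The main obstacle is the bookkeeping required to absorb the two independent parameter tuples (from $G$ and from the $P_i$'s) into a single Tate algebra over $A$ so that Lemma~\ref{d2} applies cleanly and the resulting quotient and remainder pieces descend to honest restricted power series in $R\<Z,Y,T\>$ after specialization; once this is in place, the integrality of the $f_i$'s from Lemma~\ref{powerb} is what kills the quotient part and leaves a polynomial expression inside $\O(F)$.
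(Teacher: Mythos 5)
Your proof is correct and is essentially the paper's own argument: Lemma~\ref{powerb} supplies the monic relations $P_i$, Lemma~\ref{d2} gives the division of $G$ by the $P_i$ with remainder polynomial in the $T_i$ over $R\<Z,Y\>$, and evaluation at $\big(\psi(z),f_1(z),\dots,f_m(z)\big)$ kills the quotient terms and exhibits the result as a polynomial in the $f_i$ with coefficients $\psi^*(c_j)\in\O(F)$. The only difference is that you spell out explicitly the parameter-tuple bookkeeping that the paper compresses into the remark that elements of $R\<Z,Y,X\>$ are specializations of restricted power series over $A$.
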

\begin{proof} For $i=1,\dots,m$, take $g_i\in K\<Z,Y\>$  such that $f_i(z)=g_i\big(\psi(z)\big)$ for all $z\in F$.
Lemma~\ref{powerb} gives for $i=1,\dots,m$ a polynomial $P_i=P_i(Z,Y, X_i)\in R\<Z,Y\>[X_i]$ over $R\<Z,Y\>$, monic and of degree $d_i\geqslant 1$ in $X_i$, such that $P_i\big(Z,Y, g_i(Z,Y)\big)\in I(F)$, and thus $P_i\big(z,y, g_i(z,y)\big)=0$ for all $(z,y)\in \psi(F)$. 
Since elements of $R\<Z,Y,X\>$ are 
 specializations of restricted power series over $A$, Lemma~\ref{d2} gives in $R\<Z,Y,X\>$ an equality
$$G(X)\ =\ \sum_{i=1}^mq_iP_i+ r, \quad \text{with }q_1,\dots, q_m\in R\<Z,Y,X\>,\  r\in R\<Z,Y\>[X].$$  
Let $z\in F$; so $\psi(z)=(z,y)\in \psi(F)$, $y\in (R^{\a})^{1+n}$.
Substituting in the equality above $(z,y)$ for $(Z,Y)$ and $f_i(z)$ for $X_i$ and using $P_i\big(z,y, g_i(z,y)\big)=0$ we obtain
$$G\big(f_1(z),\dots, f_m(z)\big)\ =\ r\big(z,y, f_1(z),\dots, f_m(z)\big). $$
Now $r=\sum_j r_j(Z,Y)X^j$ with $j=(j_1,\dots, j_m)$ ranging over a finite subset of  $\N^m$ and all $r_j\in R\<Z,Y\>$. So
$r\big(z,y, f_1(z),\dots, f_m(z)\big)=\sum_j \psi^*(r_j)(z)f_1(z)^{j_1}\cdots f_m(z)^{j_m}$, which exhibits this function of $z\in F$ as being in
$\O(F)$. 
\end{proof}


\begin{lemma}\label{FvdP+} For any rational function $r(Z)\in K(Z)$ the set  
 $\{z\in R^{\a}:\  r(z)\preccurlyeq 1\}$ is a finite union of $R$-annuli. $($Here $r(z)\preccurlyeq 1$ includes $z$ not being a pole of $r.)$
 
\end{lemma}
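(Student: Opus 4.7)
The plan is to factor $r$ over $K[Z]$, reduce to the case where all irreducible factors of $r$ lie in $R[Z]$, and then decompose $R^{\a}$ into finitely many $R$-annuli on each of which the sign of $v(r(z))$ is constant.

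First, write $r(Z) = c \prod_{i=1}^m p_i(Z)^{e_i}$ with $c \in K^\times$, $e_i \in \mathbb{Z}\setminus\{0\}$, and distinct monic irreducibles $p_i \in K[Z]$. Because the valuation on $K_{\alg}$ extending $v$ is unique, the roots of each $p_i$ in $K_{\alg}$ are Galois-conjugate and share a common valuation $\gamma_i$. If $\gamma_i < 0$, then for every $z \in R^{\a}$ and every root $\alpha$ of $p_i$ one has $v(z) \geqslant 0 > \gamma_i = v(\alpha)$, so $v(z-\alpha) = \gamma_i$, whence $v(p_i(z)) = (\deg p_i)\gamma_i$ is constant on $R^{\a}$; after absorbing such constant factors into $c$, I may assume every $p_i$ has all roots in $R_{\alg}$, so $p_i \in R[Z]$. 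The condition $r(z) \preccurlyeq 1$ then reads $v(c) + \sum_i e_i\, v(p_i(z)) \geqslant 0$ on $R^{\a}$.

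Next, for each $i$ the function $z \mapsto v(p_i(z)) = \sum_\alpha v(z-\alpha)$ (the sum over the roots $\alpha$ of $p_i$) depends on $z \in R^{\a}$ only through the tropical data $\{v(z - \alpha)\}$, and by the ultrametric inequality this dependence has only finitely many ``regimes'', determined by the finite combinatorial data $\{v(\alpha - \alpha')\}$ ranging over pairs of roots of $p_1,\dots,p_m$. Consequently the signed sum $v(c) + \sum_i e_i v(p_i(z))$ changes sign only across finitely many ``level surfaces'' of the form $\{z : p_i^{l}(z) \asymp \pi\}$ with $l \in \mathbb{N}^{\geqslant 1}$ and $\pi \in R\setminus\{0\}$; the exponent $l$ is needed to reach rational multiples of $v(\pi)$ in the divisible value group of $K^{\a}$. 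Taking the common refinement of the finitely many partitions cut out by these level surfaces decomposes $R^{\a}$ into finitely many pieces of the form $\{z : p_{i_0}^{l_0}(z) \preccurlyeq \pi_0\} \cap \bigcap_{j\geqslant 1}\{z : p_{i_j}^{l_j}(z) \succcurlyeq \pi_j\}$, on each of which the sign of $v(r(z))$ is constant. The desired set is the union of those pieces on which this sign is nonnegative.

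Some bookkeeping remains to ensure the disjointness-of-holes and containment-in-outer-ball conditions required of an $R$-annulus, which can always be arranged by further subdivision or by grouping overlapping holes into a single outer constraint. The principal obstacle is the tropical/combinatorial step: showing that finitely many thresholds $\pi \in R\setminus\{0\}$ --- together with the irreducible polynomials $p_i$ already produced by the factorization --- suffice to capture every sign change of $v(c) + \sum_i e_i v(p_i(z))$, and that these thresholds can be chosen so the resulting pieces satisfy the geometric hypotheses in the definition of an $R$-annulus. This rests on the piecewise-affine structure of $z \mapsto v(p_i(z))$ through the distances $\{v(z-\alpha)\}$, with only finitely many distinct affine pieces determined by the finite combinatorial data $\{v(\alpha - \alpha')\}$ for $\alpha,\alpha'$ roots of $p_1,\dots,p_m$.
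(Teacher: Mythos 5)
There is a genuine gap: your argument stops exactly where the lemma begins. The reduction to monic irreducible $p_i\in R[Z]$ (absorbing the factors whose roots have negative valuation into the constant) is fine, and the uniqueness of the extension of $v$ to $K_{\alg}$ (via henselianity of $R$) does give each $p_i$ a well-defined root valuation. But the decisive step --- converting the condition $v(c)+\sum_i e_i v(p_i(z))\geqslant 0$ into a \emph{finite} union of sets of the exact shape demanded by the definition of $R$-annulus (powers $p_i^{l_i}$ with thresholds $\pi_i\in R\setminus\{0\}$, holes pairwise disjoint and contained in the outer set $\{p_0^{l_0}(z)\preccurlyeq\pi_0\}$, all data defined over $K$) --- is precisely what you label ``the principal obstacle'' and leave to ``bookkeeping'' and ``further subdivision''. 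That step is the entire content of the lemma: one must carry out the ball/tree analysis of the distances $v(z-\alpha)$ to the roots, show that on each combinatorial regime the inequality reduces to conditions $v(p_j(z))\gtrless v(\pi)/l$ with $\pi\in R$ (here Galois-invariance over $K$ is needed to group conjugate balls into a single hole $\{p_j^{l_j}(z)\prec\pi_j\}$, and one must check the grouped holes are disjoint from each other and sit inside the chosen outer ball), and only then is the set a finite union of $R$-annuli. As written, the proposal asserts the conclusion of this analysis rather than proving it.

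For comparison, the paper does not carry out this analysis either: it quotes \cite[Lemma 3.16]{CLR} for the case $K^{\a}=K_{\alg}$ and then transfers to an arbitrary algebraically closed $A$-extension $K^{\a}$ using the elementarity (\ref{QE}) coming from quantifier elimination for algebraically closed valued fields, since the decomposition is expressed by a quantifier-free $\L^K_{\preccurlyeq}$-sentence with parameters from $K$. So what you are attempting is essentially a self-contained reproof of the Cluckers--Lipshitz--Robinson lemma; that is a legitimate route (and, if completed with the root-distance analysis done directly in $K^{\a}$, it would even avoid the transfer step), but in its current form the proof is a plan, not a proof.
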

\begin{proof} If $K^{\a}=K_{\alg}$, this can be shown along the lines of the proof of \cite[Lemma 3.16]{CLR}. It then holds for our $K^{\a}$ by (\ref{QE}). 
\end{proof}

\begin{lemma} \label{F'} 
If $F'$ is an $R$-annulus and $F'\cap F\neq \emptyset$, then $F'\cap F$ is an $R$-annulus. 
\end{lemma}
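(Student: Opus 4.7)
The plan is to reduce to the case $K^{\a}=K_{\alg}$ and then analyze the combinatorial structure of the two annuli. For the reduction, note that $F$ and $F'$ are defined by quantifier-free $\L_{\preccurlyeq}^K$-formulas in $K^{\a}$, and the very same data tuples define $F_{\alg}$ and $F'_{\alg}$ in $K_{\alg}$. By (\ref{QE}), $F\cap F'\ne\emptyset$ iff $F_{\alg}\cap F'_{\alg}\ne\emptyset$; moreover, any tuple $(r_0^{k_0},\dots,r_s^{k_s};\sigma_0,\dots,\sigma_s)$ over $R$ that presents $F_{\alg}\cap F'_{\alg}$ as an $R$-annulus in $K_{\alg}$ will also present $F\cap F'$ as an $R$-annulus in $K^{\a}$, since the required disjointness and containment conditions on the holes are universal $\L_{\preccurlyeq}^K$-statements.

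Next, writing $F=D_0\setminus(U_1\cup\cdots\cup U_n)$ with outer disk $D_0=\{z:p_0^{l_0}(z)\preccurlyeq\pi_0\}$ and holes $U_i=\{z:p_i^{l_i}(z)\prec\pi_i\}$, and likewise $F'=D'_0\setminus(U'_1\cup\cdots\cup U'_{n'})$ with outer disk $D'_0$ and holes $U'_j$, one has
\[
F\cap F'\ =\ (D_0\cap D'_0)\setminus\bigl(U_1\cup\cdots\cup U_n\cup U'_1\cup\cdots\cup U'_{n'}\bigr).
\]
The first key step, and the main obstacle, is to show that in $K_{\alg}$ the nonempty intersection $D_0\cap D'_0$ of two ``closed disks'' defined by irreducible polynomial conditions is again a closed disk of the same form. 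This rests on the non-Archimedean dichotomy that two such disks are either disjoint or nested, so that $D_0\cap D'_0$ equals whichever of the two is the smaller; this structural property of $R$-annuli is precisely the sort of fact developed in \cite[Section 5]{CL}, which I would invoke.

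Once $D_0\cap D'_0$ is identified with the smaller outer disk, the holes are handled in parallel fashion. Each hole $U_i$ or $U'_j$ either misses $D_0\cap D'_0$ entirely (and may be discarded) or is contained in it. Any two remaining holes are either disjoint or, by the same ultrametric dichotomy for open disks, nested, in which case one discards the inner member of the nested pair. The resulting trimmed collection of pairwise disjoint open disks, all contained in $D_0\cap D'_0$, together with the outer disk $D_0\cap D'_0$ itself, yields a tuple of data that exhibits $F\cap F'$ as an $R$-annulus, finishing the proof.
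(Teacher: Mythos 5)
Your proposal is correct, and your reduction to $K^{\a}=K_{\alg}$ via (\ref{QE}) is exactly the paper's transfer step (besides the disjointness and containment conditions, one also transfers the universal sentence asserting that the chosen tuple defines the same set as the quantifier-free formula for $F\cap F'$, which works the same way). Where you differ is in the core case: the paper simply quotes \cite[Lemma 5.1.2(iv)]{CL}, whereas you unwind that lemma into the outer-disk/holes decomposition plus a trimming argument, deferring only the disjoint-or-nested dichotomy to \cite{CL}. That is a legitimate, more self-contained route, but two points deserve to be made explicit. First, the dichotomy you lean on is not the elementary ultrametric fact about balls: for irreducible $p$ of degree $>1$ the set $\{z:\ p^{l}(z)\preccurlyeq \pi\}$ is a finite union of balls, and disjoint-or-nested for two such sets (and for the open analogues used for the holes) needs an extra argument --- for instance, since $R$ is henselian the automorphisms of $K_{\alg}$ over $K$ preserve the valuation and act transitively on the components of each such set, so once one component of one set meets a component of the other, the whole sets are nested; alternatively one cites the corresponding item of \cite[Lemma 5.1.2]{CL}, which is essentially what the paper does anyway. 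Second, your case analysis for a hole $U$ meeting $D_0\cap D'_0$ has a third alternative, namely $D_0\cap D'_0\subseteq U$; this must be excluded using $F\cap F'\neq\emptyset$ (a hole of $F$ containing the common outer disk would force $F\cap F'=\emptyset$). With these two points supplied, your argument is a correct expansion of the citation on which the paper's two-line proof rests.
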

\begin{proof}  If $K^{\a}=K_{\alg}$,  this holds by  ~\cite[Lemma 5.1.2 (iv)]{CL}.  It then holds for our $K^{\a}$ by
 (\ref{QE}).  \end{proof}

\begin{lemma}\label{restrf1} Let $F_1$ be an $R$-annulus, $F_1\subseteq F$, and $f\in \O(F)$. Then $f|_{F_1}\in \O(F_1)$.
\end{lemma}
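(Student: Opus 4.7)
The plan is to exhibit the restriction of $f$ to $F_1$ as a substitution of functions in $\O(F_1)$ into a power series with coefficients in $R$, so that Lemma~\ref{closure+} applies. Suppose $F$ is given by $(p_0^{l_0},\dots,p_n^{l_n};\pi_0,\dots,\pi_n)$ with associated map $\psi:F\to(R^{\a})^{2+n}$; denote its component functions by
$$\xi_0(z)=z,\qquad \xi_1(z)=p_0^{l_0}(z)/\pi_0,\qquad \xi_{1+i}(z)=\pi_i/p_i^{l_i}(z)\quad(i=1,\dots,n).$$
First I would check that each $\xi_j$, viewed as a function on $F_1$, lies in $\O(F_1)$ and satisfies $\xi_j(z)\preccurlyeq 1$ for $z\in F_1$. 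Indeed, $\xi_0$ and $\xi_1$ are polynomials in $z$ with coefficients in $K$, so they lie in $\O(F_1)$ by Lemma~\ref{div2}; each $\xi_{1+i}$ is a rational function, and the defining conditions of $F$ force $p_i^{l_i}(z)\succcurlyeq \pi_i\neq 0$ for $z\in F\supseteq F_1$, so $\xi_{1+i}$ has no pole in $F_1$ and again lies in $\O(F_1)$ by Lemma~\ref{div2}. The bound $\xi_j(z)\preccurlyeq 1$ on $F_1$ is immediate from $F_1\subseteq F\subseteq R^{\a}$ together with the defining inequalities of $F$.

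Next I would write $f=\psi^*(g)$ for some $g\in K\<Z,Y\>$, $Y=(Y_0,\dots,Y_n)$, and, using the definition of $K\<Z,Y\>$ recalled before Lemma~\ref{intKR}, factor $g=c^{-1}h$ with $c\in R^{\neq}$ and $h\in R\<Z,Y\>$. Then for every $z\in F$ (and in particular $z\in F_1$),
$$f(z)\ =\ c^{-1}\,h\bigl(\xi_0(z),\xi_1(z),\dots,\xi_{1+n}(z)\bigr).$$
Applying Lemma~\ref{closure+} to the $R$-annulus $F_1$ with the functions $\xi_0,\dots,\xi_{1+n}\in\O(F_1)$ (each $\preccurlyeq 1$ on $F_1$) and with $G:=h\in R\<X_1,\dots,X_{2+n}\>$ after renaming variables, yields that $z\mapsto h(\xi_0(z),\dots,\xi_{1+n}(z))$ belongs to $\O(F_1)$. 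Since $\O(F_1)$ is a $K$-algebra, multiplication by the constant $c^{-1}\in K$ keeps us inside $\O(F_1)$, giving $f|_{F_1}\in\O(F_1)$.

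There is no serious obstacle here: the only mildly delicate point is to make sure each component of $\psi$ really pulls back to $\O(F_1)$ on its own, which is the content of the first paragraph and uses only that $F_1\subseteq F$ (for the pole-free requirement) and the algebra closure under polynomials and division without poles recorded in Lemma~\ref{div2}. Everything else is a direct appeal to Lemma~\ref{closure+}.
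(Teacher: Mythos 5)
Your proof is correct, and it takes a genuinely different route from the paper's. The paper works with a presentation of $F_1$, forms the ideal $I$ of $K\<Z,V,Y\>$ generated by $I(F_1)$ and $I(F)$, and invokes the isomorphism $K\<Z,V\>/I(F_1)\to K\<Z,V,Y\>/I$ (citing the proof of \cite[Proposition 5.3.2]{CL}) to replace a representative $g$ of $f$ by a congruent $h\in K\<Z,V\>$, and then checks $f|_{F_1}=\psi_1^*(h)$. You never touch the presentation of $F_1$: you observe that the coordinate functions of $\psi$ restrict to elements of $\O(F_1)$ with values $\preccurlyeq 1$ (Lemma~\ref{div2}, using that the defining inequalities of $F$ hold on $F_1\subseteq F$, so in particular $p_i^{l_i}$ has no zero there), clear a denominator to write $g=c^{-1}h$ with $c\in R^{\ne}$, $h\in R\<Z,Y\>$, and then conclude by Lemma~\ref{closure+} applied to the annulus $F_1$ together with $K$-linearity of $\O(F_1)$. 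All the steps check out: the factorization is exactly the definition of $K\<Z,Y\>$, the evaluation $g(\psi(z))=c^{-1}h(\psi(z))$ is the paper's own convention for evaluating elements of $K\<Z,Y\>$ at points of $(R^{\a})^{2+n}$, and there is no circularity since Lemmas~\ref{div2} and ~\ref{closure+} precede Lemma~\ref{restrf1} and do not depend on it. What each approach buys: yours is more self-contained, avoiding the appeal to \cite[5.3.2]{CL} entirely (its only nonelementary input is Lemma~\ref{closure+}, hence Lemma~\ref{powerb} and \cite[5.2.12(b)]{CL}, which the section needs anyway); the paper's argument, on the other hand, produces explicitly a series $h\in K\<Z,V\>$ in terms of $F_1$'s own presentation with $f|_{F_1}=\psi_1^*(h)$, exhibiting restriction as induced by the comparison of the two presented algebras, which is the structural fact underlying the treatment in \cite{CL}. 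For the way the lemma is used later (in Proposition~\ref{prtau}), either proof suffices.
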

\begin{proof} Let $F_1$ be given by $(q_0^{e_0},\dots, q_{m}^{e_m}; \rho_0,\dots, \rho_m)$ with corresponding map
$$\psi_1\ :\ F_1\to (R^{\a})^{2+m},\quad \psi_1(z)\ :=\  \Big(z, \frac{q_0^{e_0}(z)}{\rho_0}, \frac{\rho_1}{q_1^{e_1}(z)},\dots, \frac{\rho_m}{q_m^{e_m}(z)}\Big).$$ This also yields the corresponding ideal $I(F_1)$ of $K\<Z, V\>$ where we use a tuple $V=(V_0,\dots, V_m)$ of new indeterminates $V_0,\dots, V_m$. Accordingly, $\psi_1$ yields the surjective  $K$-algebra morphism
$\psi_1^*: K\<Z, V\>\to \O(F_1)$ with kernel $I(F_1)$.  We now define $I$ to be the ideal
of $K\<Z, V, Y\>$ generated by $I(F_1)\subseteq K\<Z,V\>$ and $I(F)\subseteq K\<Z,Y\>$. This yields the $K$-algebra morphism
$$\iota\ :\  K\<Z, V\>/I(F_1)\to K\<Z, V, Y\>/I, \quad g+I(F_1)\mapsto g+I\qquad (g\in K\<Z,V\>).$$
The proof of [CL, Proposition 5.3.2] shows that $\iota$ is an isomorphism.
For $s$ in $K\<Z, V, Y\>$ we define $s^*: F_1\to (R^{\a})^{3+m+n}$ by
$$s^*(z)\ :=\  s\Big(z, \frac{q_0^{e_0}(z)}{\rho_0}, \frac{\rho_1}{q_1^{e_1}(z)},\dots, \frac{\rho_m}{q_m^{e_m}(z)},\ \frac{p_0^{l_0}(z)}{\pi_0},\ \frac{\pi_1}{p_1^{l_1}(z)},\ \dots,\ \frac{\pi_n}{p_n^{l_n}(z)}\Big),$$
so $s^*(z)$ is $s$ evaluated at a combination of $\psi_1(z)$ and $\psi(z)$. Using $I(F_1)\subseteq \ker \psi_1^*$ and $I(F)\subseteq \ker \psi^*$ we see
that for $s\in I$ we have $s^*(z)=0$ for all $z\in F_1$.

Take $g\in K\<Z,Y\>$ with $f=\psi^*(g)$. Surjectivity of
$\iota$ gives  $h\in K\<Z,V\>$  with $g-h\in I$. It follows that for $z\in F_1$ 
we have  $(g-h)^*(z) =g^*(z)- h^*(z)= 0$. For $z\in F_1$ we have
 $g^*(z)=\psi^*(g)(z)$ and $h^*(z)=\psi_1^*(h)(z)$, so
$f(z)=\psi^*(g)(z)=\psi_1^*(h)(z)$. Thus $f|_{F_1} =\psi_1^*(h)\in \O(F_1)$. 
\end{proof}

\noindent
The next result for $K^{\a}:=K_{\alg}$ is  close to \cite[Theorem 5.5.3]{CL}; see also \cite[A.1.10]{LC2}. We give a complete proof 
because we require $R$-annuli where [loc. cit.] allows more general annuli, and because the details are used to obtain Corollary~\ref{id}.

\begin{proposition}\label{prtau} Let $\tau(Z)$ be an 
$\L_{\preccurlyeq,D}^{A,K}$-term. Then there are  quantifier-free $\L^K_{\preccurlyeq}$-formulas
$\phi_1(Z),\dots, \phi_n(Z)$, $R$-annuli $F_1,\dots, F_n$, and $f_1\in \O(F_1),\dots, f_n\in \O(F_n)$, such that:  \begin{enumerate}
\item[(i)] $R^{\a}= \phi_1(R^{\a})\cup\dots \cup \phi_n(R^{\a})$;
\item[(ii)] $\phi_j(R^{\a})\subseteq F_j$ and $\tau(z)=f_j(z)$ for all $z\in \phi_j(R^{\a})$,  for $j=1,\dots, n$.
\end{enumerate}
\end{proposition}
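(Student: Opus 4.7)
The plan is to argue by induction on the structure of the $\L^{A,K}_{\preccurlyeq, D}$-term $\tau(Z)$, maintaining throughout the representation described in the statement. For $\tau(Z) = c$ with $c \in K$, or $\tau(Z) = Z$, take a single piece with $F = R^{\a}$ (the $R$-annulus given by $(Z;1)$), formula $\phi(Z) \equiv (Z = Z)$, and $f \in K\<Z\> = \O(R^{\a})$ the corresponding element. For the inductive step, given representations for subterms $\tau_1, \dots, \tau_m$, I first pass to a common refinement by forming conjunctions of the component formulas $\phi^{(i)}_{j_i}$ and intersections $\bigcap_i F^{(i)}_{j_i}$ of the corresponding $R$-annuli, which remain $R$-annuli by Lemma~\ref{F'}; the analytic representations restrict via Lemma~\ref{restrf1}. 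Hence I may assume all $\tau_i$ are simultaneously represented by $f_i \in \O(F)$ on a piece $\phi(R^{\a}) \subseteq F$.

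Ring operations ($+,-,\cdot$) require nothing more, since $\O(F)$ is a $K$-algebra. For an application of a restricted power series $f \in A\<Y_1,\dots,Y_m\>$, I apply Mittag--Leffler (Lemma~\ref{aff3+}) to write each $f_i = u_i r_i$ with $u_i$ a very strong unit on $F$ and $r_i \in K(Z)$ without pole in $F$. Then $f_i(z) \preccurlyeq 1$ is equivalent (as $u_i \sim 1$) to the polynomial condition $r_i(z) \preccurlyeq 1$ in $\L^K_\preccurlyeq$; by Lemma~\ref{FvdP+} and Lemma~\ref{F'}, the set where all these hold is a finite union of sub-$R$-annuli $F'$ of $F$, and on each Lemma~\ref{closure+} supplies $z \mapsto f(f_1(z),\dots,f_m(z)) \in \O(F')$ representing $\tau$. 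On the complementary piece, where some $\tau_i(z) \notin R^{\a}$, the convention of the $A$-analytic structure gives $\tau(z) = 0$, represented by the zero function in $\O(F)$.

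For restricted division $D(\tau_1,\tau_2)$ I again write $f_i = u_i r_i$. If $r_2$ is the zero rational function, $\tau \equiv 0$ on $F$ and we use the zero function. Otherwise partition $\phi(R^{\a})$ according to three quantifier-free $\L^K_\preccurlyeq$-conditions: (a) $r_2(z) = 0$, the finite zero locus of $f_2$, where $D = 0$; (b) $r_2(z) \ne 0$ and $r_1(z) \preccurlyeq r_2(z)$, where $D = f_1/f_2 = (u_1/u_2)(r_1/r_2)$; and (c) $r_2(z) \ne 0$ and $r_1(z) \succ r_2(z)$, where $D = 0$. In cases (a) and (c) use the zero function on the ambient $R$-annulus $F$. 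For (b), Lemma~\ref{FvdP+} applied to $r_1/r_2$ covers the set by finitely many $R$-annuli intersected with $F$ via Lemma~\ref{F'}; on each such sub-$R$-annulus $F^{(l)}$ the ratio $r_1/r_2$ has no pole and lies in $\O(F^{(l)})$ by Lemma~\ref{div2}, while $u_1/u_2$ is again a very strong unit and thus in $\O(F^{(l)})$, so $(u_1/u_2)(r_1/r_2) \in \O(F^{(l)})$ represents $\tau$.

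The main obstacle is precisely the zeros of $f_2$ in the restricted division case: at such a zero $z_0 \in F$ the definition forces $D(z_0) = 0$ while the natural analytic extension $(u_1/u_2)(r_1/r_2)$ may take a nonzero value there, so no single element of $\O$ of an $R$-annulus containing $z_0$ can represent $D$ throughout. This is dodged by exploiting the flexibility in the statement---$\phi_j(R^{\a})$ need only be a quantifier-free definable subset of the $R$-annulus $F_j$, not an $R$-annulus itself---so that the problematic zero set is isolated by the polynomial condition $r_2(z) = 0$ and represented by the zero function on the ambient $F$, while the good set lives inside sub-$R$-annuli where $f_1/f_2$ is genuinely analytic.
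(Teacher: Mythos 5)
Your proposal is correct and follows essentially the same route as the paper's proof: induction on term complexity, refinement by intersecting $R$-annuli (Lemmas~\ref{F'}, \ref{restrf1}), Mittag--Leffler units plus Lemmas~\ref{FvdP+} and \ref{closure+} for the power-series and division cases, and the zero function on the complementary quantifier-free pieces. Even your handling of the zeros of the divisor --- isolating them by the polynomial condition inside the defining formula rather than requiring the piece itself to be an annulus --- is exactly the device used in the paper's treatment of $D(\tau_1,\tau_2)$.
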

\begin{proof}  By induction on the complexity of $\tau=\tau(Z)$. For $\tau$ the name of an element of $K$ or just the variable $Z$ one can take $n=1$, and make the obvious choices of $\phi_1, F_1, f_1$. 
Next, given $ \phi_1,\dots, \phi_n, F_1,\dots, F_n, f_1,\dots, f_n$ as in the proposition, we only need to replace each $f_j$ by $-f_j$ to make it work for $-\tau$ instead of $\tau$.

Suppose $\tau=\tau_1+\tau_2$. The inductive assumption gives quantifier-free
$\L^K_{\preccurlyeq}$-formulas $\phi_{11}(Z),\dots, \phi_{1n_1}(Z)$ and $\phi_{21}(Z),\dots, \phi_{2n_2}(Z)$, $R$-annuli $$F_{11},\dots, F_{1n_1}, F_{21},\dots, F_{2n_2},$$ and
$f_{ij}\in \O(F_{ij})$ for $i=1,2$ and $j=1,\dots, n_i$ such that
\begin{itemize}
\item $R^{\a}= \phi_{11}(R^{\a})\cup\cdots\cup\phi_{1n_1}(R^{\a})= \phi_{21}(R^{\a})\cup\cdots\cup\phi_{2n_2}(R^{\a})$;
\item  $\phi_{ij}(R^{\a})\subseteq F_{ij}$ and $\tau_i(z)=f_{ij}(z)$ for all $z\in \phi_{ij}(R^{\a})$.
\end{itemize}
Let $1\leqslant j_1\leqslant n_1$ and $1\leqslant j_2\leqslant n_2$ and set $\phi_{j_1j_2}:= \phi_{1j_1}\wedge \phi_{2j_2}$ and 
$F_{j_1j_2}:= F_{1j_1}\cap F_{2j_2}$. Then $\phi_{j_1j_2}(R^{\a})\subseteq F_{j_1j_2}$ and 
$\tau(z)=f_{1j_1}(z) + f_{2j_2}(z)$ for
$z\in \phi_{j_1j_2}(R^{\a})$. 
Thus listing the nonempty $F_{j_1j_2}$ as $F_1,\dots, F_n$, the corresponding
$f_{1j_1}|_{F_{j_1j_2}}+f_{2j_2}|_{F_{j_1j_2}}$ as $f_1,\dots, f_n$, and the corresponding
$\phi_{j_1j_2}$ as $\phi_1,\dots, \phi_n$ yields (i) and (ii).  (This uses Lemmas~\ref{F'} and ~\ref{restrf1}.) The case
$\tau=\tau_1\cdot\tau_2$ is handled in the same way.  

Next, suppose $\tau=D(\tau_1, \tau_2)$, and let
the $\phi_{ij}, F_{ij}, f_{ij}$ be as before and also define
$\phi_{j_1j_2}$ and $F_{j_1j_2}$ as before. Consider one such
pair $j=(j_1,j_2)$ with  $F_{j_1j_2}\ne \emptyset$ and set $\phi_j=\phi_{j_1j_2}$ and $F_j=F_{j_1j_2}$. 
If $f_{1j_1}=0$ or $f_{2j_2}=0$, then $D(\tau_1, \tau_2)(z)=0$ for all $z\in F_j$, a trivial case. Assume $f_{1j_1}\ne 0$ and
$f_{2j_2}\ne 0$. Then Lemma~\ref{aff3+} yields very strong units
$u_1, u_2$ on $F_j$ and rational functions $r_1, r_2\in K(Z)^\times$ without pole in $F_j$ such that
$f_{1j_1}(z)=u_1(z)r_1(z)$ and $f_{2j_2}(z)=u_2(z)r_2(z)$ for all $z\in F_j$.  Set $r=r_1/r_2\in K(Z)^\times$. If $z\in F_j$ and $r_2(z)\ne 0$, this gives $f_{2j_2}(z)\ne 0$ and $f_{1j_1}(z)/f_{2j_2}\asymp r(z)$. 
Hence by Lemma~\ref{FvdP+} we have $N\in \N$ such that
$$\{z\in F_j:\  f_{1j_1}(z)\preccurlyeq f_{2j_2}(z)\ne 0\}\ =\  \big(F^{j,1}\cup \cdots \cup F^{j,N}\big)\setminus E, $$
with  $R$-annuli $F^{j,1},\dots, F^{j,N}\subseteq F_j$, and finite $E=\{z\in F_j: r_2(z)=0\}$. Let $1\leqslant \nu\leqslant N$. Then $r$ has no pole in $F^{j,\nu}$: if $z\in F^{j,\nu}$ were a pole, then there would be $z'\in F^{j,\nu}\setminus E$ arbitrarily close to $z$ with $r(z')\succ 1$, a contradiction.  Thus by setting $f^{j,\nu}(z):= \frac{u_1(z)}{u_2(z)}r(z)$ for $z\in F^{j,\nu}$ we obtain $f^{j,\nu}\in \O(F^{j,{\nu}})$ with $D\big(f_{1j_1}(z),f_{2j_2}(z)\big)=f^{j,\nu}(z)$ for all $z\in F^{j,\nu}\setminus E$.  
Take a quantifier-free 
$\L^K_{\preccurlyeq}$-formula $\phi^{j,\nu}(Z)$ 
such that $\phi^{j,\nu}(R^{\a})=F^{j,\nu}\setminus E$. Then for $z\in (\phi_j\wedge \phi^{j,\nu})(R^{\a})$ we have
$\tau(z) = f^{j,\nu}(z)$. Lemma~\ref{aff3+} also gives a quantifier-free $\L^K_{\preccurlyeq}$-formula $\theta_{j}(Z)$
such that for all $z\in R^{\a}$,  
$$ R^{\a}\models  \theta_{j}(z)\  \Longleftrightarrow\  z\in F_j \text{ and }\big( f_{1j_1}(z)\succ  f_{2j_2}(z) \text{ or  }f_{2j_2}(z)=0\big).$$
Thus $\theta_j(R^{\a})\subseteq F_j$, and  for $z\in (\phi_j\wedge \theta_j)(R^{\a})$ we have $\tau(z)=0$.

Finally, suppose $\tau=G(\tau_1,\dots, \tau_m)$, where $G\in A\<X\>$.  The inductive assumption gives for $i=1,\dots,m$
quantifier-free
$\L^K_{\preccurlyeq}$-formulas $\phi_{i1}(Z),\dots, \phi_{in_i}(Z)$, $R$-annuli $F_{i1},\dots, F_{in_i}$, and  functions
$f_{i1}\in \O(F_{i1}),\dots, f_{in_i}\in \O(F_{in_i})$,
such that 
\begin{itemize}
\item $R^{\a}= \phi_{i1}(R^{\a})\cup\cdots\cup\phi_{in_i}(R^{\a})$;
\item  $\phi_{ij}(R^{\a})\subseteq F_{ij}$ and $\tau_i(z)=f_{ij}(z)$, for all $z\in \phi_{ij}(R^{\a})$ and $j=1,\dots,n_i$.
\end{itemize}
Let $j=(j_1,\dots, j_m)$ with $1\leqslant j_1\leqslant n_1,\dots, 1\leqslant j_m\leqslant n_m$ and set
$$\phi_j\ :=\ \phi_{1j_1}\wedge\cdots \wedge \phi_{mj_m},\qquad F_j\ :=\  F_{1j_1}\cap \cdots\cap F_{mj_m}.$$
Using Lemmas~ \ref{aff3+},  \ref{FvdP+}, and \ref{F'} we get $N\in \N$ such that
$$\{z\in F_j:\ f_{1j_1}(z)\preccurlyeq 1,\dots, f_{mj_m}(z)\preccurlyeq 1\}\ =\  F^{j,1} \cup \dots \cup F^{j,N}$$
where $F^{j,1},\dots, F^{j,N}$ are $R$-annuli.
Let $1\leqslant \nu\leqslant N$.  Take a quantifier-free 
$\L^K_{\preccurlyeq}$-formula  $\phi^{j,\nu}(Z)$ such that $\phi^{j,\nu}(R^{\a})=F^{j,\nu}$.
For $i=1,\dots,m$, set $f^{j,\nu}_{i}:= f_{ij_i}|_{F^{j,\nu}}$, a function in $\O(F^{j,\nu})$ with $|f^{j,\nu}_i(z)|\leqslant 1$ for all $z\in F^{j,\nu}$, so by Lemma~\ref{closure+},
$$f^{j,\nu}\ :=\ G\big(f^{j,\nu}_{1},\dots, f^{j,\nu}_{m}\big)\in \O(F^{j,\nu}).$$
Then $\tau(z)=f^{j,\nu}(z)$ for all $z\in (\phi_j\wedge\phi^{j,\nu})(R^{\a})$. It remains to note that $\tau(z)=0$
for all $z\in (\phi_j\wedge \neg \phi^{j,1}\wedge \neg \phi^{j,2}\wedge\cdots \wedge \neg \phi^{j,N})(R^{\a})$.  
\end{proof}

\begin{corollary}\label{corcard} Let $z\in K^{\a}$. Then $K_z$ is an immediate extension of $K(z)$:
$$ \res K_z\ =\ \res K(z)\ \subseteq\ \res K^{\a},\qquad v(K_z^\times)\ =\ v\big( K(z)^\times\big)\subseteq\ v\big( (K^{\a})^\times\big).$$
As a consequence, $\Gamma=v(K^\times)$ and $v(K_z^\times)$ have the same cardinality, and if $\res K$ is infinite, then $\res K$ and $\res K_z$ have the same cardinality. 
\end{corollary}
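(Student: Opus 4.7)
The plan is to prove the nontrivial inclusions $v(K_z^\times)\subseteq v(K(z)^\times)$ and $\res K_z\subseteq\res K(z)$; the reverse inclusions and the ambient inclusions into $v((K^{\a})^\times)$ and $\res K^{\a}$ are immediate from $K(z)\subseteq K_z\subseteq K^{\a}$. We may replace $z$ by $z^{-1}$ if necessary to arrange $z\in R^{\a}$, as this changes neither $K(z)$ nor $K_z$ (the latter by Corollary~\ref{type}, since both $K_z$ and $K_{z^{-1}}$ coincide with the smallest field-substructure of $L$ containing $K$ together with $z$ and $z^{-1}$).

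By the discussion preceding Corollary~\ref{type}, every element of $R_z$ is of the form $\tau(z)$ for some $\L^{A,K}_{\preccurlyeq,D}$-term $\tau(Z)$. For a general $w\in K_z^\times$ either $w\in R_z$ or $w^{-1}\in R_z$, so it suffices to handle $w=\tau(z)\in R_z\setminus\{0\}$. I will apply Proposition~\ref{prtau} to $\tau$ to obtain an index $j$ with $z\in\phi_j(R^{\a})\subseteq F_j$ and an analytic function $f_j\in\O(F_j)$ satisfying $w=f_j(z)$. Lemma~\ref{aff3+} then supplies a very strong unit $u$ on $F_j$ and a rational function $r\in K(Z)$ with no pole in $F_j$ such that $f_j=u\cdot r$ pointwise on $F_j$. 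A brief ultrametric check shows that the condition $u(z)\sim 1$, i.e.\ $v(u(z)-1)>v(u(z))$, forces both $v(u(z))=0$ and $\res u(z)=1$. Since $w\neq 0$ and $u(z)\neq 0$, necessarily $r(z)\neq 0$, and so
$$v(w)\ =\ v(u(z))+v(r(z))\ =\ v(r(z))\ \in\ v(K(z)^\times).$$
If moreover $v(w)=0$ then $r(z)\asymp 1$ as well, and
$$\res w\ =\ \res u(z)\cdot\res r(z)\ =\ \res r(z)\ \in\ \res K(z),$$
which establishes the immediate-extension assertion.

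For the cardinality consequences, $K(z)/K$ is a finitely generated field extension of transcendence degree at most $1$. Abhyankar's inequality bounds the rational rank of $v(K(z)^\times)/v(K^\times)$ plus the transcendence degree of $\res K(z)/\res K$ by $1$. From this a routine argument gives $|v(K(z)^\times)/\Gamma|\leq|\Gamma|$: the torsion part sits in the divisible hull of $\Gamma$, which has cardinality $|\Gamma|$ since $\Gamma\supseteq\Z$ is infinite, while the torsion-free part has rational rank at most $1$ and is therefore countable. Hence $|v(K_z^\times)|=|v(K(z)^\times)|=|\Gamma|$. Similarly, $\res K(z)$ is a finitely generated field extension of $\res K$, so when $\res K$ is infinite, $|\res K_z|=|\res K(z)|=|\res K|$. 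I anticipate no serious obstacle beyond the careful verification that a very strong unit evaluates to $u(z)\asymp 1$ with $\res u(z)=1$, together with the mild bookkeeping around the case-split $w\in R_z$ versus $w^{-1}\in R_z$; the conceptual engine is supplied in full by Proposition~\ref{prtau} and Lemma~\ref{aff3+}.
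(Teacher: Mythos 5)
Your argument for the main assertion is essentially the paper's own proof: reduce to $z\in R^{\a}$, write a nonzero element of $R_z$ as $\tau(z)$, apply Proposition~\ref{prtau} to land in some $\phi_j(R^{\a})\subseteq F_j$ with $\tau(z)=f_j(z)$, and then use Lemma~\ref{aff3+} to get $\tau(z)\sim r(z)$ with $r\in K(Z)$ pole-free on $F_j$, which yields immediacy over $K(z)$; the ultrametric check on very strong units and the case split $w\in R_z$ versus $w^{-1}\in R_z$ are exactly the routine points the paper leaves implicit. The only divergence is the cardinality consequence, where the paper simply cites \cite[Corollary 5.19]{Lou} and you argue directly via Abhyankar's inequality; that is fine for the value group, but your claim that $\res K(z)$ is a \emph{finitely generated} extension of $\res K$ is not justified (simple valued field extensions in the non-Abhyankar case can have non-finitely-generated residue field extensions) -- fortunately it is also not needed, since transcendence degree $\leqslant 1$ already embeds $\res K(z)$ into an algebraic closure of a rational function field over $\res K$, giving $|\res K(z)|=|\res K|$ when $\res K$ is infinite.
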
 
\begin{proof}  Replacing $z$ by $z^{-1}$ if $z\succ 1$, we arrange $z\in R^{\a}$. Consider a nonzero element $\tau(z)$ of $K_z$, where $\tau(Z)$ is an $\L_{\preccurlyeq, D}^{A,K}$-term.
Let $\phi_1,\dots, \phi_n, F_1,\dots, F_n, f_1,\dots, f_n$ be as in Proposition~\ref{prtau}. Take $j\in \{1,\dots,n\}$ with $z\in\phi_j(R^{\a})$. Then Lemma~\ref{aff3+} applied to $F_j, f_j$ in the role of $F, f$ yields  $r(Z)\in  K(Z)$
without pole in $F$ such that $\tau(z)\sim r(z)$. Thus $K_z$ is an immediate extension of $K(z)$. The rest now follows from \cite[Corollary 5.19]{Lou}.  
\end{proof} 




\subsection*{Uniformity with respect to $K^{\a}$}  So far we we kept $K^{\a}$ fixed, 
but  in the rest of this section $K^{\a}$ ranges over arbitrary algebraically closed  $A$-extensions of $K$.
Let $\tau(Z)$ be an
$\L_{\preccurlyeq,D}^{A, K}$-term. To enable model-theoretic arguments we need to show that 
  $$\phi_1,\dots, \phi_n, F_1,\dots, F_n, f_1,\dots, f_n$$ in Proposition~\ref{prtau} is ``independent'' of $K^{\a}$.
To make sense of this we consider tuples $\big(\phi_j, \Phi_j\big)_{j=1}^n$ of
quantifier-free $\L^{K}_{\preccurlyeq}$-formulas $\phi_j(Z)$ and $\Phi_j(Z)$, $j=1,\dots,n$. 
Call such a tuple  $\big(\phi_j, \Phi_j\big)_{j=1}^n$ {\em good for $\tau$ in $K^{\a}$} if the following hold: 
\begin{enumerate}
\item[(1)] $F_j:=\Phi_j(K^{\a})$ is an $R$-annulus in $K^{\a}$ for $j=1,\dots,n$;
\item[(2)] $z\mapsto \tau(z): F_j\to K^{\a}$ is a function $f_j\in\O(F_j)$ for $j=1,\dots,n$;
\item[(3)] $\phi_1,\dots, \phi_n, F_1,\dots, F_n, f_1,\dots, f_n$ satisfy (i) and (ii) in Proposition~\ref{prtau}.
 \end{enumerate} 
 Of course, $\O(F_j)$ in (2) is meant in the sense of $K^{\a}$. 
 
 \begin{corollary}\label{id}
Some tuple $\big(\phi_j, \Phi_j\big)_{j=1}^n$ is good for $\tau$ in all $K^{\a}$. 
\end{corollary}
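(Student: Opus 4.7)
The plan is to observe that the proof of Proposition~\ref{prtau} proceeds by induction on $\tau$, and at each step the choices of formulas $\phi_j(Z)$, $\Phi_j(Z)$, of defining data $(p_0^{l_0},\dots,p_n^{l_n};\pi_0,\dots,\pi_n)$ for the associated $R$-annuli, and of restricted power series $g_j\in K\<Z,Y\>$ representing $f_j=\psi_j^*(g_j)$, can all be made over $K$ alone. Indeed, the lemmas driving the induction — Lemmas~\ref{aff3+}, \ref{FvdP+}, \ref{F'}, \ref{closure+}, \ref{restrf1} — furnish precisely such $K$-based data: rational functions in $K(Z)$ without poles in the relevant annuli, tuples of polynomials in $R[Z]$ and elements of $R\setminus\{0\}$ defining the sub-annuli, and series in $K\<Z,Y\>$ realizing the resulting composites through $\psi^*$.

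First I would fix an algebraic closure $K_{\alg}$ of $K$, equipped with its canonical $A$-extension structure from Corollary~\ref{algext}, and apply Proposition~\ref{prtau} once with $K^{\a}:=K_{\alg}$. This produces a tuple $(\phi_j,\Phi_j)_{j=1}^n$ of quantifier-free $\L^K_{\preccurlyeq}$-formulas, defining data for $n$ $R$-annuli over $K$, and for each $j$ a series $g_j\in K\<Z,Y\>$. I would then show that this single tuple is good for $\tau$ in every algebraically closed $A$-extension $K^{\a}$ of $K$.

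Next I verify the three conditions of goodness in an arbitrary $K^{\a}$. Conditions (1) and (3)(i) — that each $\Phi_j(K^{\a})$ is an $R$-annulus in $K^{\a}$ and that $R^{\a}=\bigcup_j \phi_j(K^{\a})$ — reduce to quantifier-free $\L^K_{\preccurlyeq}$-sentences over parameters in $K$: the pairwise disjointness of the holes, their containment in the main disk, and a universal covering statement over $R^{\a}$. Each holds in $K_{\alg}$ by construction, and hence in $K^{\a}$ by the elementary equivalence (\ref{QE}) via $K_{\alg}\preceq K^{\a}$. For condition (2) the function $f_j:=\psi_j^*(g_j)$ lies in $\O(F_j)$ by the very definition of $\O(F_j)$, for any ambient $K^{\a}$; the containment $\phi_j(K^{\a})\subseteq F_j$ is again a quantifier-free $\L^K_{\preccurlyeq}$-consequence of the defining formulas and transfers by (\ref{QE}).

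The remaining identity $\tau(z)=f_j(z)$ on $\phi_j(K^{\a})$ I would prove by a parallel induction on $\tau$: in each inductive step of the construction, both $\tau(z)$ and $f_j(z)$ are built from the previously defined functions by the same arithmetic operation, restricted division, or application of some $G\in A\<X\>$, and each such operation acts identically on the same input in any $A$-extension of $K$; the identity therefore propagates from $K_{\alg}$ to $K^{\a}$. The main effort — though essentially bookkeeping — lies in checking that throughout the proof of Proposition~\ref{prtau} every witness can indeed be chosen from $K$, so that no parameter from $K^{\a}\setminus K_{\alg}$ creeps in; this amounts to revisiting the proofs of Lemmas~\ref{aff3+} and~\ref{FvdP+}, where the relevant rational functions and sub-annuli arise from data already present in $K$ by Mittag-Leffler decomposition and the argument of \cite[Lemma~3.16]{CLR}.
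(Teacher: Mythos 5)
Your proposal is correct and takes essentially the same route as the paper: the paper likewise re-runs the recursive construction in the proof of Proposition~\ref{prtau}, observes that all the defining data (the formulas, the annuli, the representing series) live over $K$, and transfers goodness from $K_{\alg}$ to an arbitrary $K^{\a}$ using (\ref{QE}) together with Lemmas~\ref{isolem}, \ref{powerb} and Corollary~\ref{vsu}. Your ``parallel induction'' is exactly this transfer step, with the same ingredients appearing one level down inside Lemmas~\ref{aff3+}, \ref{FvdP+} and \ref{closure+}, so the two arguments coincide in substance.
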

\begin{proof} All $K^{\a}$ that are algebraic over $K$ are isomorphic as $A$-extensions of $K$, so any tuple
that is good for one is good for all.  Now, let any $K^{\a}$ be given, and let $K_{\alg}$ be the algebraic closure of
$K$ in $K^{\a}$, with $K_{\alg}$ as $\L_{\preccurlyeq, D}^{A,K}$-substructure of $K^{\a}$. Following the steps and recursive constructions
in the proof of Proposition~\ref{prtau} yields a tuple that is good for $\tau$ in $K_{\alg}$, as well as in $K^{\a}$: 
to see this,
use (\ref{QE}), Lemmas~\ref{isolem}, ~\ref{powerb}, and Corollary~\ref{vsu} to pass from $K_{\alg}$ to $K^{\a}$.   
\end{proof}

\subsection*{The quantifier-free type of an element over $K$}  For a valued field $E$ and an element $z$ in a valued field extension $L$ of $E$, the {\em quantifier-free $
\L_{\preccurlyeq}$-type of $z$ over $E$\/} is the set $\text{qftp}_{\preccurlyeq}(z|E)$ of all quantifier-free
$\L^E_{\preccurlyeq}$-formulas $\theta(Z)$ such that $L\models \theta(z)$. Likewise, for an element $z$ in an $A$-extension $L$ of $K$, the {\em quantifier-free $\L^A_{\preccurlyeq,D}$-type of $z$ over $K$\/} is the set $\text{qftp}^A_{\preccurlyeq,D}(z|K)$ of all quantifier-free $\L^{A,K}_{\preccurlyeq,D}$-formulas $\theta(Z)$ such that $L\models \theta(z)$. 


\begin{proposition}\label{id1}
Let $K_1$ and $K_2$ be A-extensions of  K, and suppose
$z_1\in K_1$ and $z_2\in K_2$ are such that $\qftp_{\preccurlyeq}(z_1|K)=\qftp_{\preccurlyeq}(z_2|K)$.
Then $$\qftp^A_{\preccurlyeq,D}(z_1|K)\ =\  \qftp^A_{\preccurlyeq,D}(z_2|K).$$
\end{proposition}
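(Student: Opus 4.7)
The plan is to work in algebraically closed $A$-extensions $K_i^{\a}\supseteq K_i$ and verify, for every atomic $\L^{A,K}_{\preccurlyeq,D}$-formula $\theta(Z)$, the equivalence $K_1^{\a}\models \theta(z_1)\Leftrightarrow K_2^{\a}\models \theta(z_2)$. Since the hypothesis decides the atomic formula $Z\preccurlyeq 1$, I split into the cases $z_i\preccurlyeq 1$ and $z_i\succ 1$; the latter reduces to the former, because setting $y_i:=1/z_i\in R^{\a}_{K_i^{\a}}$ one has $\qftp_{\preccurlyeq}(y_1|K)=\qftp_{\preccurlyeq}(y_2|K)$ (any atomic $\L^K_{\preccurlyeq}$-formula in $y$ becomes, after clearing denominators via $z=1/y$, an atomic $\L^K_{\preccurlyeq}$-formula in $z$), and then assuming the case $z_i\preccurlyeq 1$ gives $\qftp^A_{\preccurlyeq,D}(y_1|K)=\qftp^A_{\preccurlyeq,D}(y_2|K)$. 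The resulting $\L^{A,K}_{\preccurlyeq,D}$-isomorphism between the smallest substructures generated by $y_i$ over $K$ extends to an isomorphism of their fraction fields, which by Corollary~\ref{type} coincide with $K_{y_i}=K_{z_i}$; being a field isomorphism it sends $z_1=1/y_1\mapsto 1/y_2=z_2$, witnessing $\qftp^A_{\preccurlyeq,D}(z_1|K)=\qftp^A_{\preccurlyeq,D}(z_2|K)$.

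Having reduced to $z_i\in R^{\a}$ and to atomic formulas of the shape $\tau_1(Z)\preccurlyeq \tau_2(Z)$ or $\tau_1(Z)=\tau_2(Z)$, the key step is to apply Corollary~\ref{id} to each of $\tau_1,\tau_2$ and take a common refinement of the two resulting partitions---using Lemmas~\ref{F'} and \ref{restrf1} to intersect $R$-annuli and restrict $\O$-functions---to produce a single tuple $(\phi_k,\Phi_k)_{k=1}^N$ of quantifier-free $\L^K_{\preccurlyeq}$-formulas such that in every algebraically closed $A$-extension $K^{\a}$ of $K$ the sets $F_k:=\Phi_k(K^{\a})$ are $R$-annuli, the sets $\phi_k(R^{\a})\subseteq F_k$ cover $R^{\a}$, and the restriction of each $\tau_j$ to $F_k$ is given by some $f^{(j)}_k\in\O(F_k)$. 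Since $\phi_k$ is quantifier-free in $\L^K_{\preccurlyeq}$, the hypothesis forces a common index $k$ with $K_i^{\a}\models\phi_k(z_i)$ for both $i=1,2$, so each $z_i$ lies in the corresponding $F_k$ and $\tau_j(z_i)=f^{(j)}_k(z_i)$.

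Next I would invoke Lemma~\ref{aff3+} on $F_k$ to decompose $f^{(j)}_k=u_j\cdot r_j$ with $u_j$ a very strong unit on $F_k$ and $r_j\in K(Z)$ without pole in $F_k$, and similarly $f^{(1)}_k-f^{(2)}_k=u\cdot r$. Because the proof of Lemma~\ref{aff3+} first performs the Mittag-Leffler decomposition in $K_{\alg}$ and then transfers via Lemma~\ref{isolem}, the rational parts $r_1,r_2,r\in K(Z)$ are the same whether the evaluation takes place in $K_1^{\a}$ or in $K_2^{\a}$. From $u_j(z_i)\sim 1$ one obtains that $\tau_1(z_i)\preccurlyeq\tau_2(z_i)$ is equivalent to $r_1(z_i)\preccurlyeq r_2(z_i)$; writing $r_j=p_j/q_j$ with $p_j,q_j\in K[Z]$ coprime and $q_j(z_i)\neq 0$ (the no-pole condition), this collapses to the atomic $\L^K_{\preccurlyeq}$-condition $p_1(z_i)q_2(z_i)\preccurlyeq p_2(z_i)q_1(z_i)$. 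For the equality case, $\tau_1(z_i)=\tau_2(z_i)$ reduces to $r(z_i)=0$, and hence, when $r\neq 0$, to vanishing of the numerator of $r$ at $z_i$. In either case the desired equivalence follows from $\qftp_{\preccurlyeq}(z_1|K)=\qftp_{\preccurlyeq}(z_2|K)$.

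The main obstacle I expect is the bookkeeping around Corollary~\ref{id}: producing one tuple $(\phi_k,\Phi_k)$ that is simultaneously good for both $\tau_1$ and $\tau_2$ in every algebraically closed $A$-extension of $K$, and checking that the decomposition of Lemma~\ref{aff3+} is genuinely uniform across $K_1^{\a}$ and $K_2^{\a}$, so that the extracted rational parts $r_j$ live in $K(Z)$ independently of the ambient algebraically closed extension. Once these uniformities are secured, the reduction of each $A$-analytic atomic condition at $z_i$ to an atomic $\L_{\preccurlyeq}$-condition on polynomial expressions in $z_i$ is routine, and the conclusion is immediate from the hypothesis.
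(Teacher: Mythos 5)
Your proposal is correct in substance, but it follows a genuinely different route from the paper. The paper first settles the case that $z_1$ is algebraic over $K$ (via Corollaries~\ref{algtype} and~\ref{Aint5}), and in the transcendental case, with $z_1,z_2\preccurlyeq 1$, reduces everything to the single claim that $\tau(z_1)=0\Leftrightarrow\tau(z_2)=0$ for all $\L^{A,K}_{\preccurlyeq,D}$-terms $\tau$: since $c\preccurlyeq d$ iff $c=0$ or $D(c,d)\ne 0$, this claim produces, via Corollary~\ref{type}, an $\L^{A}_{\preccurlyeq,D}$-isomorphism $K_{z_1}\to K_{z_2}$ over $K$ sending $z_1$ to $z_2$, and the claim itself is proved from Corollary~\ref{id} and Lemma~\ref{aff3+}, using the transcendence of $z_1$ to pass from $f_{j1}(z_1)=0$ to $f_{j1}=0$ and Lemma~\ref{isolem} to transfer to the other side. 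You instead verify each atomic formula $\tau_1=\tau_2$, $\tau_1\preccurlyeq\tau_2$ directly: a common refinement of tuples good for $\tau_1$ and $\tau_2$, followed by the very-strong-unit decomposition of Lemma~\ref{aff3+}, converts the atomic analytic condition at $z_i$ into an atomic $\L^{K}_{\preccurlyeq}$-condition on polynomials, which the hypothesis decides. This buys a uniform argument with no algebraic/transcendental dichotomy, no use of the $D$-trick or of Corollary~\ref{type}, and no appeal to transcendence; the cost is the bookkeeping you yourself flag: the refinement must be good in every $K^{\a}$ simultaneously (intersections of $R$-annuli admit presenting tuples with data from $K$, and by (\ref{QE}) the same tuples work in all $K^{\a}$, exactly as in the proof of Corollary~\ref{id}, with Lemmas~\ref{F'} and~\ref{restrf1} supplying the pointwise facts), and the reciprocal reduction for $z_i\succ 1$ is fine since, by Lemma~\ref{lem10.0}, the isomorphism of term-generated substructures extends to their fraction fields $K_{z_i}$.

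The one step you should make fully explicit is the cross-structure uniformity of the decomposition $f=u\cdot r$: asserting that "the rational parts are the same in $K_1^{\a}$ and $K_2^{\a}$" requires first noting that the algebraic closures of $K$ inside $K_1^{\a}$ and $K_2^{\a}$ are isomorphic as $A$-extensions of $K$ (Corollary~\ref{algext}), so the restrictions of the term functions to these copies of $K_{\alg}$ correspond; only then do Lemma~\ref{isolem}, Corollary~\ref{vsu} and Lemma~\ref{div2} propagate a single Mittag-Leffler decomposition computed over $K_{\alg}$, with one $r\in K(Z)$ and transferred very strong units, to both ambient structures. This is precisely the "descend to the algebraic closures, which are isomorphic $A$-extensions of $K$" move in the paper's own proof; with it spelled out, your argument is complete.
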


\begin{proof}
Our assumption gives an $\L_{\preccurlyeq}$-isomorphism  $i:K(z_1)\to K(z_2)$ over $K$ that sends $z_1$ to $z_2$. If $z_1$ is algebraic over $K$, then so is $z_2$ and $K(z_1)$ and $K(z_2)$ underly the
$\L_{\preccurlyeq,D}^{A}$-substructures $K_{z_1}$  and $K_{z_2}$ of $K_1$ and $K_2$, respectively, by Corollary~\ref{algtype}, so  $i$ is an  
$\L_{\preccurlyeq,D}^{A}$-isomorphism over $K$ by Corollary~\ref{Aint5}. Hence $z_1$ and $z_2$ have the same quantifier-free $\L_{\preccurlyeq,D}^{A}$-type over $K$.

 For the rest of the proof, assume that $z_1$ and $z_2$ are both transcendental over $K$. Replacing $z_1, z_2$ by their reciprocals if necessary, we arrange $z_1, z_2\preccurlyeq 1$. We claim that for every $\L_{\preccurlyeq,D}^{A,K}$-term $\tau(Z)$, $$\tau(z_1)=0\ \Longleftrightarrow\ \tau(z_2)=0.$$ For $c,d$ in any $A$-extension of $K$, $c\preccurlyeq d$ if and only if $c=0$ or $D(c,d)\neq0$. Hence, in light of Corollary~\ref{type}, our claim yields an $\L_{\preccurlyeq,D}^{A}$-isomorphism 
$K_{z_1} \to K_{z_2}$ over $K$ given by $\tau(z_1)\mapsto \tau(z_2)$, where $\tau(Z)$ ranges over $\L_{\preccurlyeq,D}^{A,K}$-terms. Thus a proof of the claim will complete the proof of the proposition.

By passing to algebraic closures we arrange that $K_1$ and $K_2$ are algebraically closed, with respective $A$-valuation rings
$R_1$ and $R_2$.
 Let $\tau(Z)$ be an $\L_{\preccurlyeq,D}^{A,K}$-term such that $\tau(z_1)=0$. Take a tuple $$ \big(\phi_j, \Phi_j\big)_{j=1}^n$$ as in Corollary~\ref{id}. This gives $j\in \{1,\dots, n\}$ with $z_1\in \phi_j(R_1)$ and
 $z_2\in \phi_j(R_2)$. Set $F_{j1}:=\Phi_j(K_1)\subseteq R_1$, and let $f_{j1}\in \O(F_{j1})$ be given by
 $f_{j1}(z)=\tau_1(z)$, and define $F_{j2}\subseteq R_2$ and $f_{j2}\in \O(F_{j2})$ in the same way.  
 
 Then $\tau(z_1)= f_{j1}(z_1)=0$, so $f_{j1}=0$ by Corollary~\ref{aff3+}  and $z_1$ being transcendental over $K$. By descending to
 the algebraic closures of $K$ in $K_1$ and $K_2$ and using that these algebraic closures are isomorphic $A$-extensions of $K$ we obtain $f_{j2}=0$ by Lemma~\ref{isolem}. 
 Hence $\tau(z_2)= f_{j2}(z_2)=0$. This proves the forward direction of our claim. The backward direction follows in the same way. 
\end{proof}

\section{Analytic AKE-type Equivalence and Induced Structure}\label{A-AKE}

\noindent
We begin with some terminology and conventions. 
A valued field will be construed as an $\L_{\preccurlyeq}$-structure in the usual way.

 Let $K$ be a valued field. We denote its valuation ring by $R$ (by $R_F$ if we are dealing with a valued field $F$ instead). Let $\smallo(R)$ be the maximal ideal of $R$ and $\k:=R/\smallo(R)$ the residue field of $K$. We also let $v:K^\times \to \Gamma$ with $\Gamma=v(K^\times)$ be a valuation on the field $K$ such that $R=\{z\in K: v(z)\geqslant 0\}$ (and if we are dealing instead with a valued field $F$, we have likewise the residue field $\k_F$ and a valuation $v_F: F^\times\to \Gamma_F$). 

A {\em coefficient field of $K$\/} is a lift of $\k$, that is, a subfield $C$ of $K$ such that $C\subseteq R$ and
$C$ maps bijectively onto $\k$ under the residue map $R\to \k$, equivalently, a subfield $C$ of $K$ such that
$R=C+\smallo(R)$. Likewise, a {\em monomial group\/} of $K$ is a lift of $\Gamma$, that is, a subgroup $G$ of $K^\times$ that is mapped bijectively onto $\Gamma$ by $v: K^\times \to \Gamma$. 
If $K$ is henselian (by which we mean that the local ring $R$ is henselian)  and $\k$ has characteristic $0$, then $K$ has a coefficient field; see for example \cite[Lemma 2.9]{Lou}. If $K$ is algebraically closed or $\aleph_1$-saturated, then $K$ has a monomial group; see for example \cite[Lemmas 3.3.32, 3.3.39]{ADH}.

\bigskip\noindent
{\em Throughout $A$ is as in Section~\ref{vrna}: $A$ is noetherian with an ideal $\smallo(A)\ne A$, such that $\bigcap_e \smallo(A)^e=\{0\}$ and $A$ is $\smallo(A)$-adically complete.} 

By an {\em $\Acg$-field} we mean an expansion
$\F=(F, C,G)$ of an $A$-field $F$ where $C$ is (the underlying set of) a coefficient field of $F$ and $G$ is (the underlying set of) a monomial group $G$ of $F$. Let $\L^{\Acg}_{\preccurlyeq,D}$ be the language $\L^{A}_{\preccurlyeq,D}$ augmented by unary predicate symbols $C$ and $G$. We construe an $\Acg$-field as an $\L^{\Acg}_{\preccurlyeq,D}$-structure in the obvious way. 

\medskip\noindent
{\em Example to keep in mind}:  $\F=\big(F, C, t^{\Z}\big)$, where $C$ is any field, $F$ is the Laurent series field $C(\!(t)\!)$ with valuation ring $C[[t]]$, and $A=C[[t]]$, $\smallo(A)=tC[[t]]$, with the natural $A$-analytic structure on $C[[t]]$. To simplify
notation we denote this $\Acg$-field $\F$ by $\big(C(\!(t)\!), C, t^{\Z}\big)$.

\medskip\noindent
{\em In the rest of this section $\K=(K, C_{\K}, G_{\K})$ is an $\Acg$-field such that the valuation $A$-ring $R$ of $K$ is viable; $\k$ is the residue field of $K$ and $\Gamma:= v(K^\times)$ its value group.}


\subsection*{Good substructures and good maps} Our aim is to establish an analogue of the Equivalence Theorem \cite[5.21]{Lou} in our {\em analytic} setting {\em with coefficient field and monomial group}, and we follow the general setup and proof strategy there.

\medskip\noindent
A {\em good substructure} of $\K$ is an $\L^{\Acg}_{\preccurlyeq,D}$-substructure $\E=(E,C_{\E}, G_{\E})$ of $\K$ which is also an $\Acg$-field.
Note that then $E$ is an $\L^{A}_{\preccurlyeq,D}$-substructure of $K$, and
$$C_{\E}\ =\ C_{\K}\cap E, \qquad G_{\E}\ =\ G_{\K}\cap E.$$  Below, $\E=(E ,C_{\E}, G_{\E})$ is a good substructure of $\K$.  By Lemma~\ref{subinher} the valuation $A$-ring $R_E$ is viable. For $a\in K$, set $\E_a:=\big(E_a, C_{\K}\cap E_a, G_{\K}\cap E_a\big)\subseteq \K$. 

\begin{lemma}\label{good}  We consider four cases for an element $a\in C_{\K}\cup G_{\K}$:  \begin{enumerate} 
\item[(i)]  {\em $a\in C_{\K}$ is algebraic over $E$}.  Then $a$ is algebraic over $C_{\E}$, $E[a]$ is the underlying field of $E_a$, $C_{\K}\cap E_a=C_{\E}[a]$, and $G_{\K}\cap E_a=G_{\E}$;
\item[(ii)]  {\em $a\in C_{\K}$ is transcendental over $E$}. Then 
$C_{\K}\cap E_a=C_{\E}(a)$, $G_{\K}\cap E_a=G_{\E}$;
\item[(iii)] {\em $a\in G_{\K}$ is algebraic over $E$}.  Then $a^d\in G_{\E}$ for some $d\geqslant 1$, $E[a]$ is the underlying field of $E_a$, $C_{\K}\cap E_a=C_{\E}$, and $G_{\K}\cap E_a=G_{\E}\cdot a^{\Z}$;
\item[(iv)] {\em $a\in G_{\K}$ is transcendental over $E$}. Then  
$C_{\K}\cap E_a=C_{\E}$, $G_{\K}\cap E_a=G_{\E}\cdot a^{\Z}$. 
\end{enumerate}
In each of these four cases, $\E_a$ is a good substructure of $\K$. 
\end{lemma}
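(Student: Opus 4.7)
The plan is to treat the four cases uniformly: first identify the residue field $\res(E_a)$ and value group $\Gamma_{E_a}$, then use bijectivity of $C_\K\to\k$ and $G_\K\to\Gamma$ to recover $C_\K\cap E_a$ and $G_\K\cap E_a$. I will repeatedly invoke two simple facts: (a) $C_\K$ is a subfield of $R$ with $C_\K\cap\smallo(R)=\{0\}$, so any polynomial in $C_\E[X]$ vanishing on $a\in C_\K$ modulo $\smallo(R)$ already vanishes; (b) $G_\K\cap R^\times=\{1\}$, so two $G_\K$-elements sharing a valuation coincide.

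For the algebraic cases (i) and (iii), Corollary~\ref{algtype} gives $E_a=E(a)=E[a]$, reducing matters to computing $\res(E(a))$ and $\Gamma_{E(a)}$. In (i), fact (a) applied to a monic lift in $C_\E[X]$ of the minimal polynomial of the residue $\res a$ over $\res E\cong C_\E$ shows that $a$ is algebraic over $C_\E$; then $\res(E(a))=(\res E)(\res a)$ has degree $[E(a):E]$ over $\res E$, so the fundamental inequality forces $\Gamma_{E(a)}=\Gamma_E$ and fact (b) yields $G_\K\cap E_a=G_\E$; both $C_\E[a]$ and $C_\K\cap E[a]$ are subfields of $C_\K$ mapping onto the common residue $(\res E)(\res a)$, so they coincide. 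In (iii), $\Gamma_{E(a)}/\Gamma_E$ is torsion, so by (b) there is a least $d\geqslant 1$ with $a^d\in G_\E$; the cosets $iv(a)+\Gamma_E$ for $0\leqslant i<d$ give $[\Gamma_{E(a)}:\Gamma_E]\geqslant d$, while $X^d-a^d\in E[X]$ gives $[E(a):E]\leqslant d$, and the fundamental inequality then forces equality together with $\res(E(a))=\res E$; the stated identifications of $C_\K\cap E_a$ and $G_\K\cap E_a$ follow.

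For the transcendental cases (ii) and (iv), Corollary~\ref{corcard} says $E_a$ is immediate over $E(a)$, so again it suffices to compute $\res(E(a))$ and $\Gamma_{E(a)}$. In (ii), fact (a) rules out any algebraic relation of $\res a$ over $\res E$, so $\res(E(a))=(\res E)(\res a)$ and $\Gamma_{E(a)}=\Gamma_E$; then $C_\E(a)\subseteq C_\K$ maps bijectively onto $(\res E)(\res a)$, forcing $C_\K\cap E_a=C_\E(a)$, while (b) gives $G_\K\cap E_a=G_\E$. In (iv), fact (b) together with the transcendence of $a$ over $E$ precludes $nv(a)\in\Gamma_E$ for $n\ne 0$ (otherwise $a^n$ would equal the unique $g\in G_\E\subseteq E$ of the same valuation, contradicting transcendence), so the valuation on $E(a)$ is the Gauss extension with respect to $v(a)$, yielding $\Gamma_{E(a)}=\Gamma_E\oplus\Z v(a)$ and $\res(E(a))=\res E$; then $G_\E\cdot a^\Z\subseteq G_\K\cap E_a$ maps bijectively onto $\Gamma_{E(a)}$ via (b), and $C_\E=C_\K\cap E_a$ as in (i). Finally, in each of the four cases $C_\K\cap E_a$ lifts $\res(E_a)$ and $G_\K\cap E_a$ lifts $\Gamma_{E_a}$, so $\E_a$ is a good substructure. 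The main bookkeeping challenge is case (iii), where the simultaneous identification of the degree, residue, and value-group extensions must pin down the correct $d$ before the fundamental inequality becomes an equality.
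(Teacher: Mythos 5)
Your proposal is correct and takes essentially the same route as the paper: the algebraic cases (i), (iii) rest on Corollary~\ref{algtype} and the transcendental cases (ii), (iv) on Corollary~\ref{corcard}, which is exactly the paper's (two-line) proof. Your write-up merely supplies the standard valuation-theoretic bookkeeping the paper leaves implicit — the fundamental inequality, the Gauss-extension computations of $\res E(a)$ and $\Gamma_{E(a)}$, and the injectivity of the residue map on $C_{\K}$ and of $v$ on $G_{\K}$ — and these details are all sound.
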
 
\begin{proof} If $a\in C_{\K}\cup G_{\K}$ is algebraic over $E$, this follows from Corollary~\ref{algtype}.
In the transcendental case, use Corollary~\ref{corcard}. 
\end{proof}

\noindent
In this subsection, $\K'=(K', C_{\K'},G_{\K'} )$ is an $\Acg$-field like $\K$: its valuation $A$-ring $R'$ is viable. We also let 
$\E'=(E', C_{\E'}, G_{\E'})$  be a good substructure of $\K'$, and for $b\in K'$  we set $\E'_{b}:=\big(E'_{b}, C_{\K'}\cap E'_{b}, G_{\K'}\cap E'_{b}\big)$.

\bigskip\noindent
Let $\L_{\mathrm{r}}:=\{0,1,+,-,\cdot\}$ be the language of rings and $\L_{\mathrm{v}}:=\{1,\cdot,\preccurlyeq\}$ the language of (multiplicative) ordered abelian groups, taken as sublanguages of $\L^{A}_{\preccurlyeq,D}$; we construe $C_{\K}, C_{\K'}$  as $\L_{\mathrm{r}}$-structures and $G_{\K}, G_{\K'}$ as $\L_{\mathrm{v}}$-structures accordingly. 

A  \textit{good map} $f: \E\to \E'$ is an $\L^{\Acg}_{\preccurlyeq,D}$-isomorphism $\E\to \E'$ such that: \begin{enumerate}
\item [(r)] the $\L_{\mathrm{r}}$-isomorphism $f|_{C_{\E}}: C_{\E}\to C_{\E'}$ is a partial elementary map from the field $C_{\K}$ to the field $C_{\K'}$; 
\item [(v)] the $\L_{\mathrm{v}}$-isomorphism $f|_{G_{\E}}: G_{\E}\to G_{\E'}$ is a partial elementary map from the ordered group $G_{\K}$ to the ordered group $G_{\K'}$.
\end{enumerate} 


\begin{theorem}\label{AKE} Suppose $\operatorname{char} \k=0$. Let
$f:\E\to \E'$ be a good map. Then $f$ is a partial elementary map from $\K$ to $\K'$.
\end{theorem}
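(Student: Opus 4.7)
The plan is a back-and-forth between sufficiently saturated elementary extensions of $\K$ and $\K'$ (henceforth called $\K$ and $\K'$ again, with respective $\kappa$-saturated $\Acg$-field structures for some $\kappa$ exceeding $|\E|+|\E'|$). The goal is to show that the set of good maps between good substructures of cardinality $<\kappa$ forms a back-and-forth system containing $f$; this will yield the claim by the usual Tarski--Vaught argument, once we verify that a good map actually preserves all $\L^{\Acg}_{\preccurlyeq,D}$-formulas (for substructures of full cardinality one invokes the Keisler--Shelah characterization of elementary equivalence via ultrapowers, or one checks that the system satisfies the extension axioms). The core reduction is therefore: \emph{given a good map $f:\E\to\E'$ and an element $a\in K$, there is a good substructure $\E_1\supseteq \E$ of $\K$ with $a\in E_1$ and a good map $f_1:\E_1\to\E'_1\subseteq \K'$ extending $f$}; and symmetrically in $\K'$.

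For the extension step, the four cases of Lemma~\ref{good} dispose of the ``simple'' sub-cases when $a\in C_{\K}\cup G_{\K}$. Indeed, if $a\in C_{\K}$, then condition (r) on $f$ says $f|_{C_{\E}}$ is partial elementary from $C_{\K}$ to $C_{\K'}$, so by saturation of $C_{\K'}$ (as a reduct of the $\kappa$-saturated $\K'$) one finds $a'\in C_{\K'}$ with the same type over $C_{\E'}$; Lemma~\ref{good}(i)--(ii) then gives that $\E_a$ and $\E'_{a'}$ are good substructures with identified underlying fields, and Corollaries~\ref{algext} and~\ref{Aint4} guarantee the $A$-analytic structures match, yielding a good extension $f_1$. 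The case $a\in G_{\K}$ is handled identically via condition (v) and Lemma~\ref{good}(iii)--(iv). Applying these simple cases iteratively, one further arranges that $C_{\E}$ maps onto $\res E(a)$ and $G_{\E}$ maps onto $v\big(E(a)^{\times}\big)$, and then $E\langle a\rangle$ (and hence, by Corollary~\ref{corcard}, also $E_a$) is an immediate extension of $E$. Within this immediate reduction, if $a$ is algebraic over $E$, then Corollary~\ref{algext} and henselianity of $R_E$ (Lemma~\ref{he5}) together with $\operatorname{char}\k=0$ (so no algebraic proper immediate extensions exist) force $a\in E$, and there is nothing to do.

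The remaining, substantive case is when $a$ is transcendental over $E$ and $E(a)/E$ is immediate. Choose a pseudocauchy sequence $(a_{\rho})$ in $E$ with $a_{\rho}\leadsto a$; since $\operatorname{char}\k=0$ and $E$ has no proper algebraic immediate $A$-extension, $(a_{\rho})$ is of transcendental type over $E$. By $\kappa$-saturation of $\K'$ applied to the quantifier-free $\L_{\preccurlyeq}$-type expressing ``$z$ is a pseudolimit of $(f(a_{\rho}))$'' we obtain $a'\in K'$ with $f(a_{\rho})\leadsto a'$. Proposition~\ref{imm1} provides the $A$-closed valuation rings $R_a\subseteq R_K$ and $R'_{a'}\subseteq R_{K'}$, and Corollary~\ref{Aimmu} furnishes the unique $\L^{A}_{\preccurlyeq,D}$-isomorphism $\E_a\to\E'_{a'}$ extending $f$ and sending $a\mapsto a'$; since these are immediate extensions, $C_{\E}$ and $G_{\E}$ remain a coefficient field and a monomial group of the enlarged valuation rings, so $\E_a$ and $\E'_{a'}$ are good substructures and the extension $f_1$ is a good map.

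The main obstacle is not the back-and-forth scaffolding itself but the legitimacy of working with pure $\L_{\preccurlyeq}$-types when realizing new elements: a priori, matching the quantifier-free valued-field type of $a$ (as used above via saturation) is much weaker than matching the quantifier-free $\L^{A}_{\preccurlyeq,D}$-type that determines $E_a$ up to $A$-isomorphism. This gap is closed exactly by Proposition~\ref{id1}, which asserts that $\qftp_{\preccurlyeq}(a\,|\,K)$ already determines $\qftp^{A}_{\preccurlyeq,D}(a\,|\,K)$; combined with Corollary~\ref{corcard} (so that extending $a$ to $K_a$ does not alter residue field or value group), this lets every step of the back-and-forth be carried out using only saturation over the sorts for which conditions (r) and (v) of a good map grant partial elementarity. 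This same proposition also underlies the induced structure corollary on $C_{\K}$ and $G_{\K}$, since it ensures that definable sets in $\K$ whose parameters and realizations lie in $C_{\K}\cup G_{\K}$ are controlled entirely by the pure residue-field and value-group structures.
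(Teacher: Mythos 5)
Your proposal follows essentially the same route as the paper's proof: pass to $\kappa$-saturated elementary extensions, run a back-and-forth of good maps, handle elements of $C_{\K}$ and $G_{\K}$ via conditions (r), (v), saturation, the standard AKE extension lemmas and Proposition~\ref{id1}, iterate so that $E_{\infty}(a)$ is immediate over $E_{\infty}$, and finish with a divergent pc-sequence (of transcendental type since $\operatorname{char}\k=0$), Proposition~\ref{imm1} and Corollary~\ref{Aimmu}. Two cosmetic slips: ``small'' should bound only the coefficient field and monomial group (or $\kappa$ must exceed the number of $\L^{A,K}_{\preccurlyeq,D}$-terms, since the $A$-closure of $E\cup\{a\}$ can have cardinality larger than $|\E|+|\E'|$), and in the transcendental subcases of the $C$- and $G$-extensions the upgrade to an $\L^{A}_{\preccurlyeq,D}$-isomorphism comes from Proposition~\ref{id1} rather than Corollaries~\ref{algext} and~\ref{Aint4}, as you in effect acknowledge in your closing paragraph.
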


\noindent
We need $\operatorname{char} \k=0$ only towards the end of the proof below to guarantee that a certain pc-sequence
$(a_{\rho})$ introduced there is of transcendental type.

\begin{proof} By passing to suitable elementary extensions we arrange that the underlying valued fields of $\K$ and $\K'$ are $\kappa$-saturated, where $\kappa$ is an uncountable cardinal greater than the cardinalities of $C_{\E}$ and  $G_{\E}$.
A good substructure $$\E_1\ =\ (E_1,C_{\E_1},G_{\E_1})$$ of $\K$ is termed {\em small} if $\kappa$ is greater than the cardinalities of $C_{\E_1}$ and $G_{\E_1}$.  We shall prove that for any $a\in \K$ we can extend $f$ to a good map with small domain $\mathcal{F}\supseteq \E$  such that $a\in \F$. By the properties of ``back-and-forth" this  suffices. In addition to Corollary \ref{Aimmu}, we will need the extension procedures in (1)--(4) below.  

 In (1) and (2) we assume $a\in C_{\K}$ and extend $\E$ and $\E'$ to small good substructures $\F$ of $\K$
 and $\F'$ of $\K'$ and our good map $f$ to a good map $g: \F\to \F'$ such that $a\in C_{\F}$ and $G_{\E}=G_{\F}$.  In (3) and (4) we assume that $a\in G_{\K}$, and extend $\E$ and $\E'$ to small good substructures $\F$ of $\K$
 and $\F'$ of $\K'$ and our good map $f$ to a good map $g: \F\to \F'$ such that $a\in G_{\F}$ and $C_{\E}=C_{\F}$. 

\medskip
\noindent
(1) {\em The case that $a\in C_{\K}$ is algebraic over $E$}.  Then $\kappa$-saturation of $\K'$  gives $b\in C_{\K'}$ and an $\L_{\mathrm{r}}$-isomorphism $g_{\mathrm{r}}: C_{\E}[a]\to C_{\E'}[b]$ extending $f|_{C_{\E}}$ and sending $a$ to $b$ such that $g_{\mathrm{r}}$ is a partial elementary map from $C_{\K}$ to $C_{\K'}$. 

Now \cite[Lemma 3.21]{Lou} gives an $\L_{\preccurlyeq}$-isomorphism $g:E[a]\to E'[b]$ extending both $f$ and $g_{\mathrm{r}}$. Then
$g$ is an $\L_{\preccurlyeq,D}^{A}$-isomorphism $E_a \to E'_{b}$ by Corollary~\ref{algtype} and Proposition~\ref{id1}. By Lemma~\ref{good} (i),  $\E_a$ and $\E'_{b}$ are good substructures of $\K$ and $\K'$ respectively, and $g$ is a good map.

\medskip\noindent
(2){ \em The case that $a\in C_{\K}$ is transcendental over $E$}.  As in (1) we have $b\in C_{\K'}$ and an $\L_{\mathrm{r}}$-isomorphism 
$g_{\mathrm{r}}: C_{\E}(a)\to C_{\E'}(b)$ extending $f|_{C_{\E}}$ and sending $a$ to $b$ such that $g_{\mathrm{r}}$ is a partial elementary map from $C_{\K}$ to $C_{\K'}$.

Then \cite[Lemma 3.22]{Lou} gives an $\L_{\preccurlyeq}$-isomorphism $E(a)\to E'(b)$ extending both $f$ and $g_{\mathrm{r}}$.  This $\L_{\preccurlyeq}$-isomorphism extends to an $\L_{\preccurlyeq,D}^{A}$-isomorphism $g:\ E_a \to E'_{b}$ by Proposition~\ref{id1}. Lemma~\ref{good}(ii) gives that $\E_a$ and $\E'_{b}$ are good substructures of $\K$ and $\K'$ respectively, and that $g$ is a good map.

\medskip\noindent
(3) {\em The case that $a\in G_{\K}\setminus G_{\E}$ and $a^p \in G_{\E}$, where $p$ is a prime number}.  As before we get $b\in G_{\K'}$ and an $\L_{\mathrm{v}}$-isomorphism $g_{\mathrm{v}}: G_{\E}\cdot a^{\Z}\to G_{\E'}\cdot {b}^{\Z}$ extending $f|_{G_{\E}}$ and sending $a$ to $b$ such that $g_{\mathrm{v}}$ is a partial elementary map from $G_{\K}$ to $G_{\K'}$.
Now \cite[Lemma 5.6]{Lou} gives an $L_{\preccurlyeq}$-isomorphism $g:E(a)\to E'(b)$ extending both $f$ and $g_{\mathrm{v}}$. 
Then $g$ is an $\L_{\preccurlyeq,D}^{A}$-isomorphism $E_a \to E'_{b}$ by Corollary~\ref{algtype} and Proposition~\ref{id1}. By Lemma~\ref{good}(iii),  $\E_a$ and $\E'_{b}$ are good substructures of $\K$ and $\K'$ respectively, and $g$ is a good map.

\medskip\noindent
(4) {\em The case that $a\in G_{\K}$ and $a^d \notin G_{\E}$ for all $d\geqslant 1$}. 
As before we get $b\in G_{\K'}$ and an $\L_{\mathrm{v}}$-isomorphism 
$$g_{\mathrm{v}}: G_{\E}\cdot a^{\Z}\to G_{\E'}\cdot {b}^{\Z}$$ extending $f|_{G_{\E}}$ and sending $a$ to $b$ such that $g_{\mathrm{v}}$ is a partial elementary map from $G_{\K}$ to $G_{\K'}$. Note that $a$ is transcendental over $E$  by \cite[Proposition 3.19]{Lou}; likewise, $b$ is transcendental over $E'$.  

Now \cite[Lemma 3.23]{Lou} gives an $\L_{\preccurlyeq}$-isomorphism $E(a)\to E'(b)$ extending both $f$ and $g_{\mathrm{v}}$. 
This $\L_{\preccurlyeq}$-isomorphism extends to an $\L_{\preccurlyeq,D}^{A}$-isomorphism $g:\ E_a \to E'_{b}$ by Proposition~\ref{id1}. By Lemma~\ref{good}(iv), $\E_a$ and $\E'_{b}$ are good substructures of $\K$ and $\K'$ respectively, and $g$ is a good map.


\medskip
\noindent
Let now any $a\in K$ be given. Let $C_1$ be the subfield of $C_{\K}$ such that $\res C_1=\res E_a$, and let
$G_1$ be the subgroup of $G_{\K}$ such that $v(G_1)=v(E_a^{\times})$. We do not guarantee that  $C_1\subseteq E_a$ or 
$G_1\subseteq E_a^\times$, but $C_{\E}$ and $C_1$ have the same cardinality, and so do
$G_{\E}$ and $G_1$, by Corollary~\ref{corcard}.  Thus by iterating (1)--(4), we extend $\E$ and $\E'$ to small good substructures 
$\E_1=(E_1,C_1,G_1)$ of $\K$ and $\E'_1=(E'_1, C'_1, G'_1)$ of $\K'$, and extend $f$ to a good map
$f_1:\E_1\to \E'_1$. Next, let $C_2$ be the subfield of $C_{\K}$ such that $\res C_2=\res E_{1,a}$, and let
$G_2$ be the subgroup of $G_{\K}$ such that $v(G_2)=v(E_{1,a}^\times)$, and obtain likewise $\E_2=(E_2, C_2, G_2)$ with $\E_1\subseteq \E_2\subseteq \K$, and an extension of $f_1$ to a good map
$f_2: \E_2\to \E'_2$, with  $\E'_1\subseteq \E'_2\subseteq \K'$. Continuing this way we obtain for each $n$  small good substructures 
$$\E_n\ =\ (E_n, C_n, G_n)\  \subseteq\  \E_{n+1}\  =\ (E_{n+1}, C_{n+1}, G_{n+1})$$ of $\K$ such that
 $\res C_{n+1}= \res E_{n,a}$ and $v(G_{n+1})=v(E_{n,a}^\times)$,
and small good substructures $\E'_n\subseteq \E'_{n+1}$ of $\K'$, and good maps $$f_n\ :\  \E_n\to \E'_n, \qquad f_{n+1}\ :\ \E_{n+1}\to \E'_{n+1}$$ such that $f_{n+1}$ extends $f_n$; here $\E_0:= \E$, $\E'_0:= \E'$ and $f_0:= f$.
Then $$\E_{\infty}\ :=\  \bigcup_n \E_n\ =\ (E_{\infty},C_{\infty},G_{\infty})$$ is a small good substructure of $\K$, and
$\E'_{\infty}:= \bigcup_n \E'_n=(E'_{\infty},C'_{\infty},G'_{\infty})$ is a small good substructure of $\K'$, and we have a good map
$f_{\infty}:\E_{\infty} \to \E'_{\infty}$ extending each $f_n$.  Using $E_{\infty,a}=\bigcup_n E_{n,a}$ we see that
 $E_{\infty,a}$ is an immediate extension of $E_{\infty}$. 
 
 If $a\in E_{\infty}$ we have achieved our goal of extending $f$ to a good map with small domain containing $a$, so assume $a\notin E_{\infty}$. Replacing $a$ by $a^{-1}$ if necessary we arrange $a\preccurlyeq 1$. Take a divergent pc-sequence
$(a_{\rho})$ in $E_{\infty}$ such that all $a_{\rho}\preccurlyeq 1$ and $a_{\rho}\leadsto a$. Then $(b_{\rho}):=\big(f_{\infty}(a_{\rho})\big)$ is a divergent pc-sequence in $E'_{\infty}$. 
Since the underlying valued field of $\K'$ is $\kappa$-saturated and the cardinality of the value group of $E'_{\infty}$ is less than $\kappa$ we have $b\in K'$ such that $b_{\rho}\leadsto b$.  Note that $(a_{\rho})$ is of transcendental type over $E_{\infty}$, by \cite[4.22, 4.16]{Lou}. Hence
$(b_{\rho})$ is of transcendental type over $E'_{\infty}$, and so $E'_{\infty,b}$ is an immediate extension of $E'_{\infty}$ by Proposition~\ref{imm1}.   This yields the (small) good substructures
$\E_{\infty,a}:=\big(E_{\infty,a}, C_{\infty}, G_{\infty}\big)$ of $\K$ and $\E'_{\infty,b}:=\big(E'_{\infty,b}, C'_{\infty}, G'_{\infty}\big)$ of $\K'$. Moreover,  $f_{\infty}$ extends by Corollary~\ref{Aimmu} to a good map
$\E_{\infty,a}\to \E'_{\infty,b}$, and we have achieved our goal. 
\end{proof}

\begin{corollary} Suppose $\operatorname{char} \k =0$, $C_{\E}\preccurlyeq C$ as $\L_{\mathrm{r}}$-structures, and $G_{\E} \preccurlyeq G$ as $\L_{\mathrm{v}}$-structures. Then $\E\preccurlyeq\K$.
\end{corollary}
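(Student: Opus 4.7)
The plan is to apply Theorem~\ref{AKE} directly with $\K' := \K$ and $\E' := \E$, taking $f$ to be the identity map $\mathrm{id}_{\E}\colon \E \to \E$. Recall that for a substructure $\E \subseteq \K$, we have $\E \preccurlyeq \K$ if and only if the inclusion is a partial elementary map from $\K$ to $\K$. Thus, once we verify that $\mathrm{id}_{\E}$ qualifies as a good map in the sense of the hypotheses of Theorem~\ref{AKE}, the conclusion of that theorem will immediately yield $\E \preccurlyeq \K$.

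First I would verify that $\mathrm{id}_{\E}$ is an $\L^{\Acg}_{\preccurlyeq,D}$-isomorphism $\E\to \E$; this is immediate from the fact that $\E$ is, by assumption, a good substructure of $\K$ (so it is itself an $\Acg$-field with viable valuation $A$-ring, inheriting viability via Lemma~\ref{subinher}). Next I would check the two elementarity conditions (r) and (v) required of a good map. Condition (r) asks that the identity on $C_{\E}$ be a partial elementary map from the field $C_{\K}$ to itself; but this is equivalent to $C_{\E} \preccurlyeq C_{\K}$ as $\L_{\mathrm{r}}$-structures, which is exactly the first hypothesis of the corollary. Condition (v) asks, likewise, that the identity on $G_{\E}$ be partial elementary from $G_{\K}$ to itself, which is equivalent to $G_{\E} \preccurlyeq G_{\K}$ as $\L_{\mathrm{v}}$-structures, and this is the second hypothesis.

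Having verified the hypotheses of Theorem~\ref{AKE}, and using $\operatorname{char}\k=0$ (which is assumed and required for that theorem), I would conclude that $\mathrm{id}_{\E}$ is a partial elementary map from $\K$ to $\K$. Unpacking this, any $\L^{\Acg}_{\preccurlyeq,D}$-formula $\varphi(x_1,\dots,x_n)$ and any tuple $e \in E^n$ satisfy $\K \models \varphi(e)$ iff $\K \models \varphi(\mathrm{id}_{\E}(e)) = \varphi(e)$ — reading this symmetrically between ``$\K$ as source'' and ``$\K$ as target'' yields $\E \models \varphi(e) \Leftrightarrow \K \models \varphi(e)$, which is precisely the statement $\E \preccurlyeq \K$.

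There is no genuine obstacle here: the entire content of the corollary lies in the main equivalence theorem, and the corollary serves to repackage Theorem~\ref{AKE} as an elementary-substructure criterion. The only subtlety worth noting in the written-up version is that the statement of the corollary abbreviates $C_{\K}$ and $G_{\K}$ as $C$ and $G$, so one should make clear in the proof that the symbols refer to the distinguished lifts in the ambient structure $\K$.
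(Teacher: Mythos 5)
There is a genuine gap, and it lies in your choice $\K' := \K$. With that choice the conclusion of Theorem~\ref{AKE} is vacuous: a map that is the identity on a subset of $K$ is \emph{always} a partial elementary map from $\K$ to $\K$, since it just asserts $\K \models \varphi(e) \Leftrightarrow \K \models \varphi(e)$. Satisfaction in $\E$ never enters, so nothing about $\E \preccurlyeq \K$ follows; the ``reading this symmetrically between source and target'' step is a non sequitur. Relatedly, your opening claim that ``$\E \preccurlyeq \K$ iff the inclusion is a partial elementary map from $\K$ to $\K$'' is false: the correct criterion compares satisfaction in $\E$ with satisfaction in $\K$, i.e.\ the identity on $E$ must be partial elementary with $\E$ as one of the two structures. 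A further warning sign is your verification of (r) and (v): with target $\K' = \K$ these conditions are automatically true for the identity (they do \emph{not} reduce to $C_{\E} \preccurlyeq C_{\K}$ and $G_{\E} \preccurlyeq G_{\K}$), so in your setup the hypotheses of the corollary are never used.

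The repair is to set $\K' := \E$ instead, regarding $\E$ itself as an $\Acg$-field whose valuation $A$-ring is viable (Lemma~\ref{subinher}). Then $\E$ is a good substructure of both $\K$ and $\K' = \E$, and the identity on $\E$ is a good map: condition (r) now says that for every $\L_{\mathrm{r}}$-formula $\varphi$ and tuple $c$ from $C_{\E}$ one has $C_{\K} \models \varphi(c) \Leftrightarrow C_{\E} \models \varphi(c)$, which is exactly the hypothesis $C_{\E} \preccurlyeq C_{\K}$, and similarly (v) is exactly $G_{\E} \preccurlyeq G_{\K}$. Theorem~\ref{AKE} (using $\operatorname{char}\k = 0$) then says the identity on $E$ is a partial elementary map from $\K$ to $\K' = \E$, i.e.\ $\K \models \varphi(e) \Leftrightarrow \E \models \varphi(e)$ for all formulas and all tuples $e$ from $E$, which is precisely $\E \preccurlyeq \K$. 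This is the paper's argument.
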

\begin{proof} Note that
$\E$ is a good substructure of both $\K$ and $\K':=\E$, and the identity on $\E$ is a good map. Now apply Theorem \ref{AKE}. 
\end{proof}

\subsection*{Induced structure on coefficient field and monomial group}  In this subsection we assume for our
$\Acg$-field $\K=(K, C_{\K}, G_{\K})$  that $\operatorname{char} \k=0$. Our aim here is Corollary~\ref{indstr} on the structure that $\K$ induces on $C_{\K}$ and $G_{\K}$ combined.  It will be derived in a familiar way from Theorem~\ref{AKE} and a fact implicit in its proof.
To state that fact we let $\E=(E, C_{\E}, G_{\E})$ and
 $\F=(F,C_{\F},G_{\F})$ be $\Acg$-fields and $\L^{\Acg}_{\preccurlyeq,D}$-extensions of $\K$. For $a\in E^n$, let $\tp(a|K)$ be the $\L^{\Acg}_{\preccurlyeq,D}$-type of $a$ over $K$, that is, the set of $\L^{\Acg, K}_{\preccurlyeq,D}$-formulas
 $\phi(Y_1,\dots, Y_n)$ such that $\E\models \phi(a)$.  Likewise, for $c\in C^n_\E$, let
$\tp(c|C_{\K})$ be the $\L_{\mathrm{r}}$-type of $c$ over $C_{\K}$, and for $g\in G_\E^{n}$, let $\tp(g|G_{\K})$ be the $\L_{\mathrm{v}}$-type of $g$ over $G_{\K}$.

\begin{lemma}\label{separation} Suppose $\E$ and $\F$ are elementary extensions of $\K$.  Let $c_{\E}\in C^m_\E$, $g_{\E}\in G^n_\F$ and $c_{\F}\in C^m_F$, $g_{\F} \in G^n_\F$ be such that $$\tp(c_{\E}|C_{\K})\ =\ \tp(c_{\F}|C_{\K}), \qquad \tp(g_{\E}|G_{\K})\ =\ \tp(g_{\F}|G_{\K}).$$
 Then for the points $(c_{\E},g_{\E})\in E^{m+n}$ and  $(c_{\F},g_{\F})\in F^{m+n}$ we have $$\tp\big((c_{\E},g_{\E})|K\big)\ =\  \tp\big((c_{\F},g_{\F})|K\big).$$
\end{lemma}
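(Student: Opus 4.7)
The plan is to apply Theorem~\ref{AKE} with $\E, \F$ in the roles of $\K, \K'$. I would construct good substructures $\E_0 \subseteq \E$ and $\F_0 \subseteq \F$ containing $K$, with the tuples $c_\E, g_\E$ (respectively $c_\F, g_\F$) lying in the coefficient-field and monomial-group parts, together with a good map $f \colon \E_0 \to \F_0$ that is the identity on $K$ and sends $c_\E \mapsto c_\F$, $g_\E \mapsto g_\F$. Theorem~\ref{AKE} will then yield that $f$ is partial elementary from $\E$ to $\F$; since $\K$ is an elementary substructure of both $\E$ and $\F$ in $\L^{\Acg}_{\preccurlyeq,D}$, this gives $\tp((c_\E, g_\E)|K) = \tp((c_\F, g_\F)|K)$ as required.

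A preliminary observation: because $C, G$ are unary predicates whose $\L_{\mathrm{r}}$- and $\L_{\mathrm{v}}$-structures are relativisations of the ambient $\L^{\Acg}_{\preccurlyeq, D}$-structure, the elementarity $\K \preccurlyeq \E$, $\K \preccurlyeq \F$ in $\L^{\Acg}_{\preccurlyeq, D}$ transfers to $C_\K \preccurlyeq C_\E$, $C_\K \preccurlyeq C_\F$ in $\L_{\mathrm{r}}$ and $G_\K \preccurlyeq G_\E$, $G_\K \preccurlyeq G_\F$ in $\L_{\mathrm{v}}$. Combined with the hypotheses on types, the $\L_{\mathrm{r}}$-isomorphism $C_\K(c_\E) \to C_\K(c_\F)$ over $C_\K$ sending $c_\E \mapsto c_\F$ is then a partial elementary map from $C_\E$ to $C_\F$, and analogously the $\L_{\mathrm{v}}$-isomorphism $\langle G_\K, g_\E\rangle \to \langle G_\K, g_\F\rangle$ over $G_\K$ sending $g_\E \mapsto g_\F$ is partial elementary from $G_\E$ to $G_\F$.

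For the construction I iterate the extension procedures (1)--(4) from the proof of Theorem~\ref{AKE}, but with prescribed targets in place of ones chosen by saturation. Starting from the base good substructure $\K$ on both sides and $f^{(0)} := \mathrm{id}_K$, for $i = 1, \ldots, m$ I adjoin $c_{\E, i}$ on the $\E$-side (via (1) if algebraic over the current coefficient field, which will be $C_\K(c_{\E, 1}, \ldots, c_{\E, i-1})$ by Lemma~\ref{good}(i), or via (2) otherwise) with the specific target $c_{\F, i}$ on the $\F$-side. For $i = m+1, \ldots, m+n$ I do the analogous thing for $g_{\E, i-m}$, using (3) and (4) and (when a single step of (3) does not suffice because the torsion index of $g_{\E, i-m}$ over the current monomial group is not prime) splitting into prime-power sub-steps by adjoining auxiliary intermediate powers of $g_{\E, i-m}$ with the matching powers of $g_{\F, i-m}$ as targets. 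The prescribed target at each step is valid: the type-match condition --- say, that $c_{\F, i}$ has the same $\L_{\mathrm{r}}$-type over $C_{\F^{(i-1)}}$ as $c_{\E, i}$ has over $C_{\E^{(i-1)}}$ under the current bijection --- reduces via the partial elementarity maintained inductively to the hypothesis $\tp(c_\E | C_\K) = \tp(c_\F | C_\K)$ (and analogously for the $g$'s).

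The main obstacle is the bookkeeping for the monomial-group case, where a single use of procedure (3) may not suffice; one must carefully select the auxiliary intermediate elements and verify at each sub-step that the prescribed target lies in the ambient $G_\F$ and realises the correct $\L_{\mathrm{v}}$-type, both of which follow routinely from the hypothesis on $(g_\E, g_\F)$ over $G_\K$. Once the iteration yields the good map $f \colon \E_0 \to \F_0$, Theorem~\ref{AKE} applied with $\E$ and $\F$ as the ambient $\Acg$-fields (using $\operatorname{char} \k = 0$) concludes the proof.
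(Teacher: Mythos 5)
Your proposal is correct and follows essentially the same route as the paper: the paper likewise takes the identity on $\K$ as a good map between $\K$ viewed as a good substructure of $\E$ and of $\F$, extends it through the procedures (1)--(4) from the proof of Theorem~\ref{AKE} (first the coefficient-field components via Lemma~\ref{good}(i),(ii), then the monomial-group components via Lemma~\ref{good}(iii),(iv)) with the prescribed targets $c_{\F}$, $g_{\F}$ justified by the type hypotheses, and then invokes Theorem~\ref{AKE}. The extra bookkeeping you supply (inductive maintenance of partial elementarity, splitting the monomial-group adjunction into prime steps) is exactly the detail the paper leaves implicit.
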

\begin{proof}
By our assumptions $\K$ is a good substructure of both $\E$ and $\F$, and the identity on $\K$ is a good map.
Using  $\tp(c_{\E}|C)= \tp(c_{\F}|C)$ and the extension procedures (1) and (2) in the proof of Theorem~\ref{AKE} in conjunction with Lemma~\ref{good}(i),(ii) we obtain a good map  whose  domain contains the elements of $K$ and the components of $c_{\E}$ and that is the identity on $\K$ and sends $c_{\E}$ to $c_{\F}$, such that the monomial group of its domain is still $G_{\E}$. Next we use likewise the extension procedures from (3) and (4) in that proof to extend this good map further so that its domain now contains the components of $g_{\E}$ as well, and sends sends $g_{\E}$ to $g_{\F}$. It remains to use Theorem~\ref{AKE}.
\end{proof}

\begin{corollary}\label{indstr}
Each subset of $C_{\K}^m\times G_{\K}^n\subseteq K^{m+n}$ which is definable in $\K$ is a finite union of ``rectangles'' $P\times Q$ with $P\subseteq C_{\K}^m$ definable in the $\L_{\mathrm{r}}$-structure $C_{\K}$ and $Q\subseteq G_{\K}^n$ definable in the $\L_{\mathrm{v}}$-structure $G_{\K}$.
\end{corollary}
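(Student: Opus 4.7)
The plan is to derive Corollary~\ref{indstr} from Lemma~\ref{separation} by two compactness arguments in $\L^{\Acg,K}_{\preccurlyeq,D}$. Let $S\subseteq C_{\K}^m\times G_{\K}^n$ be defined in $\K$ by an $\L^{\Acg,K}_{\preccurlyeq,D}$-formula $\phi(\bar u, \bar v)$. For an $\L_{\mathrm{r}}$-formula $\psi(\bar u)$ with parameters from $C_{\K}$, I write $\tilde\psi(\bar u)$ for the formula obtained by relativizing all quantifiers to the predicate $C$ and conjoining $C(u_i)$ for each free variable, and analogously $\tilde\theta(\bar v)$ for $\L_{\mathrm{v}}$-formulas $\theta$ over $G_{\K}$; these are $\L^{\Acg,K}_{\preccurlyeq,D}$-formulas with $\K\models\tilde\psi(\bar c)\iff C_{\K}\models\psi(\bar c)$ for $\bar c\in C_{\K}^m$, and similarly for $\tilde\theta$. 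Call a formula $\chi(\bar u,\bar v)=\tilde\psi_1(\bar u)\wedge\tilde\theta_1(\bar v)$ a \emph{rectangular instance} if $\K\models\forall\bar u\forall\bar v(\chi\to\phi)$; its zero set on $C_{\K}^m\times G_{\K}^n$ is then a product of sets $P$ and $Q$ definable in $C_{\K}$ and $G_{\K}$ respectively, contained in $S$.

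The first step shows that every point of $S$ lies in the zero set of some rectangular instance. Fix $(\bar c,\bar g)\in S$ and set $p:=\tp_{\L_{\mathrm{r}}}(\bar c\,|\,C_{\K})$, $q:=\tp_{\L_{\mathrm{v}}}(\bar g\,|\,G_{\K})$. Consider the partial $\L^{\Acg,K}_{\preccurlyeq,D}$-type
\[
\Sigma(\bar u,\bar v)\ :=\ \{\tilde\psi(\bar u):\psi\in p\}\cup\{\tilde\theta(\bar v):\theta\in q\}\cup\{\neg\phi(\bar u,\bar v)\}.
\]
If $\Sigma$ were realized by $(\bar c',\bar g')$ in some elementary extension $\F$ of $\K$, then $\bar c'\in C_{\F}^m$ realizes $p$ and $\bar g'\in G_{\F}^n$ realizes $q$; Lemma~\ref{separation} (with $\mathcal{E}:=\K$) would then yield $\tp((\bar c,\bar g)\,|\,K)=\tp((\bar c',\bar g')\,|\,K)$, contradicting $\F\models\neg\phi(\bar c',\bar g')$ and $\K\models\phi(\bar c,\bar g)$. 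Hence $\Sigma$ is inconsistent with $\mathrm{Th}(\K,K)$, and compactness produces finite conjunctions yielding a single rectangular instance $\chi$ satisfied by $(\bar c,\bar g)$ in $\K$.

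The second step passes to a finite cover. Let $\Psi$ be the collection of all rectangular instances and consider the partial type
\[
\{C(\bar u)\wedge G(\bar v)\wedge\phi(\bar u,\bar v)\}\cup\{\neg\chi(\bar u,\bar v):\chi\in\Psi\}.
\]
Any realization of it in an elementary extension $\F$ of $\K$ would lie in $C_{\F}^m\times G_{\F}^n\cap\phi(\F)$ and could be fed back into the argument of Step~1, applied inside $\F$ but still using Lemma~\ref{separation} over the base $\K$, to produce some $\chi\in\Psi$ containing it; here one uses that membership in $\Psi$ is expressible by a sentence over $K$ and so is preserved when passing between $\K$ and $\F$. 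This contradicts $\F\models\neg\chi$, so the partial type is inconsistent, and compactness delivers $\chi_1,\dots,\chi_N\in\Psi$ whose disjunction covers $S$. The only mildly delicate point is the relativization bookkeeping that puts both compactness arguments into the single language $\L^{\Acg,K}_{\preccurlyeq,D}$; once Lemma~\ref{separation} is in hand the rest is a standard \emph{type-definability implies formula-definability} pattern.
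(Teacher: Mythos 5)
Your proof is correct and follows essentially the same route as the paper: the substance is Lemma~\ref{separation}, and your two compactness steps (a rectangular instance through each point, then a finite subcover) are exactly the standard ``type-separation implies rectangular decomposition'' argument that the paper imports wholesale by citing \cite[Lemmas 5.13, 5.14]{Lou}. The only quibbles are cosmetic: ``zero set'' should be ``solution set'', and your Step~1 is subsumed by the argument you run inside $\F$ in Step~2.
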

\begin{proof}
Apply Lemma~\ref{separation} in conjunction with  \cite[Lemmas 5.13, 5.14]{Lou}. 
\end{proof}

\begin{corollary}\label{corind} If $P\subseteq K^n$ is definable in $\K$, then $P\cap C_{\K}^n$ is
definable in the $\L_{\mathrm{r}}$-structure $C_{\K}$, and $P\cap G_{\K}^n$ is definable in 
the $\L_{\mathrm{v}}$-structure $G_{\K}$.
\end{corollary}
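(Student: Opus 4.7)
The plan is to deduce this directly from Corollary~\ref{indstr} by using the unary predicates $C$ and $G$ that are part of the language $\L^{\Acg}_{\preccurlyeq,D}$. Since these predicates name the coefficient field and monomial group, intersections of $\K$-definable sets with $C_{\K}^n$ or $G_{\K}^n$ remain definable in $\K$, and we can then trivialise one of the two factors in the rectangle decomposition.

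More precisely, suppose $P \subseteq K^n$ is definable in $\K$. The set $P \cap C_{\K}^n$ is cut out inside $P$ by the conjunction $C(x_1)\wedge\cdots\wedge C(x_n)$, hence is itself $\K$-definable. Now apply Corollary~\ref{indstr} with $m:=n$ and with the ``monomial group side'' taken to be trivial (i.e.\ view $C_{\K}^n$ as $C_{\K}^n\times G_{\K}^0$): this expresses $P\cap C_{\K}^n$ as a finite union of rectangles $P_i\times Q_i$ where each $P_i\subseteq C_{\K}^n$ is definable in the $\L_{\mathrm{r}}$-structure $C_{\K}$ and each $Q_i\subseteq G_{\K}^0=\{()\}$ is either empty or the singleton $\{()\}$. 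Dropping the trivial second factor gives $P\cap C_{\K}^n$ as a finite union of $\L_{\mathrm{r}}$-definable subsets of $C_{\K}^n$, hence definable in $C_{\K}$.

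The $G_{\K}^n$ case is entirely symmetric: $P \cap G_{\K}^n$ is $\K$-definable by adding the conjuncts $G(x_1)\wedge\cdots\wedge G(x_n)$, and now Corollary~\ref{indstr} applied with $m=0$ and with the second factor being $G_{\K}^n$ exhibits $P\cap G_{\K}^n$ as a finite union of $\L_{\mathrm{v}}$-definable subsets of $G_{\K}^n$, hence definable in $G_{\K}$.

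There is essentially no obstacle here beyond verifying that Corollary~\ref{indstr} is formulated broadly enough to cover the degenerate values $m=0$ or $n=0$; that degenerate case is harmless because both $C_{\K}^0$ and $G_{\K}^0$ are singletons and the empty tuple is always definable. Thus the corollary follows with no further input.
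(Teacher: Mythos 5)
Your proof is correct and is exactly the deduction the paper leaves implicit: since $C$ and $G$ are predicates of $\L^{\Acg}_{\preccurlyeq,D}$, the intersections $P\cap C_{\K}^n$ and $P\cap G_{\K}^n$ are $\K$-definable subsets of $C_{\K}^n\times G_{\K}^0$ and $C_{\K}^0\times G_{\K}^n$ respectively, and Corollary~\ref{indstr} (whose exponents $m,n$ are allowed to be $0$ by the paper's conventions) then gives them as finite unions of $\L_{\mathrm{r}}$- resp.\ $\L_{\mathrm{v}}$-definable sets. Nothing further is needed.
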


\noindent
In particular, the sets $C_{\K}, G_{\K}\subseteq K$ are  stably embedded and orthogonal in $\K$. Next an application
of Corollary~\ref{corind}.

\subsection*{Recovering the Binyamini-Cluckers-Novikov result} We construe $\C(\!(t)\!)$ below as an
$A$-field in the usual way, with $A=\C[[t]], \smallo(A)=tA$. Proposition 2 in  
\cite{BCN} concerns the $3$-sorted structure $\mathcal{M}$ consisting of the following: 
$$ \text{ the $A$-field }\C(\!(t)\!),\qquad  \text{ the field }\C, \qquad \text{  the ordered abelian group }\Z,$$
(each a $1$-sorted structure) 
and two functions relating the three sorts: the obvious $t$-adic valuation $v: \C(\!(t)\!)^\times \to \Z$, and the ``reduced angular component map'' $\overline{ac}: \C(\!(t)\!)\to \C$ that assigns to each nonzero Laurent series $f=\sum_{k\in \Z} c_kt^k$ (all $c_k\in \C$) its leading coefficient $c_{v(f)}$, with $\overline{ac}(0):=0$ by convention. 

This $3$-sorted $\mathcal{M}$ should not be confused with the $1$-sorted
$\big(\C(\!(t)\!), \C, t^{\Z}\big)$ that is among the 
$\Acg$-fields $\K$ considered in Section~\ref{A-AKE}. We have a natural interpretation of
$\mathcal{M}$ in $\big(\C(\!(t)\!), \C, t^{\Z}\big)$, which shows that if a set $P\subseteq \C(\!(t)\!)^n$ is definable in $\mathcal{M}$, then it is definable in $\big(\C(\!(t)\!), \C, t^{\Z}\big)$. The converse fails: the subsets $\C$ and  $t^{\Z}$ of $\C(\!(t)\!)$ are definable in the latter, but not in the former by \cite[Theorem 3.9]{vdD}; thus $\big(\C(\!(t)\!), \C, t^{\Z}\big)$ is ``richer''  than $\mathcal{M}$. 

For $d\geqslant 1$ we let $\C[t]_{<d}$ be the set of polynomials in $\C[t]$ of degree $<d$. Then 
$\C[t]_{<d}$ is a subset of $\C[[t]]$, and thus of $\C(\!(t)\!)$. We identify  $\C[t]_{<d}$ with $\C^d$ via the bijection $c_0+ c_1t+\cdots + c_{d-1}t^{d-1} \mapsto (c_0,\dots, c_{d-1})$ for $c_0,\dots, c_{d-1}\in \C$. For $P\subseteq \C(\!(t)\!)^n$ we set 
$P(d):=P\cap (\C[t]_{<d})^n$, which under the identification above becomes a subset of $\C^{dn}$. Now Proposition 2 in
\cite{BCN} says:

\medskip\noindent
{\it if $P\subseteq \C(\!(t)\!)^n$ is definable in $\mathcal{M}$, then for each $d\geqslant 1$ the set $P(d)\subseteq \C^{dn}$ is a constructible subset of
the space $\C^{dn}$ with its Zariski topology}. 

\medskip\noindent
By ``Chevalley-Tarski'' a subset of $\C^m$ is constructible iff it is definable in the field $\C$, so this proposition is for $\K=\big(\C(\!(t)\!), \C, t^{\Z}\big)$ a special case of Corollary~\ref{corind}.

\subsection*{NIP} The model-theoretic condition NIP forbids certain combinatorial configurations;  there is a lot of information about it in  \cite{Simon}.  Below a structure $\cM$ is said to have NIP if its theory $\text{Th}(\cM)$ has NIP. By Delon \cite{Delon} and Gurevich \& Schmitt \cite{GS}, a henselian valued field of equicharacteristic $0$ has NIP  iff its residue field (as a ring) has NIP.  Jahnke \& Simon~\cite{JS}  extend this to a criterion that
also applies to {\em expansions\/} of such valued fields. We apply their criterion now to our analytic setting:

\begin{corollary} Assume $\operatorname{char} \k=0$. Then:
$$\text{ the $\L_{\preccurlyeq,D}^{\Acg}$-structure 
$\K$ has $\mathrm{NIP}$}\ \Longleftrightarrow\  \text{ the ring $\k$ has $\mathrm{NIP}$}.$$
\end{corollary}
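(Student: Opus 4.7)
The plan is to deduce the equivalence from the structural results already established in this section, combined with the Jahnke--Simon NIP transfer criterion for expansions of henselian valued fields of equicharacteristic $0$.

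The forward direction is routine. The coefficient field $C_{\K}\subseteq K$ is a definable subset of $\K$, and by Corollary~\ref{corind} the structure induced on $C_{\K}$ by $\K$ is precisely its pure field structure. Since $C_{\K}\cong \k$ as rings and NIP is preserved under passage to definable reducts of definable sets, $\k$ has NIP whenever $\K$ does.

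For the converse, suppose $\k$ has NIP. Then so does $C_{\K}$ as a field. The value group $\Gamma$, as a pure ordered abelian group, has NIP by Gurevich--Schmitt, and therefore so does $G_{\K}$ as an $\L_{\mathrm{v}}$-structure. By Corollary~\ref{indstr}, the two distinguished sets $C_{\K}$ and $G_{\K}$ are stably embedded and orthogonal in $\K$, each carrying only its pure structure; consequently their ``joint'' induced structure is the orthogonal disjoint union of two NIP structures, hence is itself NIP. The underlying valued field of $\K$ is henselian by Lemma~\ref{he5} and of equicharacteristic $0$ by hypothesis, so the Jahnke--Simon criterion \cite{JS} applies to the expansion $\K$: the NIP question for such an expansion reduces to the NIP question for the induced structure on residue field together with value group. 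Since we have just verified the latter, $\K$ has NIP.

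The main obstacle will be ensuring that the Jahnke--Simon criterion, which is stated for expansions of henselian valued fields of equicharacteristic $0$, is genuinely applicable here with the ``residue field plus value group'' replaced by our lifted pair $(C_{\K}, G_{\K})$. This is exactly where Corollary~\ref{indstr} does the heavy lifting: it guarantees both that no unintended extra structure is induced by the analytic function symbols on the lifts, and that the lifts suffice to detect all the ``combinatorial complexity'' of $\K$ modulo the valued field infrastructure. Once that is verified, the transfer is a direct application of \cite{JS}, and no additional computation with restricted power series is needed.
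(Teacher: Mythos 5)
Your forward direction and your verification of stable embeddedness are fine, but your application of \cite{JS} skips half of its hypotheses, and the half you skip is where the real work lies. The criterion the paper uses, \cite[Theorem 2.3]{JS}, requires \emph{two} conditions of $\mathrm{Th}(\K)$: the stable-embeddedness condition (SE), which Corollary~\ref{corind} indeed supplies, and an additional condition (Im) concerning immediate extensions: for an elementary extension $\E$ of $\K$ and $a\in \E$ such that $K(a)$ is an immediate valued field extension of $K$, the full $\L_{\preccurlyeq,D}^{\Acg}$-type of $a$ over $K$ must be implied by the instances of NIP formulas it contains. Your proposal never addresses (Im); asserting that the criterion ``reduces the NIP question for the expansion to the NIP question for the induced structure on residue field together with value group'' treats the conclusion of the theorem as if it followed from (SE) alone. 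Orthogonality and purity of the induced structure on $C_{\K}$ and $G_{\K}$ (Corollary~\ref{indstr}) say nothing about elements generating immediate extensions, which carry no trace in the residue field or value group; this is exactly the ``combinatorial complexity modulo the valued field infrastructure'' that your last paragraph gestures at but does not control.

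In the paper, (Im) is where the analytic machinery enters: by the Delon--Gurevich--Schmitt theorem the valued-field reduct of an elementary extension of $\K$ has NIP, so it suffices to show that if $a$ and $b$ (in elementary extensions $\E$, $\F$ of $\K$) generate immediate extensions of $K$ and have the same $\L_{\preccurlyeq}$-type over $K$, then they have the same $\L_{\preccurlyeq,D}^{\Acg}$-type over $K$. This is proved by using Corollary~\ref{Aimmu} (essential uniqueness of the immediate $A$-extension generated by a pc-sequence of transcendental type) to produce a good map $\K_a\to \K_b$ over $\K$, and then invoking Theorem~\ref{AKE}. Without this step --- i.e.\ without the work of Section~\ref{immAE} --- the citation of \cite{JS} does not yield the conclusion, so your argument has a genuine gap rather than being a shortcut.
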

\begin{proof} The direction $\Rightarrow$ is obvious, and for $\Leftarrow$, assume that $\k$ has NIP as a ring.
We refer to \cite[Section 2]{JS} for definitions and notations used in this proof. Using \cite[Theorem 2.3]{JS}, it suffices to show that $\mathrm{Th}(\K)$ satisfies the conditions (SE) and (Im). Condition (SE) requires that the residue field and the value group be stably embedded in $\K$. This condition is satisfied in our setting by Corollary~\ref{corind}. 

In order that $\mathrm{Th}(\K)$ satisfies condition (Im) it suffices to show the following: Let $\E=(E, \dots)$ be an elementary extension of
$\K$ and $a\in E$ such that $K(a)$ is an immediate valued field extension of $K$; then 
the $\L_{\preccurlyeq,D}^{\Acg}$-type of $a$ over $K$ (in $\E$) is implied by instances of NIP formulas in this type. By the Delon-Gurevich-Schmitt result the valued field reduct of $\E$  has NIP. So let $\F=(F,\dots)$ also be an elementary extension of $\K$ and let $b\in F$ have the same $\L_{\preccurlyeq}$-type over $K$ as $a$; it is enough to show that then $a$ and $b$ also have the same $\L_{\preccurlyeq,D}^{\Acg}$-type over $K$. Now $\K$ is a good substructure of both $\E$ and $\F$ and the identity on $\K$ is a good map. By Corollary~\ref{Aimmu}  this gives a good map $\K_a\to \K_b$ that is the identity on $\K$ and sends $a$ to $b$. Now apply Theorem~\ref{AKE}.
\end{proof}

\subsection*{Elementary Equivalence} The AKE-results are often summarized suggestively as follows: For any
henselian valued fields $E$ and $F$ of equicharacteristic $0$,
$$E \equiv F\ \Longleftrightarrow\ \res E \equiv \res F \text{ as fields,  and } \Gamma_E \equiv \Gamma_F \text{ as ordered groups}.$$ 
For special $A$ we now derive a similar result in our analytic setting. Let $A=C[[t]]$ where $C$ is a field and take $\smallo(A)=tA$.
Recall that we construe $A$ itself as an $A$-ring. In the beginning of this section we introduced the $\Acg$-field $\big(C(\!(t)\!), C, t^{\Z}\big)$. 
We try to embed it into our $\Acg$-field $\K=(K, C_{\K}, G_{\K})$ (with viable valuation $A$-ring $R$).  Lemma~\ref{initial} yields the $A$-ring morphism $\iota_0: A\to R$, which is injective: 
for $a\in A^{\ne}$ we have $a=ct^e(1+ b)$ with $c\in C^\times$, $e\in \N$, and $b\in \smallo(A)$, and so its image in $R$ is nonzero, since viability of $R$ gives $\iota_0(t)\ne 0$. Thus $\iota_0$ extends to an embedding $C(\!(t)\!)\to K$ of $A$-fields which we denote by $\iota_K$ to indicate its dependence on $K$. It is routine to verify the following:  

\begin{lemma}\label{obv} The map $\iota_{K}: C(\!(t)\!)\to K$ is an  embedding $\big(C(\!(t)\!), C, t^{\Z}\big)\to \K$  of $\Acg$-fields if and only if $\iota_{K}(C)\subseteq C_{\K}$ and $\iota_{K}(t)\in G_{\K}$. 
\end{lemma}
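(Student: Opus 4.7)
The forward direction is immediate from the definition of $\L^{\Acg}_{\preccurlyeq,D}$-embedding: any such embedding must map the $C$-predicate (interpreted as $C$ in $C(\!(t)\!)$) into the $C$-predicate of $\K$, and the $G$-predicate (interpreted as $t^{\Z}$) into $G_\K$, so in particular $\iota_K(C)\subseteq C_\K$ and $\iota_K(t)\in G_\K$. The content lies in the backward direction: assuming these two inclusions, and using that $\iota_K$ is already an $A$-field embedding (so an $\L^A_{\preccurlyeq,D}$-embedding), I need only verify the equivalences $x\in C\Leftrightarrow \iota_K(x)\in C_\K$ and $x\in t^{\Z}\Leftrightarrow \iota_K(x)\in G_\K$ for every $x\in C(\!(t)\!)$.

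For the $C$-predicate, the forward direction holds by assumption. For the reverse, I would take $x\in C(\!(t)\!)$ with $\iota_K(x)\in C_\K\subseteq R$, note that value-preservation forces $x\in C[[t]]$, write $x=c+tz$ with $c\in C,\ z\in C[[t]]$, and use that $\iota_K(t)\in\smallo(R)$ (since $\iota_K(t)=\iota_0(t)$ has positive valuation by viability of $R$) to conclude $\iota_K(x)\equiv\iota_K(c)\pmod{\smallo(R)}$. Both $\iota_K(x)$ and $\iota_K(c)$ lie in the coefficient field $C_\K$, which maps bijectively onto $\res K$, so they are equal; injectivity of $\iota_K$ then gives $x=c\in C$.

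For the $G$-predicate, the forward direction uses that $G_\K$ is a subgroup of $K^\times$ containing $\iota_K(t)$, hence contains $\iota_K(t^n)=\iota_K(t)^n$ for all $n\in\Z$. For the reverse, I would take $x\in C(\!(t)\!)$ with $\iota_K(x)\in G_\K$ (necessarily $x\neq 0$), set $n:=v(x)\in\Z$, and consider the ratio $\iota_K(x)\cdot\iota_K(t)^{-n}\in G_\K$; this ratio has valuation zero in $K$, hence is a unit. The monomial-group property---that $G_\K\to\Gamma_K$ is a bijection---implies that the only element of $G_\K$ with valuation zero is $1$, forcing $\iota_K(x)=\iota_K(t^n)$, and injectivity yields $x=t^n\in t^{\Z}$.

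There is no real obstacle here; the proof is a routine unwinding of definitions. The only subtlety worth flagging is the double use of the ``bijective section'' property: for $C_\K$ it pins down a unique lift in each residue class, and for $G_\K$ it pins down a unique element in each valuation class. Combined with injectivity and value-preservation of $\iota_K$ (both automatic from being an $A$-field embedding into a viable $A$-field), these suffice to upgrade the two one-sided inclusions in the hypothesis to full preservation of the two predicates.
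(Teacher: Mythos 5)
Your proof is correct, and it fills in exactly the verification the paper leaves implicit (the paper states the lemma with "It is routine to verify" and gives no proof): reduce to preservation and reflection of the two predicates, use the group structure of $G_{\K}$ for preservation, and use the bijectivity of $C_{\K}\to\res K$ and of $v|_{G_{\K}}$ together with injectivity and $\preccurlyeq$-reflection of $\iota_K$ for reflection. No gaps; this is the intended argument.
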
 

\noindent
The conditions $\iota_K(C)\subseteq C_{\K}$ and $\iota_K(t)\in G_{\K}$ are  satisfied for $\K=\big(C(\!(t)\!), C, t^{\Z}\big)$. These conditions are of a first-order nature, since for any $a\in A$ the constant symbol $a$ of $\L^{A}$ names the element
$\iota_K(a)\in K$. 
If these conditions are satisfied, let $C_{\K,\star}$ be the expansion $\big(C_{\K}, (\iota_{K}(c))_{c\in C}\big)$
of the field $C_{\K}$, and let $G_{\K, \star}$ be the expansion $\big(G_{\K}, \iota_{K}(t)\big)$ of the ordered group $G_{\K}$.

\begin{corollary}\label{AAKEtr} Assume $\operatorname{char} C=0$. Suppose $\iota_K(C)\subseteq C_{\K}$ and $\iota_K(t)\in G_{\K}$, and likewise for $\K'$. Then we have the following equivalence: 
$$\K \equiv \K'\ \Longleftrightarrow\  C_{\K,\star} \equiv C_{\K',\star} \text{ and }\ G_{\K,\star}\equiv G_{\K',\star}.$$
\end{corollary}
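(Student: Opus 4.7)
The forward direction is routine. Any $\L_{\mathrm{r}}$-sentence $\sigma$ in the expanded language of $C_{\K,\star}$ translates to an $\L^{\Acg}_{\preccurlyeq,D}$-sentence $\widetilde\sigma$ by relativizing quantifiers to the unary predicate $C$ and reinterpreting each added constant $c\in C$ as the $\L^A$-constant symbol that names $\iota_0(c)=\iota_K(c)$. Then $C_{\K,\star}\models\sigma$ iff $\K\models\widetilde\sigma$, and similarly for $\K'$; so $\K\equiv\K'$ yields $C_{\K,\star}\equiv C_{\K',\star}$. The argument for $G_{\K,\star}\equiv G_{\K',\star}$ is analogous, using the constant symbol for $\iota_K(t)$.

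For the backward direction, the plan is to exhibit a canonical base good substructure and apply Theorem~\ref{AKE}. Set $E_0:=\iota_K(C(\!(t)\!))\subseteq K$ and $\E_0:=(E_0,\iota_K(C),\iota_K(t^{\Z}))$; define $\E'_0\subseteq\K'$ analogously. The subring $E_0$ is an $\L^A_{\preccurlyeq,D}$-substructure of $K$ since $\iota_K$ is an $A$-field embedding, and $(\iota_K(C),\iota_K(t^{\Z}))$ is a coefficient field/monomial group pair for $E_0$. The hypothesis $\iota_K(C)\subseteq C_{\K}$, $\iota_K(t)\in G_{\K}$ is what makes $\E_0$ a substructure of $\K$ in $\L^{\Acg}_{\preccurlyeq,D}$, but I also need the equalities $C_{\K}\cap E_0=\iota_K(C)$ and $G_{\K}\cap E_0=\iota_K(t^{\Z})$. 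For the first: any $\iota_K(f)\in C_{\K}$ with $f\in C[[t]]$ has residue in $\k$ equal to the residue of $\iota_K(f(0))$, and since $C_{\K}\to\k$ is a bijection this forces $\iota_K(f)=\iota_K(f(0))\in\iota_K(C)$. For the second, $v_K(\iota_K(f))=v(f)\in\Z\subseteq\Gamma$ for $f\in C(\!(t)\!)^\times$, and $G_{\K}\to\Gamma$ being a bijection isolates exactly the elements $\iota_K(t^n)$, $n\in\Z$.

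Define $f_0\colon\E_0\to\E'_0$ by $\iota_K(a)\mapsto\iota_{K'}(a)$; this is an $\L^{\Acg}_{\preccurlyeq,D}$-isomorphism by the uniqueness in Lemma~\ref{initial} extended to fraction fields (it preserves $\preccurlyeq$ and $D$ because both $\iota_K$ and $\iota_{K'}$ preserve valuations after identifying $\Z$ with $\Z\cdot 1$ in $\Gamma$ and $\Gamma'$). The restriction $f_0|_{\iota_K(C)}$ is the ``tautological'' identification $\iota_K(c)\mapsto\iota_{K'}(c)$; saying it is partial elementary from the ring $C_{\K}$ to the ring $C_{\K'}$ means $C_{\K}\models\phi(\iota_K(c_1),\dots,\iota_K(c_n))\Leftrightarrow C_{\K'}\models\phi(\iota_{K'}(c_1),\dots,\iota_{K'}(c_n))$ for every $\L_{\mathrm{r}}$-formula $\phi$ and $c_1,\dots,c_n\in C$, which is precisely $C_{\K,\star}\equiv C_{\K',\star}$ applied to the sentence $\phi(\bar c_1,\dots,\bar c_n)$. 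Likewise $f_0|_{\iota_K(t^{\Z})}$ is partial elementary between $G_{\K}$ and $G_{\K'}$ by $G_{\K,\star}\equiv G_{\K',\star}$. Hence $f_0$ is a good map, and since $\operatorname{char} C=0$ forces $\operatorname{char}\k=0$ (because $C\hookrightarrow\k$ via $C_{\K}$), Theorem~\ref{AKE} makes $f_0$ partial elementary from $\K$ to $\K'$; applied to sentences (empty tuple), this gives $\K\equiv\K'$.

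No substantive obstacle is anticipated: the whole argument is an application of Theorem~\ref{AKE} with the most natural possible base good substructure, and the only real bookkeeping lies in the verification that $\E_0$ and $\E'_0$ are genuinely substructures of $\K$ and $\K'$ in the full language $\L^{\Acg}_{\preccurlyeq,D}$.
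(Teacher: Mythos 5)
Your proof is correct and follows essentially the same route as the paper: identify the canonical copy of $\big(C(\!(t)\!),C,t^{\Z}\big)$ inside $\K$ and $\K'$ via $\iota_K$, $\iota_{K'}$ (the paper packages this as Lemma~\ref{obv}), note that the induced identification is a good map precisely because $C_{\K,\star}\equiv C_{\K',\star}$ and $G_{\K,\star}\equiv G_{\K',\star}$, and apply Theorem~\ref{AKE}. Your explicit checks that $C_{\K}\cap E_0=\iota_K(C)$ and $G_{\K}\cap E_0=\iota_K(t^{\Z})$ are exactly the "routine" verifications the paper leaves implicit.
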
 
\begin{proof} The direction $\Rightarrow$ is clear. For $\Leftarrow$, assume the right hand side. Then by Lemma~\ref{obv} we have $\Acg$-field embeddings
$$\iota_{K}\ :\ \big(C(\!(t)\!), C, t^{\Z}\big)\to \K, \qquad \iota_{K'}\ :\ \big(C(\!(t)\!), C, t^{\Z}\big)\to \K'. $$ 
Identifying 
$\big(C(\!(t)\!), C, t^{\Z}\big)$ with its image in $\K$ and $\K'$ via these embeddings yields good substructures
of $\K$ and $\K'$, with the identity on $\big(C(\!(t)\!), C, t^{\Z}\big)$ as a good map. Now use Theorem~\ref{AKE}.
\end{proof}

\section{Separating Variables}\label{req}

\noindent
For $\Acg$-fields of equicharacteristic $0$ with viable valuation $A$-ring we establish here a uniform reduction of any formula to a boolean combination of formulas whose quantifiers range only over $C$ and formulas whose
quantifiers range only over  $G$.  Towards this goal we extend the language by symbols for an absolute value map and for  a 
coefficient map. We begin by introducing these maps. Throughout $k$ and $l$ range over $\N$ (as do $d$, $m$, $n$). 

\medskip\noindent
Let $K$ be just a valued field and $G$ a monomial group of $K$. Then we
define the map $|\cdot|_G: K \to K$  as follows:  $$|0|_G\ :=\ 0,\qquad     |a|_G\ :=\ g\  \text{ if } a\asymp g\in G.$$ 
This map is $0$-definable in the expansion $(K,G)$ of the valued field $K$, takes values in $G\cup\{0\}$, and is the identity on
$G\cup \{0\}$.  Moreover, $a\mapsto |a|_G: K^\times \to G$ is a group morphism, and for all $a,b\in K$: $|a|_G\preccurlyeq |b|_G\Leftrightarrow a\preccurlyeq b$.

Let in addition $C$ be a coefficient field of $K$. Then we introduce the coefficient map  $\co=\co_{C,G}: K \to K$ as follows: 
 $$\textrm{co}(0)\ :=\ 0,\qquad
 \textrm{co}(a)\ :=\ c\ \text{ if } a\ne 0 \text{ and }a/|a|_G\sim c\in C^\times.$$
This map is $0$-definable in the expansion $(K,C,G)$ of the valued field $K$, takes values in $C$, is
the identity on $C$, and $\textrm{co}(ab)=\textrm{co}(a)\textrm{co}(b)$ for all $a,b\in K$. 

\medskip\noindent
 {\em In the rest of this section $A$ is noetherian with an ideal $\smallo(A)\ne A$ such that $\bigcap_e \smallo(A)^e=\{0\}$ and $A$ is $\smallo(A)$-adically complete}. 
We extend the language $\L^{A}_{\preccurlyeq,D}$ to $\L^{A, +}_{\preccurlyeq,D}$  by adding unary function symbols $\text{co}$ and $|\cdot|$, and likewise
we extend $\L^{\Acg}_{\preccurlyeq,D}$ to $\L^{\Acg, +}_{\preccurlyeq,D}$.  

 Below $\K=(K, C, G)$ ranges over $\Acg$-fields whose valuation $A$-ring is viable. We expand $\K$ to an  $\L^{\Acg, +}_{\preccurlyeq,D}$-structure $\K^+$ by interpreting  $|\cdot|$  as the function $|\cdot|_G$ and $\text{co}$ as the
corresponding coefficient map $\text{co}_{C,G}$.
Note that the good substructures of $\K$ are exactly the $\L^{\Acg}_{\preccurlyeq,D}$-reducts of $\L^{\Acg, +}_{\preccurlyeq,D}$-substructures of $\K^+$ whose underlying ring is a field. Now
$$  \{\tau^{\K}:\  \tau \text{ is a variable-free }\L^{A,+}_{\preccurlyeq, D}\text{-term}\}$$
underlies an $\L_{\preccurlyeq, D}^A$-substructure of $K$, so by  Lemma~\ref{lem10.0}, $$R_0\ :=\ \{\tau^{\K}:\ \tau \text{ is a variable-free }\L^{A, +}_{\preccurlyeq,D}\text{-term and } \tau^{\K}\preccurlyeq 1\}$$ is an $A$-subring of $R$ and a valuation ring dominated by $R$. Hence $$K_0\ :=\ \mathrm{Frac}(R_0)$$ underlies the smallest $\L^{\Acg, +}_{\preccurlyeq,D}$-substructure of $\K^+$ whose underlying ring is a field. Thus $\K_0:=(K_0, C\cap K_0, G\cap K_0)$ is a good substructure of $\K$. 

More generally, let $a=(a_1,\dots, a_k)\in K^k$, and take a tuple $u=(u_1,\dots, u_k)$ of distinct variables $u_1,\dots, u_k$. Then
$ \{\tau^{\K}(a):\  \tau(u) \text{ is an }\L^{A,+}_{\preccurlyeq, D}\text{-term}\}$
underlies an $\L_{\preccurlyeq, D}^A$-substructure of $K$, so by Lemma~\ref{lem10.0}, $$R_{0|a}\ :=\ \{\tau^{\K}(a):\ \tau(u) \text{ is an }\L^{A, +}_{\preccurlyeq,D}\text{-term and } \tau^{\K}(a)\preccurlyeq 1\}$$ is an $A$-subring of $R$ and a valuation ring dominated by $R$. Hence $$K_{0|a}\ :=\ \mathrm{Frac}(R_{0|a})$$ underlies the smallest $\L^{\Acg, +}_{\preccurlyeq,D}$-substructure of $\K^+$ that contains $a_1,\dots, a_k$ and whose underlying ring is a field. Thus 
$\K_{0|a} := (K_{0|a}, C\cap K_{0|a}, G\cap K_{0|a})$ is a good substructure of $\K$.

\medskip
\noindent
Towards separating variables, let $\Lrc$ be the language $\Lr$ augmented by the unary relation symbol $C$, and let $\Lvg$ be the language $\Lv$ augmented by the unary relation symbol $G$, so $\Lrc$ and $\Lvg$ are sublanguages of 
$\L_{\preccurlyeq, D}^{\Acg}$.  We define the {\it $\mathrm{c}$-relative formulas\/} to be the $\Lrc$-formulas obtained by applying the following recursive rules:
\begin{itemize}
    \item quantifier-free $\Lr$-formulas are $\mathrm{c}$-relative formulas;  
    \item if $\phi$  and $\psi$ are $\mathrm{c}$-relative formulas, then so are  $\phi \wedge \psi$, $\phi \vee \psi$, $\neg \phi$;
    \item if $\phi$ is a $\mathrm{c}$-relative formula and $u$ is a variable, then $\exists u\big(C(u)\wedge  \phi\big)$ and  $\forall u\big(C(u)\to  \phi\big)$ are $\mathrm{c}$-relative formulas.
\end{itemize}
So ``$\mathrm{c}$-relative'' indicates that all quantifiers are relativized to $C$. 
Likewise, the \textit{$\mathrm{g}$-relative formulas}  are the $\Lvg$-formulas obtained by the following recursive rules: 
\begin{itemize}
    \item quantifier-free $\Lv$-formulas are $\mathrm{g}$-relative formulas;  
    \item if $\phi$  and $\psi$ are $\mathrm{g}$-relative formulas, then so are $\phi \wedge \psi$, $\phi \vee \psi$, $\neg \phi$;
    \item if $\phi$ is a $\mathrm{g}$-relative formula and $u$ is a variable, then $\exists u\big( G(u)\wedge \phi\big)$ and  $\forall u\big(G(u)\to \phi\big)$ are  $\mathrm{g}$-relative formulas.
\end{itemize}

\noindent
Let $x_1,\ldots,x_m$ be distinct variables and $x=(x_1,\dots, x_m)$.
A \textit{$\mathrm{c}$-formula} in $x$ is by definition an $\L^{\Acg, +}_{\preccurlyeq,D}$-formula
$$\psi(x)\ :=\  \psi' \big(\text{co}(\tau_1(x)), \ldots, \text{co}(\tau_k(x))\big)$$
with $\psi'(u_1,\ldots,u_k)$ a $\mathrm{c}$-relative formula  and  $\L^{A, +}_{\preccurlyeq,D}$-terms $\tau_1(x),\ldots, \tau_k(x)$. Likewise, a \textit{$\mathrm{g}$-formula} in $x$ is an $\L^{\Acg, +}_{\preccurlyeq,D}$-formula
$$\theta(x)\ :=\ \theta' \big(|\tau_1(x)|,\ldots, |\tau_k(x)|\big) $$
with $\theta'(u_1,\ldots,u_k)$ a $\mathrm{g}$-relative formula  and $\L^{A, +}_{\preccurlyeq,D}$-terms
$\tau_1(x),\ldots, \tau_k(x)$.

\medskip\noindent
Let $\k$ be the residue field of the $\Acg$-field $\K=(K, C,G)$, and $\Gamma$ its value group.
Let $a\in K^m$, let $\tp^{\K}(a)$ denote the $\L^{\Acg}_{\preccurlyeq,D}$-type realized by $a$ in
$\K$, that is, the set of $\L^{\Acg}_{\preccurlyeq,D}$-formulas  $\phi(x)$ such that $\K\models \phi(a)$.  Also,
$$\tp^{\K}_{\mathrm{c}}(a)\ :=\ \{\psi(x):\ \K^+\models \psi(a)\}, \qquad \tp^{\K}_{\mathrm{g}}(a)\ :=\ \{\theta(x):\ \K^+\models \theta(a)\},$$
where $\psi(x)$ ranges over $\mathrm{c}$-formulas in $x$, and $\theta(x)$ over $\mathrm{g}$-formulas in $x$. 

 Next, let $\K'=(K', C', G')$ also be an $\Acg$-field whose valuation $A$-ring is viable,  and let $a'\in K'^m$.

\begin{lemma}\label{sepvar}
Suppose $\operatorname{char} \k=0$. If $\tp^{\K}_{\mathrm{c}}(a)=\tp^{\K'}_{\mathrm{c}}(a')$ and $\tp^{\K}_{\mathrm{g}}(a)=\tp^{\K'}_{\mathrm{g}}(a')$, then $\tp^{\K}(a)= \tp^{\K'}(a')$.
\end{lemma}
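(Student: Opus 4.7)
The plan is to construct a good map $f: \K_{0|a} \to \K'_{0|a'}$ between the good substructures generated by $a$ and $a'$ (as introduced above), and then to invoke Theorem~\ref{AKE}. The main technical point throughout will be that atomic statements about $a$ in $\L^{\Acg,+}_{\preccurlyeq,D}$---equalities, inequalities under $\preccurlyeq$, and memberships in $C$ or $G$---can each be captured as a $\mathrm{g}$-formula via the functions $|\cdot|$ and $\mathrm{co}$, so the hypothesis $\tp^{\K}_{\mathrm{g}}(a) = \tp^{\K'}_{\mathrm{g}}(a')$ already forces quantifier-free $\L^{\Acg,+}_{\preccurlyeq,D}$-agreement of the two configurations.

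Concretely, I define $f$ by sending $\tau_1(a)/\tau_2(a) \mapsto \tau_1(a')/\tau_2(a')$ for all $\L^{A,+}_{\preccurlyeq,D}$-terms $\tau_1, \tau_2$ with $\tau_2(a)\ne 0$. Well-definedness and bijectivity reduce to the biconditional $\sigma(a) = 0 \iff \sigma(a') = 0$ for every such term $\sigma$: since $\sigma(a) = 0 \iff |\sigma(a)|=|0|$, this is expressed by the $\mathrm{g}$-formula $\theta(x) := (|\sigma(x)| = |0|)$ (taking $\theta'(u_1,u_2) := u_1 = u_2$), and g-type equality delivers the claim. The map $f$ is then a ring homomorphism; it preserves $\preccurlyeq$ via the $\mathrm{g}$-formula $|\tau(x)| \preccurlyeq |\sigma(x)|$, preserves $D$ (definable from $\preccurlyeq$ and ring operations), $|\cdot|$, and $\mathrm{co}$ (term-by-term), and preserves the predicates $C$ and $G$---for example via $\tau(a) \in C \iff \mathrm{co}(\tau(a)) - \tau(a) = 0$. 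Thus the $\L^{\Acg}_{\preccurlyeq,D}$-reduct of $f$ is an isomorphism $\K_{0|a} \to \K'_{0|a'}$ of good substructures (the valuation $A$-rings being viable by Lemma~\ref{subinher}).

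It remains to verify the partial-elementarity conditions (r) and (v) in the definition of a good map. For (r), given an $\Lr$-formula $\phi_{\mathrm{r}}(u_1,\dots,u_n)$, relativize every quantifier to $C$ to obtain the $\mathrm{c}$-relative formula $\tilde{\phi}_{\mathrm{r}}$. For $c_i \in C \cap K_{0|a}$, write $c_i = \mathrm{co}(\tau_i(a))$ with $\tau_i$ a term (possible since $\mathrm{co}$ is the identity on $C$); then
\[
C \models \phi_{\mathrm{r}}(c_1,\dots,c_n)\ \iff\ \K^+ \models \tilde{\phi}_{\mathrm{r}}\bigl(\mathrm{co}(\tau_1(a)),\dots,\mathrm{co}(\tau_n(a))\bigr),
\]
the right-hand side being the $\mathrm{c}$-formula $\psi(x) := \tilde{\phi}_{\mathrm{r}}(\mathrm{co}(\tau_1(x)),\dots,\mathrm{co}(\tau_n(x)))$ evaluated at $a$; the assumption $\tp^{\K}_{\mathrm{c}}(a)=\tp^{\K'}_{\mathrm{c}}(a')$ transfers this to $\K'^+$, giving $C' \models \phi_{\mathrm{r}}(f(c_1),\dots,f(c_n))$. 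Condition (v) follows in the same way, with $\mathrm{g}$-relativization in place of $\mathrm{c}$-relativization and $\tp^{\K}_{\mathrm{g}}(a)=\tp^{\K'}_{\mathrm{g}}(a')$ in place of the c-type equality. Since $\operatorname{char}\k = 0$, Theorem~\ref{AKE} now applies: $f$ is partial elementary from $\K$ to $\K'$, and as $f(a) = a'$, we conclude $\tp^{\K}(a) = \tp^{\K'}(a')$.
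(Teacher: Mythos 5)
Your proposal is correct and follows essentially the same route as the paper: build the good map $\K_{0|a}\to\K'_{0|a'}$ from the correspondence $\tau(a)\mapsto\tau(a')$ on $\L^{A,+}_{\preccurlyeq,D}$-term values (the paper packages this via Lemma~\ref{lem10.0}(ii) and the zero-test through $\co$ rather than $|\cdot|$), check conditions (r) and (v) by relativized $\mathrm{c}$- and $\mathrm{g}$-formulas, and conclude with Theorem~\ref{AKE}. The only differences are cosmetic (which of the two type equalities supplies the biconditional $\sigma(a)=0\Leftrightarrow\sigma(a')=0$, and how explicitly the relativization step is written out), not a different method.
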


\begin{proof}  
 As at the beginning of this subsection we have the
 good substructures $\K_{0|a}$ of $\K$, and $\K'_{0|a'}$ of $\K'$. 
 Assume  $\tp^{\K}_{\mathrm{c}}(a)= \tp^{\K'}_{\mathrm{c}}(a')$ and $\tp^{\K}_{\mathrm{g}}(a)=\tp^{\K'}_{\mathrm{g}}(a')$. 
 Then for any $\L_{\preccurlyeq,D}^{A, +}$-term $\tau(x)$, $\tau^{\K}(a)=0\Leftrightarrow \tau^{\K'}(a')=0$, since it follows from $\tp^{\K}_{\mathrm{c}}(a)= \tp^{\K'}_{\mathrm{c}}(a')$ that
$$\tau^{\K}(a)=0\ \Leftrightarrow\ \co(\tau)^{\K}(a)=0\ \Leftrightarrow\ \co(\tau)^{\K'}(a')=0\ \Leftrightarrow\ \tau^{\K'}(a')=0.$$ We also have for $f,h\in K$ the equivalences
$$f\preccurlyeq h\ \Leftrightarrow\ f=hD(f,h),\quad f\in C\ \Leftrightarrow\ \co(f)=f,\quad f\in G\ \Leftrightarrow\ f\ne 0 \text{ and }|f|=f,$$ 
so Lemma~\ref{lem10.0}(ii) gives
a unique $\L^{\Acg+}_{\preccurlyeq,D}$-isomorphism $\sigma: \K_{0|a}^+\to \K'^+_{0|a'}$ such that $\sigma(a_i)=a_i'$ for $i=1,\dots, m$; in fact, 
 $\sigma\big(\tau^\K(a)\big)=\tau^{\K'}(a')$ for every $\L_{\preccurlyeq,D}^{A, +}$-term $\tau(x)$. 
  From $\tp^{\K}_{\mathrm{c}}(a)=\tp^{\K'}_{\mathrm{c}}(a')$ and $\tp^{\K}_{\mathrm{g}}(a)=\tp^{\K'}_{\mathrm{g}}(a)$ it now follows  that $\sigma: \K_{0|a}\to \K'_{0|a'}$ is a good map.  
 Hence $\tp^{\K}(a)= \tp^{\K'}(a')$ by Theorem~\ref{AKE}.
\end{proof}

\medskip\noindent
Let $T_{A}$  be the $\L^{\Acg}_{\preccurlyeq,D}$-theory whose models are the $\Acg$-fields of equicharacteristic 0 whose valuation $A$-ring is viable. (Recall that the viability of the valuation $A$-ring $R$ of $\K$ means that $\smallo(R)=tR$ for some $t\in R^{\ne}$ with $t\in \smallo(A)R$;
as $\smallo(A)$ is finitely generated, this is a first-order condition.) 
Let $T^+_{A}$ be the extension by definitions of $T_{A}$ whose models are the expansions
$\K^+$ of models $\K$ of $T_{A}$. 

\begin{corollary}
Every $\L^{\Acg}_{\preccurlyeq,D}$-formula $\phi(x)$ is $T^+_{A}$-equivalent to 
$$ \big( \psi_1(x)\wedge \theta_1(x)\big) \vee \cdots \vee \big( \psi_N(x)\wedge \theta_N(x)\big)$$
for some $N\in \N$, $\mathrm{c}$-formulas $\psi_1(x), \ldots, \psi_N(x)$, and $\mathrm{g}$-formulas $\theta_1(x), \ldots, \theta_N(x)$.
\end{corollary}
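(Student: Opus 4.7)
The plan is to use a standard compactness argument and invoke Lemma~\ref{sepvar}. Let $\Phi$ denote the set of $\L^{\Acg,+}_{\preccurlyeq,D}$-formulas of the form $\bigvee_{i=1}^{N}\bigl(\psi_i(x)\wedge \theta_i(x)\bigr)$ with each $\psi_i$ a $\mathrm{c}$-formula in $x$ and each $\theta_i$ a $\mathrm{g}$-formula in $x$. I first claim that the class of $\mathrm{c}$-formulas in $x$ (resp.\ $\mathrm{g}$-formulas in $x$) is closed under $\wedge$, $\vee$, and $\neg$: this follows from the corresponding closure properties built into the recursive definition of $\mathrm{c}$-relative (resp.\ $\mathrm{g}$-relative) formulas, after concatenating and reindexing the term lists $\tau_1(x),\dots,\tau_k(x)$ used in the substitution. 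Distributing $\wedge$ over $\vee$ and pushing $\neg$ through disjunctions and conjunctions, $\Phi$ itself is closed under $\wedge$, $\vee$, and $\neg$.

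Given $\phi(x)$, put $\phi^+(x):=\{\sigma(x)\in \Phi : T^+_{A}\vdash \phi(x)\to \sigma(x)\}$. I claim $T^+_{A}\cup \phi^+(x)\vdash \phi(x)$. Granting the claim, compactness yields $\sigma_1,\dots,\sigma_n\in \phi^+(x)$ with $T^+_{A}\vdash \bigwedge_{i\le n}\sigma_i\to \phi$; since $\bigwedge_{i\le n}\sigma_i\in \Phi$ and $T^+_{A}\vdash \phi\to \sigma_i$ for every $i$, we obtain $T^+_{A}\vdash \phi\leftrightarrow \bigwedge_{i\le n}\sigma_i$, expressing $\phi$ in the desired form (after rewriting the conjunction as a single disjunction of conjunctions $\psi\wedge\theta$ via closure of $\Phi$).

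To prove the claim, suppose for contradiction that $T^+_{A}\cup \phi^+(x)\cup \{\neg\phi(x)\}$ is consistent, and realize it by some $a'\in K'^m$ in a model $\K'$ of $T_{A}$. Let $p(x):=\{\sigma(x)\in \Phi:\K'^+\models \sigma(a')\}$; since $\Phi$ is closed under $\neg$, the set $p(x)$ is a complete $\Phi$-type. The set $T^+_{A}\cup p(x)\cup\{\phi(x)\}$ is consistent: otherwise, by compactness, $T^+_{A}\vdash \phi(x)\to \neg\bigwedge_{i\le k}\sigma_i(x)$ for some $\sigma_1,\dots,\sigma_k\in p(x)$, so $\neg\bigwedge_{i\le k}\sigma_i \in \Phi\cap \phi^+(x)$, whence $\K'^+\models \neg\bigwedge_{i\le k}\sigma_i(a')$, contradicting $\sigma_i\in p(x)$ for all $i$. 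Realize this set in $(\K^+, a)$ with $\K\models T_{A}$ and $a\in K^m$. Then $p(x)\subseteq \tp^{\K}_{\Phi}(a)$, and completeness of both $\Phi$-types gives $\tp^{\K}_{\Phi}(a)=p(x)=\tp^{\K'}_{\Phi}(a')$. Since $\Phi$ contains every $\mathrm{c}$-formula and every $\mathrm{g}$-formula in $x$, this yields $\tp^{\K}_{\mathrm{c}}(a)=\tp^{\K'}_{\mathrm{c}}(a')$ and $\tp^{\K}_{\mathrm{g}}(a)=\tp^{\K'}_{\mathrm{g}}(a')$. By Lemma~\ref{sepvar}, $\tp^{\K}(a)=\tp^{\K'}(a')$, contradicting $\K^+\models \phi(a)$ and $\K'^+\models \neg\phi(a')$.

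The only point requiring verification outside the compactness framework is the closure of $\mathrm{c}$- and $\mathrm{g}$-formulas under Boolean operations, which is bookkeeping with term lists. All the real content sits in Lemma~\ref{sepvar}, so I do not anticipate any further obstacle.
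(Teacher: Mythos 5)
Your proof is correct and follows essentially the same route as the paper: the paper's one-line proof invokes Lemma~\ref{sepvar} together with the general model-theoretic separation lemmas \cite[Lemmas 5.13, 5.14]{Lou}, which encapsulate exactly the compactness/Boolean-closure argument you spell out. In other words, you have simply inlined the cited black-box step, with all the substantive content residing in Lemma~\ref{sepvar} in both versions.
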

\begin{proof}
Apply Lemma~\ref{sepvar} in conjunction with  \cite[Lemmas 5.13, 5.14]{Lou}.
\end{proof}

\bibliographystyle{amsplain}

\bibliography{AAKEpaper}

\end{document}